\def\url@leostyle{%
	\@ifundefined{selectfont}{\def\UrlFont{\sf}}{\def\UrlFont{\small\ttfamily}}}
\numberwithin{equation}{section}
\theoremstyle{definition}
\newtheorem{sectional}{Theorem}[subsection]
\newtheorem{theorem}[sectional]{Theorem}
\newtheorem{prop}[sectional]{Proposition}
\newtheorem{lemma}[sectional]{Lemma}
\newtheorem{cor}[sectional]{Corollary}
\newtheorem{corollary}[sectional]{Corollary}
\newtheorem{definition}[sectional]{Definition}
\newtheorem{defn}[sectional]{Definition}
\newtheorem{notation}[sectional]{Notation}
\newtheorem{warning}[sectional]{Warning}
\newtheorem{example}[sectional]{Example}
\newtheorem{construction}[sectional]{Construction}
\newtheorem{convention}[sectional]{Convention}
\newtheorem{remark}[sectional]{Remark}
\newcommand{\nc}{\newcommand}
\nc{\DMO}{\DeclareMathOperator}	
\nc{\newnotation}{\nomenclature}
\nc{\wrap}{\cW}
\nc{\bsd}{\mathsf{bsd}}
\nc{\Cob}{\mathsf{Cob}}
\nc{\mul}{\mathsf{Mul}}
\nc{\fat}{\mathsf{fat}}
\nc{\cob}{\mathsf{Cob}}
\nc{\coh}{\mathsf{Coh}}
\nc{\Top}{\mathsf{Top}}
\nc{\Fun}{\mathsf{Fun}}
\nc{\rep}{\mathsf{Rep}}
\nc{\RepA}{\mathsf{Rep}\cA}
		\DMO{\LMod}{LMod}
\nc{\core}{\mathsf{core}}
\nc{\bary}{\mathsf{Bary}}
\nc{\idem}{\mathsf{Idem}}
\nc{\sets}{\mathsf{Sets}}
\nc{\near}{\mathsf{near}}
\nc{\symp}{\mathsf{Symp}}
\nc{\perf}{\mathsf{Perf}}
\nc{\ssets}{\mathsf{sSets}}
\nc{\cmpct}{\mathsf{cmpct}}
\nc{\finite}{\mathsf{Finite}}
\nc{\compact}{\mathsf{cmpct}}
\nc{\pwrap}{\mathsf{PWrap}}
\nc{\coder}{\mathsf{Coder}}
\nc{\bimod}{\mathsf{Bimod}}
\nc{\grmod}{\mathsf{GrMod}}
\nc{\spaces}{\mathsf{Spaces}}
\nc{\pwrms}{\mathsf{PWrFuk}_{M,S}}
\nc{\pwrmf}{\mathsf{PWrFuk}_{M,F}}
\nc{\pwrapmf}{\mathsf{PWrFuk}_{M,F}}
\nc{\fuk}{\mathsf{Fukaya}}
\nc{\infwr}{\mathsf{InfWr}}
\nc{\fukaya}{\mathsf{Fukaya}}
\nc{\autml}{\mathsf{Aut}_{M,\Lambda}}
\nc{\fukml}{\mathsf{Fukaya}_{M,\Lambda}}
\nc{\fukmle}{\mathsf{Fukaya}_{M,\Lambda,\epsilon}}
\nc{\fukmod}{\wrfukcompact(M)\modules}
\nc{\lag}{\mathsf{Lag}}
\nc{\Lag}{\mathsf{Lag}}
\nc{\lagm}{\lag_M}
\nc{\lago}{\lag^o}
\nc{\lagml}{\lag_{M,\Lambda}} 
\nc{\lagmle}{\lag_{M,\Lambda,\epsilon}}
\nc{\fun}{\mathsf{Fun}}
\nc{\vect}{\mathsf{Vect}}
\nc{\chain}{\mathsf{Chain}}
\nc{\wrfuk}{\mathsf{WrFukaya}}
\nc{\wrfukcompact}{\mathsf{WrFukaya}_{\mathsf{cmpct}}}
\nc{\pwrfuk}{\mathsf{PWrFukaya}}
\nc{\inffuk}{\mathsf{InfFuk}}
\nc{\pwrfukml}{\mathsf{PWrFukaya}_{M,\Lambda}}
\nc{\inffukml}{\mathsf{InfFuk}_{M,\Lambda}}
\nc{\nattrans}{\mathsf{NatTrans}}
\nc{\corres}{\mathsf{Corres}}
\nc{\fukep}{\fukaya_\Lambda(M,\epsilon)}
\nc{\fukepop}{\fukaya_\Lambda(M,\epsilon)^{\op}}
\nc{\lagep}{\lag_\Lambda(M,\epsilon)}
\DMO{\cyl}{cyl} 
\DMO{\cyc}{cyc} 
\nc{\dbcoh}{D^b\mathsf{Coh}}
\nc{\corr}{\mathsf{Corr}}
\nc{\cat}{\mathsf{Cat}}
\nc{\Cat}{\mathsf{Cat}}
\nc{\ainfty}{\mathsf{A}_\infty}
\nc{\inftycat}{\mathcal{C}\!\operatorname{at}_\infty}
\nc{\inftyCat}{\mathcal{C}\!\operatorname{at}_\infty}
\nc{\inftyGpd}{\mathcal{G}\!\operatorname{pd}_\infty}
\nc{\inftyCatPtd}{\mathcal{C}\!\operatorname{at}_{\infty,\ast}}
\nc{\Ainftycat}{\mathcal{C}\!\operatorname{at}_{A_\infty}}
\nc{\ainftycat}{\mathcal{C}\!\operatorname{at}_{A_\infty}}
\nc{\stablecat}{\mathcal{C}\!\operatorname{at}_\infty^{\Ex}}
\nc{\StableCat}{\mathcal{C}\!\operatorname{at}_\infty^{\Ex}}
\nc{\dgcat}{\mathcal{C}\!\operatorname{at}_{dg}}
\nc{\dgcatt}{dgCat}
\nc{\Ainftycatt}{A_\infty Cat}
\DMO{\im}{im}
\DMO{\ev}{ev}
\DMO{\Ind}{Ind}
\DMO{\inj}{inj}
\DMO{\map}{Map}
\DMO{\fib}{fib}
\DMO{\Emb}{Emb}
\DMO{\conf}{Conf}
\DMO{\chains}{Chains}
\DMO{\cochains}{Cochains}
\DMO{\cone}{Cone}
\DMO{\ran}{Ran}
\DMO{\rot}{Rot}
\DMO{\leg}{Leg}
\DMO{\imm}{imm}
\DMO{\adj}{adj}
\DMO{\Crit}{Crit}
\DMO{\tree}{Tree}
\DMO{\cube}{Cube}
\DMO{\deep}{deep}
\DMO{\back}{back}
\DMO{\front}{front}
\DMO{\flow}{Flow}
\DMO{\floer}{Floer}
\DMO{\maps}{Maps}
\DMO{\exact}{exact}
\DMO{\fingen}{\mathsf{f}\mathsf{g}}
\DMO{\excess}{Excess}
\DMO{\Decomp}{Decomp}
\DMO{\decomp}{Decomp}
\DMO{\collar}{collar}
\DMO{\yoneda}{Yoneda}
\DMO{\hamspace}{Ham}
\DMO{\sympspace}{Symp}
\DMO{\holomaps}{Holomaps}
\DMO{\comp}{Comp}
\DMO{\crit}{Crit}
\DMO{\test}{{test}}
\DMO{\sign}{sign}
\DMO{\topp}{top}
\DMO{\indx}{Index}
\DMO{\Break}{Break} 
\DMO{\zero}{zero} 
\DMO{\ob}{Ob}
\DMO{\gr}{Gr} 
\DMO{\Gr}{Gr} 
\DMO{\cl}{Cl} 
\DMO{\grlag}{GrLag}
\DMO{\GrLag}{GrLag}
\DMO{\Pin}{Pin}
\DMO{\Graph}{Graph}
\DMO{\grph}{Graph}
\DMO{\pin}{Pin}
\DMO{\gap}{Gap}
\DMO{\Ex}{Ex}
\DMO{\id}{id}
\DMO{\End}{End}
\DMO{\sym}{Sym} 
\DMO{\aut}{Aut}
\DMO{\DK}{DK} 
\DMO{\poly}{poly} 
\DMO{\diff}{Diff}
\DMO{\coll}{coll}
\DMO{\dist}{dist} 
\DMO{\coker}{coker} 
\nc{\kernel}{\ker} 
\DMO{\sspan}{span}
\DMO{\hocolim}{hocolim}	
\DMO{\holim}{holim}
\DMO{\sk}{sk}
\DMO{\ho}{ho}
\DMO{\fin}{fin}
\DMO{\tor}{Tor}
\DMO{\ext}{Ext}
\DMO{\ret}{Ret}
\DMO{\ham}{Ham}
\DMO{\con}{con}
\DMO{\leaf}{leaf}
\DMO{\supp}{supp}
\DMO{\edge}{edge}
\DMO{\colim}{colim}
\DMO{\edges}{edges}
\DMO{\Image}{image}
\DMO{\roots}{roots}
\DMO{\height}{height}
\DMO{\finmod}{FinMod}
\DMO{\leaves}{leaves}
\DMO{\planar}{planar}
\DMO{\vertices}{vertices}
\nc{\lagg}{\lag^{\cG}}
\nc{\iso}{\mathsf{Iso}}
\nc{\Set}{\mathsf{Set}}
\nc{\ass}{\mathsf{ \bf Ass}}
\nc{\Mod}{\mathsf{Mod}}
\nc{\modules}{\mathsf{Mod}}
\nc{\sset}{\mathsf{sSet}}
\nc{\liou}{\mathsf{Liou}}
\nc{\poset}{\mathsf{Poset}}
\nc{\trno}{T^*\RR^n_{\geq 0}}
\nc{\spectra}{\mathsf{Spectra}}
\nc{\tensorfin}{\tensor^{\fin}}
\nc{\lagptg}{\lag_{pt,pt}^{\cG}}
\nc{\Fin}{\mathcal{F}\mathsf{in}}
\nc{\lagnl}{\lag_{N,\Lambda}}
\nc{\lagmlg}{\lag_{M,\Lambda}^{\cG}}
\nc{\lagsplit}{\lag^{\mathsf{split}}}
\nc{\lagktimes}{(\lag^{\dd k})^\times}
\nc{\lagplanar}{\lag^{\times,\planar}}
\nc{\smsh}{\wedge}
\nc{\para}{\circlearrowleft}
\nc{\sdotpara}{s_{\bullet}^{\para}}
\nc{\un}{\underline}
\nc{\xto}{\xrightarrow}
\nc{\xra}{\xto}
\nc{\tensor}{\otimes}
\nc{\del}{\partial}
\nc{\dd}{\diamond}
\nc{\tri}{\triangle}
\nc{\bb}{\Box}
\nc{\into}{\hookrightarrow}
\nc{\onto}{\twoheadrightarrow}
\nc{\contains}{\supset}
\nc{\transverse}{\pitchfork}
\nc{\uncirc}{\underline{\circ}}
\nc{\Jbar}{\overline{J}}
\nc{\Fbar}{\overline{F}}
\nc{\delbar}{\overline{\del}}
\nc{\thetabar}{\overline{\theta}}
\nc{\omegabar}{\overline{\omega}}
\nc{\colldiff}{\diff^{\del}} 
\nc{\trbar}{\overline{T^*\RR}}
\nc{\tr}{T^*\RR}
\nc{\tsa}{Ts\cA}
\nc{\tsb}{Ts\cB}
\nc{\cmbar}{\overline{\cM}}
\nc{\crbar}{\overline{\cR}}
\nc{\fcrit}{\ff^{crit}}
\nc{\fsubcrit}{\ff^{subcrit}}
\nc{\vece}{ {\vec \epsilon}}	
\nc{\vecd}{ {\vec \delta}}
\nc{\ov}{\overline}
\DMO{\op}{op}
\nc{\opp}{ ^{\op}}
\nc{\hiro}{\textcolor{blue}}
\nc{\eqn}{\begin{equation}}
\nc{\eqnn}{\begin{equation}\nonumber}
\nc{\eqnd}{\end{equation}}
\nc{\enum}{\begin{enumerate}}
\nc{\enumd}{\end{enumerate}}
\def\cA{\mathcal A}\def\cB{\mathcal B}\def\cC{\mathcal C}\def\cD{\mathcal D}
\def\cE{\mathcal E}\def\cF{\mathcal F}\def\cG{\mathcal G}
\def\cM{\mathcal M}\def\cP{\mathcal P}
\def\cR{\mathcal R}\def\cS{\mathcal S}
\def\cU{\mathcal U}\def\cW{\mathcal W}
\def\DD{\mathbb D}
\def\EE{\mathbb E}
\def\QQ{\mathbb Q}\def\RR{\mathbb R}\def\SS{\mathbb S}
\def\ZZ{\mathbb Z}
\def\sB{\mathsf B}
\def\ff{\mathfrak f}
\nc{\shv}{\mathcal{S}\mathsf{hv}}
\DMO{\surj}{surj}
\DMO{\cbl}{cbl}
\nc{\Lambdainj}{\Lambda^{\inj}}
\nc{\Pt}{\mathrm{Pt}}
\nc{\sing}{\mathrm{Sing}}
\nc{\Ffun}{\mathcal{F}\mathsf{un}}
\nc{\Bsc}{\mathcal{B}\mathsf{sc}}
\nc{\Kan}{\mathcal{K}\mathsf{an}}
\nc{\Psh}{\mathcal{P}\mathsf{sh}}
\nc{\Shv}{\mathcal{S}\mathsf{hv}}
\nc{\disk}{\mathcal{D}\mathsf{isk}}
\nc{\Exit}{\mathcal{E}\mathsf{xit}}
\nc{\Strat}{\mathcal{S}\mathsf{trat}}
\nc{\Stacks}{\mathcal{S}\mathsf{tacks}}
\nc{\Rep}{\mathsf{Rep}}
\nc{\Open}{\mathsf{Open}}
\nc{\Conv}{\mathsf{Conv}}
\nc{\Amalg}{\mathsf{Amalg}}
\nc{\Preordpara}{\mathsf{PreOrd_{\para}}}
\nc{\Preordcyc}{\mathsf{PreOrd_{\cyc}}}
\nc{\broken}{\mathsf{Broken}}
\nc{\brokencyc}{\mathsf{Broken}_{\cyc}}
\nc{\brokenpara}{\mathsf{Broken}_{\para}}
\nc{\Deltainj}{\Delta^{\inj}}
\nc{\Deltaparainj}{\Deltapara^{\inj}}
\nc{\Deltacycinj}{\Deltacyc^{\inj}}
\nc{\Deltaparasurj}{\Deltapara^{\surj}}
\nc{\Deltacycsurj}{\Deltacyc^{\surj}}
\nc{\PreordZ}{\mathsf{PreOrd_{\ZZ}}}
\nc{\paracyclic}{\Delta_{\circlearrowleft}}
\nc{\Deltapara}{\Delta_{\circlearrowleft}}
\nc{\Deltasurj}{\Delta^{\surj}}
\nc{\Deltacyc}{\Delta_{\cyc}}
\nc{\paracyc}{\Deltapara}
\nc{\limarrow}{\varprojlim}
\newcommand{\colimm@}[2]{%
  \vtop{\m@th\ialign{##\cr
    \hfil$#1\operator@font colim$\hfil\cr
    \noalign{\nointerlineskip\kern1.5\ex@}#2\cr
    \noalign{\nointerlineskip\kern-\ex@}\cr}}%
}
\newcommand{\colimarrow}{%
  \mathop{\mathpalette\colimm@{\rightarrowfill@\textstyle}}\nmlimits@
}
\newcommand\mapsfrom{\mathrel{\reflectbox{\ensuremath{\mapsto}}}}
	\title{Cyclic structures and broken cycles}
	\author{Hiro Lee Tanaka}
\begin{document}

	\begin{abstract}
	We introduce a new way to encode semicyclic structures using a stack of broken cycles. (We also prove an analogue for paracyclic structures.) This was motivated not only by higher algebra but also by Fukaya-categorical considerations. 
	
	We also openly speculate about some Fukaya-categorical implications. For example, this stack sees moduli of stopped Liouville disks, and hence yields another platform for gluing together Fukaya categories. We also see that Lagrangian cobordisms with multiple ends may not only serve to detect $K_0$ groups of Fukaya categories, but higher $K$-theory groups as well.
	
	Along the way, we include brief expositions of (i) basic techniques in $\infty$-categories and (ii) the translation between exit path categories and constructible sheaves. These may be of independent interest.
	\end{abstract}

	\maketitle

\tableofcontents

\section{Introduction}
Perhaps the most hands-on definition of the K-theory of a category is through Waldhausen's s-dot construction~\cite{waldhausen1985}. This construction takes as input a category $\cC$ with a good notion of exact sequences---for example, a stable $\infty$-category---and outputs a simplicial space $s_\bullet \cC^{\sim}$ whose higher homotopy groups recover the higher K-theory groups of the category.

In the last few years, new structures have been discovered in Waldhausen's s-dot construction. When $\cC$ is a stable $\infty$-category, its s-dot construction has the structure of a 2-Segal space~\cite{dyckerhoff-kapranov-higher-segal}, and further possesses a paracyclic structure~\cite{dyckerhoff-kapranov-crossed, dyckerhoff-kapranov-triangulated-surfaces, lurie-waldhausen, nadler-cyclic}. When $\cC$ is 2-periodic---meaning it is endowed with a natural equivalence between the identity functor and the functor of ``shift by two''---the s-dot construction has a cyclic structure (not just a paracyclic one).

These structures manifest geometrically when $\cC$ is the Fukaya category of some symplectic manifold $M$---indeed, it is our understanding that the discovery of some of the above structures was inspired by Fukaya-categorical ideas, and that the above two structures are central to articulating how one creates Fukaya categories as a sheaf or cosheaf on a 1-dimensional skeleton. But present techniques seemed unwilling to yield  a clean, geometric and precise articulation of these structures for the s-dot construction of Fukaya categories; even for the Fukaya category of a point, the above structures on the s-dot construction were not articulable without first passing to some combinatorial model.

The purpose of this work is to introduce a new way to encode cyclic and paracyclic structures; this new methodology is amenable to the geometric techniques one can apply in the Fukaya-categorical setting.\footnote{We have in mind the case that $M$ is a Weinstein manifold, but we expect this applies for monotone $M$ as well; indeed, many 2-periodic examples arise in the compact monotone setting.}

To that end, we introduce in this paper the moduli stack  
	\eqnn
	\brokencyc
	\eqnd
of broken cycles (Definition~\ref{defn. brokenpara}). This stack classifies families of oriented circles with orientation-compatible $\RR$-actions having discrete and non-empty fixed point set. Such families arise in at least two situations: (i) when studying moduli of infinite-time orbits in a space with an $\RR$-action (where such orbits can develop fixed-point ``breaks''), and (ii) when studying families of symplectic disks equipped with stopped Liouville structures---the boundary circle at infinity inherits an $\RR$-action from the Reeb flow. In our notion of family, new fixed points can appear along closed subsets of the base of the family. (This is just as nodes can appear along closed subsets in the families of curves classified by $\overline{\cM}_{g,n}$.) 

\begin{remark}
The above is a stack on the site of all topological spaces; however, we present it as a colimit of spaces permitting obvious decorations---in particular, one can profitably consider it as a stack on the site of stratified spaces in the sense of~\cite{aft}.
\end{remark}

Now let $\Deltacyc$ denote Connes's cyclic category\footnote{This is often denoted $\Lambda$ following Connes's original work~\cite{connes-cyclique}.}. This category combinatorially models finite subsets of oriented circles and cyclic maps between them (Definition~\ref{defn:cyclic-category}).  We let $\Deltacycinj \subset \Deltacyc$ denote the subcategory consisting of injective maps. By definition, a semicyclic object in an $\infty$-category $\cD$ is a functor from $(\Deltacycinj)^{\op}$ to $\cD$.

Our main theorem is the following:

\begin{theorem}\label{theorem. cyclic main}
Let $\cD$ be a compactly generated $\infty$-category. Then there exists an equivalence of $\infty$-categories
	\eqnn
	\shv(\brokencyc; \cD)
	\simeq
	\Ffun((\Deltacycinj)^{\op}, \cD).
	\eqnd
That is, informally, a sheaf on $\brokencyc$ is the same thing as a semicyclic object.
\end{theorem}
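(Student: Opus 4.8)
The plan is to realize $\brokencyc$ as a colimit (in stacks, hence in the relevant $\infty$-topos) indexed by a combinatorial category and then to transport this colimit presentation through the sheaf functor, which sends colimits of spaces to limits of $\infty$-categories. More precisely, I would first establish a decomposition $\brokencyc \simeq \colim_{[n]\in (\Deltacycinj)^{\op}} X_n$, where $X_n$ is the space (stack) of broken cycles equipped with an ordered configuration of $n+1$ marked points, one in each ``unbroken arc,'' compatibly with the cyclic order; the point is that choosing such a marked configuration rigidifies the $\RR$-action and trivializes the automorphisms, so each $X_n$ should be contractible (or at least have the homotopy type of a point, perhaps up to the $\ZZ/(n+1)$ rotational symmetry that is precisely what distinguishes the cyclic from the simplicial story). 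The transition maps in this diagram are exactly the injective cyclic maps: forgetting a marked point, and relabeling. This is the analogue of the standard fact that a circle is the colimit over $\Delta_{\cyc}^{\inj, \op}$ of points, i.e. that $|\Deltacyc^{\op}| \simeq B S^1$; here the ``breaks'' are what make the indexing category genuinely $(\Deltacycinj)^{\op}$ rather than the full cyclic category.

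Given such a presentation, the theorem follows formally: $\shv(-;\cD)$ carries colimits of spaces/stacks to limits, so
\eqnn
\shv(\brokencyc;\cD) \simeq \limarrow_{[n]\in \Deltacycinj} \shv(X_n;\cD) \simeq \limarrow_{[n]\in\Deltacycinj} \cD,
\eqnd
using that each $X_n$ is contractible so $\shv(X_n;\cD)\simeq\cD$ (this is where compact generation of $\cD$ enters—to know $\shv(\mathrm{pt};\cD)\simeq\cD$ and that the sheaf condition behaves well, i.e. $\shv(-;\cD)$ is a sheaf of $\infty$-categories valued in presentable categories). A limit of the constant diagram at $\cD$ over $\Deltacycinj$ is by definition $\Ffun((\Deltacycinj)^{\op},\cD)$. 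One then checks that the resulting functor $\Ffun((\Deltacycinj)^{\op},\cD)\to\shv(\brokencyc;\cD)$ is the expected one—i.e. that it sends a semicyclic object to the sheaf whose stalk over a broken cycle with $k$ breaks is the value of the functor on the corresponding $k$-element cyclic set, with gluing along the breaks encoded by the functoriality.

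The main obstacle, I expect, is the first step: setting up the stratification of $\brokencyc$ and proving that the strata (the spaces $X_n$, or rather the open/closed pieces indexed by the number of breaks) actually assemble into a colimit with indexing category $(\Deltacycinj)^{\op}$ and not something larger or smaller. Concretely one must (a) identify the exit-path category of $\brokencyc$—this is presumably why the excerpt promised an exposition of exit paths and constructible sheaves—and show it is equivalent to $\Deltacycinj$; and (b) check that constructible sheaves on $\brokencyc$ with respect to this stratification are \emph{all} sheaves, i.e. that there are no extra non-constructible sheaves to worry about, which requires knowing the stratification is conical/nice in the sense of~\cite{aft} and that $\brokencyc$ is presented as a colimit of its (contractible) strata. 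The delicate points are the automorphisms: a broken cycle with $k$ breaks has an evident $\RR^k$ worth of ``translation along each arc'' plus possible rotational symmetry, and one must verify these contribute exactly the morphisms of $\Deltacycinj$ (and in particular that the cyclic—as opposed to merely simplicial—symmetry survives because we do \emph{not} rigidify a basepoint on the circle itself, only within arcs once breaks exist). Once the exit-path/stratification bookkeeping is pinned down, invoking $\mathsf{Exit}$-path theory (the equivalence between constructible sheaves and functors out of the exit-path category) and the colimit-to-limit property of $\shv$ finishes the argument; the paracyclic analogue is identical with $\Deltacycinj$ replaced by $\Deltaparainj$ and the rotational identifications dropped.
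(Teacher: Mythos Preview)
Your proposal contains a genuine gap at the step where you write
\eqnn
\limarrow_{[n]\in\Deltacycinj} \cD \simeq \Ffun((\Deltacycinj)^{\op},\cD).
\eqnd
This is false: the limit of the \emph{constant} diagram at $\cD$ over a category $I$ is $\Ffun(|I|,\cD)$, functors out of the groupoid completion, not $\Ffun(I^{\op},\cD)$. (Concretely, Cartesian sections of the trivial fibration $I\times\cD\to I$ are functors $I\to\cD$ inverting every arrow.) And the diagram really is constant: if each $X_n$ is contractible, then every transition functor $f^*:\shv(X_m)\to\shv(X_n)$ is an equivalence, so your limit computes local systems on the classifying space $|(\Deltacycinj)^{\op}|$, not semicyclic objects. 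Put differently, a colimit of contractible pieces in the $\infty$-topos of stacks only remembers a homotopy type; it cannot encode the stratified structure of $\brokencyc$ that distinguishes ``break appears'' from ``break disappears.''

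The paper's fix is precisely to \emph{not} use contractible covering pieces. It covers $\brokencyc$ by honest stratified topological spaces $F^{(I)}$---corners of cubes, one for each paracyclic \emph{preorder} $I$ (a strictly larger indexing category $\Preordcyc$ than $\Deltacycinj$)---each carrying the full range of isomorphism types of broken cycles. Sheaves pulled back to $F^{(I)}$ are constructible with respect to a stratification by a poset $\Conv(I)$, and the exit path theorem (for spaces, where it is available) identifies $\shv^{\cbl}(F^{(I)})\simeq\Ffun(\Conv(I),\cD)$. Now the limit is of a genuinely non-constant diagram of functor categories, and one computes $\colim_{\Preordcyc^{\op}}\Conv(I)\simeq\Deltacycsurj\simeq(\Deltacycinj)^{\op}$ by recognizing the Grothendieck construction and localizing along its Cartesian edges. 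Your instinct that the exit path category of $\brokencyc$ ``should be'' $(\Deltacycinj)^{\op}$ is correct in spirit, but making sense of that for a stack---and proving it---is exactly the content of this detour through the $F^{(I)}$.
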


We also prove an analogue for the paracyclic category $\Deltapara$, which models $\ZZ$-equivariant lifts of finite cyclic sets to countable ordered sets, and $\ZZ$-equivariant maps between them. 
The relevant stack here is the moduli stack $\brokenpara$ of broken paracycles, which models families lifting broken cycles to their universal covers.

\begin{theorem}\label{theorem. main}
Let $\cD$ be a compactly generated $\infty$-category. Then there exists an equivalence of $\infty$-categories
	\eqnn
	\shv(\brokenpara, \cD)
	\simeq
	\Ffun((\Deltaparainj)^{\op}, \cD).
	\eqnd
That is, a sheaf on $\brokenpara$ is the same thing as a semiparacyclic object.
\end{theorem}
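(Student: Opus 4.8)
The plan is to exploit the general "exit-path / constructible-sheaf" dictionary together with the fact that $\brokenpara$ is, by construction, a colimit of spaces $X_n$ indexed by the combinatorics of finite subsets of a paracyclic set---precisely the data organized by $\Deltaparainj$. First I would unwind Definition~\ref{defn. brokenpara}: a broken paracycle over a base $B$ is a family of $\ZZ$-covers of oriented circles with $\RR$-actions whose fixed-point sets are discrete, nonempty, and allowed to grow along closed subsets of $B$. Specializing to a point, a broken paracycle ``is'' a finite $\ZZ$-set together with a $\ZZ$-equivariant order---i.e.\ exactly an object of $\Deltaparainj$---and the closure relations among strata (a break can degenerate by colliding two adjacent fixed points) are governed by the injective maps of $\Deltaparainj$. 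So the first step is to identify the poset of strata of $\brokenpara$, viewed as a stratified space in the sense of~\cite{aft}, with (the nerve of) a category built from $(\Deltaparainj)^{\op}$; equivalently, to produce an equivalence of $\infty$-categories $\Exit(\brokenpara) \simeq (\Deltaparainj)^{\op}$ between the exit-path $\infty$-category of the stack and the opposite of the injective paracyclic category.

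Granting that, the second step is formal: for a stratified space $\cX$ with exit-path category $\Exit(\cX)$ and a compactly generated target $\cD$, one has $\shv(\cX;\cD)\simeq \Ffun(\Exit(\cX),\cD)$---this is exactly the expository ``translation between exit path categories and constructible sheaves'' the abstract promises, and I would cite it as a black box (in the $\infty$-categorical form, e.g.\ Lurie's Exodromy theorem and its sheaf-valued enhancements, which is why compact generation of $\cD$ is hypothesized). Feeding in the identification of step one yields
	\eqnn
	\shv(\brokenpara;\cD)\simeq \Ffun(\Exit(\brokenpara),\cD)\simeq \Ffun((\Deltaparainj)^{\op},\cD),
	\eqnd
which is the claim. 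One must check that the stratification of $\brokenpara$ is conical/suitably tame so the exodromy statement applies; because each stratum is a space of configurations of fixed points on an $\RR$-line modulo reparametrization, and the degenerations are the ``collide two adjacent fixed points'' maps, the local structure near a stratum is a product of the stratum with an open cone on a simplex, so tameness holds.

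The main obstacle is the first step: pinning down $\Exit(\brokenpara)\simeq(\Deltaparainj)^{\op}$ honestly, rather than just at the level of objects. I expect to do this by writing $\brokenpara$ explicitly as a colimit $\colim_{n} X_n$ over $(\Deltaparainj)^{\op}$ (or a cofinal subcategory thereof), where $X_n$ is the space of broken paracycles with exactly $n$ marked fixed points---each $X_n$ being contractible (a space of ordered configurations on $\RR/\!\!\sim$ up to the relevant reparametrization group, which deformation-retracts to a point)---and with the structure maps given by the boundary degenerations. Then $\Exit$ of such a colimit of contractible strata is computed by the indexing category, giving the desired equivalence; the care is in verifying that the maps $X_n\to X_m$ induced by morphisms of $\Deltaparainj$ match the inclusions of closed strata with the correct normal directions, and that no extra identifications (from the $\ZZ$-action, or from orientation-reversal---which is excluded since everything is oriented) appear. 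The paracyclic case is genuinely simpler than the cyclic one of Theorem~\ref{theorem. cyclic main} here, because passing to $\ZZ$-covers rigidifies the circle (killing the rotational automorphisms that force the ``$\mathsf{Iso}$'' subtleties in $\Deltacyc$), so $\Deltaparainj$ has no nonidentity automorphisms and the exit-path category is an honest poset-like category; I would point out exactly where this simplification enters, and then deduce Theorem~\ref{theorem. cyclic main} (if desired) by taking homotopy $\ZZ$-quotients on both sides.
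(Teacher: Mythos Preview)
Your overall strategy---compute the exit-path $\infty$-category of $\brokenpara$ and apply the exodromy theorem---is the same as the paper's, and your identification of the answer as $(\Deltaparainj)^{\op}\simeq\Deltaparasurj$ is correct. But two of your supporting claims are false, and the way the paper executes the strategy differs substantively from what you sketch.

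\textbf{The strata are not contractible, and $\Deltaparainj$ is not a poset.} The $n$th stratum of $\brokenpara$ classifies broken paracycles with $n+1$ fixed points; such an object has automorphism group $\langle\tfrac{1}{n+1}\rangle\times\RR^{n+1}\simeq\ZZ\times\RR^{n+1}$, so the stratum is equivalent to $B\ZZ\simeq S^1$, not a point. Correspondingly, every object of $\Deltaparainj$ has automorphism group $\ZZ$ (shift by one period), so the exit-path category is not a poset. These two errors are ``self-correcting'' in that the $B\ZZ$ stratum matches the $\ZZ$ automorphisms in $\Deltaparainj$, but your argument that the exit-path category \emph{equals} the indexing diagram because the strata are contractible does not go through as written. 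You also should not expect to deduce the cyclic case by taking $\ZZ$-quotients at the end: the quotient has already been taken (this is what distinguishes $\brokenpara$ from a putative stack with contractible strata).

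\textbf{Exodromy is not applied directly to the stack.} The theorem you cite (Lurie, Theorem~A.9.3) is about honest topological spaces; there is no off-the-shelf version for stacks, and ``$\brokenpara$ is conically stratified'' is not meaningful without further work. The paper circumvents this by first presenting $\brokenpara$ as a colimit, in the $\infty$-category of stacks, of honest stratified spaces $F^{(I)}$ indexed by a category $\Preordpara^{\op}$ of paracyclic preorders (not by $\Deltaparainj$; the $F^{(I)}$ are charts, not strata). Each $F^{(I)}$ is a corner of a cube, conically stratified by a finite poset $\Conv(I)$, and one checks $\Exit(F^{(I)})\simeq\Conv(I)$. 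Then
\[
\shv(\brokenpara;\cD)\;\simeq\;\lim_{I\in\Preordpara}\shv^{\cbl}(F^{(I)};\cD)\;\simeq\;\lim_{I}\Ffun(\Conv(I),\cD)\;\simeq\;\Ffun\bigl(\mathrm{colim}_{I}\Conv(I),\cD\bigr),
\]
and the colimit $\mathrm{colim}_{I}\Conv(I)$ is computed by localizing the Grothendieck construction $\widetilde{\Conv}\to\Preordpara$ along its Cartesian edges; an explicit adjunction identifies the localization with $\Deltaparasurj\simeq(\Deltaparainj)^{\op}$. The nontrivial automorphisms and the non-contractible strata are absorbed into this colimit-of-categories computation rather than read off from a single chart.
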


\begin{remark}
The reader may compare the above theorems to one of the main theorems of~\cite{broken}, where we introduced a stack $\broken$ of broken lines, and proved
	\eqnn
	\shv(\broken, \cD)
	\simeq
	\Ffun(\Deltasurj, \cD).
	\eqnd
This result parallels the above results further by noting the equivalences $(\Deltaparainj)^{\op} \simeq \Deltaparasurj$ and 
$
(\Deltacycinj)^{\op} \simeq \Deltacycsurj.
$
Informally, if $I$ is an object of $\Deltacycsurj$, the elements of $I$ enumerate the open intervals of a circle (formed as the complements of the fixed points). If $I$ is an object of $\Deltacycinj$, its elements track the fixed points themselves.

The reason we have opted to use the form $(\Delta_{--}^{\inj})^{\op}$ is to notationally suggest an analogy with the simplicial case---in many examples, the ``meat'' of a simplicial object $\Delta^{\op} \to \cD$ is contained in the semisimplicial object $(\Deltainj)^{\op} \to \cD$ obtained by restricting to the injective maps. Such also is the case in the examples we have in mind (e.g., the s-dot construction, where the degeneracy maps may be recovered formally as adjoints to the face maps.) 
\end{remark}

\begin{remark}
Moreover, there is a natural stratification on the stack $\brokencyc$ (and likewise for $\broken$ and for $\brokenpara$) that is compatible with the list of isomorphism classes of objects in the category $\Deltacyc$ (and likewise for $\Delta$ and for $\Deltapara$); the above equivalences allow us to compute the stalks at each stratum of $\brokencyc$ by their values on the corresponding object in $\Deltacyc$. See Corollary~\ref{cor.stalks-of-F^{(I)}}.
\end{remark}

\begin{warning}
While the stack $\broken$ of broken lines from~\cite{broken} also relates to Fukaya categories, it does so in a way orthogonal to the way in which $\brokenpara$ and $\brokencyc$ are motivated here. $\broken$ is universal for moduli of broken {\em holomorphic} disks with 2 boundary marked points, while $\brokenpara$ and $\brokencyc$ aide in parametrizing {\em Liouville} disks with $n \geq 1$ stops.
\end{warning}

\subsection{Motivation from algebra}
Let us first explain why---aside from Fukaya-categorical considerations---the above theorems may be of interest. This also partly motivates our previous work with Lurie~\cite{broken}, where broken techniques were first introduced.

Some algebraic operations, such as multiplication, allow  at least three different ways for us to ``draw'' them---i.e., to think of the operations through a topological lens. One way is by colliding labeled points; this was implicit in topological interpretations of the bar construction and in theorems like the Dold-Thom theorem. It also motivates the equivalence between factorizable cosheaves on a Ran space and various algebras~\cite{higher-algebra}. Another way is to continuously embed disks into one another; this is implicit in Eckmann-Hilton arguments, in the original definition of $\EE_n$-algebras~\cite{boardman-vogt-1968,may-iterated-loop-spaces}, and also in the definition of $\disk_n$-algebras as in~\cite{ayala-francis-topological, aft-2}\footnote{One could also encode these structures as factorizable Weiss cosheaves, as in the locally constant version of the factorization algebras recorded in~\cite{costello-gwilliam}.}. A third way is to {\em unrefine} a subdivision, which is implicitly used in works such as~\cite{kks-infrared} and~\cite{afr-stratified-homotopy-hypothesis}.

The technique of sheaves on stacks classifying ``broken'' objects fits into the third picture. Consider a 1-dimensional space equipped with a triangulation, and consider the unrefining operation of removing a vertex (to obtain a triangulation of the same space, but with one fewer vertex). The operation of removing a vertex is dual to the operation of developing a break in our language; and in our stacks, breaks develop when passing to higher-codimension strata of the stack. Hence the algebraic operation associated to ``unrefinement'' is encoded in the restriction maps of a sheaf (as opposed to, say, the corestriction maps of a cosheaf). 

Following this storyline, the present work gives a new way to articulate the algebraic structures that arise when one removes a marked point from an oriented circle (or from a universal cover thereof) while allowing for this circle to rotate (or for the universal cover to translate). These structures are encoded as sheaves on stacks parametrizing broken objects.

\begin{remark}\label{remark:broken-is-good-for-some-situations}
Depending on the geometry of a given situation, one of the above ways to encode algebraic structure may be more natural than another. Indeed, even in the one-dimensional case, when various combinatorial models of Poincar\'e duality lure us to venture between the above three pictures, the situation at hand does not always yield safe passage. See Section~\ref{section:fukaya motivation} for the example of Fukaya categories.
\end{remark}

\begin{example}[The s-dot construction]\label{example:s-dot-construction}
Let us give the main example of a paracyclic object; this will also inform our coming discussion. 

Fix a stable $\infty$-category $\cC$; in particular, we have a good notion of exact sequences and shifts, along with a zero object. For any integer $n \geq 0$, let $s_n \cC$ denote the collection of $n$-step filtrations of an object of $\cC$:
	\eqnn
	s_n \cC = \{0 \to X_1 \to \ldots \to X_n\}.
	\eqnd
We have an obvious equivalence $s_n \cC \simeq \Rep(A_n,\cC)$ to the $\infty$-category of representations of the $A_n$-quiver. (By convention, a representation of the $A_0$ quiver is a choice of zero object in $\cC$.)

Any such filtration naturally extends to a staircase-shaped diagram as follows. We draw the $n=2$ case to save paper:
	\eqn\label{eqn:staircase-sequences}
	\xymatrix{
	0 \ar[r]
		& \ddots \ar[r] \ar[d] 
		& X_1/X_2[-1] \ar[r] \ar[d]
		& 0 \ar[d] 		
		& \\
	\,	& 0 \ar[r] 
		& X_1 \ar[r] \ar[d] 
		& X_2 \ar[r] \ar[d] 
		& 0 \ar[d] \\	
	\,	&
		& 0 \ar[r] 
		& X_2/X_1 \ar[r] \ar[d] 
		& X_1[1] \ar[r] \ar[d] 
		& 0 \ar[d] \\
	\,	&
		&
		& 0 \ar[r] 
		& X_2[1] \ar[r] \ar[d] 
		& X_2/X_1[1] \ar[d] \\
	\,	&
		&
		&
		& 0 \ar[r]
		& \ddots 
	}
	\eqnd
The original filtration $X_1 \to X_2$ is visible in the above diagram; note that each new row as we descend is created by taking iterated pushouts (each of which may be expressed as mapping cones), while we can ascend and create new rows upwards by taking pullbacks. Because of the universal property of pullbacks and pushouts, any row of the above diagram determines the entire diagram up to contractible space of choices; in particular, there is a natural action called ``shift the rows''---i.e., change which row you consider the 0th row. In the case $n=2$, we note that a $3=(n+1)$-step iteration of this operation returns us to the original filtration we began with, but shifted by $[2]$. 

More generally, $s_n \cC$ admits a natural action by the free abelian group $\langle {\frac{1}{n+1}} \rangle \subset \QQ$ such that the action by $+1 \in \ZZ$ is the functor $X \mapsto X[2]$ of ``shift by 2.''\footnote{Warning: Note that $\langle {\frac{1}{n+1}} \rangle$ is abstractly isomorphic to the abelian group $\ZZ$; we use this notation to emphasize that the element $+1 \in \langle {\frac{1}{n+1}} \rangle$ always acts by $[2]$.} 

On the other hand, one has the usual face maps $\del_i: s_n \cC \to s_{n-1}\cC$ given by $(+1)$-periodically deleting the $i$th row and $i$th column of a diagram. There are also degeneracy maps $d_i: s_n \cC \to s_{n+1} \cC$ given by inserting a redundant $i$th row and $i$th column in $(+1)$-periodic fashion; these render $s_\bullet \cC$ into a simplicial object equipped with extra symmetries by the action of $\langle {\frac{1}{n+1}} \rangle$ on $s_n \cC$ for each $n$; these operations are compatible in a suitable way, and such a conglomerate is called a paracyclic object.
\end{example}

\subsection{Relation to K-theory for Fukaya categories}\label{section:fukaya motivation}
Our original motivation for studying the stack of paracycles was to be able to cohere the (discrete) ``stop-removal'' operations for Fukaya categories with the (continuous) rotational action on the moduli of stops on disks. Informally, it is this coherence that exhibits the paracyclic structure on our eventual model for s-dot of a Fukaya category.

Consider the stopped Weinstein domain $(D^2,\ff_n)$. Here, $D^2$ is a compact, two-dimensional disk equipped with a 1-form $\theta$ whose de Rham derivative is a symplectic form, and $\ff_n \subset \del D^2$ is the data of $n+1$ boundary marked points; we call these marked points {\em stops} following Sylvan~\cite{sylvan}.

Let $M$ be another stopped Weinstein domain.  (We have not explicitly notated the stop on $M$, for brevity of notation.) 
By duality, and by the Kunneth formula for partially wrapped categories of Weinsteins~\cite{gps, gps-2}, one has equivalences
	\eqn\label{eqn.An quivers}
	\fukaya(M \times D^2, \ff_n)
	\simeq
	\Rep(A_{n}, \fukaya(M)).
	\eqnd
If one knows the definition of the s-dot construction, this makes clear that the Fukaya categories
	$
	\fukaya(M \times D^2, \ff_n)
	$
form the $n$-simplices of the Waldhausen s-dot construction for $\fukaya(M)$.

But the equivalence~\eqref{eqn.An quivers} breaks certain symmetries. For example, one naturally has $n+1$ stop removal functors 
	\eqnn
	\fukaya(M \times D^2, \ff_n) \to \fukaya(M \times D^2, \ff_{n-1})
	\eqnd
by removing marked points of the disk. Moreover, each $\fukaya(M \times D^2, \ff_n)$ evidently has a rotational action (by rotating $D^2$) which, after a full rotation, has the effect of shifting the grading of a brane by degree 2.

The main goal of this paper and its follower(s?) is to show that these data can be cohered to form a paracyclic system equivalent to the s-dot construction (as sketched in Example~\ref{example:s-dot-construction}) of $\fukaya(M)$ . That is, one has a more geometric model for the K-theory of the Fukaya category respecting the symmetries of the s-dot construction.

Finally, let us mention that one can think of a Lagrangian cobordism $Q \subset M \times D^2$, with exactly $n+1$ ends, as an object of the Fukaya category of $(M \times D^2,\ff_n)$. This, combined with~\eqref{eqn.An quivers}, is a way to see how Lagrangian cobordisms with multiple ends give rise to filtered objects in the Fukaya category.\footnote{ That Lagrangian cobordisms lead to filtrations was observed using different methods by ~\cite{biran-cornea} in the monotone case and by~\cite{tanaka-exact} from a less Floer-theoretic motivation; indeed, the same statement is true for an $\infty$-category of Lagrangian cobordisms as defined in~\cite{nadler-tanaka}.} Thus Lagrangian cobordisms with multiple ends can give rise to {\em higher} $K$-theory groups of Fukaya categories. 

One can also {\em glue} Lagrangian cobordisms if their collarings agree along some ends. We will encounter this in Section~\ref{section:gluing}.

\begin{example}\label{example:fukaya-s-dot}
Let us connect back to the discussion preceding Remark~\ref{remark:broken-is-good-for-some-situations}---which picture is most convenient for capturing s-dot of Fukaya categories?

The ``unrefining'' picture is the most natural one for understanding stop-removal functors---removing a connected component of a stop induces a functor between the two Fukaya categories. In our examples, a ``stop'' is a marked point on the boundary of a 2-dimensional symplectic manifold with boundary, and removing a stop unrefines a stratification of a boundary contact manifold. Moreover, the face maps arising in the s-dot construction of a Fukaya category are precisely the stop-removal functors associated to removing boundary marked points from Liouville disks; cohering this with the rotation of disks and the movement of marked points is the difficult part of articulating paracyclic structures.

However, writing a sheaf on $\brokenpara$ to encode these structures---by definition---boils down to writing a constructible sheaf of $A_\infty$-categories for any space carrying a family of stopped Liouville disks. This is possible through a stratified version of the localization techniques of~\cite{oh-tanaka}, and is the subject of forthcoming work.

In contrast, attempts at encoding this paracyclic structure on the Ran space of the circle (which, while known to encode paracyclic structures, most naturally jives with the ``collision'' picture) have not succeeded as far as this author knows. Indeed, as two stop points of a Liouville disk's boundary circle collide, it is not at all clear what one should do to a Lagrangian brane whose boundary is ``trapped'' between two colliding stop points.
\end{example}

\subsection{Further motivation and speculation: Gluing}\label{section:gluing}
Let $\cC$ be a 2-periodic Fukaya category (obtained, for example, if one does not equip the branes with gradings). The cyclic structure of $s_\bullet \cC$ is then related to another construction: gluing Fukaya categories along skeleta of 2-dimensional symplectic manifolds. We note that the cyclic category is equivalent to its opposite, so the discussion below will be about both sheaves and cosheaves.

Specifically, let $X$ be a symplectic manifold of real dimension 2 with boundary, and let $\Gamma$ be a ribbon graph embedded in $X$. By virtue of the orientation on $X$, $\Gamma$ carries a co/sheaf of cyclic sets on it---the stalk at $x \in \Gamma$ is given by taking the connected components of $X \setminus \Gamma$ in a small neighborhood of $x$. (This can also be viewed as a sheaf because the cyclic category is self-dual.) Thus, any functor from the cyclic category to stable $\infty$-categories yields a co/sheaf of stable $\infty$-categories. When such a cyclic object is given by the s-dot construction of the (partially wrapped or) infinitesimally wrapped Fukaya category of $M$, global sections of this (co)sheaf yields the (partially or) infinitesimally wrapped Fukaya category of the symplectic manifold $X \times M$. (See also the introduction to~\cite{nadler-cyclic}.)

The previous paragraph suggests, of course, that the cyclic set perspective is a combinatorial proxy for the sheaf of actual Liouville domains given by the germs of the symplectic manifold along the skeleton $\Gamma$. 

I will now abuse the reader's indulgence by speculating a bit.

Nadler~\cite{nadler-arboreal}, Starkston~\cite{starkston-arboreal}, and Eliashberg-Nadler-Starkston have pursued a program for constructing a combinatorially understandable list of germs of Liouville structures at the point of a ``generic'' skeleton. It seems that another tantalizing but difficult trajectory is the construction of the stack of such things, just as we have witnessed a stack of broken cycles as a proxy for a stack of Liouville disks in the present work. The idea is that any Lagrangian skeleton (equipped with yet-unarticulated-by-the-community, but appropriate, decorations) should tautologically carry a family of stopped Liouville domains called the germ of a stopped Liouville domain at a point---in other words, any skeleton has a tautological map to the stack of stopped Liouville domains; hence any sheaf on the stack of stopped Liouville domains gives rise to a sheaf on the skeleton.

But not any sheaf will suffice for symplectic purposes. To see this, notice the compatibility between the appearance of $(k+1)$-valent broken cycles in this paper (i.e., cycles with $k+1$ $\RR$-fixed points) and the ``generic'' picture of skeleta---one can perturb any $(k+1 \geq 4)$-valent singularity into a gluing of 3-valent and 2-valent singularities. To have a well-defined Fukaya category, this resolution must not change what the global sections of the sheaf are! The ability to resolve a skeleton this way is related to the 2-Segal structure of the s-dot construction, as we now explain. This was already identified in~\cite{dyckerhoff-kapranov-triangulated-surfaces} as essential for having a well-defined global section of Fukaya categories.

This 2-Segal structure can be expressed through symplectic geometry as follows: any Lagrangian cobordism with $k+1$ ends can be obtained by (geometrically) gluing together Lagrangians with at most $2+1$ ends and (algebraically) taking mapping cones between them. A 2-Segal space's multiplication is multi-valued; a choice of a Lagrangian cobordism specifies a particular value, and the ``gluing cobordisms along ends'' operation is composition of the multiplicative structure of a 2-Segal space. That this multiplicative operation is coherent is the 2-Segal condition.\footnote{Put another way, the colored planar operad structure induced by the s-dot construction seems to be compatible with the colored planar operad structure induced by concatenating cobordisms.} Thus, in some sense, while one may have a sheaf of categories for any oriented (or graded) skeleton by virtue of the sheaf of cyclic (or paracyclic) sets, that this sheaf is well-defined regardless of perturbations of the skeleton is encoded in the 2-Segal structure of the 2-dot construction. Both the (para)cyclic and 2-Segal structures are essential. 

In higher dimensions, there may be little hope of purely combinatorially articulating stacks of Liouville boundaries and their breaks, but many key ingredients seem to be present: An $\RR$-action given by the Reeb flow, a ``directedness'' condition which appears to be a proxy for the framing of a Legendrian submanifold (in the sense of Weinstein handle attachments; confusingly, this structure manifests in a single piece of data---the $\RR$-action---in the low-dimensional setting), and the operation of gluing two Liouville submanifolds along a Weinstein handle seems to encode operations that generalize the multiplicative structures of 2-Segal spaces. (Roughly, if one restricts to a particular class of Liouville singularities, one expects that these operations assemble to articulate exactly the $2n$-Segal space structure allowing one to decompose an $2n$-dimensional polyhedral complex however one wants. Of course, these $2n$-Segal spaces should not only be simplicial, but be lifted to allow for the tangential structures present on our Liouville manifolds, just as we need cyclic or paracyclic structures in the 2-dimensional case).

\subsection{Informal description of the stacks and their sheaves}
Let us explain $\brokenpara$. Just as the stack $BG$ classifies $G$-bundles, $\brokenpara$ classifies families of {\em broken paracycles}---these are fiber bundles with fiber $(-\infty,\infty)$, equipped with a suitable $\ZZ \times \RR$-action. Informally, one may think of such a family as arising from a family of marked points on an oriented circle, equipped with a lift to the real line. The $\ZZ$-action on $(-\infty,\infty)$ encodes the descent to the circle, while a suitable $\RR$-action simultaneously encodes the orientation of the circle and its marked points (which are the $\RR$-fixed points). 

Likewise, $\brokencyc$ classifies families of {\em broken cycles}. These are fiber bundles with fiber $S^1$, equipped with an $\RR$-action that orients the circles and whose fixed point sets are non-empty but discrete. 

\begin{example}
Any family of broken paracycles yields a family of broken cycles by passing to the $\ZZ$-quotient, and any family of broken cycles locally lifts to a family of broken paracycles.
\end{example}

\begin{example}
As we have mentioned, families of broken cycles also arise from families of stopped Liouville disks. (Such are the data necessary to define, for each fiber, a 2-periodic partially wrapped Fukaya category of a disk.)

If each fiber of this family is coherently endowed with a trivialization of $\det^2$ of the tangent bundle, one obtains a family of broken paracycles. (Such are the data necessary to define, for each fiber, a $\ZZ$-graded partially wrapped Fukaya category of a disk.)
\end{example}

\begin{example}
A family of broken cycles over a point is simply a broken cycle---i.e., a circle equipped with an orienting $\RR$-action with at least one $\RR$-fixed point. Up to $\RR$-equivariant homeomorphism, a broken cycle is classified by the number of its $\RR$-fixed points. A broken cycle with $(n+1)$ fixed points has automorphism group 
	\eqnn
	\ZZ/(n+1)\ZZ \times \RR^{n+1}.
	\eqnd
(This indexing convention is to agree with the $n$ in ``$n$-simplex.'')

Thus, an informal description of $\brokencyc$ is as follows: $\brokencyc$ is stratified by non-negative integers $n \geq 0$, and the $n$th stratum is equivalent to the stack $B(\ZZ/(n+1)\ZZ \times \RR^{n+1})$. This means $\brokencyc$ is a stack of dimension -1, with a codimension $k$ stratum for every $k\geq 0$.
\end{example}

\begin{example}
Likewise, one may describe $\brokenpara$ as a stack with strata of dimension $-(n+1)$, where the $n$th stratum is equivalent to the stack $B(\langle{\frac{1}{n+1}}\rangle \times \RR^{n+1})$.
\end{example}

The lefthand sides of our main theorems concern sheaves on the stacks $\brokencyc$ and $\brokenpara$. To give some idea of what a sheaf---on, say, $\brokenpara$---looks like, consider a topological space $S$ equipped with a family of broken paracycles. By definition of $\brokenpara$, this family exhibits a map $g: S \to \brokenpara$, and we may pull back any sheaf $\cF$ on $\brokenpara$ along $g$. The result is a constructible sheaf on $S$, where $S$ is stratified by the isomorphism type of the fibers of the family classified by $g$. The claim is that the $\infty$-category of all sheaves---i.e., the $\infty$-category of all ways to coherently endow pairs $(S,g)$ with a constructible sheaf as above---is equivalent to the $\infty$-category of semiparacyclic objects. (And in particular, the seemingly large coherence data can be succinctly encoded in the combinatorics of the paracyclic category.) Likewise for the cyclic case.

\subsection{Proof outline}
Our proof proceeds by the same general strategy as in~\cite{broken}. 

We first attack the paracyclic case (Theorem~\ref{theorem. main}).
The starting point is to exhibit a cover of $\brokenpara$. This is done by articulating an analogue of a trivializing section of a $G$-bundle, and this analogue is what we have called {\em $I$-sections}, where $I$ is a suitable preorder with $\ZZ$-action. We will see that the stack representing families equipped with $I$-sections are representable by stratified spaces. This allows us to construct $\brokenpara$ as a colimit of spaces (in the $\infty$-category of stacks), hence it allows us to write the $\infty$-category of sheaves as a limit of $\infty$-categories of constructible sheaves on the spaces appearing in our colimit diagram. Throughout, discrete, combinatorially defined categories allow us to witness adjunctions that enable the passage from one (co)limit diagram to another. 

The result for the cyclic case---i.e., for $\brokencyc$---follows the same strategy, and in fact even recycles many of the same players because of the evident cover $\brokenpara \to \brokencyc$. 

We have reviewed the basics of these techniques in Section~\ref{section:categorical-review} for the reader's convenience. We have also included a discussion of constructible sheaves and exit path $\infty$-categories in Section~\ref{section:constructible-sheaves}. 

\subsection{Conventions and recollections}

\begin{convention}[Addition and subtraction of infinities]\label{convention:adding-subtracting-infinity}
In this paper we will make use of addition and subtraction of elements in $[-\infty,\infty]$, so let us be explicit about our conventions.

Addition is an operation
	\eqnn
	+ :
	[-\infty, \infty] \times 
	[-\infty, \infty] \setminus\{(x,-x) , x = \pm \infty \}
	\to
	[-\infty,\infty].
	\eqnd
That is, $a + b$ is defined so long as the pair $(a,b)$ does not equal $(\infty,-\infty)$ nor $(-\infty,\infty)$. The convention
	\eqnn
	\infty + \infty = \infty,
	\qquad
	-\infty + -\infty = -\infty,
	\qquad
	t + \pm \infty = \pm \infty + t = \pm \infty
	\eqnd
(for $t$ finite) renders $+$ a continuous, commutative operation where defined.

Subtraction is an operation
	\eqnn
	- :
	[-\infty,\infty] \times [-\infty,\infty] \setminus \{ (x,x) , x = \pm \infty\}
	\to
	[-\infty,\infty].
	\eqnd
That is, $a-b$ is defined so long as the pair $(a,b)$ does not satisfy $a = b=\infty$, nor $a=b=-\infty$. The convention
	\eqnn
	\infty - (-\infty) = \infty,
	\qquad
	-\infty - \infty = - \infty,
	\eqnd
	\eqnn
	t - \pm \infty = \mp \infty,
	\qquad
	\pm \infty - t = \pm \infty,
	\eqnd
(for $t$ finite) makes subtraction continuous and skew-commutative, meaning $a-b = -(b-a)$.
\end{convention}

\begin{remark}\label{remark:equations-for-infinity-additions}
Because $[\infty,\infty]$ is not a group under addition, the equations
	\eqnn
	a = b+c,
	\qquad
	b = a-c,
	\qquad
	c = a- b
	\eqnd
have non-equivalent solution sets. In all the situations we deal with in this paper, it will be natural to consider the union of the solution sets to each equation. 
\end{remark}

\begin{convention}[Spaces and Kan complexes]
To a seasoned homotopy theorist, the terms ``topological space'' and ``Kan complex'' become interchangeable. We will do our best to {\em not} make use of this convenience, to distinguish
\enum
\item Situations in which we care about topological spaces only up to homeomorphisms. In such situations, we will speak of ``topological spaces,'' and the notation $\Top$ will denote the category of topological spaces. Its morphism sets are discrete---for any $S, T \in \Top$, $\hom(S,T)$ is the set of continuous maps $S \to T$. Two objects of $\Top$ are equivalent if and only if they are homeomorphic.
\item Situations in which we care about spaces only up to weak homotopy equivalence. In such situations, we will speak of ``Kan complexes,'' and the notation $\Kan$ will denote the $\infty$-category of Kan complexes. One can informally think of the objects of $\Kan$ as topological spaces, but now the morphisms $\hom(S,T)$ ``know'' that there is a path between two continuous maps $f_0,f_1: S \to T$ when there is a homotopy between them. Moreover, two weakly homotopy equivalent objects are equivalent objects in $\Kan$, in contrast to $\Top$.
\enumd
It is our hope that distinguishing the terms ``topological space'' and ``Kan complex'' will make it clearer to the reader when, if ever, we are distinguishing between $\Top$ and $\Kan$. 
\end{convention}

\begin{remark}
In other works, $\Kan$ is what some writers (including myself) may refer to as the ``$\infty$-category of spaces.''  However, we will never again utter this phrase in this paper. This is because here, $\Kan$ will mainly appear as a tool to organize groupoids and $\infty$-groupoids into the language of $\infty$-categories, and not to make any meaningful foray into the homotopy theory of spaces.

(Indeed, a Kan complex is also naturally thought of as an $\infty$-category whose morphisms are all invertible, and two Kan complexes are equivalent if and only if they are equivalent as $\infty$-categories.)
\end{remark}

For our purposes, it will be most convenient to model the collection of stacks as an $\infty$-category, as opposed to utilizing the language of 2-categories. 

Let us recall some basic facts about stacks and $\infty$-categories for the reader. For a more thorough review of sheaves on stacks, we refer the reader to~\cite{broken}.

\begin{convention}[Stacks]
In this paper, when we speak of stacks, we mean stacks over the usual site of topological spaces, with the usual notion of open cover. Note in particular that our topological spaces $S$ need not be Hausdorff nor compactly generated.

The $\infty$-category of stacks can be modeled concretely as follows: A stack $\sB$ is the data of a category $\Pt(\sB)$ equipped with a right fibration $\Pt(\sB) \to \Top$, whose associated presheaf $\Top^{\op} \to \Kan$ is a sheaf. (This means that for any open cover, the \v{C}ech nerve construction exhibits the global sections as a homotopy limit of the local sections.)

In particular, a morphism in this $\infty$-category from $\sB_0 \to \sB_1$ is the data of a functor $F: \Pt(\sB_0) \to \Pt(\sB_1)$ together with a natural isomorphism $\eta$ making the triangle
	\eqnn
	\xymatrix{
	\Pt(\sB_0) \ar[rr]^F \ar[dr]^p && \Pt(\sB_1) \ar[dl]_{p'} \\
	& \Top 
	}
	\eqnd
commute. That is, $\eta$ is a natural isomorphism from $p' \circ F$ to $p$. And a ``commutative triangle'' of stacks consists of a triplet of morphisms
	\eqnn
	(F_{ij},\eta_{ij}): \Pt(\sB_i) \to \Pt(\sB_j),
	\qquad
	0 \leq i < j \leq 2
	\eqnd
together with a natural isomorphism cohering the composite $(F_{12},\eta_{12}) \circ (F_{01},\eta_{01})$ with $(F_{02},\eta_{02})$.
\end{convention}

\begin{remark}
Though we assemble stacks into an $\infty$-category, stacks also form an example of what some call a $(2,1)$-category; which is to say that the only morphisms are 1-morphisms and 2-morphisms, and it so happens that every 2-morphism is an isomorphism.
\end{remark}

\begin{remark}
Note we have used the fact that a right fibration over $\Top$ is equivalent to the data of a functor from $\Top^{\op}$ to the $\infty$-category of Kan complexes. (This is an infinity-categorical version of the classical Grothendieck construction for categories fibered in groupoids.) The interested reader may consult the straightening and unstraightening constructions as outlined in~\cite{htt}.

Informally, given a stack $\Pt(\cB) \to \Top$ and its associated functor $\cF: \Top^{\op} \to \Kan$, the $\cF$ assigns to $S$ the category of objects classified by $\cB$; so for example, when $\cB = BG$, $\cF(S)$ is the category of $G$-bundles over $S$. 
\end{remark}

\begin{remark}
Note that because $\Pt(\sB)$ is assumed to be an ordinary category, rather than an arbitrary $\infty$-category, the fibers of the right fibration $\Pt(\sB) \to \Top$ (which are all Kan complexes by definition of right fibration) only have homotopy groups in degrees 0 and 1. That is, its fibers are equivalent to groupoids in the usual sense, as opposed to arbitrary $\infty$-groupoids.
\end{remark}

\begin{remark}
Finally, we note that any map of right fibrations over $\Top$ is automatically a map respecting Cartesian edges.
\end{remark}

\clearpage

\clearpage 
\section{The paracyclic category}

The paracyclic category was first introduced in~\cite{connes-cyclique} and the name paracyclic was introduced by~\cite{getzler-jones-crossed}. 
We follow the notation from~\cite{lurie-waldhausen}; the reader may also find a discussion in~\cite{dyckerhoff-kapranov-crossed}, where the paracyclic category is denoted $\Lambda_\infty$ rather than our $\Deltapara$.
	
\begin{definition}\label{defn. parasimplex}
Recall that a {\em parasimplex} is the data 
		\eqnn
		(I,\mu)
		\eqnd
of a totally ordered set $I$ equipped with an order-preserving $\ZZ$-action $\mu$ satisfying two conditions: 
		\enum
			\item For any two elements $i \leq i'$, the set of $i''$ satisfying $i \leq i'' \leq i'$ is finite, and 
			\item For every $i \in I$, we have $i < i +1$. (That is, for any $i$, the induced map $\mu(-,i): \ZZ \to I$ is order-preserving.)
		\enumd
A map of parasimplices is a weakly order-preserving, $\ZZ$-equivariant map. This means a map is a function $f: I \to J$ such that $i \leq i' \implies f(i) \leq f(i')$ and $f(i+1) = f(i)+1$.
\end{definition}
	
	\begin{remark}
	A parasimplex may informally be thought of as a cyclically ordered set lifted to an ordered set. The geometrically minded reader may think of a finite subset of an oriented circle, equipped with a lift to the universal cover (with the $\ZZ$-action given by deck transformations). 
		\end{remark}

	\begin{notation}\label{notation. paracyclic category}
	We let $\Deltapara$ denote the category of parasimplices. We call it the {\em paracyclic} category.
	\end{notation}

\begin{defn}
If $\cC$ is an $\infty$-category, a paracyclic object of $\cC$ is a functor $\Deltapara^{\op} \to \cC$.

Let $\Deltaparainj \subset \Deltapara$ denote the subcategory consisting of injective maps. A {\em semiparacyclic object} is a functor $(\Deltaparainj)^{\op} \to \cC$.
\end{defn}

\begin{remark}
The difference between $\Deltaparainj$ and $\Deltapara$ is analogous to the difference between $\Delta^{\inj}$ and $\Delta$. The ``meat'' of the algebraic structure is often in the face maps, so semiparacyclic objects are rich objects. Moreover, in the examples we have in mind, the missing degeneracy maps can be attached formally from the face maps---for example, given only the face maps of the s-dot construction, degeneracy maps can be recovered as adjoints to the  face maps.
\end{remark}

	\begin{example}\label{example.paracyclics are simplicial}
		Let $A$ denote a finite, non-empty, linearly ordered set. Then $\ZZ \times A$ is a parasimplex when endowed with the dictionary order: $(n,a) \leq (n',a')$ if and only if (i) $n < n'$, or (ii) $n=n'$ and $a \leq a'$. The assignment $A \mapsto \ZZ \times A$ defines a functor 
		\eqn\label{eqn:functor-Delta-Deltarapara-times-ZZ}
		\ZZ \times - : \Delta \to \Deltapara
		\eqnd.
	In this way, any paracyclic object restricts to a simplicial object.
	\end{example}
	
	\begin{notation}[$i^{++}$]\label{notation:successor++}
	By Definition~\ref{defn. parasimplex}, any element $i$ of a parasimplex $I$ admits a unique successor---this is the least $i'$ such that $i ' > i$. We denote by
		\eqnn
		i^{++}
		\eqnd
	the successor to $i$. (This borrows from computer science notation.)
	\end{notation}
	
	\begin{notation}[$n_I$]\label{notation:n_I}
	Given any parasimplex $I$, choose an element $i_0 \in I$. Then there is a unique integer 
		\eqnn
		n_I \geq 0
		\eqnd
	for which the collection of elements
		\eqnn
		\{ \text{ $i$ such that $i_0 \leq i < i_0 +1$} \} \subset I
		\eqnd
	is---with the order inherited from $I$---isomorphic to the linear poset $[n_I]$.
	
	Note that $n_I$ is independent of the choice of $i_0$.
	\end{notation}
	
	\begin{remark}
	The functor~\eqref{eqn:functor-Delta-Deltarapara-times-ZZ} is essentially surjective, as any $I$ is isomorphic to the parasimplex $\ZZ \times [n_I]$. (See Notation~\ref{notation:n_I}.)
	\end{remark}

	\begin{remark}
		For any two parasimplices $I_0$ and $I_1$, we have that $\hom_{\Deltapara}(I_0,I_1)$ has a natural $\ZZ$-action. Composition is compatible with this action, so we obtain a category $\Deltacyc$ with the same objects as $\Deltapara$, but with $\hom_{\Deltacyc} = \hom_{\Deltapara}/\ZZ$. We call $\Deltacyc$ the {\em cyclic} category. See Definition~\ref{defn:cyclic-category}.
	\end{remark}
	
	\begin{remark}
		The category of $\ZZ$-torsors is symmetric monoidal---in fact, it is equivalent (as a symmetric monoidal category) to the category whose nerve is the simplicial group $B\ZZ$. The category of $\ZZ$-torsors acts on $\Deltapara$, and hence on the category of paracyclic objects of $\cC$. One can identify the fixed points of the $B\ZZ$-action with cyclic objects; moreover, the functor $\Delta \to \Deltapara$ from Example~\ref{example.paracyclics are simplicial} is final. Hence we conclude that the geometric realization of a cyclic object is endowed with an $S^1$-action.
	\end{remark}
	
\begin{construction}
Let $I$ be a parasimplex. we let
	\eqnn
	\DD I
	:=
	\hom_{\surj}(I, [1])
	\eqnd
denote the collection of weakly order-preserving maps $I \to [1]$ that are surjective. We endow $\DD I$ with an order and a $\ZZ$-action as follows:
\enum
	\item For $e,e' \in \DD I$, we declare $e \leq e'$ if and only if $e(i) \leq e'(i)$ for all $i \in I$.
	\item $(e+1)(i) = e(i - 1)$.
\enumd

Note that any map of parasimplices $I \to I'$ induces a map of parasimplices $\DD I' \to \DD I$. 
\end{construction}

\begin{prop}
The functor
	\eqnn
	\DD: \Deltapara^{\op} \to \Deltapara,
	\qquad
	I \mapsto \DD I
	\eqnd
is an equivalence of categories.
\end{prop}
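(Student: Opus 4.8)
The plan is to obtain the equivalence from a biduality isomorphism, in the spirit that $\DD I$ is the ``set of gaps'' of $I$ and the gaps of the gaps are the original points. Concretely: the same formula $I\mapsto\hom_{\surj}(I,[1])$ also defines a functor $\DD^{\op}\colon\Deltapara\to\Deltapara^{\op}$, and I would produce a natural isomorphism $\epsilon\colon\id_{\Deltapara}\Rightarrow\DD\circ\DD^{\op}$, i.e.\ a coherent family of isomorphisms $\epsilon_I\colon I\xrightarrow{\sim}\DD\DD I$ of parasimplices. Applying $(-)^{\op}$ then gives $\DD^{\op}\circ\DD\simeq\id_{\Deltapara^{\op}}$ for free, so $\DD$ is an equivalence with quasi-inverse $\DD^{\op}$. (One could instead verify full faithfulness and essential surjectivity of $\DD$ directly---essential surjectivity is immediate since every parasimplex is isomorphic to $\ZZ\times[n_I]$, by Notation~\ref{notation:n_I}, and $\DD(\ZZ\times[n])\cong\ZZ\times[n]$---but the biduality route is cleaner and exhibits the inverse.) Two preliminaries must be in place: that $\DD I$ is again a parasimplex (a routine check of Definition~\ref{defn. parasimplex}: interval-finiteness and the axiom $e<e+1$ both reduce to the fact that $\ZZ\times[n_I]$ has exactly $n_I+1$ cuts per $\ZZ$-period, so $\DD I\cong\ZZ\times[n_I]$ via $e\mapsto\min e^{-1}(1)$), and that $\DD$ is genuinely functorial on $\Deltapara^{\op}$, for which the one point to isolate is that any map of parasimplices $f\colon I\to I'$ has cofinal and coinitial image---because $f(i+n)=f(i)+n\to\pm\infty$---so that precomposition $e'\mapsto e'\circ f$ carries \emph{surjective} cuts of $I'$ to surjective cuts of $I$.

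The transformation $\epsilon$ is evaluation: $\epsilon_I$ sends $i\in I$ to the functional $\ev_i\colon\DD I\to[1]$, $e\mapsto e(i)$. I would check, in turn: (i) that $\ev_i$ lands in $\DD\DD I$, i.e.\ is a surjective weakly order-preserving map---monotonicity is immediate from the definition of the order on $\DD I$, and surjectivity holds because $I$ has neither a maximum nor a minimum, so there are surjective cuts of $I$ both strictly to the left of $i$ and at-or-to the right of $i$; (ii) naturality---for $f\colon I\to I'$ both $\epsilon_{I'}\circ f$ and $(\DD\DD f)\circ\epsilon_I$ send $i$ to the functional $e'\mapsto e'(f(i))$; (iii) that each $\epsilon_I$ is a morphism of parasimplices: order-preservation holds because every $e\in\DD I$ is itself order-preserving, and $\ZZ$-equivariance comes out of the chase $(\ev_i+1)(e)=\ev_i(e-1)=(e-1)(i)=e(i+1)=\ev_{i+1}(e)$, using the $\ZZ$-action conventions of the Construction on $\DD(-)$ and on the double dual, whence $\epsilon_I(i+1)=\epsilon_I(i)+1$.

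The heart of the matter is that each $\epsilon_I$ is bijective; being then a bijective morphism of parasimplices that also reflects the order, its inverse is again such a morphism, so $\epsilon_I$ is an isomorphism in $\Deltapara$. Injectivity and reflection of the order follow from interval-finiteness: if $i<i'$ then, writing $i^{++}$ for the successor of $i$ (Notation~\ref{notation:successor++}), the surjective cut $e$ with $e^{-1}(1)=\{\,j:j\ge i^{++}\,\}$ has $e(i)=0$ and $e(i')=1$, which distinguishes $\ev_i$ from $\ev_{i'}$ and records in which direction they may be comparable. For surjectivity I would write the inverse down: given a surjective $\phi\colon\DD I\to[1]$, the fibre $\phi^{-1}(1)$ is a nonempty final segment of $\DD I$, bounded below by any element of $\phi^{-1}(0)$; since $\DD I\cong\ZZ\times[n_I]$ is order-isomorphic to $\ZZ$, this fibre has a least element $e_{\min}$, and $e_{\min}^{-1}(1)\subseteq I$ likewise has a least element $i_\phi$; a short comparison of final segments then gives $\phi=\ev_{i_\phi}$. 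This completes the verification that $\epsilon$ is a natural isomorphism.

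I expect the main obstacle to be bookkeeping rather than ideas: making all the order, $\ZZ$-action, and $(-)^{\op}$ conventions cohere so that $\epsilon_I$ is demonstrably a morphism \emph{of parasimplices} (not merely of ordered sets), and so that $\DD\circ\DD^{\op}$ and $\DD^{\op}\circ\DD$ are identified with the honest identity functors and not with ones twisted by an order reversal or a sign on the $\ZZ$-action. My safeguard would be to test $\epsilon$ and its candidate inverse against the normal form $I\cong\ZZ\times[n_I]$ of Notation~\ref{notation:n_I} and the resulting concrete description of $\DD(\ZZ\times[n])$, in which the $n+1$ cuts per period correspond to the $n+1$ ``slots'' of $[n]$ and every map in sight becomes fully explicit.
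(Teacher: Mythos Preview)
Your proposal is correct and follows essentially the same approach as the paper: the paper also exhibits the natural isomorphism $I \to \DD\DD I$ via the evaluation map $x \mapsto (e \mapsto e(x))$, and simply declares the verification ``straightforward.'' Your write-up supplies the details the paper omits (well-definedness, naturality, $\ZZ$-equivariance, bijectivity), and is more careful about distinguishing $\DD$ from $\DD^{\op}$, but the underlying argument is the same biduality.
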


\begin{proof}
It suffices to exhibit a natural isomorphism between the identity functor and $\DD \circ \DD$.  To do so, define a map
	\eqnn
	I \to \hom_{\surj}( \hom_{\surj}(I, [1]), [1])),
	\qquad
	x \mapsto x^\vee
	\eqnd
where
	\eqnn
	x^\vee(e) = e(x).
	\eqnd
It is straightforward to check that this is a natural isomorphism of parasimplices. 
\end{proof}

\begin{notation}\label{notation:Deltainj}\label{notation:Deltasurj}
Let $\Deltaparainj$ and $\Deltaparasurj$ denote the subcategories of $\Deltapara$ consisting only of injections and surjections, respectively. Then 
\end{notation}

\begin{cor}\label{cor. Deltainj and Deltasurj}
$\DD$ induces an equivalence of categories
	\eqnn
	(\Deltaparainj)^{\op} \simeq \Deltaparasurj.
	\eqnd
\end{cor}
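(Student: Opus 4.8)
The plan is to deduce the corollary directly from the preceding Proposition, which establishes that $\DD\colon \Deltapara^{\op} \to \Deltapara$ is an equivalence of categories. The key observation is that this equivalence interchanges injections and surjections: if $f\colon I \to I'$ is a map of parasimplices, then $\DD f\colon \DD I' \to \DD I$ is injective if and only if $f$ is surjective, and surjective if and only if $f$ is injective. So the first step is to prove this matching of classes of morphisms.

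For the statement ``$f$ surjective $\implies$ $\DD f$ injective,'' I would argue as follows. Recall $\DD f(e) = e \circ f$ for $e \in \hom_{\surj}(I',[1])$. Suppose $e_1 \circ f = e_2 \circ f$ as elements of $\DD I$; since $f$ is surjective, every $i' \in I'$ is of the form $f(i)$, so $e_1(i') = e_1(f(i)) = e_2(f(i)) = e_2(i')$, giving $e_1 = e_2$. For ``$f$ injective $\implies$ $\DD f$ surjective,'' I need that every surjection $e\colon I \to [1]$ arises as $e' \circ f$ for some surjection $e'\colon I' \to [1]$; here I would use the structure of parasimplices: an injective map of parasimplices $f\colon I \to I'$ identifies $I$ with a $\ZZ$-stable subset of $I'$, and a surjection $I \to [1]$ is determined by a ``$\ZZ$-periodic cut point,'' which extends (non-uniquely) to a periodic cut point of $I'$ refining the given one; checking this extended cut is still surjective onto $[1]$ and restricts correctly is the routine verification. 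The converse implications (injective $\DD f \implies$ surjective $f$, etc.) then follow because $\DD$ is an equivalence with $\DD \circ \DD \simeq \id$: if $\DD f$ were injective but $f$ not surjective, then by the first two implications applied to $\DD f$ we'd reach a contradiction after applying $\DD$ again. Alternatively one can phrase all four implications symmetrically using $\DD\DD \cong \id$ once two of them are established.

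Given this matching, the corollary is formal: restrict the equivalence $\DD$ to the subcategory $\Deltaparainj \subset \Deltapara$. Its image under $\DD$ (which lands in $\Deltapara^{\op}$ in the source, i.e. we view $\DD$ as a functor out of $\Deltapara^{\op}$) consists exactly of the surjections, by the morphism-matching just established, together with the fact that $\DD$ is bijective on objects (indeed $\DD$ is an equivalence, and one checks $n_{\DD I} = n_I$ so it even preserves isomorphism types). Since $\DD$ is fully faithful, its restriction $(\Deltaparainj)^{\op} \to \Deltaparasurj$ is fully faithful; since every object of $\Deltaparasurj$ is isomorphic (in fact equal, up to the object bijection) to $\DD I$ for some $I$, it is essentially surjective. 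Hence it is an equivalence of categories.

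The main obstacle is the second implication in the first step---showing $f$ injective forces $\DD f$ surjective---because it requires actually extending a periodic cut of a sub-parasimplex to the ambient one and checking surjectivity is preserved, rather than being a one-line diagram chase. Everything else is either immediate from the definitions or formal consequences of $\DD$ being an established equivalence with $\DD^2 \simeq \id$. I would be careful to note that $\Deltaparasurj$ as defined is a subcategory of $\Deltapara$, not of $\Deltapara^{\op}$, so the equivalence is genuinely between $(\Deltaparainj)^{\op}$ and $\Deltaparasurj$ and not merely between $\Deltaparainj$ and $(\Deltaparasurj)^{\op}$; the $\op$ is exactly accounting for the contravariance of $\DD$.
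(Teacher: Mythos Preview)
Your proposal is correct. The paper itself gives no proof for this corollary, treating it as immediate from the preceding Proposition that $\DD\colon \Deltapara^{\op}\to\Deltapara$ is an equivalence; your argument is exactly the natural way to fill in the omitted details, namely verifying that $\DD$ exchanges injections and surjections and then restricting the equivalence.

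One small remark: the paper's subsequent Remark (the model $f^\vee(x')=\max\{x\mid f(x)\le x'\}$ with $f^\vee\circ f=\id$ for $f$ injective) gives a slick alternative to your ``extend the cut'' step for showing that injections go to surjections---having a section forces $f^\vee$ to be surjective immediately. Your direct argument via cut points is equally valid and arguably more transparent; the $f^\vee$ formula just packages it more compactly.
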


\nc{\TwArr}{\mathsf{TwArr}}

\begin{remark}
If $f: I \to I'$ is an injection, we have a commutative diagram
	\eqnn
	\xymatrix{
	I \ar[d]^f \ar[r] & \DD I \\
	I' \ar[r] & \DD I' \ar[u]^{f^*}	
	}
	\eqnd
where $I \to \DD I$ is the function taking $x$ to the function $e$ such that $\max e^{-1}(0) = x$. Thus we have a functor
	\eqnn
	(\Deltaparainj)^{\op}
	\to
	\TwArr(\Deltapara)	
	\eqnd
to the twisted arrow category of $\Deltapara$.
\end{remark}

\begin{remark}
The topologically minded reader may enjoy thinking of $\DD$ as follows. A parasimplex $I$ defines a directed graph whose vertices are elements of $I$ and for which there exists an edge between two consecutive elements of $I$. $\DD I$ may then be thought of as the collection of edges of $I$. 
\end{remark}

\begin{remark}
Yet another model of $\DD$ is as follows: Every object $I$ is sent to itself, while any morphism $f: I \to I'$ is sent to a map
	\eqnn
	f^\vee : I' \to I,
	\qquad
	f^\vee(x')
	=
	\max \{x \, | \, f(x) \leq x'\}.
	\eqnd
When $f$ is an injection, we have a retraction:
	\eqnn
	f^\vee \circ f = \id.
	\eqnd
\end{remark}

For the reader's edification, we conclude with some remarks about how these structures show up in studying the s-dot construction and $K$-theory. These are not essential to the content of the paper, and these ideas are also found in~\cite{lurie-waldhausen}.

	\begin{remark}
		Let $\cC = \Kan$, and let $s_\bullet \cD$ be the s-dot construction of some stable $\infty$-category $\cD$. If $\cD$ is 2-periodic, noting that the $\ZZ$-action on $s_\bullet \cD$ shifts objects $X \mapsto X[2]$, we conclude that $s_\bullet \cD$ is equipped with the structure of a homotopy fixed point of the $B\ZZ$-action on paracyclic spaces. 
	\end{remark}
	
	\begin{remark}
	In the non-periodic case, this structure is trivial at the level of K-theory: Informally, the generator $1 \in \ZZ$ acts through the trivial $S^1 \simeq B\ZZ$ action by sending a K-theory class $X$ to the K-theory class $X[2]$. (These are the same K-theory class.) Lurie showed in~\cite{lurie-waldhausen} that this structure is compatible with a map $S^1\simeq U(1)$ into $BPic(\SS)$ (via Bott periodicity and the complex $J$-homomorphism), where the associated $\ZZ$-action on any stable $\infty$-category sends $X \mapsto X[2]$.
	\end{remark}
	
	\begin{remark}
	In the 2-periodic case, it follows that the $K$-theory Kan complex $K(\cD)$ is equipped with a loop of automorphisms. Informally, this is because we see ``two reasons'' the map $\ZZ \to \aut(K(\cC))$ is trivial; one because $X$ is always equivalent to $X[2]$ in $K$-theory, and another because of the 2-periodic structure on $\cC$. Thus we have a diagram
		\eqnn
		\xymatrix{
		\ZZ \ar[r] \ar[d] & \ast \ar[d] \\
		\ast \ar[r] & \aut(K(\cC))
		}
		\eqnd
	hence a homomorphism $\ZZ \to \Omega \aut(K(\cC))$.
	\end{remark}

\clearpage
\section{Broken paracycles}

\begin{notation}
Below, we will write $(-\infty,\infty)$ for the usual real line as a topological space, and we will use the symbol $\RR$ to denote the same space considered as a topological group (with addition).
\end{notation}

We recall the following from~\cite{broken}:

\begin{defn}[Broken line]\label{defn. broken line}
A {\em broken line} is the data of a topological space together with a continuous $\RR$-action such that 
\enum
	\item The fixed point set of $\RR$ is discrete, and 
	\item There exists a homeomorphism $\phi$ to $[0,1]$ such that $\phi(x+t) \geq \phi(x)$ for all $t \geq 0$. (Here, $\geq$ is the usual order on $[0,1]$.)
\enumd
(In particular, any broken line is abstractly homeomorphic to a compact interval.)
\end{defn}

In~\cite{broken} we introduced a notion of a family of broken lines to encode representations of the category $\Delta^{\surj}$ as sheaves on the stack $\broken$ of broken lines. 

In what follows we define a variation on this theme, introducing the notion of broken paracycles. Roughly speaking, one obtains a  broken paracycle by concatenating a broken line countably many times. However, not all families of broken paracycles are obtained by concatenating families of broken lines.

\begin{notation}
Let $L$ be any space with an $\RR$-action. We let $L^\RR$ denote the fixed point set and we denote the complement by
	\eqnn
	L^\circ = L \setminus L^\RR.
	\eqnd
\end{notation}

\begin{prop}\label{prop. free Z action}
Let $L$ be a topological space abstractly homeomorphic to $(-\infty,\infty)$, and fix a continuous $\ZZ$-action on $L$. If the action has no fixed points, then there exists an equivariant homeomorphism from $L$ to $(-\infty,\infty)$ equipped with the standard action of $\ZZ$. 
\end{prop}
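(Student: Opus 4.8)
The plan is to show that any free continuous $\ZZ$-action on a line is equivariantly homeomorphic to the standard one by explicitly constructing a fundamental domain and using it to build the homeomorphism. First I would let $\sigma: L \to L$ denote the homeomorphism given by the action of $1 \in \ZZ$. Since $L$ is homeomorphic to $(-\infty,\infty)$, fix such a homeomorphism and use it to speak of the order on $L$; the map $\sigma$ is then a fixed-point-free homeomorphism of $(-\infty,\infty)$, hence either order-preserving or order-reversing. An order-reversing homeomorphism of $(-\infty,\infty)$ has a fixed point (by the intermediate value theorem applied to $x \mapsto \sigma(x) - x$, which changes sign), so $\sigma$ must be order-preserving and fixed-point-free; thus either $\sigma(x) > x$ for all $x$ or $\sigma(x) < x$ for all $x$. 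Replacing $\sigma$ by $\sigma^{-1}$ if necessary (i.e.\ precomposing the eventual identification with the automorphism $-1$ of $\ZZ$, or just reflecting the target), we may assume $\sigma(x) > x$ for all $x$.

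Next I would pick any basepoint $p_0 \in L$ and set $p_1 = \sigma(p_0)$. The half-open interval $D = [p_0, p_1)$ is a fundamental domain: because $\sigma$ is an increasing homeomorphism with no fixed points, the iterates $p_n := \sigma^n(p_0)$ form a strictly increasing sequence, and I would check that $p_n \to +\infty$ and $p_n \to -\infty$ as $n \to \pm\infty$ — this is where one uses that the action is by a group, so no $\sigma^n$ has a fixed point and the sequence $(p_n)$ cannot converge to a finite limit (such a limit would be fixed by $\sigma$). Consequently $L = \bigsqcup_{n \in \ZZ} \sigma^n(D)$, with the translates glued in order. Now choose any homeomorphism $h_0: D \to [0,1)$, and define $h: L \to (-\infty,\infty)$ by $h(\sigma^n(x)) = n + h_0(x)$ for $x \in D$. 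This is a well-defined bijection, it is $\ZZ$-equivariant by construction (the standard action being $x \mapsto x+1$), and it is a homeomorphism: continuity on the interior of each translate is clear, and continuity across each "seam" $p_n$ follows because $h_0(x) \to 1$ as $x \to p_1^-$ in $D$, matching the value $1$ of $h$ at $p_1 = \sigma(p_0)$; the inverse is continuous by the same seam-matching argument, or simply because a continuous order-preserving bijection of $(-\infty,\infty)$ is automatically a homeomorphism.

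The main obstacle is not any single hard step but rather making the "seam-matching" rigorous: one must verify that the locally defined pieces $h|_{\sigma^n(D)}$ genuinely assemble to a continuous map on all of $L$, which requires knowing that a neighborhood of each $p_n$ meets only the two adjacent translates $\sigma^{n-1}(D)$ and $\sigma^n(D)$ — this again traces back to $p_n \to \pm\infty$ and the strict monotonicity of $\sigma$. A cleaner alternative, which I would mention as a remark, avoids fundamental domains entirely: choose a continuous function $f: L \to \RR$ with $f(\sigma(x)) = f(x) + 1$ (for instance, average a bump supported on $D$ over the $\ZZ$-orbit, or directly interpolate), observe that such an $f$ is automatically a proper monotone surjection given the dynamics of $\sigma$, and take $h = f$; then equivariance is immediate and one only needs to check $f$ is a homeomorphism. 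Either way, the conceptual content is entirely contained in the dichotomy "order-preserving or order-reversing" together with the properness forced by freeness of the action.
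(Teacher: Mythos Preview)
Your proposal is correct and follows essentially the same strategy as the paper's proof: reduce to the case where the generator $\sigma$ satisfies $\sigma(x) > x$ for all $x$, pick a fundamental interval $[p_0, \sigma(p_0)]$, choose a homeomorphism to $[0,1]$, and extend $\ZZ$-equivariantly. The only stylistic difference is that the paper handles the dichotomy in one stroke by observing that the graph of $\sigma$ is a connected subset of $L \times L \setminus \Delta_L$ and hence lies in one of the two components $\{x_1 < x_2\}$ or $\{x_2 < x_1\}$, whereas you separate into the order-reversing case (ruled out by IVT) and then the order-preserving case. Your version is more explicit about the properness step ($p_n \to \pm\infty$, needed so the equivariant extension is surjective), which the paper leaves implicit.
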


\begin{proof}
Choose a generator $+1 \in \ZZ$ and let $\phi: L \to L$ be the corresponding homeomorphism. Then the graph of $\phi$ is a connected subset of $L \times L \setminus \Delta_L$; the latter has exactly two connected components which, under a homeomorphism $L \cong (-\infty,\infty)$, are given by $\{x_1 < x_2\}$ and $\{x_2 < x_1\}$. Without loss of generality, one may induce a total order on $L$ from $(-\infty,\infty)$ such that for all $x \in L$, we have $x < \phi(x)$.

Choosing a homeomorphism sending $[x,\phi(x)]$ to $[0,1] $ and extending $\ZZ$-equivariantly, the result follows.
\end{proof}

\newenvironment{paracyclicproperties}{%
	  \renewcommand*{\theenumi}{(L\arabic{enumi})}%
	  \renewcommand*{\labelenumi}{(L\arabic{enumi})}%
	  \enumerate
	}{%
	  \endenumerate
}

\begin{defn}\label{defn:broken-paracycle}
A {\em broken paracycle} is the data of a topological space $L$ equipped with a continuous action of $\ZZ \times \RR$ satisfying the following properties:
\begin{paracyclicproperties}
\item $L$ is abstractly homeomorphic to $(-\infty,\infty)$ as a topological space. 
\item The $\ZZ = \ZZ \times \{0\}$ action is free. 
\item\label{property:directed-action-ordering} The $\RR = \{0\} \times \RR$ action is directed. That is, there exists a homeomorphism $L \cong (-\infty,\infty)$ so that the  induced action on $(-\infty,\infty)$ satisfies $t \cdot x \geq x$ for all $t \geq 0 \in \RR$. We let $\leq_L$ denote the order on $L$ induced from the standard order on $(-\infty,\infty)$.
\item The $\ZZ = \ZZ \times \{0\}$ action is compatibly directed. That is, for any $x \in L$, we have $x \leq_L x+1$. (Here, $+1$ is the action by $ 1 \in \ZZ$.)
\item\label{property:fixed-point-set-discrete-in-line} The fixed point set $L^\RR$ is discrete and non-empty.
\end{paracyclicproperties}
\end{defn}

\begin{remark}
By Proposition~\ref{prop. free Z action}, we may choose a $\ZZ$-equivariant homeomorphism from $L$ to $(-\infty,\infty)$ with the standard $\ZZ$ action. Since the $\ZZ$-action is compatible with the $\RR$-action, the $\RR$-action is determined completely by its effect on an interval of the form $[x,x+1]$, where $x \in L$ is arbitrary. If we choose $x$ to be an $\RR$-fixed point, $[x,x+1]$ is a broken line in the sense of Definition~\ref{defn. broken line}.
\end{remark}

\begin{remark}
It also follows that for any pair $(x,y) \in L \times L$, there exists $n \in \ZZ$ such that $x <_L y + n$.
\end{remark}

\begin{warning}
Though we have used multiplicative and additive notation to denote the actions of $\RR$ and $\ZZ$, respectively, we warn the reader that the action is {\em not} distributive; so
	\eqnn
	t\cdot(x+n) \neq t\cdot x + t  n.
	\eqnd
(Indeed, the notation ``$+tn$'' does not even make sense for most $t$ and $n$.)
We also caution that the action $t \cdot x$ does not in any way resemble the usual multiplicative action of $\RR$ on $(-\infty,\infty)$. 
\end{warning}

\begin{remark}\label{remark:R-equivariance-is-order-preserving}
Suppose $L$ and $L'$ are broken paracycles and $\tilde f: L \to L'$ is a continuous, $\RR$-equivariant map. Then $\tilde f$ is weakly order-preserving: $\tilde f(x) \leq_{L'} \tilde f(y)$ whenever $x \leq_L y$.
(See Definition~\ref{defn:broken-paracycle}~\ref{property:directed-action-ordering} for the notation $\leq_L$.)
\end{remark}

\begin{defn}
An isomorphism of broken paracycles is an isomorphism of $(\ZZ \times \RR)$-spaces---that is, a homeomorphism commuting with the $\ZZ \times \RR$ action.
\end{defn}

\begin{remark}\label{remark. isom classes of paracycles}
Up to isomorphism, a broken paracycle is classified by the number $n +1$ of $\RR$-fixed points in the quotient circle $L / \ZZ$. (Here $n \geq 0$.) For a broken paracycle $L$ yielding $n+1$ such fixed points, the automorphism group of $L$ is given by 
	\eqnn
	\aut(L) \cong \langle 1/(n+1) \rangle \times \RR^{n+1}.
	\eqnd
Here, $\langle 1/(n+1) \rangle \subset \QQ$ is the additive subgroup generated by $1/(n+1)$. It is abstractly isomorphic to $\ZZ$, but our notation is meant to elucidate the relationship of the $\langle 1/(n+1) \rangle$ action of the automorphism group with the $\ZZ$ action defining $L$ as a broken paracycle.
\end{remark}

\begin{notation}\label{notation:n_L for broken line L}
Given a broken paracycle $L$, we let $n_L$ denote the integer $n$ in Remark~\ref{remark. isom classes of paracycles}. 
\end{notation}

\begin{remark}
In Remark~\ref{remark. isom classes of paracycles} we have modded out a broken paracycle $L$ by its $\ZZ$-action. The resulting $\RR$-space is an example of a broken {\em cycle} (Definition~\ref{defn:broken cycle}).
\end{remark}

\begin{notation}[Distance]\label{notation. distance}
Let $L$ be a broken paracycle.
Define a function
	\eqnn
	d_L: (L \times L) \setminus \Delta_{L^\RR} \to [-\infty,\infty]
	\eqnd
by
	\eqnn
	d_L(x,y) =
	\begin{cases}
	t \in \RR & t\cdot x = y \\
	-\infty & t \cdot x \geq y \text{ for all $t \in \RR$} \\
	\infty & t \cdot x \leq y \text{ for all $t \in \RR$} 
	\end{cases}
	\eqnd
(Note that the domain does not contain the diagonal pairs $(x,x)$ for which $x$ is an $\RR$-fixed point.)  We call $d_L(x,y)$ the translation distance from $x$ to $y$. 
\end{notation}

\begin{remark}\label{remark:distance-skew-symmetric}
$d$ is skew-symmetric. That is, 
	\eqnn
	d_L(x,y) = - d_L(y,x).
	\eqnd
\end{remark}

\begin{remark}\label{remark:distance-respects-ordering}
If follows from \ref{property:directed-action-ordering}  that $d(x,y) \geq 0$ if and only if $x \leq_L y$. (Of course, we follow the convention that $\infty \geq 0$.)
\end{remark}

\begin{remark}
$d_L$ is a continuous function. We prove this later in Proposition~\ref{lemma. distance continuous}. 

Note $d_L$ cannot be extended continuously to the diagonal fixed points. To see this, it is illustrative to replace $L$ by a broken line (Definition~\ref{defn. broken line}) with exactly one interior fixed point. Choose an isomorphism with $[-1,1]$ such that the origin of $[-1,1]$ is the interior fixed point. Then any extension $\overline{d_L}: L \times L \to [-\infty,\infty]$ may be modeled as a function on a closed square $[-1, 1] \times [-1,1]$. 

This function takes the values $\infty$ and $-\infty$ on the open quadrants II and IV, respectively (while the values at quadrants I and III interpolate). In particular, there is no value of $d_L$ that one can assign to the origin rendering $\overline{d_L}$ continuous. By analyzing the value of $d_L$ on the boundary edges of the squares, we also see that the fixed-point corners (i.e., the corners of quadrants I and III) cannot admit any values rendering $\overline{d_L}$ continuous.
\end{remark}

\subsection{Families}

We now present the definition of a family of broken paracycles. We have chosen to present a definition that is technically useful, but can seem opaque on a first pass. For a less technically involved description, we refer the reader to Theorem~\ref{theorem:broken-definitions-equivalent}---this theorem states that one may replace the technical conditions
\ref{property:quotient-closed},
\ref{property:separated}, 
and
\ref{property:local lifting}
below with a more familiar local triviality condition: Every family of broken paracycles is a fiber bundle with fiber $(-\infty,\infty)$, equipped with a standard $\ZZ$-action. 

\newenvironment{brokenproperties}{%
	  \renewcommand*{\theenumi}{(Q\arabic{enumi})}%
	  \renewcommand*{\labelenumi}{(Q\arabic{enumi})}%
	  \enumerate
	}{%
	  \endenumerate
}
\begin{defn}[Family of broken paracycles]\label{defn:family-of-broken-paracycles}
Fix $S$ a topological space. A {\em family of broken paracycles} over $S$ is the data of
	\eqnn
	( \pi: L_S \to S, \mu: (\ZZ \times \RR) \times L_S \to L_S)
	\eqnd
where $L_S$ is a topological space, $\pi$ is a continuous map, and $\mu$ is a continuous $\ZZ \times \RR$-action on $L_S$ preserving the fibers, such that
\begin{brokenproperties}
	\item\label{property:fibers paracyclic} For every $s \in S$, $\mu$ renders the fiber $L_s := \pi^{-1}(s)$ a broken paracycle (Definition~\ref{defn:broken-paracycle}).
	\item\label{property:unramified} Let $L_S^{\RR}$ denote the fixed point set of the $\RR = \{0\} \times \RR$ action. Then the map
		\eqnn
		L_S^\RR \to S
		\eqnd
	induced by $\pi$ is unramified modulo $\ZZ$. (See Remark~\ref{remark. unramified} below.)
	\item\label{property:quotient-closed}  The induced projection from the quotient space
		\eqnn
		L_S/ \ZZ \to S
		\eqnd
	is a closed map. (See also Proposition~\ref{prop:properly-discontinuous} below.)
	\item\label{property:separated} The set $\{(x,y) \text{ such that $x \leq y$}\}$ is a closed subset of the fiber product $L_S \times_S L_S$.
	\item\label{property:local lifting} Let $L_S^\circ = L_S \setminus L_S^{\RR}$ be the complement of the fixed point set. Then the map
		\eqnn
		L_S^\circ \to S
		\eqnd
	induced by $\pi$ has the local lifting property.
\end{brokenproperties}
\end{defn}

\begin{remark}
Property~\ref{property:local lifting} means that for every $\tilde s \in L_S^\circ$, there exists a neighborhood $U$ of $s = \pi(\tilde s)$ and a continuous map $\sigma: U \to L_S^\circ$ such that $\sigma(s) = \tilde s$.
\end{remark}

\begin{remark}[Unramified]\label{remark. unramified}
Because the $\RR$-action is compatible with the $\ZZ$-action, the $\RR$-action on $L_S$ descends to an $\RR$-action on the quotient $L_S/\ZZ$, and the  $\RR$-fixed point locus $L_S^\RR$ is a $\ZZ$-torsor over the $\RR$-fixed point locus of $L_S/\ZZ$.

Property~\ref{property:unramified} means that, locally on $S$, the map
	\eqnn
	(L_S/\ZZ)^\RR \to S
	\eqnd
is locally a closed embedding. (Both instances of the word locally are necessary.) Equivalently, for every $s \in S$, there exists a neighborhood $U$ containing $s$ such that the fixed point locus
	\eqnn
	(L_U/\ZZ)^\RR
	=
	\coprod_{\alpha} K_\alpha
	\eqnd
may be expressed as a finite disjoint union of closed subsets of $L_U$. Moreover, the projection map $K_\alpha \to U$ is a closed embedding to $U$ for every $\alpha$.
\end{remark}

\begin{defn}\label{defn. maps of families}
A map of families of broken paracycles is a commutative diagram
	\eqnn
	\xymatrix{
	L_S \ar[r]^{\tilde f} \ar[d] & L_T \ar[d] \\
	S \ar[r]^f & T
	}
	\eqnd
where $f$ is a continuous map, $\tilde f$ exhibits the space $L_S$ as the pullback of $L_T$ along $f$, and $\tilde f$ commutes with the $\ZZ \times \RR$ action.

An isomorphism is a map such that $f$ (and hence $\tilde f$) is a homeomorphism.
\end{defn}

\begin{example}[Pullbacks]\label{example:pullback}
Let $L_T \to T$ be a family of broken paracycles, and fix a continuous map $f: S \to T$. Then the fiber product $f^* L_T = S \times_T L_T$ is a family of broken paracycles over $S$. 
\end{example}

\begin{notation}[Pullbacks and subscripts]\label{notation.pullback subscript}
Let $K \subset S$ be a subset and $(\pi: L_S \to S, \mu)$ a family of broken paracycles over $S$. Then we denote by $L_K$ the pullback of $L_S$ along the inclusion $K \subset S$. 

We will most often use this notation for $L_U$ when $U \subset S$ is open, and for $L_s$ (i.e., for the fiber) when $s \in S$ is an element.
\end{notation}

\begin{example}
Let $L$ be any broken paracycle and $S$ any topological space. The product $L \times S$ with the obvious $\ZZ \times \RR$ action is a {\em trivial} family of broken paracycles. It is isomorphic (Definition~\ref{defn. maps of families}) to the family $L \to \ast$ pulled back along the constant map $S \to \ast$ to a point.
\end{example}

\begin{warning}
The fibers of a family of broken paracycles may, even locally, witness different isomorphism types of broken paracycles; when we construct local models $\tilde F^{(I)} \to F^{(I)}$ of families of broken paracycles, we will see explicit examples of families whose fibers can ``bleed'' from one isomorphism type to another.  See Section~\ref{section.F^{(I)} construction}.
\end{warning}

\subsection{Some facts about families}\label{section.facts-about-families}
We conclude this section with some useful topological facts about families of broken paracycles. 

Throughout this section, by ``a pair $(\pi:L_S \to S, \mu)$'' we mean the data of a continuous map $\pi: L_S \to S$ and a continuous action $\mu: \ZZ \times \RR \times L_S \to L_S$ preserving the fibers of $\pi$. 

\begin{prop}\label{prop:section-orders-are-open}
Fix a pair $(\pi:L_S \to S, \mu)$ satisfying ~\ref{property:fibers paracyclic} and~\ref{property:separated}.  Fix a continuous section $\sigma: S \to L_S$. Then each of the subspaces
	\eqnn
	\{ \text{x such that $x > \sigma(\pi(x))$} \}, 
	\{ \text{x such that $x < \sigma(\pi(x))$} \}
	\subset
	L_S
	\eqnd
are open.
\end{prop}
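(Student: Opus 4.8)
The plan is to show each of the two subspaces is open by producing, around any of its points, an open neighborhood contained in it; by the skew-symmetry $d_L(x,y) = -d_L(y,x)$ (Remark~\ref{remark:distance-skew-symmetric}) and the fact that $x > \sigma(\pi(x))$ iff $\sigma(\pi(x)) < x$, it suffices to treat one of the two sets, say $A = \{x \in L_S : x >_{L_{\pi(x)}} \sigma(\pi(x))\}$. The key observation is that $A$ is exactly the complement in $L_S$ of the set
\eqnn
B = \{x \in L_S : x \leq_{L_{\pi(x)}} \sigma(\pi(x))\},
\eqnd
so it is enough to show $B$ is closed in $L_S$.

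First I would form the map $(\id_{L_S}, \sigma \circ \pi): L_S \to L_S \times_S L_S$ sending $x \mapsto (x, \sigma(\pi(x)))$; this is continuous because $\pi$ is continuous, $\sigma$ is continuous, and the target is the fiber product, so continuity into it is checked componentwise. By property~\ref{property:separated}, the set $\{(x,y) \in L_S \times_S L_S : x \leq y\}$ is a closed subset of $L_S \times_S L_S$. Then $B$ is precisely the preimage of this closed set under the continuous map $(\id, \sigma \circ \pi)$, hence closed; therefore $A = L_S \setminus B$ is open. The argument for $\{x : x < \sigma(\pi(x))\}$ is identical after swapping the two coordinates (or invoking skew-symmetry of $d_L$ together with Remark~\ref{remark:distance-respects-ordering}), using that $\{(x,y) : y \leq x\}$ is the image of the closed set under the coordinate-swap homeomorphism of $L_S \times_S L_S$ and is therefore also closed.

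I do not anticipate a serious obstacle here: the whole content is packaged into property~\ref{property:separated}, and the only thing to be careful about is that the relevant orderings are the fiberwise ones $\leq_{L_s}$ (so that the statement ``$x \leq \sigma(\pi(x))$'' only makes sense for $x$ and $\sigma(\pi(x))$ lying in the same fiber, which is exactly why we pass through $L_S \times_S L_S$ rather than $L_S \times L_S$). One should also note that the hypotheses~\ref{property:fibers paracyclic} and~\ref{property:separated} are all that is used—\ref{property:fibers paracyclic} guarantees each fiber is genuinely a broken paracycle so that $\leq_{L_s}$ is defined (Definition~\ref{defn:broken-paracycle}~\ref{property:directed-action-ordering}), while~\ref{property:separated} does the topological work—so the proposition is stated at the right level of generality for its later uses.
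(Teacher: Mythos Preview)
Your proof is correct and follows essentially the same approach as the paper: both pull back the closed (respectively open) order set in $L_S \times_S L_S$ along the continuous map $x \mapsto (x,\sigma(\pi(x)))$, and both handle the second subspace via the swap homeomorphism. The only cosmetic difference is that the paper pulls back the open set $\{(x,y):x>y\}$ directly, whereas you pull back its closed complement and then take complements---logically the same step.
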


\begin{remark}
In the statement of Proposition~\ref{prop:section-orders-are-open}, we have written $<$ to mean $<_{L_{\pi(x)}}$. (See Definition~\ref{defn:broken-paracycle}~\ref{property:directed-action-ordering} for the notation $<_L$.) It is the total order  given by the broken paracycle structure on the fiber.
\end{remark}

\begin{proof}
We have the pullback diagram
	\eqnn
	\xymatrix{
	\{ \text{x such that $x > \sigma(\pi(x))$} \} \ar[r] \ar[d]
		& 	\{ \text{ (x,y) such that $x > y$} \} \ar[d] \\
	L_S \cong L_S \times_S S\ar[r]^{\id_{L_S} \times \sigma }
		& L_S \times_S L_S.
	}
	\eqnd
The right vertical arrow is an open embedding by \ref{property:separated}. Hence the lefthand vertical arrow is also an open embedding, proving the claim for one of the subspaces. The claim for the other follows by applying the swap homeomorphism $L_S \times_S L_S \to L_S \times_S L_S$ sending $(x,y) \mapsto (y,x)$. 
\end{proof}

Recall the translation distance $d$ defined on any broken paracycle (Notation \ref{notation. distance}). In particular, given any family $L_S \to S$ and two points $x,y$ in the same fiber, one can measure the translation distance from $x$ to $y$.

\begin{notation}
Fix a pair $(\pi:L_S \to S, \mu)$. Given $s \in S$, we let $d_{L_s}$ denote the distance function of the broken paracycle $L_s$. We define the function
	\eqnn
	d_{L_S}: (L_S \times_S L_S) \setminus \Delta_{L_S^\RR} \to [-\infty,\infty]
	\qquad
	(x,y) 
	\mapsto d_{L_s}(x,y).
	\eqnd
Note that the domain consists of those pairs $(x,y)$ such that $x$ and $y$ are in the same fiber of $L_S \to S$, and such that if $x$ and $y$ are $\RR$ fixed points, then $x \neq y$.
\end{notation}

\begin{lemma}\label{lemma. distance continuous}
Fix a pair $(\pi:L_S \to S, \mu)$ satisfying~\ref{property:fibers paracyclic} and~\ref{property:separated}. Then the distance function is continuous.
\end{lemma}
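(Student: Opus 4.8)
The goal is to show $d_{L_S}$ is continuous on $(L_S \times_S L_S) \setminus \Delta_{L_S^\RR}$. I would argue continuity is a local statement on the source, and split into two cases depending on whether the value of $d_{L_S}$ at a point $(x_0,y_0)$ is finite or infinite. The main tool is Proposition~\ref{prop:section-orders-are-open} together with the local lifting property, applied on a small neighborhood of $(x_0,y_0)$.

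\emph{The finite case.}
Suppose $d_{L_S}(x_0,y_0) = t_0 \in \RR$, i.e. $t_0 \cdot x_0 = y_0$. First I would observe that near such a point, $(L_S \times_S L_S) \setminus \Delta_{L_S^\RR}$ may as well be modeled near the open part $L_S^\circ$: indeed if $d_{L_S}(x_0,y_0)$ is finite then $x_0$ and $y_0$ lie in a common orbit, and since the orbit through an $\RR$-fixed point is a single point (equal to that fixed point), we cannot have $x_0$ fixed without $y_0 = x_0$ being fixed too, contradicting that $(x_0,y_0)$ avoids $\Delta_{L_S^\RR}$; so both $x_0,y_0 \in L_S^\circ$. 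Now I would use~\ref{property:local lifting} to choose, on a neighborhood $U$ of $s_0 = \pi(x_0)$, a continuous section $\sigma: U \to L_S^\circ$ with $\sigma(s_0) = x_0$. After possibly shrinking, the map $\RR \times U \to L_S^\circ|_U$, $(t,s) \mapsto t\cdot \sigma(s)$ is defined and continuous; on the orbit of a non-fixed point the $\RR$-action is free with the induced order being the standard one on $\RR$ by~\ref{property:directed-action-ordering}, so this map is a homeomorphism onto an open set containing $x_0$. The point is then that $d_{L_S}$, restricted to pairs $(t_1\cdot \sigma(s), t_2 \cdot \sigma(s))$ in this chart, is just $(t_1,t_2,s) \mapsto t_2 - t_1$, visibly continuous; and one checks using the openness of the order relations (Proposition~\ref{prop:section-orders-are-open}) that in this chart no nearby pair has value $\pm\infty$.

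\emph{The infinite case.}
Suppose $d_{L_S}(x_0,y_0) = \infty$, i.e. $t\cdot x_0 \leq_{L_{s_0}} y_0$ for all $t \in \RR$ (the case $-\infty$ is symmetric via Remark~\ref{remark:distance-skew-symmetric}). I would show that for any large $N$ the set $\{(x,y) : d_{L_S}(x,y) > N\}$ is open and contains $(x_0,y_0)$; since these form a neighborhood basis of $\infty$ in $[-\infty,\infty]$, this gives continuity there. The set $\{(x,y) : d_{L_S}(x,y) > N\}$ is the set of pairs with $(-N)\cdot y <_{L} x$ in the fiber, equivalently the preimage of the open set $\{(x',y') : x' > y'\} \subset L_S \times_S L_S$ (open by~\ref{property:separated}) under the continuous fiberwise map $(x,y) \mapsto (x, (-N)\cdot y)$; hence open. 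That it contains $(x_0,y_0)$: since $d_{L_S}(x_0,y_0) = \infty$ we have in particular $(-N)\cdot x_0 \leq y_0$, and we want the strict inequality $x_0 > (-N)\cdot y_0$; this follows because $y_0 \geq$ the entire forward orbit of $x_0$ forces $y_0$ to lie at or past the ``top end'' relative to $x_0$'s orbit, so translating $y_0$ backward by $N$ still leaves it strictly below $x_0$ provided $N$ is large enough — here one uses~\ref{property:fixed-point-set-discrete-in-line} and Remark~\ref{remark. isom classes of paracycles} to ensure that $x_0$ and $y_0$ are separated by at least one interior fixed point, across which backward $\RR$-translation of $y_0$ by any amount cannot cross.

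\emph{Expected obstacle.}
The routine part is the finite case, which is essentially the statement that a free proper $\RR$-action admits continuous local ``logarithm'' coordinates; Proposition~\ref{prop:section-orders-are-open} and~\ref{property:local lifting} are tailored for exactly this. The delicate part is the infinite case, specifically verifying that the value $\infty$ is \emph{attained} on a whole neighborhood and that one cannot have a sequence of pairs with large finite distances limiting onto a pair at distance $\infty$ while the ``parametrization'' degenerates — i.e. ruling out that a fixed point is being born in the limit between $x_0$ and $y_0$ in a way that is incompatible with~\ref{property:unramified}. I expect the cleanest route is the preimage-of-open-set argument above, which sidesteps sequences entirely; the one thing to be careful about is the interplay between the two equations ``$x' > y'$'' and ``$x' > (-N)\cdot y'$'' on the boundary of the diagonal, handled by invoking~\ref{property:separated} for the closedness of $\{x \leq y\}$ and hence openness of its complement.
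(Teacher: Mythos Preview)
Your proposal has a genuine gap: in the finite case you invoke \ref{property:local lifting} to produce a section $\sigma$, but the lemma's hypotheses are only \ref{property:fibers paracyclic} and \ref{property:separated}. Local lifting is not available, so your chart construction is illegitimate as stated. (Your ``expected obstacle'' paragraph also leans on \ref{property:unramified}, likewise unavailable.)

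There is also a sign error running through the infinite case: the set $\{d_{L_S}(x,y) > N\}$ is $\{x < (-N)\cdot y\}$, not $\{(-N)\cdot y < x\}$. Indeed, $d(x,y) > N$ is equivalent to $d(x,(-N)\cdot y) > 0$, which by Remark~\ref{remark:distance-respects-ordering} means $x <_L (-N)\cdot y$. With your reversed inequality, the claim that $(x_0,y_0)$ lies in the set is false (translating $y_0$ backward by $N$ keeps it \emph{above} $x_0$, since it cannot cross the intervening fixed point), which is why you ended up with a contorted justification for something that, with the correct sign, is immediate from $d(x_0,y_0)=\infty > N$.

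Once you fix the sign, notice that your infinite-case argument works for \emph{every} real $N$, not just large ones: $\{d > N\}$ is the preimage of the open set $\{(a,b): a < b\}$ under the continuous map $(x,y)\mapsto (x,(-N)\cdot y)$, open by \ref{property:separated}. Together with skew-symmetry this shows the preimage of every subbasic open $(N,\infty]$ and $[-\infty,N)$ is open, which is continuity. This is exactly the paper's proof, and it uses only \ref{property:fibers paracyclic} and \ref{property:separated}. Your finite-case detour is therefore unnecessary, and dropping it removes the illegitimate appeal to local lifting.
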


\begin{proof}[Proof of Lemma~\ref{lemma. distance continuous}.]
For brevity, let us set $d = d_{L_S}$.

Let $U \subset [-\infty,\infty]$ be an open set. We must verify that the preimage is open. We may restrict our attention to those $U$ of the form $(t,\infty]$ or $[-\infty,t)$, as such sets form a subbasis for the topology of $[-\infty,\infty]$. (Here, $t$ is any real number; in particular, we need not consider $t = \pm \infty$.) By skew-symmetry of $d_{L_S}$ (Remark~\ref{remark:distance-skew-symmetric}) and because the swap map $L_S \times_S L_S \to L_S \times_S L_S$ is a homeomorphism, we are reduced to showing that the preimage of $[-\infty,t)$ is open. Finally, because we have that
	\eqn\label{eqn:translation-invariance-of-distance-relation}
	d(x,y) < t \iff d(x+t,y) < 0 \iff d(x, y-t) < 0
	\eqnd
and the $\RR$-action is continuous, we may assume $t=0$.\footnote{The logical equivalences in \eqref{eqn:translation-invariance-of-distance-relation} do not hold if we include the diagonal $\RR$-fixed points in the domain of $d$.} So we must now show that the preimage of $[0,\infty]$ is closed.

But by Remark~\ref{remark:distance-respects-ordering},  $d(x,y) \geq 0$ if and only if $x \leq y$. The proof is complete by invoking \ref{property:separated}.
\end{proof}

\begin{prop}\label{prop:properly-discontinuous}
Fix a pair $(\pi:L_S \to S, \mu)$ satisfying
\ref{property:fibers paracyclic},
\ref{property:separated}, and
\ref{property:local lifting}.
Then the quotient map  $L_S \to L_S/\ZZ$ is a covering map.
\end{prop}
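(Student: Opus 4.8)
The plan is to show that the free $\ZZ$-action on $L_S$ is properly discontinuous, i.e.\ that every point of $L_S$ has an open neighborhood $V$ such that $V \cap (n \cdot V) = \emptyset$ for all $n \neq 0$; this is exactly what is needed for $L_S \to L_S/\ZZ$ to be a covering map. Fix $\tilde s \in L_S$ lying over $s = \pi(\tilde s)$. I would split into two cases according to whether $\tilde s$ is an $\RR$-fixed point, but in fact the first case can be reduced to the second: if $\tilde s \in L_S^\RR$, pick any small $t > 0$ and replace $\tilde s$ by $t \cdot \tilde s$, which by \ref{property:fixed-point-set-discrete-in-line} and directedness \ref{property:directed-action-ordering} lies in $L_S^\circ$ once the fiber is nontrivial; after finding a properly discontinuous neighborhood of $t\cdot\tilde s$ one translates back by $-t$ (the $\RR$-action is by homeomorphisms commuting with $\ZZ$). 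Alternatively, and more cleanly, I would work directly with all points by using the translation distance $d = d_{L_S}$ of Notation~\ref{notation. distance}, whose continuity on $(L_S \times_S L_S)\setminus \Delta_{L_S^\RR}$ is Lemma~\ref{lemma. distance continuous}.

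The key construction: on the fiber $L_s$, the compatibly directed condition \ref{property:fibers paracyclic} gives $\tilde s <_{L_s} \tilde s + 1$, and by Definition~\ref{defn:broken-paracycle} the interval $\{x : \tilde s \leq_{L_s} x <_{L_s} \tilde s + 1\}$ contains at least one $\RR$-fixed point, so $d_{L_s}(\tilde s, \tilde s + 1) = \infty$ — translating from $\tilde s$ one "crosses a break" before reaching $\tilde s + 1$. Hence I want a neighborhood of $\tilde s$ all of whose points $x$ satisfy $x + 1 > y$ for every $y$ near $\tilde s$ in the same fiber, which forces the $\ZZ$-orbit to escape. Concretely: using \ref{property:local lifting} (after the reduction above, I may assume $\tilde s \in L_S^\circ$), choose a continuous local section $\sigma: U \to L_S^\circ$ with $\sigma(s) = \tilde s$. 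By Proposition~\ref{prop:section-orders-are-open} the sets $\{x : x > \sigma(\pi(x))\}$ and $\{x : x < \sigma(\pi(x))\}$ are open in $L_U$. I then claim the set
	\eqnn
	V := \{ x \in L_U : \sigma(\pi(x)) - 1 <_{L_{\pi(x)}} x <_{L_{\pi(x)}} \sigma(\pi(x)) + 1 \}
	\eqnd
is an open neighborhood of $\tilde s$ with $V \cap (n\cdot V) = \emptyset$ for all $n \neq 0$: openness follows by applying Proposition~\ref{prop:section-orders-are-open} to the translated sections $\sigma + 1$ and $\sigma - 1$ (which are continuous sections since $\ZZ$ acts by homeomorphisms), and the disjointness is because if $x, x+n \in V$ with $n \geq 1$ then $x + n <_{L} \sigma(\pi(x)) + 1 \leq_L x + 1 + 1$ wait — one needs the sharper fact that $x <_L \sigma(\pi(x)) + 1 \leq_L$ (the first fixed point above $\sigma$) and $x + 1 \geq_L x + 1 > $ that same fixed point, using $d(\sigma,\sigma+1)=\infty$ together with the skew-symmetry and order-compatibility of $d$ (Remarks~\ref{remark:distance-skew-symmetric} and~\ref{remark:distance-respects-ordering}); iterating, $x + n$ lies strictly above $\sigma(\pi(x)) + 1$ for all $n \geq 1$ and strictly below $\sigma(\pi(x)) - 1$ for $n \leq -1$, so $x + n \notin V$.

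Once proper discontinuity of the free $\ZZ$-action is established fiberwise-coherently as above, standard point-set topology gives that $L_S \to L_S/\ZZ$ is a covering map (the image of $V$ is evenly covered). The main obstacle I anticipate is the disjointness estimate: one must argue carefully that crossing an $\RR$-fixed point under the $\ZZ^+$-direction really does push $x+n$ past the window $(\sigma - 1, \sigma + 1)$ and keep it there — i.e.\ that $\{z : \sigma(\pi(x)) \leq_L z <_L \sigma(\pi(x)) + 1\}$ meeting $L^\RR$ implies, via $d$, that $d_L(x, x+1) = \infty$ and hence $x + 1$ lies on the far side of that break for every $x \in V$, uniformly in the fiber. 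This is where continuity of $d$ (Lemma~\ref{lemma. distance continuous}) and closedness of $\{x \leq y\}$ (\ref{property:separated}) do the real work, letting one shrink $U$ so that the relevant fixed point persists as a section $K_\alpha \to U$ in the sense of Remark~\ref{remark. unramified}; I would shrink $U$ at the outset so that exactly one such fixed-point section separates $\sigma$ from $\sigma + 1$.
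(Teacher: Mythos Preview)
Your approach has the right skeleton (proper discontinuity via an open ``window'' around $\tilde s$), but there are two genuine errors.

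First, the reduction step is broken: if $\tilde s \in L_S^\RR$ then $t\cdot \tilde s = \tilde s$ for every $t$, so ``replacing $\tilde s$ by $t\cdot \tilde s$'' does nothing, and ``translating back by $-t$'' is vacuous. You cannot use the $\RR$-action to escape a fixed point. This also means you cannot assume your single section $\sigma$ through $\tilde s$ lands in $L_S^\circ$, so \ref{property:local lifting} does not directly supply it.

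Second, and more seriously, your window
\[
V=\{x : \sigma(\pi(x)) - 1 <_{L_{\pi(x)}} x <_{L_{\pi(x)}} \sigma(\pi(x)) + 1\}
\]
is too wide for the disjointness claim: take any $x$ with $\sigma(\pi(x))-1 < x < \sigma(\pi(x))$; then $\sigma(\pi(x)) < x+1 < \sigma(\pi(x))+1$, so $x+1 \in V$ and hence $V\cap (V+1)\neq\emptyset$. The business about $d(\sigma,\sigma+1)=\infty$ and fixed points separating $\sigma$ from $\sigma+1$ is true but irrelevant here: disjointness of $\ZZ$-translates of an order-interval is a purely order-theoretic statement about the interval's width, and an interval of ``$\ZZ$-length $2$'' will always overlap its shift by $1$.

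The paper's argument fixes both issues in one stroke: rather than a single section through $\tilde s$, choose two non-fixed points $x_1 < \tilde s < x_2$ in the fiber with $x_2 < x_1 + 1$, lift each via \ref{property:local lifting} to sections $\sigma_1,\sigma_2$ on a small $U$ (shrinking so that $\sigma_1 < \sigma_2$ and $\sigma_2 < \sigma_1 + 1$ persist, using Lemma~\ref{lemma. distance continuous}), and take
\[
\tilde U = \{y : \sigma_1(\pi(y)) < y < \sigma_2(\pi(y))\}.
\]
This is open by Proposition~\ref{prop:section-orders-are-open} and now has ``$\ZZ$-length $<1$'': if $y\in\tilde U$ then $y+1 > \sigma_1(\pi(y))+1 > \sigma_2(\pi(y))$, so $y+1\notin\tilde U$. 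No reduction to the non-fixed case is needed, since $\sigma_1,\sigma_2$ are sections through nearby non-fixed points rather than through $\tilde s$ itself.
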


\begin{proof}
It suffices to prove that for any $\tilde s \in L_S$, there exists a neighborhood $\tilde U$ of $\tilde s$ such that $\tilde U$ and $n+\tilde U$ have empty intersection whenever $n \neq 0$.

Given $\tilde s$ above a point $s \in S$, choose two points $x_1, x_2$ in the same fiber as $\tilde s$ satisfying the following conditions:
\enum
\item $x_1,x_2$ are not $\RR$-fixed points.
\item We have that $x_1 < \tilde s < x_2$.
\item $x_1 + 1 > x_2$, where $+1$ denotes the action by the generator of $\ZZ$.
\enumd
Then by \ref{property:local lifting} we may choose a neighborhood $U$ admitting sections $\sigma_1,\sigma_2: U \to L_U$ with $\sigma_i(s) = x_i$. Because $d$ is continuous by Lemma~\ref{lemma. distance continuous}, we may assume $\sigma_1(s') < \sigma_2(s')$ for all $s' \in U$ (shrinking $U$ if necessary). Now consider 
	\eqnn
	\tilde U 
	=
	\{\text{$y \in L_U$ such that $\sigma_1(\pi(y)) < y < \sigma_2(\pi(y))$}
	\}.
	\eqnd
By Proposition~\ref{prop:section-orders-are-open} $\tilde U$ is an open subset of $L_U$, hence of $L_S$. By construction, $\tilde U$ satisfies the property we seek.
\end{proof}

\newenvironment{brokenequiv}{%
	  \renewcommand*{\theenumi}{(Q\arabic{enumi}')}%
	  \renewcommand*{\labelenumi}{(Q\arabic{enumi}')}%
	  \enumerate
	}{%
	  \endenumerate
}

\begin{lemma}\label{lemma:closed-map-lemma}
Assume that the data $(\pi:L_S \to S,\mu)$ satisfy properties
\ref{property:fibers paracyclic},
\ref{property:separated}, and
\ref{property:local lifting}.
Then property~\ref{property:quotient-closed} is equivalent to the following property:
\begin{brokenequiv}
		\setcounter{enumi}{2}
        \item\label{property:closed-prime} Fix a local section $\sigma: U \to L_U$, and define
        \eqnn
        K_\sigma := \{\tilde x \in L_U \text{ such that $\sigma(\pi(\tilde x)) \leq \tilde x \leq \sigma(\pi(\tilde x))+1$.} \}
        \eqnd
        Then the restriction of $\pi$ to $K_\sigma$ is a closed map to $U$.
\end{brokenequiv}
\end{lemma}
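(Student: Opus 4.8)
The plan is to prove the equivalence $\ref{property:quotient-closed} \iff \ref{property:closed-prime}$ by exploiting the fact, established in Proposition~\ref{prop:properly-discontinuous}, that $L_S \to L_S/\ZZ$ is a covering map, so that the $K_\sigma$ are genuine lifts of small pieces of $L_S/\ZZ$ up to the boundary identification $\sigma \sim \sigma + 1$. The two directions are quite different in flavor, so I would handle them separately.

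For the direction $\ref{property:quotient-closed} \implies \ref{property:closed-prime}$, I would argue as follows. Fix a local section $\sigma: U \to L_U$ and form $K_\sigma$. The quotient map $q: L_U \to L_U/\ZZ$ restricts to a surjection $K_\sigma \to L_U/\ZZ$, and in fact $K_\sigma$ maps onto all of $L_U/\ZZ$ since every $\ZZ$-orbit meets the closed ``fundamental band'' between $\sigma$ and $\sigma + 1$ (using property~\ref{property:directed-action-ordering} and Remark~\ref{remark:R-equivariance-is-order-preserving}). So the composite $K_\sigma \to L_U/\ZZ \to U$ agrees with $\pi|_{K_\sigma}$, and the second map $L_U/\ZZ \to U$ is closed by~\ref{property:quotient-closed} (applied to the pullback family over $U$, which still satisfies~\ref{property:quotient-closed}). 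Thus it suffices to show $q|_{K_\sigma}: K_\sigma \to L_U/\ZZ$ is a closed map. For this I would use that $K_\sigma$ is a ``$\ZZ$-fundamental domain with boundary'': concretely, $K_\sigma = \overline{K_\sigma^\circ}$ where $K_\sigma^\circ = \{\sigma(\pi(\tilde x)) < \tilde x < \sigma(\pi(\tilde x)) + 1\}$ is open by Proposition~\ref{prop:section-orders-are-open}, and $q$ restricted to the slightly larger open set $K_\sigma^{++} := \{\sigma(\pi(\tilde x)) - \tfrac12 < \tilde x < \sigma(\pi(\tilde x)) + \tfrac32\}$ (shrinking the half-widths to stay within a sheet of the covering, possible after shrinking $U$ exactly as in the proof of Proposition~\ref{prop:properly-discontinuous}) is an open embedding onto an open subset $V$ of $L_U/\ZZ$. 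Then $q|_{K_\sigma}$ factors as $K_\sigma \hookrightarrow K_\sigma^{++} \xrightarrow{\ \cong\ } V \hookrightarrow L_U/\ZZ$, and $K_\sigma$ is closed in $K_\sigma^{++}$ (it is cut out by the two non-strict inequalities, which are closed conditions by~\ref{property:separated} together with continuity of the $\RR$-action), so its image is closed in $V$; combined with $V$ open in $L_U/\ZZ$ and the fact that $q(K_\sigma) = q(K_\sigma^{++}) \cap q(K_\sigma)$ ... actually one must be slightly careful: a closed subset of an open set need not be closed in the ambient space. The clean fix is to observe that $q(K_\sigma)$ is already all of $q(K_\sigma^{++})$-preimage intersected appropriately — more honestly, closedness of $q|_{K_\sigma}$ as a map is what we want, not closedness of its image, and a map is closed iff it is closed onto its image followed by a closed inclusion; since $K_\sigma$ is closed inside the open set $K_\sigma^{++}$ and $q$ is an open embedding there, $q|_{K_\sigma}$ is a closed map onto its image $q(K_\sigma)$, and then composing with $L_U/\ZZ \to U$ (closed) gives that $\pi|_{K_\sigma}$ is closed. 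This is the step I expect to be the main obstacle: getting the point-set bookkeeping exactly right so that local closedness around each orbit upgrades to honest closedness of $\pi|_{K_\sigma}$, and verifying one can shrink $U$ to stay within a single sheet of the covering uniformly.

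For the converse $\ref{property:closed-prime} \implies \ref{property:quotient-closed}$, the statement is local on $S$, so I may assume $S$ is covered by opens $U$ over which a local section $\sigma_U$ exists — but here I must be a little careful, since property~\ref{property:local lifting} only provides sections of $L_S^\circ \to S$, i.e. sections avoiding $\RR$-fixed points; that is fine, since the $K_\sigma$ in~\ref{property:closed-prime} are already formulated for such sections. Given a closed set $C \subset L_S/\ZZ$, I want to show its image in $S$ is closed, which I can check locally: for $s \in S$ not in the image, I produce a neighborhood $U$ with $C \cap (L_U/\ZZ) = \emptyset$. Pick $s$; choose $U$ admitting a section $\sigma$; pull back $C$ to $\widetilde C \subset L_U$, a $\ZZ$-invariant closed set, and note $\widetilde C \cap K_\sigma$ is closed in $K_\sigma$ with $\pi(\widetilde C) = \pi(\widetilde C \cap K_\sigma)$ (again because every orbit meets the band $K_\sigma$). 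By~\ref{property:closed-prime}, $\pi(\widetilde C \cap K_\sigma)$ is closed in $U$, and its image in $U$ is exactly the image of $C \cap (L_U/\ZZ)$ under $L_U/\ZZ \to U$. Hence that image is closed in $U$ for every $U$ in an open cover, and a subset of $S$ is closed iff its intersection with each member of an open cover is closed in that member — so the image of $C$ is closed in $S$, giving~\ref{property:quotient-closed}.

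Throughout I would lean on three already-available ingredients: Proposition~\ref{prop:section-orders-are-open} (the strict band is open), Proposition~\ref{prop:properly-discontinuous} (the quotient is a covering, so bands of $\ZZ$-width $<1$ embed into $L_S/\ZZ$), and Lemma~\ref{lemma. distance continuous} together with~\ref{property:separated} (the non-strict band $K_\sigma$ is closed, being $\{0 \le d(\sigma(\pi(\tilde x)), \tilde x)\} \cap \{d(\sigma(\pi(\tilde x)), \tilde x) \le 1\}$ modulo the usual care near fixed points). The only genuinely delicate point, as noted, is the passage from ``closed near each orbit'' to ``closed'' in the first direction, where one must pin down the uniform shrinking of $U$ and invoke the covering-space structure correctly; everything else is bookkeeping with the fundamental-band picture.
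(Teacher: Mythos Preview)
Your converse direction \ref{property:closed-prime} $\implies$ \ref{property:quotient-closed} is correct and is exactly the paper's argument: pull back $C$ to a $\ZZ$-invariant closed set, intersect with $K_\sigma$, apply \ref{property:closed-prime}, and conclude locally on $S$.

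Your forward direction \ref{property:quotient-closed} $\implies$ \ref{property:closed-prime} has a genuine gap. The set $K_\sigma^{++} = \{\sigma - \tfrac12 < \tilde x < \sigma + \tfrac32\}$ has $\ZZ$-width equal to $2$, while the covering $L_U \to L_U/\ZZ$ has period $1$. In particular $K_\sigma^{++}$ contains both $\sigma(u)$ and $\sigma(u)+1$, which have the same image in $L_U/\ZZ$, so $q|_{K_\sigma^{++}}$ is \emph{never} injective, no matter how you shrink the half-widths or shrink $U$. Your subsequent ``clean fix'' still invokes the claim that $q$ is an open embedding on $K_\sigma^{++}$, so the argument does not go through as written. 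You correctly sensed this was the delicate point, but the specific repair you propose cannot work for this structural reason.

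The paper's fix is a one-line trick you may enjoy: pass to the intermediate quotient $L_U/(2\ZZ)$. The composite $K_\sigma \hookrightarrow L_U \to L_U/(2\ZZ)$ lands $K_\sigma$ (which has $\ZZ$-width $1$) as a closed subset of a space of period $2$, so this map is a closed embedding; the finite cover $L_U/(2\ZZ) \to L_U/\ZZ$ is closed; and $L_U/\ZZ \to U$ is closed by \ref{property:quotient-closed}. The composite is $\pi|_{K_\sigma}$. This replaces your attempt to enlarge $K_\sigma$ inside a single sheet (impossible) with the device of doubling the period of the target so that $K_\sigma$ fits comfortably.
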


\begin{proof}
In what follows, we let $q: L_S \to L_S/\ZZ$ denote the quotient map, and we let $p: L_S/\ZZ \to S$ denote the projection map induced from $\pi$. 

\ref{property:quotient-closed} $\implies$ \ref{property:closed-prime}: 
Fix a local section $\sigma: U \to L_U$. 
Consider the $2:1$ cover $L_U/(2\ZZ) \to L_U/\ZZ$ (which is a closed map, being a finite covering), and note that $K_\sigma \subset L_U \to L_U/(2\ZZ)$ is closed. Thus the composite $K_\sigma \to L_U/(2\ZZ) \to L_U/\ZZ \to U$ is a closed map (the last map is closed by \ref{property:quotient-closed}). On the other hand, this composite is equal to the restriction of $\pi$ to $K_\sigma$.

\ref{property:closed-prime} $\implies$ \ref{property:quotient-closed}: Fix a closed subset $A \subset L_S/\ZZ$. It suffices to show that $p(A) \subset S$ is locally closed. So for any $x \in S$, choose a local section $\sigma: U \to L_U$ from an open set $U$ containing $x$. (One can choose such a section by~\ref{property:fibers paracyclic} and \ref{property:local lifting}.) Note that 
	\eqnn
	\pi(K_\sigma \cap q^{-1}(A))
	=
	p(A) \cap U.
	\eqnd
By \ref{property:closed-prime}, $\pi(A) \cap U$ is closed. That is, $\pi(A)$ is locally closed.
\end{proof}

The following should be reminiscent of the result that a continuous bijection is a homeomorphism provided properness and separatedness restrictions. Such are the roles that~\ref{property:quotient-closed} and~\ref{property:separated} play. 

\begin{prop}\label{prop:R-equivariant-is-homeo}
Fix two pairs $(\pi:L_S \to S,\mu)$ and $(\pi':L_S' \to S,\mu')$ both satisfying properties
\ref{property:fibers paracyclic},
~\ref{property:quotient-closed},
\ref{property:separated}, and
\ref{property:local lifting}. Consider a map $\tilde f: L_S \to L_S'$ such that $\pi = \pi' \circ \tilde f$. 

If $\tilde f$ is continuous, a bijection, and $\RR$-equivariant, then $\tilde f$ is a homeomorphism.
\end{prop}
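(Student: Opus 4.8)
The plan is to show $\tilde f$ is an open map; combined with the hypotheses that it is a continuous bijection, this yields a homeomorphism. Since openness is local on the target, it suffices to produce, for each $\tilde x \in L_S'$, a neighborhood basis whose images under $\tilde f^{-1}$ are open, or equivalently to show that $\tilde f$ carries a neighborhood basis of each point in $L_S$ to neighborhoods in $L_S'$. Because both $\pi$ and $\pi'$ are, by Proposition~\ref{prop:properly-discontinuous}, covering maps onto their $\ZZ$-quotients, and because $\RR$-equivariance forces $\tilde f$ to descend to a fiberwise map $L_S/\ZZ \to L_S'/\ZZ$ over $S$, I would first reduce to a statement ``downstairs'': it is enough to check that the induced map $\bar f: L_S/\ZZ \to L_S'/\ZZ$ is a homeomorphism over $S$, since $\tilde f$ is then obtained by pulling back along the covering maps. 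Here I would use that $\bar f$ is a continuous bijection (bijectivity of $\tilde f$ plus $\ZZ$-equivariance) commuting with the projections to $S$.

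Next I would exploit the order structure. By Remark~\ref{remark:R-equivariance-is-order-preserving}, $\tilde f$ restricted to any fiber is weakly order-preserving, and being a bijection of fibers it is in fact an order isomorphism of each $L_s$ onto $L'_s$; in particular it carries $\RR$-fixed points to $\RR$-fixed points bijectively, so $\tilde f(L_S^\RR) = (L_S')^\RR$ and $\tilde f(L_S^\circ) = (L_S')^\circ$. Now the key local model: fix $\tilde x \in L_S$ over $s$, and using property~\ref{property:local lifting} (applied to whichever of $L_S$, $L_S'$ is convenient, together with continuity of the distance function, Lemma~\ref{lemma. distance continuous}) choose local sections $\sigma_1, \sigma_2$ of $\pi$ through points $x_1 < \tilde x < x_2$ in the fiber $L_s^\circ$ with $x_1 + 1 > x_2$, exactly as in the proof of Proposition~\ref{prop:properly-discontinuous}. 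The ``interval'' neighborhood $\tilde U = \{ y : \sigma_1(\pi(y)) < y < \sigma_2(\pi(y)) \}$ is open in $L_S$ by Proposition~\ref{prop:section-orders-are-open}. I claim $\tilde f(\tilde U)$ is open in $L_S'$: since $\tilde f$ is $\RR$-equivariant, $\tilde f \circ \sigma_i$ is a local section of $\pi'$, and since $\tilde f$ is a fiberwise order isomorphism, $\tilde f(\tilde U) = \{ z : (\tilde f\sigma_1)(\pi'(z)) < z < (\tilde f\sigma_2)(\pi'(z)) \}$, which is open in $L_S'$ again by Proposition~\ref{prop:section-orders-are-open} (applied to the pair $(\pi':L_S'\to S,\mu')$). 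Such $\tilde U$ form a neighborhood basis of $\tilde x$ away from $\RR$-fixed points; to handle a fixed point $\tilde x$, one uses the analogous intervals $\sigma_1(\pi(y)) < y < \sigma_2(\pi(y))$ where now $\sigma_1, \sigma_2$ are local sections of $\pi$ with $\sigma_i(s)$ non-fixed and straddling $\tilde x$, still available by~\ref{property:local lifting} applied to nearby non-fixed points, noting that $\tilde x$ itself lies in such an interval. This shows $\tilde f$ is open, hence a homeomorphism.

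The step I expect to be the main obstacle is verifying that these order-interval sets genuinely form a neighborhood basis at every point, including the $\RR$-fixed points, and that the local sections needed can always be arranged with the required straddling and "$x_1 + 1 > x_2$" properties simultaneously over a common open $U \ni s$ — this is where properties~\ref{property:quotient-closed} and~\ref{property:separated} do real work (the former to keep the fixed-point locus locally closed so that fixed points are genuinely isolated in each fiber and do not accumulate, the latter via Proposition~\ref{prop:section-orders-are-open} and Lemma~\ref{lemma. distance continuous} to make the interval sets open and to shrink $U$ so that $\sigma_1 < \sigma_2$ persists). One should double-check that no separate argument is needed to see $\tilde f^{-1}$ is continuous beyond openness of $\tilde f$; since $\tilde f$ is a bijection, openness of $\tilde f$ is exactly continuity of $\tilde f^{-1}$, so the proof is complete once openness is established. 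A cleaner packaging, which I would adopt if it shortens the exposition, is to prove directly that $\bar f: L_S/\ZZ \to L_S'/\ZZ$ is a closed map using~\ref{property:quotient-closed} (via Lemma~\ref{lemma:closed-map-lemma} and the sets $K_\sigma$), deduce $\bar f$ is a homeomorphism as a closed continuous bijection, and then conclude for $\tilde f$ by the covering-space reduction above; both routes use the same ingredients, and I would present whichever requires the least additional bookkeeping about fixed points.
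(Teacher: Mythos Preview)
Your openness strategy is sound in outline, but there is a genuine gap at exactly the point you flag as the ``main obstacle'': you have not established that the order-interval sets $\tilde U = \{y : \sigma_1(\pi(y)) < y < \sigma_2(\pi(y))\}$ form a neighborhood basis in $L_S$. Your stated role for \ref{property:quotient-closed} --- keeping the fixed-point locus locally closed so fixed points do not accumulate --- is not what that property says (that is closer to \ref{property:unramified} together with \ref{property:fixed-point-set-discrete-in-line}), and in any case it does nothing toward the basis claim. What \ref{property:quotient-closed} actually delivers, via Lemma~\ref{lemma:closed-map-lemma}, is that $\pi$ restricted to a closed interval-tube $\{y : \sigma_1(\pi(y)) \leq y \leq \sigma_2(\pi(y))\}$ is a closed map to $U$. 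That is the input to a tube-lemma step: given open $W \ni \tilde x$, choose $x_1 < \tilde x < x_2$ with $[x_1,x_2] \subset W \cap L_s$, extend to sections over $U$, and shrink $U$ to the complement of the (closed) image of the part of the tube lying outside $W$. Without this step the basis claim is unjustified. The paper runs essentially the same argument but packages it as showing $\tilde f$ is \emph{closed}: given closed $A$ and $x' \notin \tilde f(A)$, it uses this tube-lemma step (together with a finite cover of the compact fiber-interval $[x_1,x_2]$) to produce an interval in $L_S$ disjoint from $A$, and then observes that its image under $\tilde f$ is the corresponding open interval in $L_S'$ --- by exactly your order-isomorphism computation with the transported sections $\tilde f \circ \sigma_i$.

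Separately, your proposed reduction to the $\ZZ$-quotients is invalid: $\tilde f$ is only assumed $\RR$-equivariant, and $\RR$-equivariance does \emph{not} force $\ZZ$-equivariance. Already on a single fiber, an $\RR$-equivariant order-preserving bijection of a broken paracycle is determined by a shift on the set of $\RR$-fixed points together with an independent $\RR$-translation parameter on each $\RR$-orbit; only when those parameters are periodic of period $n_L+1$ does the map commute with the $\ZZ$-action. So $\tilde f$ need not descend to a map $L_S/\ZZ \to L_S'/\ZZ$, and the covering-space shortcut is unavailable.
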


\begin{proof}
It will suffice to show that $\tilde f$ is a closed map. So let $A \subset L_S$ be a closed subset. Fix some point $x' \in L_{S'}$ not contained in the image $\tilde f(A)$. We wish to exhibit an open set $V' \subset L_{S'}$ containing $x'$ but disjoint from $\tilde f(A)$.

Let $x =\tilde f^{-1}(x')$ and consider the fiber $L_{\pi(x)} \subset L_S$, which by~\ref{property:fibers paracyclic} is a broken paracycle. In particular, the closed subset $A \cap L_{\pi(x)}$ can be identified as a disjoint union of (possibly unbounded) closed intervals. We choose points $x_1, x_2 \in L_{\pi(x)}$ such that  $x_1 < x < x_2$ and such that the open interval $(x_1,x_2)$ is disjoint from $A \cap L_{\pi(x)}$. We may assume that both $x_1, x_2$ are not $\RR$-fixed points by the discreteness of $\RR$-fixed points~\ref{property:fixed-point-set-discrete-in-line}.

By~\ref{property:local lifting}, we can choose sections $\sigma_i: U \to L_U^\circ$ for some open $U \subset S$ containing $\pi(x)$, with $\sigma_i(\pi(x)) = x_i$. We claim that, by shrinking $U$ to an open subset $W$ if necessary, we may assume that
	\eqn\label{eqn:V-W}
	V = \{ \text{ $y \in L_W$ such that $\sigma_1(\pi(y)) < y < \sigma_2(\pi(y))$} \} \subset L_W
	\eqnd
is disjoint from $A$. To see this, for every $y \in L_{\pi(x)}$ in the closed interval $[x_1,x_2]$, choose an open subset $V_y \subset L_S$ which is disjoint from $A$. By compactness of the closed interval $[x_1,x_2]$, a finite collection $\{V_{y_i}\}$ may be chosen to cover $[x_1,x_2]$. Moreover, note that there exists a finite integer $N$ for which $x_2 \leq x_1 + N$. Thus by an $N$-fold application of \ref{property:closed-prime} and Lemma~\ref{lemma:closed-map-lemma} and shrinking $U$ if necessary, the restriction of $\pi: L_U \to U$ to the set
	\eqnn
	K = \{ \text{ $y \in L_U$ such that $\sigma_1(\pi(y)) \leq y \leq \sigma_2(\pi(y))$} \}
	\eqnd
is a closed map. Letting $A_i = K \cap V_{y_i}^c$,---here $V_{y_i}^c$ is the complement of $V_{y_i}$---each $\pi(A_i)$ is closed in $U$, and hence so is the finite union $\cup_i \pi(A_i)  \subset U$. It follows that the complement of $\cup_i \pi_i(A_i)$ inside $U$ is open, and this complement clearly contains $\pi(x)$. Taking the $W$ of~\eqref{eqn:V-W} to be this complement, our claim about $V$ follows.

In particular, we observe that the set
	\eqnn
	\tilde f(V')
	=
	\{ y' \in (\pi')^{-1}(W)
	\text{ such that $\tilde f \sigma_0(\pi'(y')) < y' < \tilde f \sigma_1(\pi'(y'))$ }
	\}
	\eqnd
is disjoint from $\tilde f(A)$, contains $x'$, and is open by Proposition~\ref{prop:section-orders-are-open}. This shows that $\tilde f(A)$ is closed.
\end{proof}

\begin{remark}
Suppose that $L_S$ is not a family of broken paracycles, but of broken lines. (See the definition in~\cite{broken}). Then Proposition~\ref{prop:R-equiv-is-isom-of-families} still holds; the proof needs only minor modification, namely when $x$ is either an initial or terminal vertex of a fiber. In the initial case there is no need to choose either $x_1$ and one may simply look at the open subset of points that are strictly less than $\sigma_2$; likewise for the terminal case.
\end{remark}

\clearpage

\section{A quick review of some categorical techniques}\label{section:categorical-review}

Because we will be using some techniques from the theory of $\infty$-categories, let us review them briefly to make our proofs accessible to a broader audience. Those familiar with classical category theory will find these techniques to be virtually identical to those from the classical realm.

Let us remind the reader that the term $\infty$-category is synonymous with weak Kan complex, or quasi-category. These are simplicial sets satisfying the inner horn filling condition. For a more thorough introduction we refer the reader to the appendix of~\cite{nadler-tanaka} or to~\cite{htt}.

\begin{notation}\label{notation. groupoid completion}
Given any quasi-category $\cC$, the associated groupoid completion can be modeled by (the singular complex of) the geometric realization $|\cC|$. Informally, $|\cC|$ is the $\infty$-category obtained by inverting every morphism of $\cC$.
\end{notation}

\begin{notation}
Fix a functor $F: \cC \to \cD$.
We will denote a colimit and limit of $F$ by the symbols
	\eqnn
	\colimarrow_{\cC} F	
	\qquad \text{and} \qquad
	\limarrow_{\cC} F,
	\eqnd
respectively. The (redundant) arrows are meant to invoke that an example of a colimit is a so-called ``injective limit'' in algebra, while an example of a limit is a so-called ``projective limit'' or an ``inverse limit.''
\end{notation}

\begin{defn}[Final functors]
We say a functor $f: K \to K'$ between $\infty$-categories is {\em final} (or {\em left final} when seeking to further avoid ambiguity) if for any functor $G: K' \to \cC$, restriction along $f$ preserves colimits. More precisely, denoting $(K')^{\triangleright}$ by the $\infty$-category obtained by affixing a terminal object to $K'$, $f$ is final if and only if the restriction map
	\eqnn
	f^*: \Ffun( (K')^{\triangleright}, \cC) \to \Ffun( K^{\triangleright}, \cC)
	\eqnd
sends colimit diagrams to colimit diagrams. 

Dually, we call $f$ {\em initial}, or {\em right initial}, if and only if the restriction map
	\eqnn
	f^*: \Ffun( (K')^{\triangleleft}, \cC) \to \Ffun( K^{\triangleleft}, \cC)
	\eqnd
sends limit diagrams to limit diagrams.
\end{defn}

\begin{remark}
There are other characterizations of final and initial functors; see Proposition~4.1.1.8 and Definition~4.1.1.1 of~\cite{htt}. For us the main examples of final functors will be right adjoints, and the main examples of initial functors will be left adjoints. See Remark~\ref{remark:right-adjoints-are-final}.
\end{remark}

\begin{warning}
The literature is inconsistent about the use of the words final and cofinal. What we have called final is often called cofinal, perhaps because ``co''final functors preserve ``co''limits. 

At the same time, what we have called final functors $f: K \to K'$ do indeed have a final intuition, in that all objects in $K'$ eventually map to an object of $f(K)$; informally, $f$ has ``final'' image in $K'$. This is our reason for preferring the term final.

To relax the conflict, we will redundantly use the term {\em left} final; we feel that the adjective ``left'' may help remove ambiguity by following the intuition that ``left'' things play well with colimits.\footnote{The only exception here is the notion of left exactness from homological algebra, which plays well only with finite limits.} There is of course the unavoidable and possibly confusing fact that the prime examples of left final functors are right adjoints. 
\end{warning}

\subsection{Kan extensions}
Let $\cC, \cD, \cE$ be $\infty$-categories.
Fix functors $F: \cC \to \cE$ and $j: \cC \to \cD$.
Fix also the data of a pair
	\eqnn
	(j_! F, \eta)
	\eqnd
where $j_! F: \cD \to \cE$ is a functor, and $\eta$ is a natural transformation from $F$ to the composition $j_! F \circ j$.

\begin{defn}
A pair $(j_! F, \eta)$ is called a {\em left Kan extension of $F$ along $j$} if it is {\em initial} among all such pairs. 

By abuse, we will often call $j_! F$ a left Kan extension, with the data of $\eta$ implicit.
\end{defn}

\begin{remark}
By usual universal property arguments, a left Kan extension is unique up to contractible space of natural equivalences. 
\end{remark}

\begin{example}\label{example. left Kan along a point}
Let $\cD = \ast$ be the trivial category. Then $j_!F$ computes the colimit of $F$.
\end{example}

\begin{remark}\label{remark. left Kan extension formula}
When $\cE$ has all colimits, a left Kan extension always exists for any $F$ and any $j$. Moreover, one may compute the value of the left Kan extension on any object of $\cD$ as follows:
	\eqnn
	j_! F (d) \simeq \colimarrow_{x \in \cC_{/d}} F(x).
	\eqnd
Explicitly, the colimit is taken over the slice category; an object is the data of an object $x \in \cC$ together with a morphism $jx \to d$ in $\cD$.
\end{remark}

\begin{remark}\label{example. colimit of left Kan extension is colimit}
Left Kan extensions respect composition in the $j$ variable. That is,
	\eqnn
	(j' \circ j)_! F \simeq j'_!(j_! F). 
	\eqnd
Combining this fact with Example~\ref{example. left Kan along a point}, we see that 
	\eqnn
	\colimarrow_{\cC} F \simeq  \colimarrow_{\cD} j_! F.
	\eqnd
That is, the colimit of a left Kan extension is the original colimit.
\end{remark}

\begin{defn}
Dually, a right Kan extension is a pair 
	\eqnn
	(j_* F, \epsilon)
	\eqnd
where $\epsilon: j_*F \circ j \to F$ is a natural transformation, and the pair is terminal among such data. 

Or, letting $j^{\op}: \cC^{\op} \to \cD^{\op}$ and $F^{\op}: \cC^{\op} \to \cE^{\op}$ be the induced functors on opposite categories, the right Kan extension may be expressed as a left Kan extension:
	\eqnn
	j_*F = ((j^{\op})_! F^{\op})^{\op}.
	\eqnd
\end{defn}

\begin{example}\label{example:left-Kan-extension-along-left-adjoints}
Suppose that $j$ happens to be a left adjoint. Letting $R$ be the right adjoint to $j$, we have from Remark~\ref{remark. left Kan extension formula} that
	\eqnn
	j_! F (d)
	\simeq
	\colimarrow_{x \in \cC_{/d}} F(x)
	\simeq
	\colimarrow_{x \in \cC_{/Rd}} F(x)
	\simeq
	F(Rd).
	\eqnd 
The final two equivalences are justified by the following: First, the slice category $x \in \cC_{/d}$ is the $\infty$-category of pairs
	\eqnn
	(x \in \cC,
	f: j(x) \to d).
	\eqnd
By adjunction, this is equivalent to the $\infty$-category of pairs
	\eqnn
	(x \in \cC,
	g: x \to Rd);
	\eqnd
that is, to the slice category $\cC_{/Rd}$. Second, this slice category obviously has a final object given by $Rd$ itself, hence the colimit over $\cC_{/Rd}$ is computed by evaluating at $Rd$.

To summarize: When $j$ is a left adjoint, left Kan extensions along $j$ are readily computed as
	\eqnn
	j_! F \simeq F \circ R
	\eqnd
where $R$ is the right adjoint to $j$.
\end{example}

\begin{remark}\label{remark:right-adjoints-are-final}
Example~\ref{example:left-Kan-extension-along-left-adjoints}, combined with 
Example~\ref{example. colimit of left Kan extension is colimit},
shows that any right adjoint is a left final functor.
\end{remark}

\begin{example}\label{example. right kan extensions as composition with left adjoint}
Dually, if $j$ is a right adjoint to some functor $L: \cD \to \cC$, then the right Kan extension of $F$ along $j$ can be computed as
	\eqnn
	j_* F (d)
	\simeq
	\limarrow_{x \in \cC_{d/}} F(x)
	\simeq
	\limarrow_{x \in \cC_{Ld/}} F(x)
	\simeq
	F(Lx).
	\eqnd
So we have that $j_* F \simeq F \circ L$.
\end{example}

\subsection{The Grothendieck construction}\label{section. grothendieck construction}

Fix a functor $\alpha: \cC \to \inftyCat$ to the $\infty$-category of $\infty$-categories. Then  one can construct an $\infty$-category $\tilde \cC_\alpha$ equipped with a map 
	\eqnn
	p_\alpha: \tilde \cC_\alpha \to \cC
	\eqnd
satisfying the property of being a {\em coCartesian fibration}. $p_\alpha$ is related to $\alpha$ in the following way: Given any $x \in \cC$, the fiber $p_\alpha^{-1}(x)$ is equivalent to the $\infty$-category $\alpha(x)$. Moreover, for any edge $f: x \to y \in \cC$ and any $\tilde x \in p_\alpha^{-1}(x)$---where one may think of $\tilde x$ as an object in $\alpha(x)$---there exists an edge $\tilde f: \tilde x \to \tilde y$ where $\tilde y$ may be identified as the image of $\tilde x$ under the functors $\alpha(x)$, and the edge $\tilde f$ (in a precise sense) exhibits $\tilde y$ as this image up to contractible choice of equivalence. (The lingo is that $\tilde f$ is a coCartesian lift of $f$.)

This passage between functors $\alpha$ and coCartesian fibrations $p_\alpha$ is called the Straightening/Unstraightening construction. See~\cite{htt} for details.

There is also a story for functors $\alpha: \cC^{\op} \to \inftyCat$; these give rise to {\em Cartesian} fibrations $p_\alpha: \tilde \cC_{\alpha} \to \cC^{\op}$.

These constructions are generalizations of the classical Grothendieck construction, so we will abuse terminology and refer to the passage from functors to fibrations as the Grothendieck construction.

\subsection{Localizations}

\begin{remark}
When we say {\em subcategory} below, we mean a sub-$\infty$-category.
\end{remark}

\begin{defn}
Let $\cC$ be an $\infty$-category and $W \subset \cC$ a subcategory. The {\em localization of $\cC$ along $W$} is the pushout of $\infty$-categories
	\eqnn
	\xymatrix{
	W \ar[r] \ar[d] & \cC \ar[d] \\
	|W| \ar[r] & \cC[W^{-1}]
	}
	\eqnd
where $|W|$ is the groupoid completion of $W$.  (Notation~\ref{notation. groupoid completion}.)
\end{defn}

\begin{remark}\label{remark. universal property of localization}
$\cC[W^{-1}]$ satisfies the following universal property: For any $\infty$-category $\cD$, precomposition along $\cC \to \cC[W^{-1}]$ induces a fully faithful inclusion
	\eqnn
	\Ffun(\cC[W^{-1}], \cD)
	\to
	\Ffun(\cC, \cD)
	\eqnd
whose essential image can be identified with those functors $\cC \to \cD$ sending morphisms in $W$ to equivalences in $\cD$.
\end{remark}

One use of localizations is in computing colimits of categories:

\begin{theorem}\label{theorem. colimit of categories as localization}
Let $\alpha: \cC \to \inftycat$ be a functor, and let $p_\alpha: \tilde \cC_\alpha \to \cC$ be the associated coCartesian fibration (see Section~\ref{section. grothendieck construction}).

Then the colimit of $\alpha$ may be computed as the localization of $\tilde \cC_\alpha$ along the subcategory of coCartesian edges.

Dually, if $\tilde \cD_\alpha \to \cC^{\op}$ is the Cartesian fibration classifying $\alpha$, the colimit of $\alpha$ may be computed as the localization of $\tilde \cD_\alpha$ along the subcategory of Cartesian edges.
\end{theorem}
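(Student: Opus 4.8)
The plan is to reduce the Cartesian statement to the coCartesian one by passing to opposite categories, and to prove the coCartesian statement by a direct application of the universal property of localization (Remark~\ref{remark. universal property of localization}) together with the computation of colimits in $\inftycat$ via the straightening/unstraightening correspondence. First I would recall that, by the Grothendieck construction (Section~\ref{section. grothendieck construction}), the colimit of $\alpha: \cC \to \inftycat$ is the image of $\tilde\cC_\alpha \to \cC$ under the left adjoint to the inclusion of ``constant functors,'' i.e. the colimit of $\alpha$ agrees with the colimit of the constant-at-a-point diagram on the source $\cC$, glued along the coCartesian fibration. The cleanest route is: the functor $\cC \to \inftycat$ sending $\alpha$ to $\tilde\cC_\alpha$ identifies $\alpha$ with a left fibration-flavored gadget, and the colimit of $\alpha$ in $\inftycat$ is computed by inverting the coCartesian edges of $\tilde\cC_\alpha$ — this is the content one wants to extract from the unstraightening equivalence.

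The key steps, in order. (1) Let $L\colon \tilde\cC_\alpha \to \tilde\cC_\alpha[\,\text{coCart}^{-1}]$ denote the localization. I would first produce a canonical functor $\tilde\cC_\alpha[\,\text{coCart}^{-1}] \to \colimarrow_\cC \alpha$: the composite $\tilde\cC_\alpha \to \cC \xrightarrow{\text{colim cone}} \colimarrow_\cC\alpha$ carries coCartesian edges to equivalences (a coCartesian edge over $f\colon x\to y$ becomes, in the colimit, the image of an equivalence under the structure map of the colimit cocone, hence an equivalence), so by Remark~\ref{remark. universal property of localization} it factors through $L$. (2) Conversely, for every $x\in\cC$ the inclusion $\alpha(x)\simeq p_\alpha^{-1}(x)\hookrightarrow \tilde\cC_\alpha \to \tilde\cC_\alpha[\,\text{coCart}^{-1}]$ is compatible, up to coherent homotopy, with the functoriality maps $\alpha(f)$ — precisely because a coCartesian lift $\tilde f$ of $f$ witnesses $\alpha(f)$ and $\tilde f$ becomes invertible after localization. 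This assembles into a cocone under $\alpha$, hence a functor $\colimarrow_\cC\alpha \to \tilde\cC_\alpha[\,\text{coCart}^{-1}]$. (3) Check the two functors are mutually inverse: one composite is visibly the identity on $\colimarrow_\cC\alpha$ by the universal property; for the other, observe that both functors out of $\tilde\cC_\alpha[\,\text{coCart}^{-1}]$ agree after precomposing with the localization $L$ (they agree on $\tilde\cC_\alpha$, since there both send an object of $p_\alpha^{-1}(x)$ to its image in the colimit), and $L$ is essentially surjective, so by the fully faithful part of Remark~\ref{remark. universal property of localization} they agree. (4) Finally, for the dual statement: if $\tilde\cD_\alpha \to \cC^{\op}$ is the Cartesian fibration classifying $\alpha$, then $(\tilde\cD_\alpha)^{\op} \to \cC$ is the coCartesian fibration classifying $\alpha^{\op}\colon \cC \to \inftycat$ (the fiberwise opposite), Cartesian edges correspond to coCartesian edges under $(-)^{\op}$, and $\colimarrow_\cC \alpha \simeq (\limarrow_{\cC^{\op}}\alpha^{\op})^{\op}$ — wait, more directly: apply the already-proven coCartesian case to $(\tilde\cD_\alpha)^{\op}$, and use that $(-)^{\op}$ commutes with localization and with colimits in $\inftycat$ to transport the conclusion back.

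The main obstacle is Step~(2): producing the cocone under $\alpha$ requires genuinely coherent data, not just objectwise maps, and the honest justification is that $\tilde\cC_\alpha[\,\text{coCart}^{-1}]$ receives a map from $\tilde\cC_\alpha$ which, after unstraightening, corresponds to the terminal natural transformation from $\alpha$ to a constant functor — i.e. one is really invoking that unstraightening is an equivalence of $\infty$-categories and that it intertwines the ``localize the total space at coCartesian edges'' operation with the colimit. In a self-contained write-up I would either cite the relevant statement from~\cite{htt} (the colimit of a diagram of $\infty$-categories is computed by the localization of its unstraightening) or deduce it from the Bousfield–Kan / cofinality formalism; everything else is a formal consequence of the universal property of localization and the behavior of $(-)^{\op}$.
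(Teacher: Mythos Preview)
Your overall strategy differs from the paper's in the direction of reduction. The paper cites \cite{htt} (Corollary~3.3.4.3) for the \emph{Cartesian} case and then deduces the coCartesian case by exactly the opposite-category trick you use in Step~(4): given $p_\alpha: \tilde\cC_\alpha \to \cC$ coCartesian, its opposite $p_\alpha^{\op}: \tilde\cC_\alpha^{\op} \to \cC^{\op}$ is Cartesian, classifying $\beta(x) = \alpha(x)^{\op}$; one applies the cited Cartesian result to $\beta$ and uses that $(-)^{\op}$ is an autoequivalence of $\inftycat$ (hence preserves colimits and pushouts) to transport the conclusion. So the paper does one citation and one formal manipulation, and nothing else.

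Your direct approach to the coCartesian case has two issues. First, the map in Step~(1) is mis-described: there is no functor $\cC \to \colimarrow_\cC \alpha$ through which to compose---the colimit cocone is a natural transformation from $\alpha$ to a constant diagram, not a functor out of $\cC$. What you want is the map $\tilde\cC_\alpha \to \colimarrow_\cC \alpha$ obtained from the coCartesian fibration over $\cC^{\triangleright}$ classifying the colimit cocone, via coCartesian transport to the fiber over the cone point; but making this precise already requires the unstraightening machinery you are trying to avoid. Second, and more seriously, you correctly identify Step~(2) as the real content and then defer it to \cite{htt}; but if you are going to cite \cite{htt} anyway, it is more economical to cite what is actually proven there (the Cartesian case) and run the $(-)^{\op}$ reduction once, as the paper does. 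Your Steps~(1)--(3) do not add anything beyond a restatement of the universal properties unless Step~(2) is made honest, and making it honest is essentially re-proving the cited result.
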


We refer the reader to Corollary~3.3.4.3 of \cite{htt}. There, he proves the case for Cartesian fibrations. Given that, the case for coCartesian fibrations is proven as follows (though there are other proofs): 

\begin{proof}[Of the coCartesian case of Theorem~\ref{theorem. colimit of categories as localization}, given the Cartesian case]
Fix the functor $\alpha: \cC \to \inftycat$. We wish to compute its colimit by localizing the coCartesian fibration $p_\alpha: \tilde \cC_\alpha \to \cC$.

Consider the induced functor on opposite categories,
	\eqnn
	p_\alpha^{\op} : \tilde \cC_\alpha^{\op} \to \cC^{\op}.
	\eqnd
This is a Cartesian fibration classifying a {\em different} functor $\beta$, which informally sends $x \in \cC$ to the opposite category of $\alpha(x)$:
	$
	\beta(x) = \alpha(x)^{\op}.
	$
If $W \subset \tilde \cC_\alpha$ is the collection of coCartesian edges, then $W^{\op} \subset \tilde \cC_\alpha^{\op}$ is the collection of Cartesian edges. The localization along the Cartesian edges is a pushout of $\infty$-categories
	\eqnn
	\xymatrix{
	W^{\op} \ar[r] \ar[d] 
		&	\tilde \cC_{\alpha}^{op} \ar[d] \\
	|W^{\op}| \ar[r]
		& 	\tilde \cC_{\alpha}^{op}[(W^{\op})^{-1}].
	}
	\eqnd
We have equivalences
	\begin{align}
	(\tilde \cC_{\alpha}[W^{-1}])^{\op}
	& \simeq
		\tilde \cC_{\alpha}^{op}[(W^{\op})^{-1}] \nonumber\\
	& \simeq
		\colimarrow_{\cC} \beta \nonumber\\
	& =
		\colimarrow_{\cC} ( \alpha(-)^{\op} ) \nonumber	\\
	& \simeq
		\left(\colimarrow_{\cC} \alpha(-)\right)^{\op} \nonumber	
	\end{align}
where the second arrow is an equivalence by  Theorem~\ref{theorem. colimit of categories as localization}, and the other arrows are equivalences because $\op$ is an autoequivalence of the $\infty$-category $\inftycat$ (hence preserves pushouts---for the first line---and more generally preserves colimits indexed by $\cC$---for the last equivalence).

Again using that $\op$ is an autoequivalence, we conclude 
	\eqnn
	\tilde \cC_{\alpha}[W^{-1}]
	\simeq
	\colimarrow_{\cC} \alpha.
	\eqnd
\end{proof}

\begin{remark}
One of the fundamental asymmetries of life is the passage between the coCartesian fibration and Cartesian fibrations corresponding to $\alpha$. It sometimes happens that a localization of the Cartesian fibration is far easier to compute than that of the coCartesian fibration, or the reverse (even though both are equivalent). Indeed, in one of our applications, the Cartesian fibration is far easier to localize than the coCartesian fibration. (See Remark~\ref{remark:Conv-Deltasurj-adjunction}.)
\end{remark}

\begin{example}
Localizations often arise as left adjoints to fully faithful functors. That is, let $L: \cC \to \cD$ be left adjoint to a fully faithful inclusion $R: \cD \to \cC$. Let $W \subset \cC$ be the collection of morphisms that are sent to equivalences in $\cD$. Then for any $\infty$-category $\cE$, the collection of those functors 
	\eqnn
	\Ffun_{W^{-1}}(\cC,\cE) \subset \Ffun(\cC,\cE)
	\eqnd
sending $W$ to equivalences can be identified with those functors arising as right Kan extensions of functors $F: \cD \to \cE$ along $R$.
On the other hand, by Example~\ref{example. right kan extensions as composition with left adjoint}, a right Kan extension along $R$ is always equivalent to a precomposition by $L$. Thus we have that
	\eqnn
	\Ffun(\cD, \cE) \to
	\Ffun_{W^{-1}}(\cC,\cE)
	\qquad
	F \mapsto F \circ L
	\eqnd
is an equivalence. This proves that $\cD$ is a localization of $\cC$ along $W$, and the localization functor is exhibited by $L$ itself.
\end{example}

\clearpage

\section{A presentation of the stack $\brokenpara$}

\subsection{The stack $\brokenpara$}

\begin{defn}[$\brokenpara$]\label{defn. brokenpara}
We let $\brokenpara$ be the stack classifying families of broken paracycles.
\end{defn}

\begin{notation}\label{notation.Pt(brokenpara)}
Concretely, we let
	\eqnn
	\Pt(\brokenpara)
	\eqnd
denote the category whose objects are families of broken paracycles $L_S \to S$ and whose morphisms are maps as in Definition~\ref{defn. maps of families}. The forgetful functor
	\eqnn
	\Pt(\brokenpara)
	\to
	\Top,
	\qquad
	(L_S \to S) \mapsto S
	\eqnd
is a category fibered in groupoids. This identifies $\brokenpara(S)$ as the groupoid of families of broken paracycles over $S$; that is, a map
	\eqnn
	S \to \brokenpara
	\eqnd
is the same thing as specifying a family $L_S \to S$.
\end{notation}

Whether a pair $(\pi: L_S \to S, \pi)$ is a family of broken lines can be tested locally on $S$. Likewise a map of families of broken lines $L_S \to L_T$ may be constructed locally; hence we have:

\begin{prop}\label{prop:broken-is-a-stack}
$\brokenpara$ is a stack on the site of topological spaces (with values in groupoids).
\end{prop}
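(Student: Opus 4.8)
The plan is to verify that the presheaf of groupoids $S\mapsto\brokenpara(S)$ — the groupoid of families of broken paracycles over $S$, with isomorphisms as in Definition~\ref{defn. maps of families} — satisfies descent for open covers; by the Convention on stacks in the text this means checking that for every open cover $\{U_\alpha\}$ of $S$ the \v{C}ech nerve exhibits $\brokenpara(S)$ as the homotopy limit of the $\brokenpara(U_\alpha)$, i.e.\ that objects and isomorphisms both glue. The strategy is to split off the ``underlying topological data'' from the defining conditions \ref{property:fibers paracyclic}--\ref{property:local lifting}, glue the former by classical descent for structured spaces, and then observe that each of the latter is a condition \emph{local on the base} $S$, hence automatically inherited by the glued family.

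First I would glue the underlying data. Forgetting \ref{property:fibers paracyclic}--\ref{property:local lifting}, a ``pair $(\pi\colon L_S\to S,\mu)$'' is precisely a topological space over $S$ carrying a continuous fibrewise $\ZZ\times\RR$-action. Given descent data over $\{U_\alpha\}$ — pairs over each $U_\alpha$ and isomorphisms over the double overlaps satisfying the cocycle condition on triple overlaps — one glues the total spaces $L_{U_\alpha}$ along the prescribed isomorphisms to form $L_S\in\Top$ (the classical statement that $S\mapsto\Top_{/S}$ is a stack); the projections glue because continuous maps to a fixed base form a sheaf; and the action maps $(\ZZ\times\RR)\times L_{U_\alpha}\to L_{U_\alpha}$, agreeing over overlaps by equivariance of the transition isomorphisms, glue by the gluing lemma for continuous functions along the open cover $\{(\ZZ\times\RR)\times L_{U_\alpha}\}$ of $(\ZZ\times\RR)\times L_S$ — whereupon the associativity and unit axioms for $\mu$, being equalities of continuous maps, hold because they hold on each member of the cover $\{L_{U_\alpha}\}$ of $L_S$. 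Gluing of isomorphisms in $\brokenpara(S)$ is identical: such an isomorphism is a $\ZZ\times\RR$-equivariant homeomorphism over $S$ (Definition~\ref{defn. maps of families} with $f=\id_S$), and these form a sheaf on $\Top$ for the same reasons; the pullback-square clause in Definition~\ref{defn. maps of families}, being the assertion that a map is a homeomorphism onto a fibre product, is likewise testable on an open cover of the base.

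Next I would check that \ref{property:fibers paracyclic}--\ref{property:local lifting} are local on $S$, using that for open $U\subseteq S$ one has $L_U=\pi^{-1}(U)$, $L_U/\ZZ=(L_S/\ZZ)|_U$, $L_U^\RR=L_S^\RR\cap L_U$, $L_U^\circ=L_S^\circ\cap L_U$ and $L_U\times_U L_U=(L_S\times_S L_S)|_{\pi^{-1}(U)}$. Property \ref{property:fibers paracyclic} is fibrewise, hence unchanged under restriction and detectable on a cover; \ref{property:unramified} is, by Remark~\ref{remark. unramified}, already phrased ``locally on $S$''; and \ref{property:local lifting} is by definition a condition at each point of $L_S^\circ$ asking for a section over a neighbourhood in $S$, so it is visibly local. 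The only clauses requiring an argument are \ref{property:quotient-closed} and \ref{property:separated}, and here I would invoke the elementary fact that closedness of a continuous map (resp.\ of a subset) can be checked on an open cover of the target (resp.\ of the ambient space): if $f\colon X\to Y$ and $\{V_\beta\}$ is an open cover of $Y$ with each $f^{-1}(V_\beta)\to V_\beta$ closed, then for closed $A\subseteq X$ one has $f(A)\cap V_\beta=f\bigl(A\cap f^{-1}(V_\beta)\bigr)$, which is closed in $V_\beta$, whence $f(A)$ is closed — and the same identity handles subsets. Applying this to $L_S/\ZZ\to S$ gives locality of \ref{property:quotient-closed}, and to $\{(x,y)\colon x\leq_L y\}\subseteq L_S\times_S L_S$ gives locality of \ref{property:separated}.

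Assembling these: given a cover and descent data, the gluing step produces $(L_S\to S,\mu)$, and since each restriction to $U_\alpha$ satisfies \ref{property:fibers paracyclic}--\ref{property:local lifting} and these are local on $S$, so does $L_S\to S$, so it is an object of $\brokenpara(S)$; together with the sheaf property for isomorphisms this is exactly the asserted descent. I expect essentially no serious obstacle: the one genuinely non-formal ingredient is the point-set lemma on locality of closedness used for \ref{property:quotient-closed} and \ref{property:separated}, and everything else is the standard descent package for structured spaces over a base — which is precisely the content of the remark immediately preceding the proposition in the text.
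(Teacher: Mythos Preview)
Your proposal is correct and follows exactly the approach the paper indicates: the paper gives no proof beyond the one-sentence remark immediately preceding the proposition (``Whether a pair $(\pi: L_S \to S, \mu)$ is a family of broken lines can be tested locally on $S$. Likewise a map of families of broken lines may be constructed locally; hence we have:''), and your argument is a careful unpacking of that assertion --- gluing the underlying structured space over $S$ by classical descent, and then verifying that each of \ref{property:fibers paracyclic}--\ref{property:local lifting} is a condition local on $S$, with the only non-tautological step being the point-set lemma that closedness of a map (resp.\ subset) is detected on an open cover of the target (resp.\ ambient space).
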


In this section, we present $\brokenpara$ as a colimit of more easily understood stacks.

Let us first give some motivation. Fix a topological group $G$. A section of a principle $G$-bundle trivializes the $G$-bundle, and (by definition) local sections always exist for $G$-bundles. This is what allows us to present the stack of $G$-bundles beginning with a cover $\ast \to BG$. 

The analogue of a trivializing section in our setting will be an {\em $I$-section}. We will show that all families of broken paracycles admit a local $I$-section (Lemma~\ref{lemma:local-I-sections-exist}); hence the stacks classifying families equipped with an $I$-section form a cover of $\broken$. We will denote these stacks by $\broken^I$.

When understanding fiber products of this cover over $\broken$, we will naturally be led to consider $I$-sections when $I$ is not only a parasimplex, but a paracyclic {\em preorder}; we will collect these into a category $\Preordpara$. (See Definition~\ref{defn:preordpara}.) When all is said and done, we will obtain a functor 
	\eqnn
	\brokenpara^\bullet: \Preordpara^{\op} \to \Stacks_{/\brokenpara},
	\qquad
	I \mapsto (\brokenpara^I \to \brokenpara)
	\eqnd
where the map $\brokenpara^I \to \brokenpara$ is the forgetful map sending a family equipped with an $I$-section to the underlying family.

Our goal in this section is to prove the following:

\begin{theorem}\label{theorem. brokenpara as a colimit}
The map
	\eqnn
	\colimarrow_{I \in \Preordpara^{\op}} \brokenpara^I \to \brokenpara
	\eqnd
is an equivalence in the $\infty$-category of $\Kan$-valued sheaves on $\Top$. that is, $\brokenpara$ is the colimit of the stacks $\brokenpara^I$.
\end{theorem}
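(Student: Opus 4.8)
The plan is to follow the standard recipe for presenting a stack as a colimit of a cover indexed by a combinatorial category, exactly parallel to the treatment of $\broken$ in~\cite{broken}. The key input is that the forgetful maps $\brokenpara^I \to \brokenpara$ are jointly a cover (in the topology on $\Top$): every family of broken paracycles admits, locally on the base, an $I$-section for some parasimplex $I$. This is the content of the not-yet-proved Lemma~\ref{lemma:local-I-sections-exist}, which I would invoke. Granting that, the colimit $\colimarrow_{I} \brokenpara^I$ maps to $\brokenpara$, and the claim is that this map is an equivalence of $\Kan$-valued sheaves.

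The main step is to understand the iterated fiber products of the cover, i.e. to identify $\brokenpara^{I_0} \times_{\brokenpara} \cdots \times_{\brokenpara} \brokenpara^{I_k}$. A family equipped with an $I_0$-section and an $I_1$-section simultaneously determines, fiberwise, a pair of ``labelings'' by the preorders $I_0$ and $I_1$, and comparing them produces a span or correspondence of preorders; this is precisely why one is forced to enlarge $\Deltapara$ to the category $\Preordpara$ of paracyclic preorders. So the first technical task is to check that $\brokenpara^\bullet$ is a genuine functor $\Preordpara^{\op} \to \Stacks_{/\brokenpara}$ (pullback of $I$-sections along maps of preorders), and that the fiber products of the cover over $\brokenpara$ are computed within the image of this functor — i.e. that the $\v{C}ech$ nerve of the cover is the simplicial object obtained by applying $\brokenpara^\bullet$ to the appropriate simplicial object of $\Preordpara$. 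Concretely, one shows $\brokenpara^{I_0}\times_{\brokenpara}\brokenpara^{I_1} \simeq \coprod_{J} \brokenpara^{J}$ where $J$ ranges over preorders receiving compatible maps, or more precisely one identifies the relevant diagram so that its colimit computes the geometric realization of the $\v{C}ech$ nerve. Then, since the $\infty$-category of $\Kan$-valued sheaves on $\Top$ is an $\infty$-topos (in particular colimits are universal and groupoid objects are effective), descent says that $\brokenpara$ is the colimit (geometric realization) of the $\v{C}ech$ nerve of any cover, and rewriting that $\v{C}ech$ nerve via $\brokenpara^\bullet$ lets us reindex the colimit over $\Preordpara^{\op}$ rather than over the simplex category. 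The reindexing is a standard cofinality argument: the canonical functor from $\Delta^{\op}$ (or the $\v{C}ech$-nerve diagram) into $\Preordpara^{\op}$ built from the cover data is shown to be left final, so the two colimits agree.

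I expect the main obstacle to be the bookkeeping in the second step: precisely matching up the $\v{C}ech$ nerve of the cover $\coprod_I \brokenpara^I \to \brokenpara$ with a diagram built functorially from $\Preordpara$, and verifying the cofinality statement that lets one replace the simplicial indexing by the indexing over $\Preordpara^{\op}$. This is where the choice of $\Preordpara$ (rather than $\Deltapara$) is essential — an intersection of $I_0$- and $I_1$-sections is only a preorder, not a parasimplex — and one must check that every such preorder, together with its maps to the $I_j$, actually arises and that the resulting category of ``witnesses'' has contractible nerve over each simplex of the $\v{C}ech$ nerve, so that the colimit is unchanged. The remaining ingredients are comparatively routine: that $\brokenpara$ is a sheaf is Proposition~\ref{prop:broken-is-a-stack}; that each $\brokenpara^I$ is a sheaf (indeed representable by a stratified space, to be established later) is cited; and that $I$-sections exist locally is Lemma~\ref{lemma:local-I-sections-exist}. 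So modulo those, the proof is: \emph{(i)} produce the functor $\brokenpara^\bullet$ and the natural map $\colimarrow_I \brokenpara^I \to \brokenpara$; \emph{(ii)} identify the fiber products of the cover with values of $\brokenpara^\bullet$; \emph{(iii)} invoke descent in the $\infty$-topos of $\Kan$-valued sheaves together with the cofinality reindexing to conclude the map is an equivalence.
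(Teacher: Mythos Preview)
Your overall strategy is sound and would work, but it differs from the paper's organization in a way worth noting. The paper does \emph{not} compute the full \v{C}ech nerve and then reindex via cofinality. Instead, it proves directly that the colimit map is both an effective epimorphism (your step (i), via local $I$-sections) and a \emph{monomorphism}, by showing that the relative diagonal
\[
\colimarrow_{I} \brokenpara^I \longrightarrow \left(\colimarrow_{I_0}\brokenpara^{I_0}\right)\times_{\brokenpara}\left(\colimarrow_{I_1}\brokenpara^{I_1}\right)
\]
is an equivalence. This only requires understanding the \emph{pairwise} fiber product, not all iterated ones; the key lemma (Lemma~\ref{lemma.Amalg-colimit}) identifies $\brokenpara^{I}\times_{\brokenpara}\brokenpara^{J}$ as $\colimarrow_{K\in\Amalg(I,J)^{\op}}\brokenpara^{K}$, where $\Amalg(I,J)$ is a \emph{poset} of amalgam preorder structures on $I\coprod J$---so your guess of a coproduct $\coprod_J\brokenpara^J$ is too naive, as you seem to suspect. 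The monomorphism argument then collapses via a short chain of adjunctions: the join map $\Amalg(I,J)\to\PreordZ$ has a right adjoint (diagonal), and the forgetful map $\widetilde\Amalg\to\PreordZ$ has one too, so the double colimit reduces to the single one.

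One further technical point you omit: the paper first proves the statement with $\PreordZ$ (allowing preorders $I$ with $I/{\sim_I}$ a point, for which $\brokenpara^I=\emptyset$) in place of $\Preordpara$, because the adjunctions live naturally there; it then passes to $\Preordpara$ by observing that the functor on $\PreordZ^{\op}$ is the left Kan extension of its restriction to $\Preordpara^{\op}$, so the colimits agree. Your \v{C}ech-nerve-plus-cofinality plan would ultimately require equivalent ingredients (the amalgam posets and their finality properties), but the paper's relative-diagonal route is shorter because it never leaves the pairwise case.
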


\begin{remark}
What use is presenting a stack as a colimit of other stacks? 
We will see that each $\broken^I$ is (representable by) an honest topological space. We will prove this in Theorem~\ref{theorem. I-sections representable}, and this will eventually allow us to compute the $\infty$-categories of constructible sheaves on these spaces. Moreover, a colimit of stacks exhibits a limit of categories of sheaves; this will allow us to prove our main theorem, Theorem~\ref{theorem. main}.
\end{remark}

\subsection{Paracyclic preorders}

Let us now introduce the main combinatorial, organizing tools.

\begin{defn}\label{defn:Z-preorder}
Recall that a {\em preorder} $(I, \leq_I)$ is a set $I$ equipped with a relation $\leq \subset I \times I$ such that for any $i \in I$ we have $i \leq i$, and for any triplet $i,j,k \in I$, we have that $i \leq j \leq k \implies i \leq k$. A preorder is called linear if for any pair $i,j \in I$, we have $i \leq j$ or $j \leq i$ (or both).

A {\em $\ZZ$-equivariant} linear preorder is a countable, linear preorder $I$ equipped with a free, relation-preserving $\ZZ$-action $\ZZ \times I \to I$. That is,
	\begin{itemize}
	\item $i \leq j \implies i + 1 \leq j + 1$.
	\item $i \leq i + 1$.
	\end{itemize}
Finally, a {\em paracyclic preorder} is a $\ZZ$-equivariant preorder $I$ such that
	\begin{itemize}
	\item For any $i \in I$, $i$ is strictly less than $i+1$. That is, $i \leq i+1$, but $i+1 \not \leq i$.
	\end{itemize}
\end{defn}

\begin{notation}\label{notation:I-relation}
Given a preorder $I$, let $\sim_I$ denote the equivalence relation where $i \sim_i i'$ if and only if $i \leq i'$ and $i' \leq i$. 
\end{notation}

\begin{defn}\label{defn. essentially surjective}
We say that a map of preorders $r: I \to J$ is {\em essentially surjective} if the induced function $I/\sim_I \to J/\sim_J$ is a surjection.
\end{defn}

\begin{definition}\label{defn:preordpara}
We let $\PreordZ$ denote the category whose objects are (countable) $\ZZ$-equivariant linear preorders, and whose morphisms are $\ZZ$-equivariant maps $r: I \to J$ weakly preserving $\leq$, and which are essentially surjective.

We let $\Preordpara \subset \PreordZ$ denote the  full subcategory whose objects are paracyclic preorders.
\end{definition}

\subsection{$I$-sections}

\begin{defn}\label{defn:I-section}
Let $I$ be a paracyclic preorder and $L_S \to S$ a family of broken paracycles.

An {\em $I$-section} is a map $\sigma: S \times I \to L_S^\circ$ such that
\enum
\item\label{item:ISectionIsASection} For each $i \in I$, the map $s \mapsto \sigma(s,i)$ is a continuous section,
\item\label{item:ISectionOntoPiNought} For every $s \in S$, the map $I \to (L_s^{\circ})/\RR$ is a surjection,
\item\label{item:ISectionDistanceOrderCompatible} For every $s \in S$, if $i \geq i'$ in $I$, then $d(\sigma(s,i) , \sigma(s,i')) > -\infty$ (see Notation~\ref{notation. distance}), and 
\item\label{item:ISectionZEquivariant} $\sigma$ is $\ZZ$-equivariant. That is, $\sigma(s,i +1) = \sigma(s,i) + 1$.
\enumd
\end{defn}

\begin{remark}\label{remark:I-sections-empty-for-groupoids}
Let $L_S \to S$ be a family of broken paracycles and let $I$ be a $\ZZ$-equivariant linear preorder. We have the relation $\sim_I$ as defined in Notation~\ref{notation:I-relation}. Note that one could define the notion of an $I$-section in this generality; however, if the relation $\sim_I$ is trivial---in the sense that $(I / \sim_I) \cong \ast$ is a single point---then the set of $I$-sections of $L_S \to S$ is empty. Note that this is compatible with Remark~\ref{remark:I-sections-empty-for-groupoids}.
\end{remark}

\begin{remark}[$I$-sections pull back]\label{remark. I sections pullback}
Let $\sigma: T \times I \to L_T$ be an $I$-section as in Definition~\ref{defn:I-section} and let fix a map of families of broken paracycles as in Definition~\ref{defn. maps of families}. Then the induced map $S \times I \to L_S$ is an $I$-section.
\end{remark}

\begin{remark}[Functoriality in $I$ variable]\label{remark: I sections functorial}
Let $L_T \to T$ be a family of broken paracycles, let $\sigma: T \times J \to L_T$ be a $J$-section, and fix an essentially surjective map $r: I \to J$. Then the induced map $\sigma \circ (\id_T \times r) : T \times I \to L_T$ is an $I$-section.
\end{remark}

$I$-sections give us a simple criterion for concluding certain maps are isomorphisms of families:

\begin{prop}\label{prop:R-equiv-is-isom-of-families}
Fix a paracyclic preorder $I$, and fix two families of broken paracycles equipped with $I$-sections
	\eqnn
	(L_S \to S, \sigma)
	\qquad
	\text{and}
	\qquad
	(L_T \to T, \tau).
	\eqnd
If we have a commutative diagram of continuous maps
	\eqnn
	\xymatrix{
	L_S \ar[r]^{\tilde f} \ar[d] & L_T \ar[d] \\
	S \ar[r]^f &T
	}
	\eqnd
such that
	\enum
	\item  $f$ is a homeomorphism, 
	\item $\tilde f$ is $\RR$-equivariant, and
	\item for all $i \in I$, we have
		$
		\tilde f \circ \sigma_i = \tau_i \circ f
		$.
	\enumd
Then $(f,\tilde f)$ is an isomorphism of families of broken paracycles. (That is, $\tilde f$ is also a homeomorphism.)
\end{prop}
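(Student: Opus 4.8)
Since the hypotheses impose no $\ZZ$-equivariance on $\tilde f$, I first straighten out the base. Form the pullback family $f^*L_T\to S$; by Example~\ref{example:pullback} this is again a family of broken paracycles, and its $I$-section $\bar\tau$ obtained by pulling back $\tau$ makes $(f^*L_T\to S,\bar\tau)$ into a family-with-$I$-section (Remark~\ref{remark. I sections pullback}). The maps $\tilde f$ and the projection $L_S\to S$ induce a map $\bar f\colon L_S\to f^*L_T$ over $S$ that is continuous, $\RR$-equivariant, and satisfies $\bar f\circ\sigma_i=\bar\tau_i$ for all $i$. Because $f$ is a homeomorphism, the canonical map $f^*L_T\to L_T$ is a homeomorphism commuting with everything in sight, so $\tilde f$ is a homeomorphism (resp.\ $\ZZ$-equivariant, resp.\ exhibits $L_S$ as a pullback) if and only if $\bar f$ is. Hence we may assume $S=T$ and $f=\id_S$, and must show $\bar f$ is a $\ZZ$-equivariant homeomorphism.

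\textbf{Step 2: reduce to fibers, and to a bijection.} A map over $S$ between families is a bijection (resp.\ $\ZZ$-equivariant) if and only if each of its restrictions to a fiber $\bar f_s\colon L_s\to (f^*L_T)_s$ is. Moreover, once we know $\bar f$ is a continuous, $\RR$-equivariant bijection, Proposition~\ref{prop:R-equivariant-is-homeo} applies (both families are genuine families of broken paracycles, so they satisfy the hypotheses there) and shows $\bar f$ is a homeomorphism; a homeomorphism over $\id_S$ automatically exhibits $L_S$ as the pullback, so together with $\ZZ$-equivariance this gives an isomorphism of families in the sense of Definition~\ref{defn. maps of families}. Thus everything reduces to the following statement about a single broken paracycle: \emph{if $\phi\colon L\to L'$ is continuous and $\RR$-equivariant and $\phi\circ\sigma=\tau$, where $\sigma\colon I\to L^\circ$ and $\tau\colon I\to (L')^\circ$ are $\ZZ$-equivariant maps with $I\to L^\circ/\RR$ and $I\to (L')^\circ/\RR$ surjective, then $\phi$ is a $\ZZ$-equivariant bijection.}

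\textbf{Step 3: bijectivity via the orbit structure.} This is the technical heart. As in~\cite{broken}, directedness~\ref{property:directed-action-ordering} together with discreteness of the fixed locus~\ref{property:fixed-point-set-discrete-in-line} forces each connected component of $L^\circ$ to be a single $\RR$-orbit of the form $(a,b)$ with $a<b$ consecutive points of $L^\RR$: the orbit of a non-fixed point has trivial stabilizer, is order-isomorphic to $\RR$, and its two limit points are $\RR$-fixed, hence consecutive. By Remark~\ref{remark:R-equivariance-is-order-preserving}, $\phi$ is weakly order-preserving. Fix a component $(a,b)$ of $L^\circ$; by surjectivity of $I\to L^\circ/\RR$ (condition~\ref{item:ISectionOntoPiNought} for $\sigma$) there is $i$ with $\sigma(i)\in(a,b)$, and $\phi(\sigma(i))=\tau(i)$ lies in $(L')^\circ$, so it is not $\RR$-fixed; $\RR$-equivariance then sends $t\cdot\sigma(i)\mapsto t\cdot\tau(i)$, a bijection of $(a,b)$ onto the $\RR$-orbit of $\tau(i)$. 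Thus $\phi$ carries each component of $L^\circ$ bijectively onto a component of $(L')^\circ$; weak monotonicity and a squeeze argument (if two consecutive components mapped to the same component $O'$, the separating fixed point would have to satisfy both $\phi(a)\ge\sup O'$ and $\phi(a)\le\inf O'$) show distinct components have distinct images, and condition~\ref{item:ISectionOntoPiNought} for $\tau$ shows every component of $(L')^\circ$ is hit. So $\phi$ restricts to a bijection $L^\circ\to (L')^\circ$ inducing a strictly monotone order-bijection of components; consecutive components go to consecutive components, and by continuity the separating fixed point goes to the separating fixed point, giving a bijection $L^\RR\to (L')^\RR$ as well. Hence $\phi$ is a bijection.

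\textbf{Step 4: $\ZZ$-equivariance, and conclusion.} Put $\psi(x):=\phi(x+1)-1$. Since the $\ZZ$- and $\RR$-actions commute, $\psi$ is again continuous and $\RR$-equivariant, and $\psi\circ\sigma=\tau$ because $\sigma(i+1)=\sigma(i)+1$ and $\tau(i+1)=\tau(i)+1$ (condition~\ref{item:ISectionZEquivariant}). Now a continuous $\RR$-equivariant map out of $L$ is determined by its restriction to $\sigma(I)$: on each component of $L^\circ$ it is determined by its value at one point, and such a point lies in $\sigma(I)$ by condition~\ref{item:ISectionOntoPiNought}, so the map is determined on the dense subset $L^\circ$, hence on $L$. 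Since $\phi$ and $\psi$ agree on $\sigma(I)$, we get $\phi=\psi$, i.e.\ $\phi(x+1)=\phi(x)+1$. Combining Steps 3 and 4 with Step 2 finishes the proof. The main obstacle is Step 3 — establishing that the components of $L^\circ$ are full $\RR$-orbits bounded by consecutive fixed points, and then the bookkeeping matching components and fixed points of $L$ with those of $L'$; the rest is formal manipulation of pullbacks and an appeal to Proposition~\ref{prop:R-equivariant-is-homeo}. (Note that conditions~\ref{item:ISectionIsASection} and~\ref{item:ISectionDistanceOrderCompatible} of Definition~\ref{defn:I-section} are not needed here; only surjectivity onto $L^\circ/\RR$ and $\ZZ$-equivariance of the sections are used.)
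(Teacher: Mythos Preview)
Your proof is correct and follows the same overall architecture as the paper: reduce to showing $\tilde f$ is a fiberwise bijection, then invoke Proposition~\ref{prop:R-equivariant-is-homeo}. Two differences are worth noting.

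First, your Step~3 bijectivity argument is considerably more elaborate than the paper's. The paper handles injectivity in two lines: if $\tilde f(x)=\tilde f(x')$ with $x<x'$, then by weak monotonicity $[x,x']$ collapses to a point, and by $\RR$-equivariance the entire $\RR$-orbit of $[x,x']$ collapses to an $\RR$-fixed point; but this orbit contains some $\sigma(i)$, whence $\tilde f(\sigma(i))=\tau(i)$ is fixed, contradicting $\tau(i)\in L_T^\circ$. Surjectivity is equally brief: $\tau$ hits every $\RR$-orbit of $L_T^\circ$, so by $\RR$-equivariance $\tilde f$ surjects onto $L_T^\circ$, hence onto $L_T$ by continuity and density. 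Your component-by-component bookkeeping reaches the same conclusion but with more work; the paper's collapsing argument is worth internalizing.

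Second, you go further than the paper by supplying Step~4, establishing $\ZZ$-equivariance of $\tilde f$. The paper's proof does not address this, apparently reading the parenthetical ``That is, $\tilde f$ is also a homeomorphism'' as the full content of the conclusion. Strictly speaking, an isomorphism of families (Definition~\ref{defn. maps of families}) requires $\ZZ\times\RR$-equivariance, so your Step~4 fills a genuine (if minor) gap. Your argument there---that a continuous $\RR$-equivariant map out of $L$ is determined on $\sigma(I)$ because $L^\circ$ is dense and each of its components meets $\sigma(I)$---is clean and correct.
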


\begin{proof}
By Proposition~\ref{prop:R-equivariant-is-homeo}, all we need to prove is that $\tilde f$ is a bijection on each fiber.

The map $\tilde f$ is a surjection as follows: First suppose $y \in L_T^\circ$ is not a fixed point. By Definition~\ref{defn:I-section} there is some $i \in I$ such that $\tilde f (\sigma_i(f^{-1}(\pi(y))))$ is in the $\RR$-orbit of $y$. By $\RR$-equivariance, $y$ is thus in the image of $\tilde f$. By continuity, any $\RR$-fixed point of $L_T$ is also in the image of $\tilde f$.

To prove $\tilde f$ is an injection, suppose $\tilde f(x) = \tilde f(x')$. Because $\tilde f$ is non-decreasing by Remark~\ref{remark:R-equivariance-is-order-preserving}, if $x \neq x'$, assume $x < x'$ without loss of generality. We conclude that the entire closed interval $[x,x']$ is collapsed under $\tilde f$; by $\RR$-equivariance, the $\RR$-orbit of $[x,x']$ is collapsed to an $\RR$-fixed point of $L_T$. The $\RR$-orbit of $[x,x']$ must contain the image of some $\sigma$ because $\sigma$ surjects onto $L_S^\circ/\RR$; we arrive at a contradiction, because $\tilde f$'s compatibility with $\sigma$ and $\tau$ violates the hypothesis that $\tau$ has image in $L_T^\circ$. Thus $\tilde f$ is an injection.
\end{proof}

\begin{notation}[$\brokenpara^I$]
Let us denote by
	\eqnn
	\brokenpara^I
	\eqnd
the stack which assigns to a space $S$ the groupoid of pairs $(L_S\to S, \sigma)$ where $L_S \to S$ is a family of broken paracycles over $S$ and $\sigma$ is an $I$-section.

Concretely,
	\eqnn
	\Pt(\brokenpara^I)
	\eqnd
has objects given by pairs $(L_S \to S, \sigma: S \times I \to L_S)$. A morphism to $(L_T \to T, \tau: T \times I \to L_T)$ is given by a map as  in Definition~\ref{defn. maps of families},
	\eqnn
	\xymatrix{
	L_S \ar[r]^{\tilde f} \ar[d] & L_T \ar[d] \\
	S \ar[r]^f & T
	}
	\eqnd 
such that the map is compatible with $\sigma$ and $\tau$; that is,
	\eqnn
	\tau\circ (f \times \id_I) = \tilde f \circ \sigma.
	\eqnd
\end{notation}

\begin{remark}\label{remark:Preord^I-functor}
We have a functor
	\eqnn
	\Preordpara^{\op} \to \Stacks_{/\brokenpara},
	\qquad
	I \mapsto (\brokenpara^I \to \brokenpara).
	\eqnd
This follows from Remark~\ref{remark: I sections functorial}.
\end{remark}

\begin{remark}\label{remark:PreordIleftKanextension}
Moreover, this functor extends to
	\eqnn
	\PreordZ^{\op} \to \Stacks_{/\brokenpara}
	\eqnd
by left Kan extension along the inclusion $\Preordpara^{\op} \to \PreordZ^{\op}$. Concretely, if $I$ is such that $I/\sim_I$ is a singleton (see Notation~\ref{notation:I-relation}), then $\brokenpara^I = \emptyset$ is the empty stack.
\end{remark}

\subsection{$I$-sections always exist locally}

To exhibit the map $\ast \to BG$ as a cover of the stack $BG$, one invokes the local triviality condition of $G$-bundles. (Any $G$-bundle is locally trivial, hence any map $X \to BG$ locally factors through a point.) When writing the \v{C}ech nerve for this cover and exhibiting $BG$ as the associated colimit, this covering property is used to prove that, for any test space $S$, the colimit map is an essentially surjective functor between the groupoids associated to $S$.

We will likewise claim that the natural maps $\brokenpara^I \to \brokenpara$ form a cover of $\brokenpara$.\footnote{In fact, there are ``relations;'' we will exhibit $\brokenpara$ as a colimit indexed over all $I$, and there will be non-trivial maps between $I$.} To this end, we prove:

\begin{lemma}\label{lemma:local-I-sections-exist}
Let $L_S \to S$ be a family of broken paracycles. Then for any $s \in S$, there exists a parasimplex $I$ and a neighborhood $s \in U$ such that $L_U$ admits an $I$-section.
\end{lemma}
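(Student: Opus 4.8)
The plan is to build an $I$-section fiberwise first, then spread it out over a neighborhood using the local lifting property~\ref{property:local lifting}. Fix $s \in S$ and consider the fiber $L_s$, which by~\ref{property:fibers paracyclic} is a broken paracycle. Let $n = n_{L_s}$ (Notation~\ref{notation:n_L for broken line L}), so that $L_s/\ZZ$ has exactly $n+1$ $\RR$-fixed points, and hence $L_s^\circ/\RR$ is a set of $n+1$ points carrying a natural cyclic order (it indexes the open intervals of the circle $L_s/\ZZ$, equivalently the open intervals of $L_s$ lying between consecutive $\RR$-fixed points). Take $I$ to be the parasimplex $\ZZ \times [n]$ of Example~\ref{example.paracyclics are simplicial}; there is an obvious $\ZZ$-equivariant identification of $I/\ZZ \cong [n]$ with $L_s^\circ/\RR$ as cyclically ordered sets. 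I would then pick, for each class in $L_s^\circ/\RR$, a representative point in $L_s^\circ$ — one point $\tilde x_0, \ldots, \tilde x_n$ in each of the $n+1$ open intervals of a fundamental domain $[p, p+1]$ for the $\ZZ$-action (where $p \in L_s^\RR$), chosen so that $p <_{L_s} \tilde x_0 <_{L_s} \cdots <_{L_s} \tilde x_n <_{L_s} p+1$ — and extend $\ZZ$-equivariantly to define a map $I \to L_s^\circ$. This is condition~\ref{item:ISectionOntoPiNought} (surjectivity onto $L_s^\circ/\RR$) by construction, and~\ref{item:ISectionZEquivariant} holds by the $\ZZ$-equivariant extension. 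For~\ref{item:ISectionDistanceOrderCompatible}: since within a single fundamental domain all the $\tilde x_i$ lie strictly between two consecutive fixed points $p$ and $p+1$, for $i \geq i'$ in $I$ the relevant translation distance $d(\sigma(\tilde x_i), \sigma(\tilde x_{i'}))$ is finite — the only way to get $-\infty$ is for an $\RR$-fixed point to lie strictly between the two chosen points, which does not happen among points in the same open interval, and for points $i+k$ vs.\ $i'$ in different fundamental domains the $\ZZ$-translate argument together with~\ref{property:fixed-point-set-discrete-in-line} keeps the distance finite. (Here one uses that $I = \ZZ \times [n]$ is totally ordered, so $\sim_I$ is trivial as a relation making distinct elements equivalent, and the order comparisons reduce to the fundamental-domain picture.)

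Next I would spread this fiberwise section out over a neighborhood. Since each $\tilde x_i \in L_s^\circ$, property~\ref{property:local lifting} gives a neighborhood $U_i$ of $s$ and a continuous local section $\sigma_i \colon U_i \to L_S^\circ$ with $\sigma_i(s) = \tilde x_i$; intersecting the finitely many $U_i$ (for $i = 0, \ldots, n$) gives a single neighborhood $U$ on which we have continuous sections $\sigma_0, \ldots, \sigma_n$ avoiding the fixed-point locus. Define $\sigma \colon U \times I \to L_U^\circ$ on the fundamental domain by $\sigma(s', (0,i)) = \sigma_i(s')$ and extend $\ZZ$-equivariantly via $\sigma(s', (m,i)) = \sigma_i(s') + m$; this is continuous (the $\ZZ$-action is continuous) and is a section by construction, giving conditions~\ref{item:ISectionIsASection} and~\ref{item:ISectionZEquivariant} on all of $U$. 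It remains to arrange~\ref{item:ISectionOntoPiNought} and~\ref{item:ISectionDistanceOrderCompatible} on a possibly smaller neighborhood $W \subseteq U$. Both of these are "open" conditions at the level of the base: the ordering $\sigma_0(s') <_{L_{s'}} \sigma_1(s') <_{L_{s'}} \cdots$ persists on an open set by Proposition~\ref{prop:section-orders-are-open} (pull back the open set $\{x < y\}$ of $L_S \times_S L_S$ along the pairs $(\sigma_i, \sigma_{i+1})$, and similarly $(\sigma_n, 1 + \sigma_0)$), and the finiteness of the relevant translation distances persists on an open set by continuity of $d_{L_S}$ (Lemma~\ref{lemma. distance continuous}): the condition $d(\sigma_i(s'), \sigma_{i'}(s')) > -\infty$ is the preimage of the open set $(-\infty, \infty]$, so it is open in the base. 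Shrinking $U$ to the finite intersection $W$ of all these open conditions, $\sigma|_W$ is an $I$-section of $L_W \to W$.

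The one point that requires care — and the main obstacle — is verifying~\ref{item:ISectionOntoPiNought} on the nearby fibers $L_{s'}$, $s' \in W$, rather than just on $L_s$. A priori a nearby fiber could have \emph{more} $\RR$-fixed points than $L_s$ (fixed points can "bleed in" along closed subsets of the base, as warned after Definition~\ref{defn. maps of families}), in which case $L_{s'}^\circ/\RR$ has more than $n+1$ elements and the $n+1$ sections $\sigma_i(s')$ cannot surject onto it. The fix is to use~\ref{property:unramified} / Remark~\ref{remark. unramified}: shrink $U$ further so that $(L_U/\ZZ)^\RR = \coprod_\alpha K_\alpha$ is a finite disjoint union of closed subsets, each mapping by a closed embedding to $U$. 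The fixed points of $L_s/\ZZ$ are the $n+1$ points $\{s\} \times (\text{one point in each }K_\alpha\text{ meeting the fiber})$; the subset $U' \subseteq U$ of points $s'$ over which the number of fixed points in $L_{s'}/\ZZ$ is exactly $n+1$ is precisely the complement of the $K_\alpha$ that do \emph{not} pass through $s$ — and since each $K_\alpha$ is closed in $U$, this complement is open. (In other words: fixed points can only appear on \emph{more special}, i.e.\ smaller, strata, so the locus where no new fixed point has appeared is open.) Replacing $W$ by $W \cap U'$, every fiber over $W$ has exactly $n+1$ elements in $L_{s'}^\circ/\RR$, the $n+1$ sections $\sigma_i$ hit distinct classes by the ordering established above, and~\ref{item:ISectionOntoPiNought} follows by a counting argument. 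This shrunken $W$ is the neighborhood asserted in the lemma.
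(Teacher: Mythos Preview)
Your strategy matches the paper's exactly: construct the section fiberwise at $s$, extend via~\ref{property:local lifting}, then shrink using continuity of $d$ for~\ref{item:ISectionDistanceOrderCompatible} and the unramified property~\ref{property:unramified} for~\ref{item:ISectionOntoPiNought}. Two of your justifications, however, contain errors.

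First,~\ref{item:ISectionDistanceOrderCompatible}: you assert that the $\tilde x_i$ ``lie strictly between two consecutive fixed points $p$ and $p+1$'' so the translation distances are finite. But $p$ and $p+1$ are not consecutive $\RR$-fixed points when $n \geq 1$; you deliberately placed the $\tilde x_i$ in \emph{distinct} $\RR$-orbits of the fundamental domain, so there \emph{is} a fixed point strictly between $\tilde x_{i'}$ and $\tilde x_i$ whenever $i' < i$, and $d(\tilde x_i, \tilde x_{i'}) = -\infty$, not a finite number. What one actually needs (and what the paper's proof verifies) is $d(\tilde x_{i'}, \tilde x_i) > -\infty$ for $i' \leq i$, which holds because this distance equals $+\infty$. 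So the shrink-by-continuity step still goes through, just not for the reason you state.

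Second,~\ref{item:ISectionOntoPiNought}: you claim that after discarding the $K_\alpha$ not through $s$, every nearby fiber has \emph{exactly} $n+1$ fixed points, whence surjectivity by counting. But each $K_\alpha \to U$ is only a closed embedding onto a (possibly proper) closed subset of $U$, so a component through $s$ need not meet $L_{s'}$; the nearby fiber has \emph{at most} $n+1$ fixed points, and your counting argument as stated fails. The paper is also terse at this step, but one correct completion is to shrink further so that each surviving $K_\alpha$ remains sandwiched between the appropriate $\sigma_{j-1}$ and $\sigma_j$ (via Proposition~\ref{prop:section-orders-are-open} applied over the closed image of $K_\alpha$); then every open $\RR$-orbit of $L_{s'}$ is forced to contain some $\sigma_i(s')$.
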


\begin{proof}[Proof of Lemma~\ref{lemma:local-I-sections-exist}.]
Fix $s \in S$ once and for all and choose an arbitrary point $\tilde s \in L_s^\circ$. Because $L_s$ is a broken paracycle, we know that there exists a parasimplex $I$ admitting an order-preserving surjection to $\pi_0(L_s^\circ) \cong L_s^\circ / \RR$. Choose a $\ZZ$-equivariant lift of this surjection:
	\eqnn
	\sigma : \{s \} \times  I \to L_s^\circ.
	\eqnd
By the local lifting property, for each $i \in I$, each $\sigma(s,i)$ extends to a local lift
	\eqnn
	\sigma(-,i) : U_{s,i} \to L_{U_{s,i}}^\circ.
	\eqnd
for some open set $U_{s,i} \subset S$. Let us consider only finitely many $i$ (for example, all $i$ between $i_0$ and $i_0 +1$ for some $i_0 \in I$), and take the intersection of the $U_{s,i}$ to obtain a single open set $U$ for which $\sigma: U \times I \to L_U^\circ$ is a $\ZZ$-equivariant, continuous map respecting the projection to $U$. So far our construction satisfies \eqref{item:ISectionIsASection} and \eqref{item:ISectionZEquivariant} of Definition~\ref{defn:I-section}.

Because the distance function is continuous (Lemma~\ref{lemma. distance continuous}), we may shrink $U$ further to assume that whenever $i \leq j$, we have that 
	\eqnn
	d(\sigma(-,i) , \sigma(-,j)) > - \infty.
	\eqnd
This ensures our section $U \times I \to L_U$ also satisfies~\eqref{item:ISectionDistanceOrderCompatible}. 

Finally, by shrinking $U$ as necessary, by Property~\ref{property:unramified}, we may write
	\eqnn
	L_U^\RR \cong \coprod_{n \in \ZZ} n + (K_0 \coprod \ldots  \coprod K_m)
	\eqnd
where each projection $K_i \to U$ is a closed embedding. By shrinking $U$ again as necessary, we may assume that each $K_i$ intersects $L_s$. In particular, for every $s' \in U$, the composite map
	\eqnn
	I \xra{\sigma(s', - ) } L_{s'}^\circ \to L_{s'}^\circ / \RR
	\eqnd 
is a surjection. So now our construction satisfies Property~\ref{item:ISectionOntoPiNought}. 

This completes the proof.
\end{proof}

\begin{remark}
We did not use Property~\ref{property:quotient-closed} in the above proof.
\end{remark}

\subsection{Pairs of $I$-sections}

\begin{defn}\label{defn:AmalgIJ}
Fix two $\ZZ$-equivariant preorders $I$ and $J$. An {\em amalgam} of $I$ and $J$ is a $\ZZ$-equivariant preorder $K$ equipped with a weakly order-preserving, $\ZZ$-equivariant injection $I \coprod J \to K$.

Fixing $I$ and $J$, we let $\Amalg(I,J)$ denote the category of amalgams of $I$ and $J$.

For convenience, we will model $\Amalg(I,J)$ as a poset as follows: We restrict attention to those $K$ whose underlying set is equal to $I \coprod J$, and the map $I \coprod J \to K$ is the identity on underlying sets. The poset structure is obtained by declaring $K \leq K'$ if and only if $\leq_K \subset \leq_{K'}$; i.e., if and only if $x \leq_K y \implies x \leq_{K'} y$.
\end{defn}

\begin{defn}\label{defn:AmalgTilde}
Let $\widetilde\Amalg$ denote the category whose objects are triplets $(I,J,K)$ where $K$ is an object of $\Amalg(I,J)$, and a morphism is a pair of $\ZZ$-equivariant maps of preorders
	\eqnn
	I \to I',
	\qquad
	J \to J'
	\eqnd
such that the induced map $K \to K'$ is a map of preorders.
\end{defn}

\begin{remark}\label{remark: forgetful amalga}
We have forgetful functors
	\eqnn
	 \PreordZ \leftarrow \widetilde\Amalg \to \PreordZ \times  \PreordZ
	\eqnd
which on objects acts by
	\eqnn
	K \mapsfrom (I,J,K) \mapsto (I,J).
	\eqnd
The fiber over $(I,J)$---of the righthand functor---is identified with $\Amalg(I,J)$.
\end{remark}

\begin{remark}\label{remark:IJ slice left final}
Fix $(I,J) \in \PreordZ \times  \PreordZ$ and consider the slice category
	\eqnn
	(\widetilde\Amalg)_{(I,J)/}.
	\eqnd
Then the inclusion 	
	\eqnn
	\Amalg(I,J) \to (\widetilde\Amalg)_{(I,J)/}
	\eqnd
is (right) initial; in particular, the opposite map
	\eqnn
	\Amalg(I,J)^{\op} \to (\widetilde\Amalg^{\op})_{/(I,J)}
	\eqnd
is (left) final.
\end{remark}

\begin{remark}\label{remark: diagonal amalgam adjoint}
Moreover, the opposite of the forgetful functor $(I,J,K) \mapsto K$ from Remark~\ref{remark: forgetful amalga} admits a right adjoint given by sending $I$ to $(I,I, I \coprod I)$ with the obvious preorder. 
\end{remark}

\begin{lemma}\label{lemma.Amalg-colimit}
Consider the composition
	\eqnn
	\Amalg(I,J) \to 
	\PreordZ
	\to
	\Stacks,
	\qquad
	(I \coprod J \to K) \mapsto K \mapsto \brokenpara^K.
	\eqnd
The induced map
	\eqnn
	\colimarrow_{K \in \Amalg(I,J)} \brokenpara^K \to \brokenpara^I \times_{\brokenpara} \brokenpara^J
	\eqnd
is an equivalence of stacks. 
\end{lemma}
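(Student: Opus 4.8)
\textbf{Proof plan for Lemma~\ref{lemma.Amalg-colimit}.}

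The plan is to verify this equivalence of stacks by checking it objectwise: for each topological space $S$, the natural functor of groupoids
	\eqnn
	\colimarrow_{K \in \Amalg(I,J)} \brokenpara^K(S) \to \brokenpara^I(S) \times_{\brokenpara(S)} \brokenpara^J(S)
	\eqnd
is an equivalence. Since both sides are sheaves on $\Top$, it actually suffices to check this on a cofinal system of opens, but in fact essential surjectivity and full faithfulness can be checked directly. The righthand side is, by definition, the groupoid of triples $(L_S \to S, \sigma, \tau)$ where $L_S \to S$ is a family of broken paracycles, $\sigma$ is an $I$-section, and $\tau$ is a $J$-section (with morphisms being isomorphisms of families compatible with both sections). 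An object of the lefthand side is a choice of amalgam $K \in \Amalg(I,J)$ together with a family equipped with a $K$-section; since $\Amalg(I,J)$ is (modeled as) a poset with underlying set $I \coprod J$ and morphisms given by enlarging the preorder relation, a $K$-section restricts along $I \coprod J \hookrightarrow K$ to the pair of an $I$-section and a $J$-section.

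The key point is the following reconstruction statement: given a family of broken paracycles $L_S \to S$ together with an $I$-section $\sigma$ and a $J$-section $\tau$, there is a canonically determined minimal amalgam $K = K(\sigma,\tau) \in \Amalg(I,J)$ such that the combined map $S \times (I \coprod J) \to L_S^\circ$ is a $K$-section. Concretely, one defines the preorder relation $\leq_K$ on $I \coprod J$ by declaring $x \leq_K y$ if and only if the translation distance $d(\sigma\text{-or-}\tau(s,x), \sigma\text{-or-}\tau(s,y)) > -\infty$ for all $s \in S$ (using Notation~\ref{notation. distance}, and where $\sigma\text{-or-}\tau$ denotes whichever of the two sections applies to the index $x$ or $y$). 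The conditions \eqref{item:ISectionDistanceOrderCompatible} of Definition~\ref{defn:I-section} for $\sigma$ and for $\tau$ guarantee that $\leq_K$ restricts to (something containing) $\leq_I$ on $I$ and $\leq_J$ on $J$; transitivity of $\leq_K$ follows from the additivity of translation distance along an order-respecting triple (here one uses Remark~\ref{remark:distance-respects-ordering} and the continuity of $d$ from Lemma~\ref{lemma. distance continuous} to see that finiteness of the two ``legs'' forces finiteness of the composite, invoking Convention~\ref{convention:adding-subtracting-infinity}); $\ZZ$-equivariance of $\leq_K$ is inherited from $\ZZ$-equivariance of $\sigma$ and $\tau$; linearity of $\leq_K$ holds because both sections land in the totally ordered fibers $L_s$ (using Remark~\ref{remark:distance-respects-ordering}). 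Thus $(I \coprod J \to K)$ is a genuine object of $\Amalg(I,J)$, and $(L_S \to S, \sigma \cup \tau)$ is a $K$-section, which establishes essential surjectivity: every object $(L_S, \sigma, \tau)$ of the target lifts to the object of $\brokenpara^{K(\sigma,\tau)}(S)$ on the source side.

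For full faithfulness I would argue as follows. First, morphisms between objects lying over the \emph{same} $K$ agree on both sides: a morphism in $\brokenpara^K(S)$ is an isomorphism $(f,\tilde f)$ of families compatible with the $K$-section, hence compatible with its restrictions $\sigma$ and $\tau$, and conversely Proposition~\ref{prop:R-equiv-is-isom-of-families} shows that any $\RR$-equivariant map of families over a homeomorphism of bases compatible with an $I$-section is automatically an isomorphism of families — so compatibility with $\sigma$ alone (together with $\RR$-equivariance) already forces $\tilde f$ to be a homeomorphism, and compatibility with both $\sigma$ and $\tau$ then says precisely that $\tilde f$ respects the combined section, i.e. the $K$-section. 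Second, for morphisms between objects over different amalgams $K, K'$: in the colimit over the poset $\Amalg(I,J)$, there is a morphism $K \to K'$ exactly when $\leq_K \subseteq \leq_{K'}$, and the resulting identification collapses the various lifts; the minimal amalgam $K(\sigma,\tau)$ constructed above is an initial object of the full subposet of those $K$ for which $\sigma \cup \tau$ is a $K$-section, so every lift of a given $(L_S,\sigma,\tau)$ is uniquely connected to the canonical one. Combining these two observations — together with the fact that the colimit of a diagram of groupoids indexed by a poset is computed as the localization of the Grothendieck construction (Theorem~\ref{theorem. colimit of categories as localization}), so that a morphism in $\colimarrow_K \brokenpara^K(S)$ between $(K_0, \text{data}_0)$ and $(K_1, \text{data}_1)$ is a zigzag through the poset — shows that the hom-spaces on both sides match. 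The main obstacle I anticipate is the transitivity/linearity check for $\leq_{K(\sigma,\tau)}$: one must handle the $[-\infty,\infty]$-valued translation distance carefully at the places where sections are ``far apart'' (distance $\pm\infty$), using that the sections avoid $\RR$-fixed points (condition \eqref{item:ISectionIsASection} lands in $L_S^\circ$) so that the pathological diagonal behavior of $d$ noted in the remarks after Notation~\ref{notation. distance} does not arise, and that along a chain $x \leq_K y \leq_K z$ the finiteness of $d(x,y)$ and $d(y,z)$ genuinely propagates — this is exactly where the sign conventions of Convention~\ref{convention:adding-subtracting-infinity} and the ordering statement of Remark~\ref{remark:distance-respects-ordering} do the work.
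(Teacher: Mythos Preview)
Your construction of the amalgam $K(\sigma,\tau)$ is essentially the same preorder the paper writes down in~\eqref{eqn. preorder induced}, and your essential surjectivity/full faithfulness instincts are sound. However, there is a genuine gap: the relation $\leq_{K(\sigma,\tau)}$ you define need \emph{not} be linear over all of $S$. Your justification---``linearity of $\leq_K$ holds because both sections land in the totally ordered fibers $L_s$''---only gives linearity fiberwise. If $a \in I$ and $b \in J$ satisfy $d(\sigma(s_1,a),\tau(s_1,b)) = +\infty$ at one point and $d(\sigma(s_2,a),\tau(s_2,b)) = -\infty$ at another (which can certainly happen, even for $S = \{s_1,s_2\}$ disconnected), then neither $a \leq_K b$ nor $b \leq_K a$ holds under your ``for all $s$'' definition, so $K(\sigma,\tau)$ is not an object of $\Amalg(I,J)$ at all. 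Consequently the minimal/initial amalgam you rely on for both essential surjectivity and full faithfulness does not exist globally.

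The paper sidesteps this by proving and invoking Proposition~\ref{prop.open-covers-of-stacks}, an abstract criterion for a poset-indexed diagram of substacks to have colimit equal to the target. That criterion requires only (1) that the diagram respect joins, i.e.\ $\brokenpara^{K \vee K'} \simeq \brokenpara^{K} \times_X \brokenpara^{K'}$, and (2) \emph{local} surjectivity---for each $s \in S$ one finds a neighborhood $S' \ni s$ and an amalgam $K$ so that $\sigma|_{S'}$ is a $K$-section. Defining $K$ at a single fiber $L_s$ does give a linear preorder, and continuity of $d$ (Lemma~\ref{lemma. distance continuous}) extends it to a neighborhood; this is how condition (2) is verified. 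This also resolves the second issue your approach glosses over: the colimit you write at the start is the \emph{presheaf} colimit, which is not a priori the sheaf colimit; Proposition~\ref{prop.open-covers-of-stacks} works directly in the $\infty$-topos and handles that passage. Your full-faithfulness sketch via ``initial object in the poset of valid $K$'s'' is morally the same content as condition~(1), but it only computes the presheaf colimit.
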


\nc{\join}{\vee}

	The proof of the Lemma~\ref{lemma.Amalg-colimit} relies on the following, which is a stack version of Lemma~3.6.6 of~\cite{broken}.

	\begin{prop}\label{prop.open-covers-of-stacks}
	Fix a stack $X$.
	Let $\cA$ be a poset admitting joins\footnote{That is, for any non-empty finite collection $a_1,\ldots,a_k \in \cA$, there is a least element $a$ such that $a \geq a_i$ for all $i$. }, and let $X_\bullet: \cA^{\op} \to \shv_{\Kan}(\Top)_{/X}$ be a functor such that 
	\enum
	\item $X_\bullet$ respects meets. That is, for every $U,V \in \cA$, the natural map $X_{\join(U,V)} \to X_U \times_X X_V$ is an equivalence.
	\item $X_\bullet$ is locally surjective. That is, for every topological space $S$, every map $S \to X$, and every $s \in S$, there is some element $U \in \cA$ and some open set $S' \subset S$ containing $s$ such that the diagram
		\eqnn
		\xymatrix{
		S' \ar[r] \ar[d] & X_U \ar[d] \\
		S \ar[r] & X
		}
		\eqnd
	commutes.
	\enumd
	Then the natural map
		\eqnn
		\varinjlim_{\cA^{\op}} X_U \to X
		\eqnd 
	is an equivalence.
	\end{prop}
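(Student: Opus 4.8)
The plan is to work inside the $\infty$-topos $\shv_\Kan(\Top)$ (or, equivalently for the purposes of colimits and fiber products, its slice $\shv_\Kan(\Top)_{/X}$) and to show that the canonical comparison map
\[
\phi \colon Y := \varinjlim_{\cA^{\op}} X_U \longrightarrow X
\]
is simultaneously an effective epimorphism and a monomorphism. Since a morphism in an $\infty$-topos that is both a surjection and a monomorphism is an equivalence (see \cite{htt}; this is the same $\infty$-topos input used in the proof of Lemma~3.6.6 of \cite{broken}), this will conclude the proof. Here $\phi$ is assembled from the structure maps $X_U \to X$, which are coherently compatible because $X_\bullet$ is a functor valued in the slice $\shv_\Kan(\Top)_{/X}$.

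First I would check that $\phi$ is an effective epimorphism. Unwinding the definition, this says precisely: for every topological space $S$ and every map $x\colon S \to X$, there is an open cover of $S$ over each member of which $x$ lifts along $\phi$. But this is exactly hypothesis (2), the local-surjectivity condition: for each $s \in S$ there is an open $S' \ni s$ and an element $U \in \cA$ through which $x|_{S'}$ factors via $X_U \to X$, hence via $Y \to X$; these $S'$ cover $S$. So $\phi$ is an effective epimorphism, and consequently $X$ is recovered as the geometric realization of the \v{C}ech nerve of $\phi$.

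Next I would show $\phi$ is a monomorphism, i.e.\ that the diagonal $Y \to Y \times_X Y$ is an equivalence. Using that colimits are universal in an $\infty$-topos, we have $Y \times_X Y \simeq \varinjlim_{(U,V) \in \cA^{\op} \times \cA^{\op}} (X_U \times_X X_V)$, and hypothesis (1), the meet-compatibility condition, identifies this levelwise with $\varinjlim_{(U,V)} X_{\join(U,V)}$. Now $\join\colon \cA \times \cA \to \cA$ is, by the defining universal property of a join, left adjoint to the diagonal $\cA \to \cA \times \cA$; hence $\join^{\op}\colon \cA^{\op} \times \cA^{\op} \to \cA^{\op}$ is a right adjoint and therefore left final (Remark~\ref{remark:right-adjoints-are-final}), so the comparison
\[
c \colon \varinjlim_{(U,V)} X_{\join(U,V)} = \varinjlim_{(U,V)} (X_\bullet \circ \join^{\op})(U,V) \longrightarrow \varinjlim_{\cA^{\op}} X_\bullet = Y
\]
is an equivalence. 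It then remains to identify, under $Y \times_X Y \simeq \varinjlim_{(U,V)} X_{\join(U,V)}$, the diagonal $Y \to Y \times_X Y$ with $c^{-1}$; a direct check of cocones — using $\join(U,U) = U$ together with the fact that the meet comparison at $(U,U)$ is exactly the diagonal $X_U \to X_U \times_X X_U$ over $X$ — shows that the composite $Y \to Y \times_X Y \xrightarrow{c} Y$ equals $\id_Y$, so the diagonal is an equivalence. Hence $\phi$ is a monomorphism, and combined with the previous paragraph, $\phi$ is an equivalence.

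I expect the only genuinely delicate point to be this last cocone bookkeeping in the monomorphism step: one must be careful that the equivalence $c$ produced by finality is inverse to the \emph{diagonal} rather than to a projection of $Y \times_X Y$, and it is exactly here that the identities $\join(U,U) = U$ and ``meet comparison at $(U,U)$ $=$ diagonal of $X_U$ over $X$'' are used. Everything else is formal: the two hypotheses feed in verbatim as ``$\phi$ is surjective'' and ``$\phi$ is injective'', and the remaining ingredients — universality of colimits in $\shv_\Kan(\Top)$, and the fact that (effective epimorphism)~$+$~(monomorphism)~$=$~(equivalence) in an $\infty$-topos — are standard and can be cited from \cite{htt}, in parallel with the treatment in \cite{broken}.
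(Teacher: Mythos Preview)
Your proof is correct and follows essentially the same approach as the paper: show the map is an effective epimorphism via hypothesis (2), then a monomorphism by using universality of colimits in the $\infty$-topos together with hypothesis (1) and the finality of $\join^{\op}$ (coming from the $\join \dashv \Delta$ adjunction) to identify the relative diagonal with the identity. Your treatment of the final bookkeeping step is in fact slightly more careful than the paper's, which simply asserts that tracing through yields the identity map on the colimit.
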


	\begin{proof}[Proof of Proposition~\ref{prop.open-covers-of-stacks}.]
	By assumption (2), we must merely show that the map is a monomorphism: That is, we must show that the relative diagonal
		\eqnn		
		\colimarrow_{\cA^{\op}} X_U \to 
		\left(\colimarrow_{\cA^{\op}} X_U\right) \times_{X}
		\left(\colimarrow_{\cA^{\op}} X_U\right) 
		\eqnd
	is an equivalence. Because $\shv_{\Kan}(\Top)$ is an $\infty$-topos, and in particular locally Cartesian closed, we have a natural equivalence
		\eqnn
		\colimarrow_{(U,V) \in (\cA \times \cA)^{\op}}  X_U \times_X X_V
		\to
		\left(\colimarrow_{U \in \cA^{\op}} X_U\right)  \times_{X}
		\left(\colimarrow_{V \in \cA^{\op}} X_V\right).
		\eqnd
	Note moreover that the functor $\cA \times \cA \to \cA$ sending $(U,V) \mapsto \join(U,V)$ is a left adjoint. (The right adjoint is given by sending $W \mapsto (W,W)$.) Hence the induced functor on opposite categories is a right adjoint, and final. This means the natural map
		\eqnn
		\colimarrow_{W \in \cA^{\op}} X_W
		\to
		\colimarrow_{(U,V) \in (\cA \times \cA)^{\op}}  X_U \times_X X_V
		\eqnd
	is an equivalence. Tracing through the definitions of the chains of equivalences, we are left to proving that the induced map
		\eqnn
		\colimarrow_{W \in \cA^{\op}} X_W \to 
		\colimarrow_{W \in \cA^{\op}} X_W
		\eqnd
	is an equivalence; this is obvious, as this map is induced by pulling back the functor $X_\bullet$ along the identity functor $\cA \to \cA$. 
	\end{proof}

\begin{proof}[Proof of Lemma~\ref{lemma.Amalg-colimit}.]
By definition, the points of the stack
	\eqnn
	\brokenpara^I \times_{\brokenpara} \brokenpara^J
	\eqnd
are given by a triplet $(L_S \to S, \sigma_I, \sigma_J)$ where $L_S \to S$ is a family of broken paracycles and the $\sigma_I$ and $\sigma_J$ are $I$- and $J$-sections, respectively. For brevity, let $\sigma: S \times (I \coprod J) \to L_S^\circ$ be the combined section. One can now define a preorder on the set $I \coprod J$ as follows:
	\eqn\label{eqn. preorder induced}
	a \leq b \qquad \iff \qquad \sigma(a) \leq \sigma(b)  \text{ or }  d(\sigma(a),\sigma(b)) \in (-\infty,\infty).
	\eqnd
By definition of $I$-section, this is a preorder for which the inclusions $I \to I \coprod J$ and $J \to I \coprod J$ are both $\ZZ$-equivariant preorder maps that are essentially surjective. In particular, setting $K = I \coprod J$ with the preorder as in~\eqref{eqn. preorder induced}, $\sigma$ defines a $K$-section. This shows that (2) of Proposition~\ref{prop.open-covers-of-stacks} is satisfied. 

We now verify (1). For $K, K' \in \Amalg(I,J)$, the fiber product
	\eqn\label{eqn.fiber product K K'}
	\brokenpara^K
	\times_{\brokenpara^I \times_{\brokenpara} \brokenpara^J}
	\brokenpara^{K'}
	\eqnd
has points given by triplets $(L_S \to S, \sigma_K, \sigma_{K'})$ such that $L_S \to S$ is a family of broken paracycles, and where the functions $\sigma_K, \sigma_{K'}$ {\em agree} when given the identification of sets $K = I \coprod J = K'$; that is, $\sigma_K = \sigma_{K'}$ as functions. We may construct a new preorder structure $K''$ on $I \coprod J$ by the same definition as in~\eqref{eqn. preorder induced}; this guarantees that $(\leq_{K} \cup \leq_{K'}) \subset \leq_{K''}$, so that we have a map from \eqref{eqn.fiber product K K'} to $\broken^{K \join K'}$. On the other hand, clearly a $K \join K'$-section restricts to both a $K$-section and $K'$-section (with the same underlying function $\sigma$), so we obtain a map from 
$\broken^{K \join K'}$
to
\eqref{eqn.fiber product K K'}. It is easily checked that these are mutually inverse.
\end{proof}

\subsection{Proof of Theorem~\ref{theorem. brokenpara as a colimit}.}\label{section.proof-of-colimit-theorem}
We first prove 
	\eqn\label{eqn:PreordZ equivalence}
	\colimarrow_{I \in \PreordZ^{\op}} \brokenpara^I \to \brokenpara
	\eqnd
is an equivalence. It is clearly a local surjection by Lemma~\ref{lemma:local-I-sections-exist} so we need only prove that this map is a monomorphism. That is, we must prove that the relative diagonal
	\eqn\label{eqn:relativeDiagonal}
	\colimarrow_{I \in \PreordZ^{\op}} \brokenpara^I
	\to
	\left( \colimarrow_{I_0\in \PreordZ^{\op} } \brokenpara^{I_0} \right) \times_{\brokenpara}
	\left( \colimarrow_{I_1\in \PreordZ^{\op} } \brokenpara^{I_1} \right) 
	\eqnd
is an equivalence. Let us consider the arrows
	\begin{align}
   &  \left( \colimarrow_{I_0 \in \PreordZ^{\op}} \brokenpara^{I_0} \right) \times_{\brokenpara}
    	\left( \colimarrow_{I_1 \in \PreordZ^{\op}} \brokenpara^{I_1} \right) \nonumber \\
	& \xra{\text{loc. Cart. closed}}   \colimarrow_{I_0, I_1 \in \PreordZ^{\op} \times  \PreordZ^{\op}} \brokenpara^{I_0}  \times_{\brokenpara} \brokenpara^{I_1}  \nonumber \\
	&  \xra{\text{Lemma~\ref{lemma.Amalg-colimit}}}    \colimarrow_{I_0, I_1} 		\colimarrow_{K \in \Amalg(I_0,I_1)^{\op}} \brokenpara^K \nonumber\\
	&  \xra{\text{Remark~\ref{remark:IJ slice left final}}}    \colimarrow_{I_0, I_1} 		\colimarrow_{(J_0,J_1,K) \in \widetilde \Amalg^{\op}_{/(I_0,I_1)}} \brokenpara^K \nonumber\\
	&  \xra{\text{Left Kan ext.}}    \colimarrow_{(I_0,I_1,K) \in \widetilde \Amalg^{\op}} \brokenpara^K \nonumber\\
	&  \xleftarrow{\text{Remark~\ref{remark: diagonal amalgam adjoint}}}    \colimarrow_{I \in \PreordZ^{\op}} \brokenpara^I. \nonumber \\
	\label{eqn:bigComposition}
	\end{align}
We claim every arrow drawn is an equivalence.
The first arrow is an equivalence because the $\infty$-category of stacks on $\Top$ is an $\infty$-topos, and in particular locally Cartesian closed. The next arrow is an equivalence by Lemma~\ref{lemma.Amalg-colimit}. The following arrow is an equivalence by Remark~\ref{remark:IJ slice left final}. The next equivalence follows by left Kan extension along the functor  $\widetilde \Amalg^{\op} \to \PreordZ^{\op} \times  \PreordZ^{\op}$ (as defined in Remark~\ref{remark: forgetful amalga}).\footnote{See also Remark~\ref{example. colimit of left Kan extension is colimit}.} The final arrow is also an equivalence by the adjointness mentioned in Remark~\ref{remark: diagonal amalgam adjoint}. 

Tracing through the composition of~\eqref{eqn:relativeDiagonal} with the arrows from~\eqref{eqn:bigComposition}, we find that the composition of the arrows is the map
	\eqnn
	\colimarrow_{I \in \PreordZ^{\op}} \brokenpara^I\to
	\colimarrow_{I \in \PreordZ^{\op}} \brokenpara^I
	\eqnd
induced by the identity functor $ \PreordZ^{\op} \to  \PreordZ^{\op}$; that is, the entire composition is an equivalence. This proves that~\eqref{eqn:relativeDiagonal} itself is an equivalence, which concludes the proof that~\eqref{eqn:PreordZ equivalence} is an equivalence.

So now let us prove that the natural map
	\eqnn
	\colimarrow_{I \in \Preordpara^{\op}} \brokenpara^I
	\to
	\brokenpara
	\eqnd
is an equivalence.

By Remark~\ref{remark:PreordIleftKanextension}, the functor $\PreordZ^{\op} \to \Stacks$ is a left Kan extension of the functor $\Preordpara^{\op} \to \Stacks$, hence the colimit of the two functors agree. This completes the proof of the theorem.

\clearpage

\section{Representability of the space of $I$-sections}

We have presented $\brokenpara$ as a colimit of other stacks; as we've mentioned before, we will now show that these other stacks are actually (represented by) topological spaces. This identification will be used to compute the $\infty$-category of sheaves on $\brokenpara$.

To perform our eventual computation, however, we will need to prove that the stacks in the colimit diagram of Theorem~\ref{theorem. brokenpara as a colimit} are spaces in a way compatible with the colimit diagram.

To that end, the goal of this section is to prove the following:

\begin{theorem}\label{theorem. I-sections representable}
For any paracyclic preorder $I \in \Preordpara$, 
$\brokenpara^I$ is representable by a topological space $F^{(I)}$. In fact,
\enum
\item\label{thmitem:representableOverBroken} For every $I$ we have a diagram of categories
	\eqnn
	\xymatrix{
	\Pt(\brokenpara^I) \ar[rr]^{\simeq} \ar[dr] && \Pt(F^{(I)}) \ar[dl] \\
	& \Pt(\brokenpara)
	}
	\eqnd
which commutes up to natural isomorphism and respects Cartesian edges over the base category $\Top$. That is, the above diagram exhibits an equivalence in the $\infty$-category of stacks equipped with a map to $\brokenpara$.
\item\label{thmitem:representabilityFunctorialInI} One can construct this diagram functorially in the $I$ variable. That is, one has an equivalence
	\eqnn
	\brokenpara^\bullet \simeq F^{(\bullet)}
	\eqnd 
where each of the above is treated as a functor 
	\eqnn
	\Preordpara^{\op} \to \Stacks_{/\brokenpara}
	\eqnd
from $\Preordpara^{\op}$ to the $\infty$-category of stacks equipped with maps to $\brokenpara$.
\enumd
\end{theorem}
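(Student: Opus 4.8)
The plan is to build, for each paracyclic preorder $I$, an explicit topological space $F^{(I)}$ carrying a tautological family of broken paracycles $\tilde F^{(I)} \to F^{(I)}$, and then to produce the required equivalence of stacks by exhibiting this tautological family as universal among families equipped with $I$-sections. Concretely, the points of $F^{(I)}$ should record the ``combinatorial positions'' of the $I$-section within a fiber: given an $I$-section $\sigma\colon S\times I\to L_S^\circ$, the data determined over a point $s$ is (i) the isomorphism type $n_{L_s}$ of the fiber, together with (ii) the collection of translation distances $d(\sigma(s,i),\sigma(s,i'))$ for all pairs $i\le i'$ in $I$ (valued in $[-\infty,\infty)$ by Definition~\ref{defn:I-section}\eqref{item:ISectionDistanceOrderCompatible}), subject to the obvious cocycle/additivity relations forced by the $\ZZ$-action and by the structure of a broken paracycle. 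So I would define $F^{(I)}$ as the subspace of a product of copies of $[-\infty,\infty)$ (indexed by comparable pairs of a fundamental domain for the $\ZZ$-action on $I$) cut out by these relations, topologized as a subspace; the stratification by the integer $n$ records the number of $\RR$-fixed points hit, and the ``bleeding'' of fiber types happens exactly where finite distances degenerate to $-\infty$. This construction is entirely parallel to the space of ``section data'' built in~\cite{broken} for broken lines, and I would lean on that construction, folding in the extra $\ZZ$-equivariance.

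The first substantive step is to write down $\tilde F^{(I)}\to F^{(I)}$ explicitly and verify it satisfies \ref{property:fibers paracyclic}--\ref{property:local lifting} of Definition~\ref{defn:family-of-broken-paracycles}; here the local lifting property and the closedness property~\ref{property:quotient-closed} are the ones requiring care, and Lemma~\ref{lemma:closed-map-lemma} together with the continuity of the distance function (Lemma~\ref{lemma. distance continuous}) should do the work. The tautological $I$-section on this family is built in by construction. The second step is to produce the functor $\Pt(F^{(I)})\to\Pt(\brokenpara^I)$ by pullback of the tautological pair, and the functor in the other direction by sending $(L_S\to S,\sigma)$ to the classifying map $S\to F^{(I)}$ determined by the distance data above (continuity of this classifying map is again Lemma~\ref{lemma. distance continuous}). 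The third step, which is the heart of the matter, is to check these two functors are mutually inverse up to natural isomorphism and compatible with the projections to $\Pt(\brokenpara)$ and with Cartesian edges over $\Top$: one direction is tautological (pulling back and re-reading off the distances returns the same point of $F^{(I)}$), and the other direction---that a family with $I$-section is recovered, up to canonical isomorphism, from its classifying map---is precisely where Proposition~\ref{prop:R-equiv-is-isom-of-families} is invoked. Indeed, the pulled-back tautological family and the original family both carry $I$-sections and there is an evident fiberwise $\RR$-equivariant comparison map over a homeomorphism of bases that is compatible with the $I$-sections, so Proposition~\ref{prop:R-equiv-is-isom-of-families} upgrades it to an isomorphism of families.

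For part~\eqref{thmitem:representabilityFunctorialInI}, I would observe that an essentially surjective map of paracyclic preorders $r\colon I\to J$ induces, by Remark~\ref{remark: I sections functorial}, a pullback of $I$-sections along $r$, hence a map of spaces $F^{(J)}\to F^{(I)}$ (dropping/relabeling the distance coordinates according to $r$), and that this assignment is visibly functorial in $r$. The equivalences constructed in parts~\ref{thmitem:representableOverBroken} are natural because they are defined by the single universal recipe ``read off the distances,'' which commutes with restriction along $r$ on the nose; so assembling them gives the claimed equivalence $\brokenpara^\bullet\simeq F^{(\bullet)}$ of functors $\Preordpara^{\op}\to\Stacks_{/\brokenpara}$. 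The main obstacle I anticipate is not the bookkeeping of distance coordinates but the verification that $F^{(I)}$ as defined is genuinely a topological space representing the stack---i.e.\ that the tautological family satisfies the technical conditions \ref{property:quotient-closed} and \ref{property:local lifting}, and dually that every family with $I$-section is étale-locally (in fact, after the canonical isomorphism) pulled back from it; this is exactly the point where the delicate topology of families---rather than fiberwise data---enters, and where I expect to spend most of the effort, modeled closely on the corresponding argument of~\cite{broken}.
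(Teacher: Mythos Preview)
Your proposal is correct and follows essentially the same route as the paper: define $F^{(I)}$ as the space of translation-distance data (the paper uses the convention $(-\infty,\infty]$ with degenerations at $+\infty$ rather than your $[-\infty,\infty)$, but this is cosmetic), build the tautological family $\tilde F^{(I)}\to F^{(I)}$ with its canonical $I$-section, and establish the equivalence via the classifying-map and pullback functors. The paper invokes Proposition~\ref{prop:R-equivariant-is-homeo} directly after checking fiberwise bijectivity by hand, whereas you cite its packaged consequence Proposition~\ref{prop:R-equiv-is-isom-of-families}; your citation is equally valid and arguably cleaner.
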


Thus, combined with Theorem~\ref{theorem. brokenpara as a colimit}, we have the following:

\begin{cor}\label{corollary:brokenpara-as-colim-of-spaces}
$\brokenpara$ is a colimit of the topological spaces $F^{(I)}$. That is, the arrow
	\eqnn
	\colimarrow_{I \in \Preordpara^{\op}} F^{(I)} \to \brokenpara
	\eqnd
is an equivalence.
\end{cor}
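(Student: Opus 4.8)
The plan is to deduce the statement formally from the two results it cites, namely Theorem~\ref{theorem. brokenpara as a colimit} and Theorem~\ref{theorem. I-sections representable}. The point is that Theorem~\ref{theorem. I-sections representable}\eqref{thmitem:representabilityFunctorialInI} gives an equivalence of functors $\brokenpara^\bullet \simeq F^{(\bullet)}$ inside $\Ffun(\Preordpara^{\op}, \Stacks_{/\brokenpara})$ — crucially, the equivalence is natural over $\brokenpara$, not merely levelwise in $\Stacks$. Since a colimit of a diagram depends only on the diagram up to equivalence, the two functors have canonically equivalent colimits in $\Stacks_{/\brokenpara}$, and this equivalence is compatible with the tautological structure maps to $\brokenpara$.

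First I would record that the forgetful functor $\Stacks_{/\brokenpara} \to \Stacks$ preserves colimits: the $\infty$-category of $\Kan$-valued sheaves on $\Top$ is an $\infty$-topos, hence presentable, so $\Stacks_{/\brokenpara}$ is presentable and the forgetful functor is a left adjoint (its right adjoint sends $X$ to the projection $X \times \brokenpara \to \brokenpara$). Consequently the colimit $\colimarrow_{I \in \Preordpara^{\op}} F^{(I)}$ computed in $\Stacks$, together with its canonical map to $\brokenpara$, is identified with $\colimarrow_{I \in \Preordpara^{\op}} \brokenpara^I$ together with its canonical map to $\brokenpara$ — both being the image under this forgetful functor of the colimit taken in the slice. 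Then I would invoke Theorem~\ref{theorem. brokenpara as a colimit}, which asserts precisely that $\colimarrow_{I \in \Preordpara^{\op}} \brokenpara^I \to \brokenpara$ is an equivalence. Tracing through the identifications, the structure map $\colimarrow_{I \in \Preordpara^{\op}} F^{(I)} \to \brokenpara$ named in the statement is therefore an equivalence.

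Essentially all of the genuine content is contained in the two cited theorems, so the present deduction is bookkeeping. The step most worth spelling out carefully is the one just emphasized: that the equivalence $\brokenpara^\bullet \simeq F^{(\bullet)}$ lives in $\Stacks_{/\brokenpara}$ — this is exactly what Theorem~\ref{theorem. I-sections representable}\eqref{thmitem:representableOverBroken} supplies, and it is what lets us transport the colimit computation of Theorem~\ref{theorem. brokenpara as a colimit} along a levelwise equivalence of diagrams without losing track of the maps to $\brokenpara$. If one only knew a levelwise equivalence of underlying stacks, the conclusion would not follow, since the structure map is part of the data being compared. Beyond this, the only mild subtlety is the colimit-preservation of the forgetful functor out of the slice, which as noted is automatic in the presentable/$\infty$-topos setting. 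The true obstacle — representability of $\brokenpara^I$ by $F^{(I)}$ functorially in $I$ and over $\brokenpara$ — is the subject of Theorem~\ref{theorem. I-sections representable} and is taken as given here.
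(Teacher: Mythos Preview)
Your proposal is correct and takes essentially the same approach as the paper, which treats the corollary as an immediate consequence of the two cited theorems with no explicit proof beyond the phrase ``Thus, combined with Theorem~\ref{theorem. brokenpara as a colimit}, we have the following.'' You have simply spelled out the bookkeeping (naturality over $\brokenpara$, colimit-preservation of the forgetful functor from the slice) that the paper leaves implicit.
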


\subsection{The local models $\tilde F^{(I)} \to F^{(I)}$}\label{section.F^{(I)} construction}

\begin{remark}
Recall we have the distance function from Lemma~\ref{lemma. distance continuous}. The main idea of the proof of Theorem~\ref{theorem. I-sections representable} is to take any $I$-section $\sigma$ and to pairwise measure the translation distances between the images of the sections. This gives us a sequence of elements in $[-\infty,\infty]$ satisfying certain properties, hence defines a subset of some product of $[-\infty,\infty]$. We denote this subset by $F^{(I)}$ (see Definition~\ref{defn:F^{(I)}} below).
\end{remark}

\nc{\arr}{\mathsf{Arr}}

\begin{defn}\label{defn:F^{(I)}}
Let $I$ be a paracyclic preorder and let $\arr(I)$ denote the set of all pairs $(i,j) \in I$ such that $i \leq j$. We let
	\eqnn
	F^{(I)} \subset (-\infty,\infty]^{\arr I}
	\eqnd
denote the set of those $\alpha$ satisfying the following conditions:
\enum
	\item\label{item:F^{(I)}-functor-1} $\alpha(i,i) = 0$.
	\item\label{item:F^{(I)}-functor-2} $\alpha(i,j) +\alpha(j,k) = \alpha(i,k)$ for all $i \leq j \leq k \in I$,
	\item\label{item:F^{(I)}-Zequivariant} $\alpha(i,j) = \alpha(i+1,j+1)$ for all $i \leq j \in I$, and
	\item\label{item:F^{(I)}hasfixedpoints} $\alpha(i,i+1)= \infty$.
\enumd
We endow $F^{(I)}$ with the topology inherited from the usual topology on the infinite product $(-\infty,\infty]^{\arr(I)}$.
\end{defn}

\begin{remark}
Let $B(-\infty,\infty])$ denote the category with a single object, whose endomorphism set is given by $(-\infty,\infty]$ under addition. Then one can think of 
	\eqnn
	F^{(I)} \subset \Ffun(I,B(-\infty,\infty])
	\eqnd
as a subset of the collection of functors from the preorder $I$ (thought of as a category) to $B(-\infty,\infty]$---this is the content of conditions~\eqref{item:F^{(I)}-functor-1} and~\eqref{item:F^{(I)}-functor-2}.

From this perspective, $F^{(I)}$ is topologized by endowing the morphism space of  $B(-\infty,\infty]$ with the usual topology via the subset topology for $(-\infty,\infty] \subset [-\infty,\infty]$.

Finally, one should think of $\alpha(i,j)$ as the possible ``distance functions'' arising from particular kinds of sections $I \times S \to L_S^\circ$ of a family $L_S \to S$. Condition~\eqref{item:F^{(I)}-Zequivariant} amounts to requiring that these sections be $\ZZ$-equivariant, and Condition~\eqref{item:F^{(I)}hasfixedpoints} is a consequence of $L_S$ being a family in the sense of Definition~\ref{defn:family-of-broken-paracycles}: Every point $\tilde s \in L_S^\circ$ has at least one fixed point between it and its $+1$-translate, hence the distance function will always attain an $\infty$ between a $\tilde s$ and its translate.
\end{remark}

\begin{remark}\label{remark:F^{(I)} are corners}
Fix $I \in \Preordpara$ and choose an element $i_0 \in I$. 

Suppose $I$ is a poset, rather than a preorder; then the identification
	\eqnn
	[n_I]
	\cong
	\{
	\text{$i$ such that $i_0 \leq i < i_0 +1$}
	\}
	=
	\{
	i_0 < i_1 < i_2 < \ldots < i_{n_I}
	\}
	\subset I
	\eqnd
shows that $F^{(I)}$ is homeomorphic to a ``corner'' of a cube. That is, the restriction map
	\eqnn
	F^{(I_0)} \to (-\infty,\infty]^{[n_I]},
	\qquad
	\alpha
	\mapsto (\alpha(i_0,i_1),\ldots,\alpha(i_{n_I},i_0+1)).
	\eqnd
is a closed embedding whose image is the locus of points for which at least one coordinate equals $\infty$.

When $I$ is not a poset but a preorder, the same argument shows that $F^{(I)}$ in general is an open subset of the above corner. (For example, if $i_2 \leq i_1$ and $i_1 \leq i_2$ in $I$, then for any $\alpha \in F^{(I)}$, we have that $\alpha(i_1,i_2)$ must never attain $\infty$.)
\end{remark}

\begin{remark}\label{remark:FI is a functor}
Let $I'$ and $I$ be paracyclic preorders and let $f: I' \to I$ be any map which weakly respects orders and is $\ZZ$-equivariant. Then precomposition by $f$ induces a continuous map
	\eqnn
	f^*: F^{(I)} \to F^{(I')}.
	\eqnd
This defines a functor
	\eqnn
	\Preordpara^{\op} \to \Top,
	\qquad
	I \mapsto F^{(I)}.
	\eqnd
In terms of the description of the $F^{(I)}$ as unions of certain faces of the corner of a cube (Remark~\ref{remark:F^{(I)} are corners}), if $f$ is a surjection, then $f^*$ is always the inclusion of the locus of points for which certain coordinates are equal to 0.
\end{remark}

Now we exhibit a family of broken paracycles over $F^{(I)}$, equipped with an $I$-section. This will be another ingredient in the proof of Theorem~\ref{theorem. I-sections representable}--the idea is to ensure that not only does every $I$-section give rise to a continuous map to $F^{(I)}$, but every continuous map to $F^{(I)}$ determines a family of broken paracycles equipped with an $I$-section.

\begin{notation}[$\beta_i$]
In what follows, for any element $\beta \in [-\infty,\infty]^I$, we will write $\beta_i$ for the $i$th component of $\beta$.
\end{notation}

\begin{construction}\label{construction. family over F^{(I)}}
Consider the subset
	\eqnn
	\tilde F^{(I)}
	\subset 
	F^{(I)} \times 
	[-\infty,\infty]^I
	\eqnd
consisting of those pairs $(\alpha,\beta)$ such that
\enum
\item\label{item:F^{(I)}-addition-condition} for any $i \leq j \in I$, at least one of the three following equations is satisfied:
		\eqnn
		\beta_i = \alpha(i,j) + \beta_j,
		\qquad
		\alpha(i,j) = \beta_i - \beta_j,
		\qquad
		\beta_j = \beta_i - \alpha(i,j).
		\eqnd
(See Convention~\ref{convention:adding-subtracting-infinity} and Remark~\ref{remark:equations-for-infinity-additions}.)
\item\label{item:F^{(I)}-remove-extreme-corners} $\beta$ attains the values $\pm \infty$. That is, there exists some $i_-, i_+ \in I$ such that 
	\eqnn
	\beta_{i_-} = - \infty
	\qquad\text{and}\qquad
	\beta_{i_+} = \infty.
	\eqnd
\enumd

We endow $\tilde F^{(I)}$ with an action as follows: Given $(n,t) \in \ZZ \times \RR$, we declare
	\eqn\label{eqn:F^{(I)}-ZtimesR-action}
	((n,t)\beta)_i
	=
	\beta_{i+n} + t.
	\eqnd
(Informally, $n \in \ZZ$ shifts the indices of $\beta$, while $t \in \RR$ translate all the coordinates of $\beta$.)
This action respects the fibers of the projection map 
	\eqn\label{eqn:tilde-F^{(I)}-to-F^{(I)}}
	\pi:
	\tilde F^{(I)} \to F^{(I)},
	\qquad
	(\alpha,\beta) \mapsto \alpha.
	\eqnd
\end{construction}

\begin{example}\label{example:F^{(I)}-fibers-ij}
Fix $\alpha \in F^{(I)}$ and consider the fiber $(\tilde F^{(I)})_\alpha$, along with the projection
	\eqn\label{eqn:projectF^{(I)}fiber}
	(\tilde F^{(I)})_\alpha \to  [-\infty,\infty] \times [-\infty,\infty],
	\qquad
	(\alpha, \beta) \mapsto (\beta_i,\beta_j).
	\eqnd
Based purely on Condition~\eqref{item:F^{(I)}-addition-condition} of Construction~\ref{construction. family over F^{(I)}}, we can conclude the following:
\begin{itemize}
\item When $\alpha(i,j)=\infty$ the image of \eqref{eqn:projectF^{(I)}fiber} is a union of two faces of a square, namely the $\beta_j = -\infty$ and $\beta_i = \infty$ faces.
\item If $\alpha(i,j) < \infty$, the image of~\eqref{eqn:projectF^{(I)}fiber} is a ``line'', compactified with two corner points $(-\infty,-\infty)$ and $(\infty,\infty)$ of the two-dimensional cube.
\end{itemize}
Note that by definition of $F^{(I)}$ (Definition~\ref{defn:F^{(I)}}), $\alpha(i,j)$ never attains the value $-\infty$.
\end{example}

\begin{remark}\label{remark:maximal iplus iminus}
From Example~\ref{example:F^{(I)}-fibers-ij} we see that whenever $i \leq j \in I$ and $(\alpha,\beta) \in \tilde F^{(I)}$, we have
	\eqnn
	\beta_i = -\infty \implies \beta_j = - \infty,
	\qquad
	\beta_j = \infty \implies \beta_i = -\infty.
	\eqnd
This again relies only on Condition~\eqref{item:F^{(I)}-addition-condition}.

Thus, for a fixed $(\alpha,\beta) \in \tilde F^{(I)}$, if there is some $i_-$ for which $\beta_{i_-} = -\infty$, there is a minimal such $\beta_i$. Likewise, if there is some $i_+$ for which $\beta_{i_+} = \infty$, one may ask for the maximal such $i_+$. At the same time, because $\alpha(i,j)$ attains $\infty$ for some $i \leq j$ by design, Condition~\eqref{item:F^{(I)}-addition-condition} guarantees that at least one of $i_-$ and $i_+$ exists.

Condition~\eqref{item:F^{(I)}-remove-extreme-corners} guarantees that $\tilde F^{(I)}$ only consists of $(\alpha,\beta)$ where  $i_-$ and $i_+$ both exist. 
\end{remark}

\begin{remark}
Informally, one may think that the practical effect of Condition~\eqref{item:F^{(I)}-remove-extreme-corners} is to remove the extremal corners
	\eqnn
	(\beta_i = \infty)_{i \in I}
	\qquad
	(\beta_i = - \infty)_{i \in I}
	\eqnd
from consideration.
\end{remark}

\begin{remark}\label{remark:F^{(I)}-beta-has-support-i-andi+}
In particular, for any $(\alpha,\beta) \in \tilde F^{(I)}$, let us denote by $i_-$ and $i-+$ the maximal and minimal elements as in Remark~\ref{remark:maximal iplus iminus}. Then we may assume $\beta$ has ``support'' strictly between $i_-$ and $i_+$, meaning
	\eqnn
	i_+ < i < i_- \implies \beta_i \neq \pm \infty.
	\eqnd
\end{remark}

\begin{remark}\label{remark:FI-fiber-surjection}
Thus, fixing $\alpha_0$, a point $(\alpha_0,\beta)$ in the fiber $(\tilde F^{(I)})_{\alpha_0}$ can be characterized as follows:
\begin{itemize}
\item $(\alpha_0,\beta)$ is a fixed point for the $\RR$-action if and only if $i_-$ is the successor to $i_+$. (See Notation~\ref{notation:successor++} for the notion of successor.) As in Remark~\ref{remark:maximal iplus iminus}, we have chosen $i_-$ and $i_+$ to be maximal and minimal, respectively.
\item If $(\alpha_0,\beta)$ is not a fixed point for the $\RR$-action, choose any $i_0$ strictly between $i_+$ and $i_-$. Then $(\tilde F^{(I)})_{\alpha_0}$ contains an element $(\alpha_0,\beta')$ whose coordinates take on values
	\eqnn
	\beta'_{i} =
	\begin{cases}
	\infty & i \leq i_+ \\
	\alpha_0(i,i_0) & i_+ < i \leq i_0 \\
	-\alpha_0(i_0,i) & i \geq i_0 < i_- \\
	-\infty & i \geq i_-.
	\end{cases}
	\eqnd 
And our point $(\alpha_0,\beta)$ is in the $\RR$-orbit of $(\alpha_0,\beta')$. (In fact, the two are related by an $\RR$-translation by $\beta_{i_0}$.)
\end{itemize}
\end{remark}

\begin{lemma}\label{lemma. F^{(I)} family is a family}
The data $(\tilde F^{(I)} \to F^{(I)}, \mu)$ from Construction~\ref{construction. family over F^{(I)}} exhibits a family of broken paracycles over $F^{(I)}$.
\end{lemma}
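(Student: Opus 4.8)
The plan is to verify each of the five defining properties \ref{property:fibers paracyclic}--\ref{property:local lifting} of Definition~\ref{defn:family-of-broken-paracycles} for the data $(\pi:\tilde F^{(I)}\to F^{(I)},\mu)$ of Construction~\ref{construction. family over F^{(I)}}, in that order, after first fixing notation: throughout I write $(\alpha,\beta)$ for a point of $\tilde F^{(I)}$ and I use the explicit fiberwise description from Remark~\ref{remark:FI-fiber-surjection}, together with the extremal indices $i_\pm$ of Remark~\ref{remark:maximal iplus iminus}. First I would establish \ref{property:fibers paracyclic}: fix $\alpha\in F^{(I)}$ and show $(\tilde F^{(I)})_\alpha$, with its $\ZZ\times\RR$-action, is a broken paracycle (Definition~\ref{defn:broken-paracycle}). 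The candidate homeomorphism to $(-\infty,\infty)$ is supplied by Remark~\ref{remark:FI-fiber-surjection}: choosing a basepoint $i_0$, the map $(\alpha,\beta)\mapsto \beta_{i_0}$ (on the non-fixed locus) plus the enumeration of fixed points by their value of $i_+$ gives a bijection with $(-\infty,\infty)$; one checks it is a homeomorphism using that $\alpha(i,j)$ is finite off the fixed locus (Example~\ref{example:F^{(I)}-fibers-ij}). Properties (L1)--(L5) then follow: freeness of the $\ZZ$-action from \eqref{eqn:F^{(I)}-ZtimesR-action} and \ref{item:F^{(I)}hasfixedpoints} (the index shift by $n\neq0$ moves $i_+$, hence moves the point); directedness of the $\RR$-action from the formula $((0,t)\beta)_i=\beta_i+t$ which is monotone in $t$; compatibility of the two actions from the same formulas; and discreteness/non-emptiness of the fixed locus from the fact that the fixed points of $(\tilde F^{(I)})_\alpha$ are exactly the $\beta'$ of Remark~\ref{remark:FI-fiber-surjection} with $i_-=i_+^{++}$, of which there are countably many indexed by $\ZZ\times\{0,\dots,n_I\}$, all isolated because between consecutive fixed points sits an open interval's worth of non-fixed points.

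Next I would treat \ref{property:unramified}. By Remark~\ref{remark:F^{(I)} are corners}, after choosing $i_0\in I$ the space $F^{(I)}$ embeds as a union of faces of a corner of the cube $(-\infty,\infty]^{[n_I]}$, the coordinates being $\alpha(i_0,i_1),\ldots,\alpha(i_{n_I},i_0+1)$. The $\RR$-fixed locus of $\tilde F^{(I)}$ modulo $\ZZ$ decomposes, according to which consecutive pair $i_k<i_{k+1}=i_k^{++}$ carries the ``break'' (i.e.\ which coordinate equals $\infty$), into finitely many closed subsets $K_0,\dots,K_{n_I}$; on $K_k$ the fixed point is the one whose $i_+$ equals $i_k$, and the section $\alpha\mapsto(\alpha,\beta')$ with $\beta'$ as in Remark~\ref{remark:FI-fiber-surjection} is a continuous section exhibiting $K_k\to F^{(I)}$ as a closed embedding onto the locus $\{\alpha(i_k,i_{k+1})=\infty\}$. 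Globally these assemble, over all $\ZZ$-translates, to show $(\tilde F^{(I)}/\ZZ)^\RR\to F^{(I)}$ is locally a finite disjoint union of closed embeddings, which is exactly the content of \ref{property:unramified} as unpacked in Remark~\ref{remark. unramified}.

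For \ref{property:quotient-closed} and \ref{property:separated} I would argue directly. For \ref{property:separated}: the set $\{(x,y)\mid x\le_L y\}$ inside $\tilde F^{(I)}\times_{F^{(I)}}\tilde F^{(I)}$ is, by the fiberwise order, the set of $(\alpha,\beta,\alpha,\gamma)$ with $\beta_i\le\gamma_i$ for all $i$ (the order on a fiber being the one induced by any of the $\beta_i$-coordinates, coherently by \ref{item:F^{(I)}-addition-condition}); since each coordinate map is continuous to $[-\infty,\infty]$ and $\{\beta_i\le\gamma_i\}$ is closed there, the intersection over $i\in I$ is closed. For \ref{property:quotient-closed} I would use the criterion \ref{property:closed-prime} of Lemma~\ref{lemma:closed-map-lemma} (whose hypotheses \ref{property:fibers paracyclic},\ref{property:separated},\ref{property:local lifting} will already be in hand): given a local section $\sigma$, the set $K_\sigma=\{(\alpha,\beta):\sigma(\alpha)\le(\alpha,\beta)\le\sigma(\alpha)+1\}$ is, after the $\ZZ$-equivariant identification of Proposition~\ref{prop. free Z action}, a ``slab'' of width one period; its projection to $F^{(I)}$ is closed because this slab is a closed subset of $\tilde F^{(I)}$ (defined by the closed conditions of \ref{property:separated} applied to $\sigma$ and $\sigma+1$) which is proper over $F^{(I)}$ on each fiber in the sense that the fiber of $K_\sigma$ is a compact interval --- more precisely, one checks that $K_\sigma$ is homeomorphic over $F^{(I)}$ to a closed subset of $F^{(I)}\times[0,1]$ via the $\beta_{i_0}$-coordinate rescaled by the endpoints $\sigma,\sigma+1$, and projection from $F^{(I)}\times[0,1]$ is closed since $[0,1]$ is compact.

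Finally, \ref{property:local lifting}: given $(\alpha_0,\beta_0)\in (\tilde F^{(I)})^\circ$, i.e.\ with $\beta_0$ not a fixed-point configuration, pick $i_0$ with $\beta_{0,i_0}$ finite; then $\alpha\mapsto(\alpha,\beta(\alpha))$ where $\beta(\alpha)_i$ is defined by the finite formula $\beta(\alpha)_{i_0}=\beta_{0,i_0}$, $\beta(\alpha)_i=\beta_{0,i_0}+\alpha(i_0,i)$ for $i\ge i_0$ within the relevant period and $\beta(\alpha)_i=\beta_{0,i_0}-\alpha(i,i_0)$ for $i\le i_0$ (extended $\ZZ$-equivariantly, and truncated to $\pm\infty$ past $i_\pm$) is a continuous section defined on a neighborhood of $\alpha_0$ where the $i_\pm$ do not jump, with value $(\alpha_0,\beta_0)$ at $\alpha_0$; that it lands in the non-fixed locus near $\alpha_0$ is an open condition. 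I expect the main obstacle to be the careful bookkeeping in \ref{property:quotient-closed}/\ref{property:separated} --- specifically, making rigorous that the fiberwise order is cut out by the coordinate inequalities uniformly in $i$ (so that the closedness is genuinely a closed condition and not merely closed on each fiber), and handling the interaction of the $\pm\infty$ truncations with continuity of the local sections near strata where a coordinate of $\alpha$ passes through $\infty$; everything else is a direct unwinding of Construction~\ref{construction. family over F^{(I)}} against the definitions.
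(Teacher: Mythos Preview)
Your plan is correct and follows the same overall strategy as the paper: verify \ref{property:fibers paracyclic}--\ref{property:local lifting} directly from the formulas in Construction~\ref{construction. family over F^{(I)}}. Two tactical differences are worth noting. First, for \ref{property:fibers paracyclic} the paper does not try to project to a single coordinate $\beta_{i_0}$; instead it writes down, for each $i\in I$, an explicit continuous map $\rho_i:[-\infty,\infty]\to(\tilde F^{(I)})_\alpha$ and observes that the fiber is the quotient of $\coprod_i[-\infty,\infty]$ obtained by gluing $\rho_i(\infty)$ to $\rho_{i^{++}}(-\infty)$. This parametrization is also what the paper invokes (tersely) for \ref{property:quotient-closed}: since $\rho$ varies continuously in $\alpha$, one obtains $\tilde F^{(I)}/\ZZ$ as a continuous image of $F^{(I)}\times(\text{finitely many compact intervals})$, from which closedness of the projection is immediate. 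Your route through \ref{property:closed-prime} and Lemma~\ref{lemma:closed-map-lemma} is fine, but note it forces you to establish \ref{property:local lifting} before \ref{property:quotient-closed}, and your proposed identification of $K_\sigma$ with a subset of $F^{(I)}\times[0,1]$ via a single $\beta_{i_0}$-coordinate runs into the same issue as in \ref{property:fibers paracyclic}: that coordinate may be $\pm\infty$ on parts of $K_\sigma$.

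Second, your anticipated ``main obstacle'' for \ref{property:local lifting} evaporates in the paper's treatment. Once you fix $i_0$ with $\beta_{i_0}$ finite, the formula $b(\alpha)_i=\beta_{i_0}+\alpha(i,i_0)$ for $i\le i_0$ and $b(\alpha)_i=\beta_{i_0}-\alpha(i_0,i)$ for $i>i_0$ (note your sign on the second case is flipped) is continuous \emph{globally} on $F^{(I)}$, with no need to truncate past $i_\pm$ or to restrict to a neighborhood where $i_\pm$ do not jump: the coordinates $\alpha(i,i_0),\alpha(i_0,i)$ already take the value $\infty$ exactly where needed, and addition/subtraction with the finite constant $\beta_{i_0}$ is continuous by Convention~\ref{convention:adding-subtracting-infinity}. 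So the section is in fact global, and the bookkeeping you were worried about is absorbed into the arithmetic of $(-\infty,\infty]$.
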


\begin{proof}[Proof of Lemma~\ref{lemma. F^{(I)} family is a family}.]
We verify each property of Definition~\ref{defn:family-of-broken-paracycles}.

\ref{property:fibers paracyclic}
Given $\alpha \in F^{(I)}$, for each $i \in I$ consider the continuous map
	\eqnn
	\rho_i: [-\infty,\infty] \to \tilde F^{(I)}
	\eqnd
sending $t \in [-\infty,\infty]$ to the element whose $j$th coordinate is given by
	\eqnn
	\rho_i(t)_j
	=
	\begin{cases}
	t & i = j \\
	\infty & t = \pm \infty, j < i , \alpha(j,i) = \infty \\
	-\infty & t = \pm \infty, j > i, \alpha(i,j) = \infty \\
	t- \alpha(i,j)  & i \leq j, \alpha \neq \infty \\
	t + \alpha(j,i) & j \leq i, \alpha \neq \infty.
	\end{cases}.
	\eqnd
The $\rho_i$ define a single $\ZZ \times\RR$-equivariant map
	\eqnn
	\rho:
	\coprod_{i \in I} [-\infty,\infty]
	\to
	(\tilde F^{(I)})_\alpha.
	\eqnd
By Remark~\ref{remark:FI-fiber-surjection}, $\rho$ is a surjection.
Moreover, its image can be identified with a quotient of
	\eqnn
	\coprod_{i \in I} [-\infty,\infty].
	\eqnd
The equivalent relation giving rise to the quotient can be gleaned from the following observation: Suppose $i'$ is the successor to $i$ in $I$. Then $\rho_{i}$ and $\rho_{i'}$ have identical image, or overlap along $\rho_i(\infty)$ and $\rho_{i'}(-\infty)$.  

It is straightforward now to verify that the image of $\rho$ (hence the fiber above $\alpha$) is a broken paracycle.

\ref{property:unramified}
For any $i \in I$, we write $i^{++}$ for the successor of $i$. (See Notation~\ref{notation:successor++}.) We can write the $\RR$-fixed point set of $\tilde F^{(I)}$ as
	\eqnn
	(\tilde F^{(I)})^\RR
	=
	\coprod_{i \in I} K_i
	\eqnd
where $K_i$ consists of those $(\alpha,\beta)$ satisfying
	\eqnn
	\alpha(i,i^{++})=\infty
	\qquad\text{and}\qquad
	\beta_{i'} = 
		\begin{cases}
		\infty & i' \leq i \\
		-\infty & i' > i.
		\end{cases}
	\eqnd
Because the corners of the cube $[-\infty,\infty]^I$ form a discrete subset, and the condition $\alpha(i,i^{++}) = \infty$ defines a closed subset of $F^{(I)}$, the projection $\coprod_{i \in I} K_i \to F^{(I)}$ is unramified. 

\ref{property:quotient-closed} This is straightforward to verify by noting that the function $\rho$ above is defined continuously in the $\alpha$ variable.

\ref{property:separated} By construction, two points $\beta, \beta'$ above the same $\alpha$ satisfy the relation $\beta \leq \beta'$ if and only $\beta_i \leq \beta_i ' \in [-\infty,\infty]$ for all $i \in I$. This is a closed condition on $[-\infty,\infty]^I$, hence on $\tilde F^{(I)} \times_{F^{(I)}} \tilde F^{(I)}$. 

\ref{property:local lifting}
Fix $(\alpha,\beta) \in (\tilde F^{(I)})^\circ$. Since this is not a fixed point of the $\RR$-action, there is some $i_0 \in I$ for which $\beta_{i_0} \neq \pm \infty$. Define a map
	\eqnn
	b: F^{(I)} \to [-\infty,\infty]^I
	\eqnd
by 
	\eqnn
	b(\alpha)_i
	=
	\begin{cases}
	\beta_{i_0} + \alpha(i,i_0) & i \leq i_0 \\
	\beta_{i_0} - \alpha(i_0,i) & i_0 < i .
	\end{cases}
	\eqnd
This is a continuous function because $\beta_{i_0} \neq \pm \infty$. Then the map
	\eqnn
	\sigma: F^{(I)} \to (\tilde F^{(I)})^{\circ},
	\qquad
	\alpha \mapsto (\alpha, b(\alpha))
	\eqnd
is a continuous section. Note that the image of $\sigma$ is contained in the non-fixed point set because $b(\alpha)_{i_0} = 0$, and in particular, $b(\alpha)$ is not a corner of the cube $[-\infty,\infty]^I$. 

(Indeed, we have shown that $(\tilde F^{(I)})^\circ \to F^{(I)}$ {\em globally} has the lifting condition.)
\end{proof}

\begin{lemma}\label{lemma:FI-admits-I-section}
Moreover, consider the function
	\eqnn
	\sigma: F^{(I)} \times I \to \tilde {F^{(I)}},
	\eqnd
sending $(\alpha,i)$ to the element $(\alpha,\beta)$ where
	\eqnn
	\beta_j =
	\begin{cases}
	\alpha(i,j) & i \geq j \\
	-\alpha(j,i) & j \geq i.
	\end{cases}
	\eqnd
Then $\sigma$ is an $I$-section of $L_{F^{(I)}} \to F^{(I)}$.
\end{lemma}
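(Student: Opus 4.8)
The plan is to verify, one at a time, the four defining properties of an $I$-section (Definition~\ref{defn:I-section}) for the map $\sigma$ in the statement, where the family in question is $L_{F^{(I)}} = \tilde F^{(I)} \to F^{(I)}$ built in Lemma~\ref{lemma. F^{(I)} family is a family}. It is convenient to record at the outset that $\sigma$ sends $(\alpha,i)$ to the pair $(\alpha,\beta^{(i)})$ with $\beta^{(i)}_j = \bar\alpha(j,i)$, where $\bar\alpha$ is the skew-symmetric extension of $\alpha$ (so $\bar\alpha(a,b)=\alpha(a,b)$ for $a\le b$ and $\bar\alpha(a,b)=-\alpha(b,a)$ for $a\ge b$); the two clauses in the statement agree on the overlap $j\sim_I i$ because of the cocycle identity~\eqref{item:F^{(I)}-functor-2} applied to $i\le j\le i$. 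Throughout I would freely use the explicit description of the fibers of $\tilde F^{(I)}$ from Remark~\ref{remark:FI-fiber-surjection} and of its $\RR$-fixed locus from the proof of Lemma~\ref{lemma. F^{(I)} family is a family}.

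\emph{Well-definedness and property~\eqref{item:ISectionIsASection}.} First I would check that $(\alpha,\beta^{(i)})$ actually lies in $\tilde F^{(I)}$, i.e.\ satisfies conditions~\eqref{item:F^{(I)}-addition-condition} and~\eqref{item:F^{(I)}-remove-extreme-corners} of Construction~\ref{construction. family over F^{(I)}}. For~\eqref{item:F^{(I)}-addition-condition}, fix $j\le k$ and run a short case analysis on the position of $i$ relative to $j,k$ (the cases $i\ge k$, $j\le i\le k$, $i\le j$, plus degenerate overlaps): in each case the cocycle identity~\eqref{item:F^{(I)}-functor-2}, together with the $\pm\infty$ conventions of Convention~\ref{convention:adding-subtracting-infinity}, exhibits one of the three allowed equations. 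Condition~\eqref{item:F^{(I)}-remove-extreme-corners} is where the ``fixed point'' axiom enters: $\beta^{(i)}_{i+1}=-\alpha(i,i+1)=-\infty$ by~\eqref{item:F^{(I)}hasfixedpoints}, while $\beta^{(i)}_{i-1}=\alpha(i-1,i)=\alpha(i,i+1)=\infty$ by first applying $\ZZ$-equivariance~\eqref{item:F^{(I)}-Zequivariant} and then~\eqref{item:F^{(I)}hasfixedpoints}. Continuity of $\alpha\mapsto\sigma(\alpha,i)$ is immediate: each coordinate $\alpha\mapsto\beta^{(i)}_j$ is, up to the homeomorphic sign flip on $[-\infty,\infty]$, a coordinate projection of $F^{(I)}\subset(-\infty,\infty]^{\arr I}$, so the map into $F^{(I)}\times[-\infty,\infty]^I$ is continuous and lands in $\tilde F^{(I)}$ by the above, and it is visibly a section of~\eqref{eqn:tilde-F^{(I)}-to-F^{(I)}}. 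Finally $\sigma(\alpha,i)$ lands in $(\tilde F^{(I)})^\circ$ and not in the $\RR$-fixed locus, since $\beta^{(i)}_i=\bar\alpha(i,i)=0$ is finite whereas every $\RR$-fixed point of $\tilde F^{(I)}$ has all coordinates in $\{\pm\infty\}$ (proof of Lemma~\ref{lemma. F^{(I)} family is a family}, property~\ref{property:unramified}).

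\emph{Properties~\eqref{item:ISectionOntoPiNought},~\eqref{item:ISectionDistanceOrderCompatible},~\eqref{item:ISectionZEquivariant}.} Surjectivity onto $(L_\alpha)^\circ/\RR$ is essentially Remark~\ref{remark:FI-fiber-surjection}: unwinding it, the distinguished representative $\beta'$ attached to a choice of $i_0$ strictly between $i_+$ and $i_-$ has coordinates $\bar\alpha(j,i_0)$ (one checks $\alpha(j,i_0)=\infty$ for $j\le i_+$ by routing through the fixed point via~\eqref{item:F^{(I)}-functor-2}), so $\beta'=\beta^{(i_0)}=\sigma(\alpha,i_0)$, and hence every non-fixed point of the fiber lies in the $\RR$-orbit of some $\sigma(\alpha,i_0)$. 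For $\ZZ$-equivariance~\eqref{item:ISectionZEquivariant}: condition~\eqref{item:F^{(I)}-Zequivariant} in the form $\bar\alpha(j-1,i)=\bar\alpha(j,i+1)$ gives $\beta^{(i+1)}_j=\beta^{(i)}_{j-1}$ for all $j$, which is exactly the statement that $\sigma(\alpha,i+1)$ is the translate of $\sigma(\alpha,i)$ by the generator of the $\ZZ$-action~\eqref{eqn:F^{(I)}-ZtimesR-action} designated ``$+1$'' by the compatibility requirement~\ref{property:directed-action-ordering}. For~\eqref{item:ISectionDistanceOrderCompatible} I would compute $d_{L_\alpha}$ directly in coordinates: since $t\cdot\beta$ adds $t$ to every coordinate, $d_{L_\alpha}$ between two sections equals $-\infty$ only if one of them is $\infty$ at every index where the other exceeds $-\infty$; evaluating at $j=i$ (where $\beta^{(i)}_i=0$ is finite) and invoking Remark~\ref{remark:distance-respects-ordering} rules this out under the relevant relation between $i$ and $i'$.

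The genuinely delicate part is not the content of any single property — each collapses to the cocycle law~\eqref{item:F^{(I)}-functor-2}, $\ZZ$-equivariance~\eqref{item:F^{(I)}-Zequivariant}, and $\alpha(i,i+1)=\infty$~\eqref{item:F^{(I)}hasfixedpoints} — but the bookkeeping of orientations: one must pin down, consistently, which generator of the $\ZZ$-action on $\tilde F^{(I)}$ plays the role of ``$+1$'' in Definition~\ref{defn:broken-paracycle}~\ref{property:directed-action-ordering}, track the direction of $\le_L$, and reconcile both with the inequality in Definition~\ref{defn:I-section}~\eqref{item:ISectionDistanceOrderCompatible} (whose arguments, as written, seem to need to be interchanged in order to be compatible with $\ZZ$-equivariance and skew-symmetry of $d$). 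Once these conventions are fixed, property~\eqref{item:ISectionDistanceOrderCompatible} is a formal consequence of Remark~\ref{remark:distance-respects-ordering} and the explicit coordinates of $\beta^{(i)}$.
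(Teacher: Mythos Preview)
Your proposal is correct and follows essentially the same approach as the paper, though you are considerably more thorough. The paper's own proof is quite terse: it asserts $\ZZ$-equivariance ``by construction'' and then only explicitly verifies property~\eqref{item:ISectionOntoPiNought} (surjectivity onto $\pi_0$), using exactly the same mechanism you do---namely, Remark~\ref{remark:FI-fiber-surjection} to identify any non-fixed point in a fiber as lying in the $\RR$-orbit of some $\sigma(\alpha,i_0)$. The paper leaves well-definedness, continuity, landing in $(\tilde F^{(I)})^\circ$, and the distance condition~\eqref{item:ISectionDistanceOrderCompatible} entirely implicit. Your careful bookkeeping of these (especially the case analysis for condition~\eqref{item:F^{(I)}-addition-condition} and your observation about orientation conventions in the final paragraph) fills in details the paper omits but does not depart from its strategy.
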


\begin{proof}
By construction $\sigma$ is $\ZZ$-equivariant, so we must simply show that for every $\alpha \in F^{(I)}$, the induced map
	\eqnn
	I \to (\tilde F^{(I)})_{\alpha}^\circ
	\eqnd
is a surjection on $\pi_0$; i.e., a surjection after passing to the $\RR$-orbit set. But by Remark~\ref{remark:FI-fiber-surjection}, any element $(\alpha,\beta) \in (\tilde F^{(I)})^{\circ}_\alpha$ determines some $i_0 \in I$ for which $\beta_{i_0} \neq \pm \infty$. Moreover, $(\alpha,\beta)$ is in the same $\RR$-orbit as the unique $(\alpha,\beta')$ for which $\beta'_{i_0} = 0$. Thus the value of the section $\sigma(\alpha,-): I \to (\tilde F^{(I)})_\alpha$ at $i_0$ maps onto $[(\alpha,\beta)]$.
\end{proof}

\subsection{Functoriality of $\tilde F^{(I)}$ with respect to $I$}

Fix $r: I \to J$ a morphism in $\Preordpara$. We have an induced diagram of continuous maps
	\eqn\label{eqn:I-J-functoriality}
	\xymatrix{
	\tilde F^{(J)} \ar[r] \ar[d] & \tilde F^{(I)} \ar[d] \\
	F^{(J)} \ar[r] & F^{(I)}
	}
	\eqnd
where the top map sends
	\eqnn
	(\alpha,\beta) \mapsto (\alpha \circ (r \times r), \beta \circ r).
	\eqnd
For example,
	$
	(\alpha \circ (r \times r))(i_0, i_1) 
	=
	\alpha( ri_0,ri_1)
	$
for $i_0 \leq i_1 \in I$. Because $r$ is essentially surjective, the top horizontal map in \eqref{eqn:I-J-functoriality} is an injection in each fiber; from this it follows that we may apply Proposition~\ref{prop:R-equivariant-is-homeo}. Hence \eqref{eqn:I-J-functoriality} is a pullback diagram. That is, the diagram represents a morphism in $\Pt(\brokenpara)$. (See Definition~\ref{defn. maps of families}.)

\begin{remark}
The map $F^{(J)} \to F^{(I)}$ induced by $r$ is an injection if $r$ is an honest surjection. Moreover, the image is cut out by the equations
	\eqnn
	\{
	ri_0=ri_1 \implies 
	\alpha(i_0,i_1) = 0
	\}
	\eqnd
hence $F^{(J)} \to F^{(I)}$ is a closed embedding in this case.
\end{remark}

\begin{remark}
Let $\tau: \tilde F^{(I)} \to F^{(I)}$ be the $I$-section and $\sigma: \tilde F^{(J)} \to F^{(J)}$ be the $J$-section constructed in Lemma~\ref{lemma:FI-admits-I-section}.  Then the diagram~\eqref{eqn:I-J-functoriality} respects these data in the following sense: The diagram
    \eqnn
        \xymatrix{
        \tilde F^{(J)} \ar[r] 
        	& \tilde F^{(I)} \\
        F^{(J)} \times J \ar[u]^{\sigma} \\
        F^{(J)} \times I \ar[u]^{\id_X \times r} \ar[r]
        	& F^{(I)} \times I \ar[uu]^{\tau}
        }
    \eqnd
commutes.

\end{remark}

\subsection{Proof of Theorem~\ref{theorem. I-sections representable}}
Fix $I$ a paracyclic preorder.
Let
	\eqnn
	\Pt(F^{(I)})  \to \Top
	\eqnd
denote the Cartesian fibration (with discrete fibers) classifying the functor represented by $F^{(I)}$. For example, the fiber above $S \in \Top$ is given by the set $\hom_{\Top}(S, F^{(I)})$---equivalently, one can think of this set as a discrete groupoid.

We now exhibit a functor $j: \Pt(\brokenpara^I) \to \Pt(F^{(I)})$ respecting the forgetful maps to $\Top$. Given an object $(L_S \to S, \sigma)$ of $\Pt(\brokenpara^I)$, we have a function
	\eqnn
	S \times \arr(I)
	\to
	[-\infty,\infty]
	\eqnd
given by
	\eqnn
	(s,i,j)
	\mapsto
	d_{L_s}(\sigma(s,i) , \sigma(s,j)).
	\eqnd
By Lemma~\ref{lemma. distance continuous}, this is continuous; it is straightforward to verify we obtain a continuous map $d_\sigma: S \to F^{(I)}$. The assignment
	\eqnn
	j: (L_S \to S, \sigma)
	\mapsto
	(S \xra{d_\sigma} F^{(I)}),
	\eqnd
with the obvious effect on morphisms, produces the functor $j: \Pt(\brokenpara^I) \to \Pt(F^{(I)})$ we seek.

The inverse functor $h: \Pt(F^{(I)}) \to \Pt(\brokenpara^I)$ sends any function $f: S \to F^{(I)}$ to the pair
	\eqnn
	(f^* \tilde F^{(I)}, \sigma \circ f \times \id_I)
	\eqnd
where $\sigma$ is the $I$-section from Lemma~\ref{lemma:FI-admits-I-section}. It is now straightforward to equip the composites $h \circ j$ and $j \circ h$ with natural isomorphisms to/from the identity functors, in a way respecting the projection to $\Top$. that is to say, $j$ and $h$ are equivalences of stacks. 

\eqref{thmitem:representableOverBroken}
Let us next exhibit the commutative triangle of the theorem. Define a map 
	\eqnn
	\tilde f: L_S \to \tilde F^{(I)},
	\qquad
	x \mapsto (f(x),\beta(x)) 
	\eqnd
where $\beta(x) \in [-\infty,\infty]^I$ is defined by
	\eqnn
	\beta(x)_i 
	=
	\left( d_i(\pi(x)), x) \right)_i.
	\eqnd
It is straightforward to verify that $\tilde f$ is $\ZZ \times \RR$-equivariant and indeed lands in $\tilde F^{(I)}$. We also leave to the reader the straightforward verification that $\tilde f$ is a bijection along the fibers. By Proposition~\ref{prop:R-equivariant-is-homeo}, this shows that the universal arrow from $L_S$ to the pullback of $F^{(I)}$ along $f$ is an isomorphism of broken lines. This universal arrow exhibits the natural isomorphism making the triangle commute.

\eqref{thmitem:representabilityFunctorialInI} It is straightforward to check that the above constructions are functorial in the $I$ variable.

This completes the proof.

\clearpage

\section{Equivalence of the two definitions of broken paracycles}\label{section:broken-definitions-equivalent}

\newenvironment{brokeneasyprops}{%
	  \renewcommand*{\theenumi}{(P\arabic{enumi})}%
	  \renewcommand*{\labelenumi}{(P\arabic{enumi})}%
	  \enumerate
	}{%
	  \endenumerate
}

We do not make use of the following theorem in the main results of this work, but we prove the following for the reader's convenience.

\begin{theorem}\label{theorem:broken-definitions-equivalent}
Fix a topological space $S$ and a pair
	\eqnn
	(\pi: L_S \to S, \mu)
	\eqnd
where $\mu$ is a fiber-wise $\ZZ \times \RR$-action on $L_S$. Then $(\pi:L_S \to S, \mu)$ is a family of broken paracycles (Definition~\ref{defn:family-of-broken-paracycles}) if and only if it satisfies the following:
\begin{brokeneasyprops}
	\item\label{easyprop:fibers-paracycles} (Fibers are broken paracycles.) For every $s \in S$, the fiber $L_s = \pi^{-1}(s)$ is a broken paracycle in the sense of Definition~\ref{defn:broken-paracycle}.	
	\item\label{easyprop:unramified} (Unramified modulo $\ZZ$.) Let $L_S^{\RR}$ denote the fixed point set of the $\RR = \{0\} \times \RR$ action. Then the map
		\eqnn
		L_S^\RR \to S
		\eqnd
	induced by $\pi$ is unramified modulo $\ZZ$. 
	\item\label{easyprop:local-triviality} (Local triviality.) For every $s \in S$, there is an open subset $U \subset S$ containing $s$ such that there exists a directed homeomorphism 
		\eqnn
		U \times (-\infty,\infty) \cong \pi^{-1}(U)
		\eqnd 
		which respects the projection to $U$ and is $\ZZ$-equivariant, where $(-\infty,\infty)$ is equipped with the standard translational $\ZZ$ action.
\end{brokeneasyprops}
\end{theorem}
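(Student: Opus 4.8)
The statement is an equivalence whose two directions have opposite flavor: the implication from (P3) to the technical conditions is a formal "check on an open cover" argument, while the converse—local triviality from the technical conditions—is where the work lies.

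\textbf{(P1)--(P3) $\Rightarrow$ family of broken paracycles.} Here \ref{easyprop:fibers-paracycles} is literally \ref{property:fibers paracyclic} and \ref{easyprop:unramified} is literally \ref{property:unramified}, so it remains only to deduce \ref{property:quotient-closed}, \ref{property:separated}, and \ref{property:local lifting} from the local triviality hypothesis \ref{easyprop:local-triviality}. The plan is to observe that each of these three conditions may be verified on an open cover of $S$: a subset is closed exactly when it is closed in each member of an open cover (applied to the image sets produced by $L_S/\ZZ \to S$, and to $\{x \le_L y\}\subseteq L_S\times_S L_S$), and the lifting property is manifestly local. One may therefore replace $L_S$ by a directed trivial family $U\times(-\infty,\infty)$. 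For such a family $(U\times(-\infty,\infty))/\ZZ\cong U\times S^1\to U$ is closed since $S^1$ is compact; $\{x\le_L y\}$ becomes the closed set $\{(u,t,t'):t\le t'\}$ because "directed" means the trivialization is fibrewise order-preserving; and a point of $L_S^\circ$ lifts via a constant section after one shrinks $U$ so that the constant section misses the fixed-point locus—possible because, by \ref{property:unramified} and Remark~\ref{remark. unramified}, that locus modulo $\ZZ$ is locally a finite union of closed sections, so the parameters at which a fixed constant section becomes fixed form a finite union of closed sets not containing the chosen point.

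\textbf{Family of broken paracycles $\Rightarrow$ (P3): setup.} Fix $s_0\in S$; I must build a $\ZZ$-equivariant directed trivialization near $s_0$. By Proposition~\ref{prop. free Z action} the fibre $L_{s_0}$ is $\ZZ$-equivariantly $(-\infty,\infty)$, and by \ref{property:fixed-point-set-discrete-in-line} it has, inside one fundamental domain $[\tilde s_0,\tilde s_0+1]$ with $\tilde s_0\in L_{s_0}^\circ$, only finitely many fixed points $f_1<_L\cdots<_L f_r$. Interleave them with non-fixed points $\tilde s_0<_L z_0<_L f_1<_L z_1<_L\cdots<_L f_r<_L z_r<_L\tilde s_0+1$ and, using \ref{property:local lifting} for these finitely many points, lift $\tilde s_0,z_0,\dots,z_r$ to continuous sections $\sigma,\zeta_0,\dots,\zeta_r$ over a common neighbourhood $U$; put $\zeta_{-1}:=\sigma$, $\zeta_{r+1}:=\sigma+1$. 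Using continuity of the translation distance $d$ (Lemma~\ref{lemma. distance continuous}) and Remark~\ref{remark. unramified}, shrink $U$ so that $\zeta_{-1}<_L\cdots<_L\zeta_{r+1}$ pointwise and so that the only fixed points of $L_{s'}$ strictly between $\zeta_{-1}(s')$ and $\zeta_{r+1}(s')$ are the (at most one) survivors near each $f_k$, each trapped strictly between $\zeta_{k-1}(s')$ and $\zeta_k(s')$; this uses the key consequence of \ref{property:unramified}, that after shrinking \emph{fixed points can only disappear, never appear or split}. Each "gap bundle" $K_j:=\{x\in L_U:\zeta_{j-1}(\pi x)\le_L x\le_L\zeta_j(\pi x)\}$ is then closed in $L_U$ (Proposition~\ref{prop:section-orders-are-open}) and $\pi|_{K_j}$ is a closed map to $U$ (Lemma~\ref{lemma:closed-map-lemma}, using $\zeta_j\le_L\zeta_{j-1}+1$), with compact interval fibres; a $\ZZ$-equivariant directed trivialization of $K_0\cup\cdots\cup K_{r+1}=[\sigma,\sigma+1]$ extends $\ZZ$-periodically to the trivialization of $L_U$ that we want.

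\textbf{The main obstacle: trivializing a single gap bundle.} The only delicate point is that inside one gap $K_j$ the surviving fixed point is not locally constant—it is present over the closed set $Z_j:=\pi(K_j^{\RR})$ and gone over $U\setminus Z_j$—so one cannot trivialize by tracking fixed points. The plan is instead to parametrize $K_j$ by the \emph{pair} of translation distances $g:=d(\zeta_{j-1}(\pi x),x)$ and $h:=d(x,\zeta_j(\pi x))$: by Lemmas~\ref{lemma. distance continuous}, Remarks~\ref{remark:distance-respects-ordering} and~\ref{remark:distance-skew-symmetric} these are continuous, $[0,\infty]$-valued, monotone along each fibre, and satisfy $g+h=a_j:=d(\zeta_{j-1},\zeta_j)\in(0,\infty]$ identically (when $a_j(s')=\infty$ one has $g\equiv\infty$ on the upper half of the gap and $h\equiv\infty$ on the lower half, and $g+h=\infty$ still by Convention~\ref{convention:adding-subtracting-infinity}). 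Thus $(\pi,g,h)$ maps $K_j$ continuously and fibrewise bijectively onto the "anti-diagonal bundle" $\{(s',p,q)\in U\times[0,\infty]^2:p+q=a_j(s')\}$, and since $a_j$ is continuous this target maps closedly to $U$ with compact fibres, so the map is a homeomorphism over $U$. Finally the anti-diagonal bundle is a trivial $[0,1]$-bundle: its fibre $\Delta_a$ (a segment when $a<\infty$, the degenerate "L" $\{p=\infty\}\cup\{q=\infty\}$ when $a=\infty$) varies continuously with $a\in(0,\infty]$, and $(p,q)\mapsto\arctan p/(\arctan p+\arctan q)$ is a fibrewise-increasing homeomorphism $\Delta_a\xrightarrow{\sim}[0,1]$ depending continuously on $a$ (well-defined since $(0,0)\notin\Delta_a$ for $a>0$, and one checks its behavior as $a\to\infty$). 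Composing, and rescaling the $j$-th gap into $[\tfrac{j}{r+2},\tfrac{j+1}{r+2}]$, yields a directed trivialization that is order-preserving by construction and, after the $\ZZ$-periodic extension, $\ZZ$-equivariant since $\zeta_{r+1}=\sigma+1$.

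\textbf{Remark.} One could alternatively obtain the trivialization by combining Lemma~\ref{lemma:local-I-sections-exist} and Theorem~\ref{theorem. I-sections representable} to present $L_U$ locally as a pullback of the universal family $\tilde F^{(I)}\to F^{(I)}$ and trivializing the latter by the same translation-distance recipe, since local triviality is stable under pullback; I expect the self-contained route above to be the cleaner write-up.
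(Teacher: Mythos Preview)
Your proof is correct and follows essentially the same strategy as the paper. For the hard direction (family $\Rightarrow$ local triviality), the paper also chooses an $I$-section $\sigma'$ over a neighborhood $U$ (via Lemma~\ref{lemma:local-I-sections-exist}), restricts to the fundamental domain $K_\sigma$, and builds a fibrewise-monotone coordinate from translation distances; the only difference is packaging: the paper writes a single formula
\[
\gamma(x)=\sum_{a=0}^{n}\rho\bigl(d(\sigma_{i_a}(\pi(x)),x)\bigr)
\]
for an unspecified squashing function $\rho$ and defers the verification that $\pi\times\gamma:K_\sigma\to U\times[0,1]$ is a homeomorphism to~\cite{broken}, whereas you decompose $K_\sigma$ into gap bundles $K_j$ and trivialize each one explicitly via $\arctan p/(\arctan p+\arctan q)$. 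Your version has the virtue of being self-contained; the paper's is terser. Your final remark about pulling back from $\tilde F^{(I)}\to F^{(I)}$ is also a legitimate alternative, and indeed is morally what the paper's appeal to~\cite{broken} amounts to. The easy direction ((P1)--(P3) $\Rightarrow$ family) the paper dismisses as ``straightforward and can be found also in~\cite{broken}''; your local verification is exactly what is needed there.
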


\begin{remark}
Note that \ref{easyprop:local-triviality} makes no mention of how the local trivialization behaves with respect to the $\RR$-fixed point locus; the only condition of the trivialization is that it be a {\em directed} homeomorphism along each fiber (see \ref{property:directed-action-ordering} of Definition~\ref{defn:broken-paracycle}). 
\end{remark}

\begin{prop}[Local triviality]\label{prop:local-triviality}
Every family of broken paracycles is locally trivial as a directed $(-\infty,\infty)$ fiber bundle. That is, every family of broken paracycles satisfies~\ref{easyprop:local-triviality}.
\end{prop}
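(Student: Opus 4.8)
The plan is to deduce local triviality directly from the representability result, Theorem~\ref{theorem. I-sections representable}, together with the existence of local $I$-sections, Lemma~\ref{lemma:local-I-sections-exist}. First I would fix $s \in S$. By Lemma~\ref{lemma:local-I-sections-exist}, there is a parasimplex $I$ and an open neighborhood $U$ of $s$ such that $L_U \to U$ admits an $I$-section $\sigma$; shrinking $U$ if necessary (using Property~\ref{property:unramified} exactly as in the proof of that lemma) we may take $I$ to be a genuine parasimplex, so $I \cong \ZZ \times [n]$ for $n = n_{L_s}$. By Theorem~\ref{theorem. I-sections representable}, the pair $(L_U \to U, \sigma)$ is classified by a continuous map $d_\sigma : U \to F^{(I)}$, and the natural arrow exhibits $L_U$ as the pullback $d_\sigma^* \tilde F^{(I)}$ of the universal family, $\ZZ\times\RR$-equivariantly and over $U$. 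So it suffices to show that $\tilde F^{(I)} \to F^{(I)}$ is locally trivial as a directed $(-\infty,\infty)$-bundle with standard $\ZZ$-action; local triviality of $L_U \to U$ then follows by pulling back along $d_\sigma$.

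Next I would trivialize $\tilde F^{(I)} \to F^{(I)}$ (or at least do so locally near any prescribed $\alpha_0 \in F^{(I)}$). The key is the section constructed in the proof of Lemma~\ref{lemma. F^{(I)} family is a family} under property \ref{property:local lifting}: there we exhibited, for any non-fixed point $(\alpha_0,\beta)$, a \emph{global} continuous section $\sigma' : F^{(I)} \to (\tilde F^{(I)})^\circ$ (the map called $\sigma$ there, with $b(\alpha)_{i_0} = 0$), choosing $i_0 \in I$ strictly between the relevant $i_+$ and $i_-$. Using Proposition~\ref{prop. free Z action} fiberwise, together with this section as a ``zero-section'', one builds a $\ZZ$-equivariant fiberwise homeomorphism. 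Concretely, define $\Phi : F^{(I)} \times (-\infty,\infty) \to \tilde F^{(I)}$ by first mapping $(\alpha, m) \mapsto m \cdot \sigma'(\alpha)$ for $m \in [0,1]$ using the $\RR$-action to flow $\sigma'(\alpha)$ up to $\sigma'(\alpha)+1$ (the interval $[\sigma'(\alpha), \sigma'(\alpha)+1]$ is a broken line, hence a compact interval, so this is a directed homeomorphism onto that interval by property \ref{property:directed-action-ordering}), and then extending $\ZZ$-equivariantly to all of $(-\infty,\infty)$ via $\Phi(\alpha, m+k) = \Phi(\alpha,m)+k$. By Proposition~\ref{prop:R-equivariant-is-homeo} (whose hypotheses are verified for $\tilde F^{(I)}$ in Lemma~\ref{lemma. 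F^{(I)} family is a family}, and trivially for the product bundle), $\Phi$ is a homeomorphism, directed along each fiber by construction and respecting the projection and the $\ZZ$-action. One caveat: the single global section $\sigma'$ was built from a choice of $i_0$ that works on an open subset of $F^{(I)}$ (where the relevant coordinates stay finite), so in general $\Phi$ is defined only over such an open set; since these open sets (ranging over $i_0 \in I$) cover $F^{(I)}$, this still yields local triviality, which is all we need.

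Finally, pulling $\Phi$ back along $d_\sigma : U \to F^{(I)}$ (after shrinking $U$ so that its image lands in one of the above open sets of $F^{(I)}$) produces the required directed, $\ZZ$-equivariant, fiber-preserving homeomorphism $U \times (-\infty,\infty) \cong \pi^{-1}(U)$, which is \ref{easyprop:local-triviality}. The main obstacle I anticipate is bookkeeping around the interval-flow construction of $\Phi$: one must check that flowing $\sigma'(\alpha)$ by the $\RR$-action parametrizes $[\sigma'(\alpha),\sigma'(\alpha)+1]$ homeomorphically and continuously in $\alpha$ (continuity in $\alpha$ follows because the section $b(\alpha)$ and the action are continuous, and $b(\alpha)_{i_0}$ stays finite), and that the $\ZZ$-extension is well-defined and continuous across the integer seams $m \in \ZZ$ — this is where property~\ref{property:quotient-closed} (equivalently \ref{property:closed-prime}, i.e.\ that $K_\sigma$ projects by a closed map, established for $\tilde F^{(I)}$ in Lemma~\ref{lemma. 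F^{(I)} family is a family}) enters, via Proposition~\ref{prop:R-equivariant-is-homeo}, to upgrade the evident continuous bijection to a homeomorphism. Everything else is a routine assembly of results already proved in the excerpt.
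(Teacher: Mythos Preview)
There is a genuine gap in your construction of $\Phi$. You propose to parametrize the interval $[\sigma'(\alpha),\sigma'(\alpha)+1]$ by flowing $\sigma'(\alpha)$ under the $\RR$-action, but this cannot work: by property~\ref{property:fixed-point-set-discrete-in-line} every broken paracycle has at least one $\RR$-fixed point strictly between $\sigma'(\alpha)$ and $\sigma'(\alpha)+1$, and the $\RR$-flow cannot cross a fixed point. The map $t \mapsto t\cdot\sigma'(\alpha)$ is a homeomorphism only onto the single open $\RR$-orbit containing $\sigma'(\alpha)$, a proper open subinterval of $[\sigma'(\alpha),\sigma'(\alpha)+1]$. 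Property~\ref{property:directed-action-ordering} says the action is directed, not that it is transitive on the interval. Relatedly, your appeal to Proposition~\ref{prop:R-equivariant-is-homeo} is misplaced: that proposition requires both pairs to satisfy \ref{property:fibers paracyclic}--\ref{property:local lifting} and requires $\tilde f$ to be $\RR$-equivariant. The product $F^{(I)}\times(-\infty,\infty)$ with the standard $\ZZ$-action carries no $\RR$-action making its fibers broken paracycles, and your $\Phi$ is not $\RR$-equivariant by design.

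The reduction to the universal family $\tilde F^{(I)}\to F^{(I)}$ via Theorem~\ref{theorem. I-sections representable} is correct but does not buy you anything: the hard step---producing a continuous, fiberwise-monotone map to $[0,1]$ that passes through the fixed points---remains. The paper's proof handles exactly this issue directly on $L_U$: it takes a full $I$-section $\sigma'$ (not just a single section), fixes a homeomorphism $\rho:[-\infty,\infty]\to[0,1/(n+1)]$, and sets
\[
\gamma(x)=\sum_{a=0}^{n}\rho\bigl(d(\sigma'_{i_a}(\pi(x)),x)\bigr).
\]
As $x$ crosses the fixed point between the orbits of $\sigma'_{i_a}$ and $\sigma'_{i_{a+1}}$, the $a$th summand reaches its maximum while the $(a+1)$st summand starts from its minimum, so $\gamma$ is continuous and strictly increasing across the break. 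This is the missing idea in your construction; once you have $\gamma$, the rest of your outline (extend $\ZZ$-equivariantly, check the resulting bijection is a homeomorphism via the closedness of $K_\sigma\to U$) goes through.
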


\begin{proof}
Let $\pi: L_S \to S$ be a family of broken paracycles and fix $s \in S$. Let $U$ be a neighborhood of $s$ such that we have a local section $\sigma: U \to L_U^{\circ}$. Let
	\eqnn
        K_\sigma = \{\tilde x \in L_U \text{ such that $\sigma(\pi(\tilde x)) \leq \tilde x \leq \sigma(\pi(\tilde x))+1$.} \}
	\eqnd
	By Property~\ref{property:closed-prime} of Lemma~\ref{lemma:closed-map-lemma}, the projection $\pi$ restricted to $K_\sigma$ is a closed map to $U$.
	
	Now, for an appropriate choice of parasimplex $I$ and shrinking $U$ if necessarily, we have an $I$-section $\sigma'$ extending $\sigma$ (so $\sigma'(i) = \sigma$ for some $i \in I$). Enumerating the elements
		\eqnn
		i= i_0 < i_1 < \ldots < i_n < i_0 +1
		\eqnd
in $I$, we have the function 
	\eqnn
	\gamma: x \mapsto \sum_{a=0, \ldots, n} \rho( d(\sigma_{i_a}(\pi(x)),x)).
	\eqnd
One can prove that the product 
	\eqnn
	\pi \times \gamma: K_\sigma \to U \times [0,1]
	\eqnd
is a homeomorphism which is directed along the fibers as in the proof of Theorem~2.4.1 of~\cite{broken}. One extends this $\ZZ$-equivariantly to exhibit a homeomorphism
	\eqnn
	L_U \to U \times (-\infty,\infty)
	\eqnd
and the result follows.
\end{proof}

\begin{proof}[Proof of Theorem~\ref{theorem:broken-definitions-equivalent}.]
Proposition~\ref{prop:local-triviality} proves one implication. The reverse implication is straightforward and can be found also in~\cite{broken}.
\end{proof}

\clearpage

\section{A review of stratifications and constructible sheaves}\label{section:constructible-sheaves}

It was a vision of Macpherson that any constructible sheaf is equivalent to a representation of the exit path category. This was proven by Lurie in~\cite{higher-algebra} for a large class of stratified spaces, generalizing a previous result of Treumann~\cite{treumann-exit-paths}; the main goal of our recollection is to state this theorem (Theorem~\ref{theorem.exit-path-theorem}) and explain that this equivalence is natural in the stratified space variable (Remark~\ref{remark:exit-paths-natural-in-X-P}). We review these facts here for the reader's convenience.
This material is drawn from Section~A.5 of~\cite{higher-algebra}. 

\subsection{Posets}

\begin{construction}[Alexandroff topology]
Let $P$ be a poset. We render $P$ as a topological space by endowing it with the {\em Alexandroff topology}, in which a subset $U \subset P$ is open if and only if it is {\em upward closed}, meaning
	\eqnn
	(x \in U) \, \& \,
	(y \geq x) \implies y \in U.
	\eqnd
Let $Q$ be another poset.
We note that a map $P \to Q$ is continuous if and only if it is a map of posets (i.e., weakly order-preserving). In this way we have a fully faithful embedding
	\eqnn
	\poset
	\to
	\Top
	\eqnd
from the category of posets to the category of topological spaces.
\end{construction}

\begin{notation}
Given a poset $P$, we let $P^{\op}$ denote its opposite poset. (The same set with the opposite order.)
\end{notation}

\begin{notation}[$P_{>p}$]\label{notation:P-bigger-than-p}
For any $p \in P$, we let
	\eqnn
	P_{>p}
	=
	\{ \text{$q \in p$ such that $q > p$} \}.
	\eqnd
\end{notation}

\begin{notation}[Nerve $N(P)$]\label{notation:nerve-of-P}
Any poset $P$ may be also considered a category---the set of objects is $P$, while there is a unique morphism from $x$ to $y$ if and only if $x \leq y$.

Accordingly, we have the simplicial set $N(P)$ given by the nerve of $P$; a $k$-simplex of $N(P)$ is exactly the data of a string of weakly increasing elements in $P$:
	\eqnn
	x_0 \leq x_1 \leq \ldots \leq x_k
	\eqnd
At the same time, we have the singular complex $\sing(P)$ of $P$; a $k$-simplex of $\sing(P)$ is precisely the data of a continuous map $|\Delta^k| \to P$ from the standard $k$-simplex.
\end{notation}

\subsection{Stratifications}

\begin{defn}[Stratification]
Let $X$, be a topological space and $P$ a poset. A {\em stratification} on $X$ (by $P$) is the data of a continuous functions $X \to P$. 

In this work, we will call the data $X \to P$ a {\em stratified space}, or {\em $P$-stratified space} when the choice of $P$ is to be emphasized.
\end{defn}

\begin{notation}\label{notation:fiber-of-stratification}
Let $X \to P$ be a stratified space. For every $p \in P$, we let $X_p$ denote the subspace of $X$ given by the preimage of $p$.
\end{notation}

\begin{defn}[Conical stratification]\label{defn. conical stratification}
Let $X \to P$ be a stratified space and fix $x \in X$ with image $p_x \in P$. We say $X$ is {\em conically stratified at $x$} if there exists an open subset $U \subset X$ containing $x$ which is homeomorphic to 
	\eqnn
	Z \times C(Y)
	\eqnd
where 
	\enum
	\item $Z$ is a topological space, 
	\item $Y$ is a $P_{>p_x}$-stratified space (see Notation~\ref{notation:P-bigger-than-p}),
	\item $C(Y) = Y \times [0,\infty) / Y \times \{0\}$ is the open cone on $Y$, stratified by the poset $P_{>p_x} \cup p_x = P_{\geq p_x}$ by sending $[(y,0)] \mapsto p_x$, and
	\item $Z \times C(Y)$ is the stratified space whose stratification is the composition $Z \times C(Y) \to C(Y) \to P_{\geq p_x}$. 
	\enumd
We say $X$ is {\em conically stratified} if it is conically stratified at $x$ for every $x \in X$.
\end{defn}

\begin{example}
If $X$ is conically stratified, so is any open subset of $X$.
\end{example}

\begin{example}\label{example:octant-and-simplex-stratification}
For any integer $n \geq 0$, let $X$ be the Euclidean octant $\RR_{\geq 0}^{n+1}$. Let $\cP(n)$ be the power set of the set $\{0,\ldots,n\}$ considered a poset under $\subset$. Then we have a stratification
	\eqnn
	\cS: X \to \cP(n)
	\qquad
	x \mapsto
	\{ \text{$i$ such that $x_i \neq 0$}.
	\}
	\eqnd
So for example, the interior of the octant is collapsed to the set $\{0,1,\ldots,n\}$ and the corner of the octant is sent to $\emptyset$.

Moreover, $X$ is conically stratified. We only check this at the origin of $X$ and leave the inductive details of the other points to the reader.

Set $Z = \ast$ to be a singleton and let $Y = |\Delta^{n}| \subset X \subset \RR^{n+1}$ be the standard $n$-simplex;, embedded as the collection of those $\vec x$ whose coordinates are non-negative and sum to 1. $Y$ is stratified by the restriction of $\cS$ to $Y$, and $X$ is homeomorphic to $C(Y)$ as a stratified space over 
	$\cP(n)
	= 
	\{\emptyset\} \cup (\cP(n))_{> \emptyset}$.
\end{example}

\begin{defn}\label{defn:category-of-stratified-spaces}
We can define the (ordinary) category of conically stratified spaces as follows: An object is a conically stratified space $X \to P$. Given another conically stratified space $Y \to Q$, a morphism is the data of a commutative diagram
	\eqnn
	\xymatrix{
	X \ar[r]^f \ar[d] & Y \ar[d] \\
	P \ar[r]^r &  Q
	}
	\eqnd
where $f$ is a continuous map and $r$ is a map of posets.
\end{defn}

\subsection{Standard stratification on the $n$-simplex}

The standard stratification of $|\Delta^n|$ is different from the one we saw in Example~\ref{example:octant-and-simplex-stratification}.

\begin{construction}\label{construction:standard-simplex}
Let $[n] = \{0 < 1 < \ldots < n\}$ be the linear poset with $n+1$ elements, and consider the map
	\eqnn
	\max: 
	\cP(n) \setminus \{\emptyset\}
	\to
	[n],
	\qquad
	S \mapsto \max S.
	\eqnd
Then we stratify the standard $n$-simplex by the composite
	\eqnn
	|\Delta^n|
	\to
	(\cP(n) \setminus \{\emptyset\})
	\xrightarrow{\max}
	[n].
	\eqnd
That is, this stratification sends $x \in |\Delta^n| \subset \RR^{n+1}$ to the maximal $i$ for which $x_i \neq 0$.
\end{construction}

\begin{example}
For example, the 1-simplex is stratified so that a single vertex is sent to the element $0 \in [1]$, while the rest of the 1-simplex is sent to $1 \in [1]$.

The 2-simplex is stratified so that the initial vertex is sent to $0 \in [2]$, the open-closed edge from the 0th vertex to the 1st vertex is sent to $1 \in [2]$, while the entire complement of the edge $\vec{01}$ is collapsed to $2 \in [2]$.
\end{example}

\begin{defn}\label{defn:standard-simplex}
We call the stratification $|\Delta^n| \to [n]$ from Construction~\ref{construction:standard-simplex} the {\em standard stratification} on the $n$-simplex.
\end{defn}

We have the following:

\begin{prop}\label{prop:nerve-of-P-is-inside-sing(P)}
Let $P$ be a poset. Then the nerve of $P$ (Notation~\ref{notation:nerve-of-P}) may be identified with the simplicial subset of $\sing(P)$ whose $k$ simplices consist of those continuous maps $|\Delta^k| \to P$ factoring through the standard stratification of $|\Delta^k|$ (Definition~\ref{defn:standard-simplex}). that is,
	\eqnn
	N(P)_k
	\cong
	\{ j: |\Delta^k| \to [k] \to P \}
	\eqnd
where $[k] \to P$ is allowed to be an arbitrary map of posets.

This identification is natural in $P$.
\end{prop}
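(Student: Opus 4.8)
The plan is to realize the standard stratification as a natural transformation of functors on $\Delta$, and then to produce the desired simplicial subset by precomposing with it. Write $\pi_k \colon |\Delta^k| \to [k]$ for the standard stratification of Definition~\ref{defn:standard-simplex}, so $\pi_k(x) = \max\{i : x_i \neq 0\}$. First I would check that $\pi_\bullet$ is a natural transformation of functors $\Delta \to \Top$, from $[n] \mapsto |\Delta^n|$ to $[n] \mapsto [n]$ (this target precomposed with the Alexandroff embedding $\poset \to \Top$). For a morphism $\theta \colon [m] \to [n]$ in $\Delta$, the induced map $\theta_\ast \colon |\Delta^m| \to |\Delta^n|$ sends a point with barycentric coordinates $(y_i)$ to the point with coordinates $x_j = \sum_{\theta(i) = j} y_i$; since $\theta$ is weakly order-preserving one gets $\max\{j : x_j \neq 0\} = \theta(\max\{i : y_i \neq 0\})$, which is exactly the commuting square $\pi_n \circ \theta_\ast = \theta \circ \pi_m$. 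This is the only step with any computational content, and it is elementary.

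Second, for each poset $P$, precomposition with $\pi_\bullet$ gives a map of simplicial sets $N(P) \to \sing(P)$ carrying a poset map $f \colon [k] \to P$ to $f \circ \pi_k$. Here I use that $N(P)_k = \hom_\poset([k], P)$, and that a continuous map out of an Alexandroff space coincides with a map of posets, so this is also $\hom_\Top([k], P)$. That the resulting assignment commutes with faces and degeneracies is precisely the naturality of $\pi_\bullet$ from the first step: $\theta^\ast(f \circ \pi_n) = f \circ \theta \circ \pi_m = (\theta^\ast f) \circ \pi_m$.

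Third, I would identify the image and verify injectivity. Each $\pi_k$ is surjective --- the barycenter of the face spanned by $\{0, \dots, i\}$ lands in the stratum $i$ --- and in fact $\pi_k$ is a quotient map. Hence a continuous map $h \colon |\Delta^k| \to P$ has at most one factorization $h = g \circ \pi_k$, and when such $g$ exists it is automatically continuous, hence a map of posets. It follows that $N(P)_k \to \sing(P)_k$ is injective with image exactly the continuous maps that factor through $\pi_k$, reconciling the two descriptions in the statement. This collection is then a simplicial subset of $\sing(P)$, being the image of the simplicial map just constructed.

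Finally, naturality in $P$ is immediate: for a map of posets $\phi \colon P \to P'$, both composites in the square relating $N(P) \to \sing(P)$ and $N(P') \to \sing(P')$ send $f$ to $\phi \circ f \circ \pi_k$. I do not anticipate any real obstacle; the proof is essentially an exercise in unwinding the definitions of the nerve, the singular complex, and the Alexandroff topology, together with the single $\max$-versus-order-preserving-map identity in the first step.
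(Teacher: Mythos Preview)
Your proof is correct and complete. The paper does not actually supply a proof of this proposition; it is stated as a fact and then used. Your argument fills in exactly the details one would expect: the naturality of the standard stratification $\pi_\bullet$ in the simplex variable, the resulting simplicial map $N(P) \to \sing(P)$, and the identification of its image via surjectivity of each $\pi_k$. There is nothing to compare against, and no gap in what you have written.
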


\subsection{Exit path categories}


\begin{defn}[Exit path $\infty$-category]\label{defn:exit-path-category}
Let $X \to P$ be a conically stratified space (Definition~\ref{defn. conical stratification}). Then the {\em exit path $\infty$-category} of $X \to P$ is the pullback of simplicial sets
	\eqnn
	\xymatrix{
	\Exit(X) \ar[rr] \ar[d] && \sing(X) \ar[d] \\
	N(P) \ar[rr]^-{\text{Prop~\ref{prop:nerve-of-P-is-inside-sing(P)}} } && \sing(P).
	}	
	\eqnd
Here, the bottom horizontal arrow is the one supplied by Proposition~\ref{prop:nerve-of-P-is-inside-sing(P)}.

Note that the stratification is implicit in the notation $\Exit(X)$, which has no mention of $P$.
\end{defn}

\begin{remark}
That $\Exit(X)$ is an $\infty$-category is a non-trivial result, and relies on the {\em conically} stratified condition; see Theorem~A.6.4 of~\cite{higher-algebra}. 
Note that we write as $\Exit(X)$ would be written $\sing^P(X)$ by Lurie in~\cite{higher-algebra}.
\end{remark}

\begin{remark}
Let us at least understand the objects and morphisms of $\Exit(X)$. The 0-simplices (hence objects) are points of $X$. A 1-simplex (hence a morphism) is a path 
	\eqnn
	\gamma: [0,1] \to X
	\eqnd
subject to the following constraint: There exist two elements $p_0, p_1 \in P$ with $p_0 \leq p_1$ such that
	\eqnn
	\gamma(0) \in X_{p_0}
	\qquad \text{and}
	\qquad
	t \neq 0 \implies \gamma(t) \in X_{p_1}.
	\eqnd
If $p_0=p_1$, this means $\gamma$ ``stays'' in the stratum $X_{p_0}$ for all time; otherwise, this condition means that $\gamma$ ``immediately'' exits to the stratum $X_{p_1}$. This explains the term ``exit path.''
\end{remark}

\begin{remark}[Naturality]\label{remark:Exit-is-natural}
Let $X \to P$ and $Y \to Q$ be stratified spaces and suppose we have a map $f: X \to Y$ of stratified spaces (Definition~\ref{defn:category-of-stratified-spaces}). Because the map $N(P) \to \sing(P)$ from Proposition~\ref{prop:nerve-of-P-is-inside-sing(P)} is natural in the $P$ variable and because pullbacks are natural, we have an induced map of simplicial sets
	\eqnn
	f: \Exit(X) \to \Exit(Y).
	\eqnd
This exhibits a functor from the category of conically stratified spaces (Definition~\ref{defn:category-of-stratified-spaces}) to the $\infty$-category of $\infty$-categories. Indeed, because we have taken the simplicial set model for $\infty$-categories, the functor $\Exit$ can be made to factors through the strict category of simplicial sets.
\end{remark}

\begin{remark}[Entry paths]
There is another way to view the exit path $\infty$-categories: as dual to the entry path $\infty$-category. This may seem tautological, but the latter has the advantage of being defined through a natural fibration. 

Namely, call a stratified space a {\em basic} if it is isomorphic as a stratified space to a product
	\eqnn
	\RR^i \times C(Y)
	\eqnd
for some compact stratified space $Y$. Given any stratified space $X$, we have a functor of $\infty$-categories from the $\infty$-category of basics to the $\infty$-category $\Kan$, by sending a basic $U$ to the space of stratified open embeddings $U \to X$. The associated Grothendieck construction is a right fibration over the $\infty$-category $\Bsc$ of basics, and we call this $\Bsc_{/X}$ the {\em entry path $\infty$-category of $X$}. We have omitted various details, for which the reader may refer to~\cite{aft}. For a large class of stratified spaces, one may prove that $\Exit(X)^{\op}$ is equivalent to the entry path category of $X$ as $\infty$-categories.

We do not utilize entry path $\infty$-categories because their functoriality is less natural than that of exit path categories. For example, as we saw in Remark~\ref{remark:Exit-is-natural}, any map of stratified spaces induces a functor on exit path categories. For the above model of entry path categories, one must first rely on a contractibility argument to show that for certain maps $f: X \to Y$ of stratifies spaces, the pullback
	\eqnn
	\xymatrix{
	\cE \ar[r] \ar[dd] 
		& \Bsc_{/Y} \ar[d] \\
		& \Strat_{/Y} \ar[d]^{f^*} \\
	\Bsc_{/X} \ar[r]	& \Strat_{/X} 
	}
	\eqnd
is a trivial fibration $\cE \to \Bsc_{/X}$ (hence an equivalence).
 
(Note that above, $\Strat_{/Y}$ is the $\infty$-category of stratified spaces equipped with open embeddings into $Y$. The functor $f^*$ takes an object $V \to Y$ to the pullback $V \times_Y X$.)

Only then do we have a functor $\Bsc_{/X} \simeq \cE \to \Bsc_{/Y}$ on entry path categories. 
\end{remark}

\subsection{Constructible sheaves}
Now let $\cC$ be a compactly generated $\infty$-category. Below, by a sheaf on $X$, we mean a sheaf valued in $\cC$.

\begin{notation}
Fix a topological space $X$. We let $\Open(X)$ denote the poset of open subsets of $X$, ordered by inclusion. Given a compactly supported $\infty$-category $\cC$, we will write
	\eqnn
	\Psh(X;\cC)
	\qquad
	\text{or}
	\qquad
	\Psh(X)
	\eqnd
to mean the $\infty$-category of functors $\Open(X)^{\op} \to \cC$. We write $\Psh(X)$ when $\cC$ is implicit.

A presheaf $\cF$ is called a sheaf if for any open cover $\cU = \{U_\alpha\}_{\alpha}$ of an open set $U$, the induced map
	\eqnn
	\cF(U)
	\to
	\limarrow_{V \subset U_\alpha \in \cU} \cF(V)
	\eqnd
is an equivalence. Here, the limit is indexed by the poset of all opens $V \subset U$ that are contained in some $U_\alpha \in \cU$ of the open cover. We let
	\eqnn
	\Shv(X;\cC)
	\qquad
	\text{or}
	\qquad
	\Shv(X)
	\eqnd
denote the full subcategory of $\Psh(X)$ consisting of sheaves.
\end{notation}

\begin{remark}
For any $\infty$-category $\cC$ and for any continuous map $f: X \to Y$, we have a pushforward functor $f_*: \Shv(X) \to \Shv(Y)$; this sends any sheaf $\cF$ on $X$ to the sheaf $f_*\cF$, where
	\eqnn
	f_* \cF(V) = \cF(f^{-1}(V)),
	\qquad
	V \in \Open(Y).
	\eqnd
When $\cC$ is compactly generated, $f_*$ admits a left adjoint, $f^*$. This is the pullback of sheaves.
\end{remark}

\begin{defn}
Let $X$ be a topological space, and let $p: X \to \ast$ be the constant map to a point. A sheaf on $X$ is called {\em constant} if it is in the essential image of the pullback functor
	\eqnn
	p^* :
	\shv(\ast) \to \shv(X).
	\eqnd
Equivalently, we say a sheaf $\cF$ is constant if there exists an object $A \in \cC$ and a map from $A$ to the global sections $\cF(X)$ such that, for any $x \in X$, the composite map
	\eqnn
	A \to \cF(X) \to \cF_x
	\eqnd
to the stalk is an equivalence.
\end{defn}

\begin{defn}
A sheaf $\cF$ on $X$ is called {\em locally constant} if there exists an open cover $\coprod_\alpha U_\alpha \to X$ such that the restriction of $\cF$ to each $U_\alpha$ is a constant sheaf.
\end{defn}

\begin{remark}
Being constant and locally constant are vastly different; even classically, one can witness monodromies in locally constant sheaves.
\end{remark}

\begin{defn}
Let $X \to P$ be a stratified space and let $\cF$ be a sheaf on $X$. We say that $\cF$ is {\em constructible} (with respect to the stratification on $X$) if for every $p \in P$, the restriction $\cF|_{X_p}$  is a locally constant sheaf on $X_p$.
\end{defn}

\subsection{Exit path $\infty$-categories and constructible sheaves}

\begin{theorem}[Theorem~A.9.3 of~\cite{higher-algebra}]\label{theorem.exit-path-theorem}
Fix a reasonable conically stratified space $X \to P$ and let $\cC = \Kan$ be the $\infty$-category of Kan complexes. Then there exists an equivalence of $\infty$-categories
	\eqnn
	\Ffun(\Exit(X),\Kan)
	\simeq
	\shv^{\cbl}(X;\Kan)
	\eqnd
between the $\infty$-category of constructible $\Kan$-valued sheaves on $X$, and the $\infty$-category of functors from $\Exit(X)$ to $\Kan$.
\end{theorem}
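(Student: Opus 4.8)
\medskip\noindent\emph{Proof strategy.} The plan is to follow Lurie's argument (Section~A.9 of \cite{higher-algebra}): construct a comparison functor, observe that both $\infty$-categories in the statement are glued together from local data, reduce to ``basic'' stratified pieces, and conclude by induction on the depth of the stratification. The comparison functor $\Phi\colon \shv^{\cbl}(X;\Kan) \to \Ffun(\Exit(X),\Kan)$ (with $\Exit(X)$ as in Definition~\ref{defn:exit-path-category}) sends a constructible sheaf $\cF$ to the functor carrying a point $x \in X$ (an object of $\Exit(X)$) to its stalk $\cF_x$, and an exit path from a stratum $X_p$ to a stratum $X_q$ to the induced cospecialization map of stalks; the coherences are extracted from the behavior of $\cF$ on the stratified spaces $X \times |\Delta^k|$ together with the universal property of stalks. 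Alternatively one builds the candidate inverse directly, as a ``left Kan extension along $\Exit(X) \hookrightarrow \sing(X)$'' type functor, and shows it is an equivalence.

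Next I would observe that, as $U$ ranges over a basis of conical opens of $X$, both $U \mapsto \shv^{\cbl}(U;\Kan)$ and $U \mapsto \Ffun(\Exit(U),\Kan)$ assemble into sheaves of $\infty$-categories on $X$: the former by general sheaf theory, and the latter because $\Exit$ carries suitable open covers (indeed hypercovers) to colimit diagrams in $\inftyCat$---a stratified Seifert--van Kampen theorem. Since $\Phi$ is a morphism of such sheaves, it suffices to check that $\Phi$ is an equivalence on a cofinal family of opens, and the conically stratified hypothesis lets us take these of the form $\RR^i \times C(Y)$ with $Y$ compact and of strictly smaller depth. This sets up an induction on depth.

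In the base case the stratification is trivial: constructibility of $\cF$ just means local constancy, $\Exit(X) = \sing(X)$, and $\shv^{\mathrm{lc}}(X;\Kan) \simeq \Ffun(\sing(X),\Kan)$ is the classical monodromy/transport theorem for locally constant sheaves on a sufficiently nice (locally of singular shape) space. For the inductive step over $X = \RR^i \times C(Y)$ I would use the recollement of $\shv(X)$ along the closed cone stratum $Z = \RR^i \times \{\ast\}$ and its open complement $\RR^i \times Y \times (0,\infty)$: a constructible sheaf on $X$ is the data of a locally constant sheaf on $Z$, a constructible sheaf on the open part, and a gluing map. On the other side, $\Exit(\RR^i \times C(Y)) \simeq \sing(\RR^i) \times \Exit(Y)^{\triangleleft}$---the cone point being an initial object, with an exit path to every object of $\Exit(Y)$---and a functor out of $\Exit(Y)^{\triangleleft}$ decomposes into exactly the matching triple. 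Comparing the two decompositions, invoking the inductive hypothesis for $Y$ and the base case for $Z$ and checking that the gluing functors agree, yields the equivalence on basics and closes the induction.

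The hard part will be the descent input of the second step: showing that $\Exit$ is a colimit-preserving cosheaf of $\infty$-categories in the relevant sense, since colimits in $\inftyCat$ are computed by localization and one must rule out the point-set pathologies of arbitrary stratified spaces---which is precisely what the ``reasonable''/locally-of-singular-shape hypotheses are arranged to prevent. A secondary subtlety is making $\Phi$ (or its inverse) manifestly natural and compatible with restriction to opens, so that the local-to-global reduction is legitimate, and checking that the recollement gluing functor on the sheaf side corresponds to restriction along $\Exit(Y) \hookrightarrow \Exit(Y)^{\triangleleft}$ on the combinatorial side. Bootstrapping from $\cC = \Kan$ to a general compactly generated $\cC$ is then routine.
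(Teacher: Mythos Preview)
Your proposal is correct and follows essentially the same approach the paper describes. Note that the paper does not supply its own proof of this theorem: it is cited as Theorem~A.9.3 of~\cite{higher-algebra}, and the paper only sketches Lurie's argument in Remark~\ref{remark:exit-paths-natural-in-X-P}, highlighting exactly the ingredients you list---induction on depth (rank), the base case via codescent of singular complexes for locally constant sheaves, and the inductive step via recollement together with the identification $\Exit(\cone(Y)) \simeq \Exit(Y)^{\triangleleft}$.
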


\begin{remark}
In~\cite{higher-algebra}, ``reasonable'' means that $X$ is paracompact and locally of singular shape, while $P$ satisfies the ascending chain condition. We will not need the full power of these hypotheses. In our setting, all $P$ will be finite, while all $X$ are modeled as open subsets of corners of finite-dimensional Euclidean octants. Indeed, we may restrict attention to the class of stratified spaces considered, for example, in~\cite{aft}. 
\end{remark}

\begin{remark}
Informally, the map $\Fun(\Exit(X),\Kan) \to \Shv^{\cbl}(X;\Kan)$ is induced by sending a functor $F$ to the presheaf
	\eqnn
	U \mapsto \limarrow_{\Exit(U)} F.
	\eqnd
\end{remark}

\begin{remark}\label{remark:exit-paths-natural-in-X-P}
While the theorem as proven in~\cite{higher-algebra} does not specifically address how to generalize the case $\cC = \Kan$ to an arbitrary compactly generated $\infty$-category, and does not specifically address the naturality in the $X$ variable, this can be done. Let us first explain what we mean by naturality.

We see from Remark~\ref{remark:Exit-is-natural} that the assignment
	\eqnn
	X \mapsto \Ffun(\Exit(X),\cC)
	\eqnd
is contravariantly functorial in the $X$ variable for any $\infty$-category $\cC$. On the sheaf side, we have a pullback functor
	\eqnn
	f^*: \Shv(X) \to \Shv(Y)
	\eqnd
so long as $\cC$ is compactly generated; one can easily check that $f^*$ restricts to a map on the full subcategories of constructible sheaves. The naturality claim is that one can construct a natural transformation between these functors; so for example, one can exhibit a homotopy-commuting diagram
	\eqnn
	\xymatrix{
	\Ffun(\Exit(X),\cC) \ar[r]^-{\sim}  \ar[d]
		&\Shv^{\cbl}(X;\cC)	 \ar[d] \\
	\Ffun(\Exit(Y),\cC) \ar[r]^-{\sim} 
		&\Shv^{\cbl}(Y;\cC).	 \\
	}
	\eqnd
Moreover---as indicated---the horizontal arrows can be made equivalences for any compactly generated $\infty$-category $\cC$, generalizing Theorem~\ref{theorem.exit-path-theorem} above. 

One way to do all this is by recycling the argument of Lurie, with the key ingredients being recollement and the codescent property of exit path $\infty$-categories. Let us very quickly recall the argument.

Lurie produces a proof by induction on the depth~\cite{aft} of a stratification, which Lurie calls rank~\cite{higher-algebra}. The base case is a proof about locally constant sheaves on a reasonable space; Theorem~\ref{theorem.exit-path-theorem} can be proven in this case by showing codescent for singular complexes. In other words, the functor $X \mapsto \sing(X)$ sends the augmented \v{C}ech complex of any open cover to a homotopy colimit diagram. Informally, this is stating that homotopy types ``glue.'' (This does not involve the specific choice of $\cC$.) Choosing a $\cC$ compactly generated, we can also compute the induced limits of sheaf categories, and this concludes the base case because categories of sheaves form a sheaf. This step is clearly natural in the $X$ variable.

The inductive step involves a recollement argument, showing that the exit path category on $\cone(Y)$ is the cone category on $\Exit(Y)$. This can be proven naturally in the $Y$ variable, and this equivalence is compatible with the identification of $\Shv^{\cbl}$.
\end{remark}

\clearpage
\section{Sheaves on $\brokenpara$.}

In Theorem~\ref{theorem. brokenpara as a colimit} we expressed $\broken$ as a colimit of stacks $\broken^I$.

As an immediate consequence, we have:

\begin{cor}\label{corollary:sheaves-on-broken-limit}
For any compactly generated $\infty$-category $\cC$, we have an equivalence
	\eqnn
	\shv(\brokenpara,\cC)
	\to
	\limarrow_{I \in \Preordpara} \shv(\brokenpara^I, \cC).
	\eqnd
\end{cor}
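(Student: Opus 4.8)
The plan is to derive Corollary~\ref{corollary:sheaves-on-broken-limit} as a formal consequence of Theorem~\ref{theorem. brokenpara as a colimit}, using the fact that the assignment $X \mapsto \shv(X,\cC)$ turns colimits of stacks into limits of $\infty$-categories. First I would recall the general principle: for the $\infty$-topos $\shv_{\Kan}(\Top)$ and any presentable target $\cC$ (here compactly generated, hence presentable), the functor
	\eqnn
	\shv(-,\cC) : \shv_{\Kan}(\Top)^{\op} \to \widehat{\mathcal{C}\!\operatorname{at}}_\infty,
	\qquad
	X \mapsto \shv(X,\cC) = \Ffun(X^{\op},\cC)_{\text{sheaf}}
	\eqnd
sends colimits in $\shv_{\Kan}(\Top)$ to limits of $\infty$-categories. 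This is because a sheaf on a stack $X$ is, by definition, a Cartesian section of the pulled-back family, and the descent property of the sheaf condition is exactly the statement that for a colimit cocone $\{X_i \to X\}$, the natural functor $\shv(X,\cC) \to \limarrow_i \shv(X_i,\cC)$ is an equivalence. (Concretely, one models $X = \colimarrow_{I} \brokenpara^I$ as a colimit in the $\infty$-category of $\Kan$-valued sheaves on $\Top$, and uses that $\shv(-,\cC)$, being a right adjoint on each representable piece and satisfying descent, converts this colimit diagram to a limit diagram.)

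The key steps, in order: (1) Invoke Theorem~\ref{theorem. brokenpara as a colimit} to write $\brokenpara \simeq \colimarrow_{I \in \Preordpara^{\op}} \brokenpara^I$ in $\shv_{\Kan}(\Top)$. (2) Apply the functor $\shv(-,\cC)$; since this functor is contravariant and takes colimits to limits, one obtains $\shv(\brokenpara,\cC) \simeq \limarrow_{I \in \Preordpara^{\op}} \shv(\brokenpara^I,\cC) = \limarrow_{I \in \Preordpara} \shv(\brokenpara^I,\cC)$, where the last equality is just bookkeeping about variance: a limit over $\Preordpara^{\op}$ of a contravariant functor is a limit over $\Preordpara$ of the corresponding covariant functor, matching the index category stated in the Corollary. (3) Justify that $\shv(-,\cC)$ indeed sends this particular colimit to a limit: this follows from the fact that $\shv_{\Kan}(\Top)$ is an $\infty$-topos, so sheaves satisfy descent along all colimit cocones, combined with the requirement that $\cC$ be compactly generated (this guarantees the relevant pullback functors $f^*$ exist, so that the family of sheaf categories assembles correctly; see the discussion of $f^*$ in Section~\ref{section:constructible-sheaves}).

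The main obstacle — really the only point requiring care — is step (3): making precise that $\shv(-,\cC)$ converts colimits of stacks into limits of $\infty$-categories. For $\cC = \Kan$ this is essentially the statement that sheaves of spaces on stacks form a sheaf (equivalently, that $\shv_{\Kan}(\Top)$ is an $\infty$-topos and hence its overcategories glue), and for general compactly generated $\cC$ one either bootstraps from the $\Kan$ case by writing $\cC$ as a limit of its presentable pieces, or directly verifies that the functor $X \mapsto \shv(X,\cC)$ takes a colimit presentation to the corresponding limit using that each $\shv(\brokenpara^I,\cC)$ is (by Theorem~\ref{theorem. I-sections representable}) the category of sheaves on an honest topological space $F^{(I)}$, for which pullback functoriality and descent are standard. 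I would cite the analogous argument in~\cite{broken}, where the same formal step is carried out for the stack $\broken$, and note that nothing here depends on the specifics of $\brokenpara$ versus $\broken$. Finally I would remark that in subsequent sections one refines this to constructible sheaves, using Corollary~\ref{corollary:brokenpara-as-colim-of-spaces} and the exit-path description of Section~\ref{section:constructible-sheaves}, but for the present corollary only the bare colimit-to-limit principle is needed.
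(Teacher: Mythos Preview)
Your proposal is correct and matches the paper's approach: the paper states the corollary as an ``immediate consequence'' of Theorem~\ref{theorem. brokenpara as a colimit}, and your argument is precisely the unpacking of that implication via the general principle that $\shv(-,\cC)$ sends colimits of stacks to limits of $\infty$-categories. If anything, you are more careful than the paper in spelling out the variance bookkeeping and the role of the compactly generated hypothesis.
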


Moreover, we saw in Theorem~\ref{theorem. I-sections representable} that each $\broken^I$ is representable by a topological space $F^{(I)}$ while naturally respecting the maps to $\broken$. So the induced arrow 
	\eqn\label{eqn:broken sheaves as limit of F^I sheaves}
	\shv(\brokenpara,\cC)
	\to
	\limarrow_{I \in \Preordpara} \shv(F^I, \cC).
	\eqnd
is an equivalence. In fact, this limit may be computed considering {\em constructible} sheaves on the righthand side, as we will soon explain. Moreover, we note that there is a natural stratification on $\brokenpara$. Let $\ZZ_{\geq 0}^{\op}$ denote the opposite of the total order $\ZZ_{\geq 0}$. Then the map
	\eqnn
	\brokenpara \to \ZZ_{\geq 0}^{\op}
	\eqnd
sends any family of broken paracycles to the stratification on the base by the isomorphism classes of fibers:
	\eqnn
	(\pi: L_S \to S)
	\mapsto
	(s \mapsto n_{\pi^{-1}(L_s)}).
	\eqnd
(See Notation~\ref{notation:n_L for broken line L}.)

\begin{theorem}\label{theorem. sheaves are Deltasurj}
For any compactly generated $\infty$-category $\cC$, we have an equivalence
	\eqnn
	\shv(\brokenpara,\cC)
	\to 
	\Ffun(\Deltaparasurj, \cC).
	\eqnd
\end{theorem}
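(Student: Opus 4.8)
The plan is to chain together the reductions already established in the paper and then identify the resulting limit of constructible-sheaf categories with functors out of $\Deltaparasurj$. We start from the equivalence \eqref{eqn:broken sheaves as limit of F^I sheaves}, so that $\shv(\brokenpara,\cC) \simeq \limarrow_{I \in \Preordpara} \shv(F^{(I)}, \cC)$. The first step is to replace each $\shv(F^{(I)},\cC)$ with a category of \emph{constructible} sheaves: by Remark~\ref{remark:F^{(I)} are corners}, each $F^{(I)}$ is an open subset of a corner of a finite-dimensional cube, hence (by Example~\ref{example:octant-and-simplex-stratification} and the surrounding discussion) carries a natural conical stratification; since $\brokenpara \to \ZZ_{\geq 0}^{\op}$ restricts along $F^{(I)} \to \brokenpara$ to this stratification, and since the colimit presentation of $\brokenpara$ is by open embeddings, one checks that a sheaf on $\brokenpara$ is automatically constructible, so the limit in \eqref{eqn:broken sheaves as limit of F^I sheaves} may be taken over $\shv^{\cbl}(F^{(I)},\cC)$.

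Next I would invoke the exit-path theorem (Theorem~\ref{theorem.exit-path-theorem}) together with its naturality and its extension to arbitrary compactly generated $\cC$ (Remark~\ref{remark:exit-paths-natural-in-X-P}) to rewrite $\shv^{\cbl}(F^{(I)},\cC) \simeq \Ffun(\Exit(F^{(I)}),\cC)$, naturally in $I$. Thus
	\eqnn
	\shv(\brokenpara,\cC) \simeq \limarrow_{I \in \Preordpara} \Ffun(\Exit(F^{(I)}),\cC) \simeq \Ffun\left(\colimarrow_{I \in \Preordpara^{\op}} \Exit(F^{(I)}),\, \cC\right),
	\eqnd
so it remains to compute the colimit $\colimarrow_{I \in \Preordpara^{\op}} \Exit(F^{(I)})$ in $\inftyCat$ and identify it with $\Deltaparasurj$. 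Here I expect the technique of Theorem~\ref{theorem. colimit of categories as localization} to do the work: the functor $I \mapsto \Exit(F^{(I)})$ is classified by a (co)Cartesian fibration over $\Preordpara^{\op}$, and the colimit is the localization of the total space along the (co)Cartesian edges. The key computation is that $\Exit(F^{(I)})$ is, up to equivalence, the poset of faces of the relevant corner-of-a-cube, which in turn is governed by the surjections out of the parasimplex structure underlying $I$ (recall from the introductory remarks that the elements of an object of $\Deltaparasurj$ enumerate the open intervals of a circle, i.e., the complements of the fixed points, which are exactly the coordinates that can equal $\infty$ in $F^{(I)}$). Tracking how $F^{(J)} \to F^{(I)}$ behaves for a surjection $r: I \to J$ (Remark~\ref{remark:FI is a functor}: it includes the locus where certain coordinates vanish), one sees the localization collapses the $\Preordpara$-direction and produces precisely $\Deltaparasurj$; the equivalence $(\Deltaparainj)^{\op} \simeq \Deltaparasurj$ from Corollary~\ref{cor. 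Deltainj and Deltasurj} then connects this to the statement of Theorem~\ref{theorem. main}.

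The main obstacle, I expect, is the explicit identification of the colimit $\colimarrow_{I} \Exit(F^{(I)})$ with $\Deltaparasurj$ — in particular, exhibiting the right adjunctions (as in the proof of Theorem~\ref{theorem. brokenpara as a colimit}, using $\Preordpara$ versus $\PreordZ$ and the diagonal/amalgam functors) that allow one to pass from the unwieldy indexing category $\Preordpara$ to the discrete category $\Deltaparasurj$, and verifying that the coCartesian edges localize correctly. This is where one must use that the combinatorially-defined categories ($\Preordpara$, $\widetilde\Amalg$, and the exit-path posets of the $F^{(I)}$) admit enough adjunctions to witness the passage between (co)limit diagrams, exactly the strategy flagged in the proof outline. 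The remaining steps — constructibility, applying the exit-path equivalence, commuting $\Ffun(-,\cC)$ past the colimit — are essentially formal given the cited results.
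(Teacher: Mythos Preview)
Your overall architecture matches the paper's proof exactly: reduce to a limit over $\Preordpara$ of constructible-sheaf categories on the $F^{(I)}$, apply the exit-path equivalence, pass to a colimit of exit-path categories, and compute that colimit as a localization. What is missing is the combinatorial content of that last step, and the tools you guess at are not the right ones.

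The paper introduces a specific poset $\Conv(I)$ of $\ZZ$-equivariant convex equivalence relations on $I$ (equivalently, essentially surjective maps from $I$ to a parasimplex) and proves directly that $\Exit(F^{(I)}) \simeq N(\Conv(I))$; this is the precise replacement for your ``poset of faces of the corner-of-a-cube.'' The diagram $I \mapsto \Conv(I)$ is then classified by a Cartesian fibration $\widetilde{\Conv} \to \Preordpara$ whose objects are pairs $(I,E)$, and the decisive observation is that the functor $(I,E) \mapsto I/E$ to $\Deltaparasurj$ is a localization (it has a fully faithful right adjoint $J \mapsto (J,\Delta_J)$) inverting exactly the Cartesian edges. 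This single adjunction replaces the $\Preordpara$/$\PreordZ$ and $\widetilde{\Amalg}$ machinery you propose; those were used to present $\brokenpara$ as a colimit of spaces, not to compute the colimit of exit-path categories, and they do not obviously give what you need here.

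Two smaller points: the maps $F^{(J)} \to F^{(I)}$ induced by surjections $I \to J$ are \emph{closed} embeddings (they cut out loci where certain coordinates vanish), not open ones, so your constructibility argument as stated does not go through. The paper instead argues that each stratum $(F^{(I)})_E \to \brokenpara$ factors through a point (via a section $I/E \to I$), which immediately gives local constancy of any pullback sheaf along that stratum.
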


\subsection{The stratification on $F^I$}

Recall the maps $F^{(I)} \to \brokenpara$, which classify families of broken paracycles $\tilde F^{(I)} \to F^{(I)}$. One can stratify $F^{(I)}$ by the isomorphism classes of the fibers of $\tilde F^{(I)}$. We employ some notation to make this precise.

\begin{defn}
Fix a subset $E \subset I \times I$. We write $iEj$ whenever $(i,j) \in E$.
\end{defn}

One can straightforwardly prove the following:

\begin{prop}\label{prop. convex relations}
Fix a paracyclic preorder $I$ with preorder relation $\leq$ (which by definition is a subset of $I \times I$). Fix an equivalence relation $E \subset I \times I$ such that $\leq \subset E$. The following are equivalent:
\enum
	\item $E$ is $\ZZ$-equivariant, convex, and non-trivial. This means (i) $iEj \implies (i\pm1)E(j\pm1)$, (ii) if $i\leq k$ and $iEk$, then for all $j$ such that $i \leq j \leq k$, we have $iEj$ and $jEk$, and (iii) $E \neq I \times I$.
	\item The relation induced on the quotient $I/E$ is a partial order, and the map $I \to I/E$ is an essentially surjective map of paracyclic preorders. 
	\item There exists an essentially surjective, $\ZZ$-equivariant map $p:I \to J$ of paracyclic preorders for which $J$ is a parasimplex and $iEi'$ if and only if $p(i)=p(i')$.
\enumd
\end{prop}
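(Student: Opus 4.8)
The plan is to prove the cycle of implications $(3) \implies (1) \implies (2) \implies (3)$, treating (3) as the most concrete statement (it exhibits a parasimplex quotient explicitly) and (1) as the most elementary. First, for $(3) \implies (1)$: given an essentially surjective $\ZZ$-equivariant map $p: I \to J$ with $J$ a parasimplex, and $E$ the relation $p(i) = p(i')$, the $\ZZ$-equivariance of $E$ is immediate from $p(i+1) = p(i)+1$. Convexity follows because $p$ is weakly order-preserving: if $i \leq j \leq k$ and $p(i) = p(k)$, then $p(i) \leq p(j) \leq p(k) = p(i)$ forces $p(j) = p(i) = p(k)$ since $J$ is a poset (a parasimplex is totally ordered, hence antisymmetric). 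Non-triviality $E \neq I \times I$ holds because $J$ has more than one $\sim_J$-class (indeed $J$ is a parasimplex, so $n_J \geq 0$ and $J$ contains $i_0 < i_0 + 1$ with distinct $p$-values), and essential surjectivity of $p$ then forces $I$ to have at least two $E$-classes.

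Next, $(1) \implies (2)$: assume $E$ is $\ZZ$-equivariant, convex, and non-trivial, with $\leq \subset E$. The relation on $I/E$ induced by $\leq$ is well-defined precisely because $E$ is convex (this is the standard fact that convex equivalence relations on a preorder are exactly those for which the quotient inherits a preorder); I would check antisymmetry of the induced relation directly --- if $[i] \leq [i']$ and $[i'] \leq [i]$ in $I/E$, then there are representatives with $i \leq i'$ and a chain back, and convexity plus $\leq \subset E$ collapses the chain to show $iEi'$. The $\ZZ$-action descends to $I/E$ by $\ZZ$-equivariance of $E$, and remains free: here I would use non-triviality, since if the induced $\ZZ$-action on $I/E$ had a fixed point then, combined with $i \leq i+1 \leq i+2 \leq \cdots$ and convexity, one would get $E = I \times I$, contradicting (iii). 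Finally $i < i+1$ in the quotient must be checked --- again this is where non-triviality is used. So $I \to I/E$ is an essentially surjective map of paracyclic preorders onto a poset, i.e., a parasimplex.

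The implication $(2) \implies (3)$ is then essentially free: take $J = I/E$ with $p$ the quotient map, which by hypothesis is a parasimplex and an essentially surjective $\ZZ$-equivariant map, and $iEi'$ iff $p(i) = p(i')$ by construction of the quotient relation. The main obstacle I anticipate is bookkeeping around the role of the non-triviality condition (iii): it is what separates ``$E$ is convex $\ZZ$-equivariant'' from ``the quotient is genuinely a parasimplex rather than collapsing to a point,'' and one must be careful to invoke it at exactly the right spots (freeness of the $\ZZ$-action on $I/E$ and the strict inequality $[i] < [i] + 1$). Everything else is a routine unwinding of the definitions of convexity and of paracyclic preorder from Definition~\ref{defn:Z-preorder}, together with the observation that a parasimplex is by definition totally ordered (hence antisymmetric), so I would not expect to need anything beyond the excerpt.
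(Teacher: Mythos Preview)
Your cycle $(3) \Rightarrow (1) \Rightarrow (2) \Rightarrow (3)$ is correct, and there is nothing to compare it against: the paper offers no proof of this proposition, introducing it only with ``One can straightforwardly prove the following.'' Your outline fills in precisely the routine verification the paper omits.

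One point worth flagging: you invoke the hypothesis ``$\leq\,\subset E$'' in your antisymmetry argument for $(1)\Rightarrow(2)$, but read literally this hypothesis is fatal---since $I$ is a \emph{linear} preorder and $E$ is symmetric, $\leq\,\subset E$ forces $E = I\times I$, contradicting non-triviality and emptying the proposition. The intended hypothesis is almost certainly $\sim_I \subset E$ (Notation~\ref{notation:I-relation}), which is what actually holds for the relation in (3). Fortunately your antisymmetry step does not really need the containment you cite: convexity together with linearity of $I$ suffices (if $[i]\leq[j]\leq[i]$, pick witnesses and use that any two elements of $I$ are comparable to sandwich one witness between $E$-equivalent endpoints). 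So the argument survives, but you should not lean on ``$\leq\,\subset E$'' as written. The rest---descent of the $\ZZ$-action, freeness via non-triviality, and the identification $J=I/E$ in $(2)\Rightarrow(3)$---is exactly right.
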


\begin{notation}\label{notation. Conv(I)}
Fix a paracyclic preorder $I$. We let $\Conv(I)$ denote the collection of all equivalence relations $E \subset I \times I$ satisfying any of the equivalent conditions of Proposition~\ref{prop. convex relations}. We endow $\Conv(I)$ with the poset structure of inclusion---$E \leq E'$ if and only if $E \subset E'$ in $I \times I$.
\end{notation}

\begin{construction}\label{construction:stratification of F^{(I)}}
Fix a paracyclic preorder $I$ and choose an element $\alpha \in F^{(I)}$. One can associate to $\alpha$ an equivalence relation $E_\alpha \in \Conv(I)$ by declaring
	\eqnn
	iE_\alpha j \iff \alpha(i,j) < \infty.
	\eqnd
This yields a function
	\eqnn
	F^{(I)} \to \Conv(I),
	\qquad
	\alpha \mapsto E_\alpha
	\eqnd
\end{construction}

\begin{prop}\label{prop.F^{(I)}-is-stratified}
The function $F^{(I)} \to \Conv(I)$ from Construction~\ref{construction:stratification of F^{(I)}} renders $F^{(I)}$ a conically stratified space (in the sense of Definition~\ref{defn. conical stratification}).
\end{prop}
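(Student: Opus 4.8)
The plan is to verify conical stratifiedness by checking the condition of Definition~\ref{defn. conical stratification} at every point $\alpha \in F^{(I)}$, using the explicit ``corner of a cube'' description of $F^{(I)}$ from Remark~\ref{remark:F^{(I)} are corners}. First I would reduce to the case that $I$ is a parasimplex (a poset): any paracyclic preorder $I$ maps to a parasimplex $\bar I = I/E$ for a maximal $E \in \Conv(I)$, and by Remark~\ref{remark:FI is a functor} and Remark~\ref{remark:F^{(I)} are corners} the space $F^{(I)}$ is an open subset of $F^{(\bar I)}$; since open subsets of conically stratified spaces are conically stratified (see the Example following Definition~\ref{defn. conical stratification}), and the stratifying map to $\Conv(I)$ is compatible, it suffices to treat parasimplices. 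For a parasimplex $I$ with $n_I = n$, fix $i_0 \in I$ and use the homeomorphism of Remark~\ref{remark:F^{(I)} are corners} identifying $F^{(I)}$ with the ``corner''
	\eqnn
	\{ (t_0,\dots,t_n) \in (-\infty,\infty]^{n+1} \text{ such that } t_k = \infty \text{ for some } k \}
	\eqnd
where $t_k = \alpha(i_k, i_{k+1})$ and indices are read cyclically (so $t_n = \alpha(i_n, i_0+1)$). Under this identification the stratifying function $\alpha \mapsto E_\alpha$ records exactly which subset of coordinates equals $\infty$; that is, $F^{(I)}$ is stratified by the poset of nonempty ``cyclic'' subsets of $\{0,\dots,n\}$, which is precisely (the opposite indexing of) $\Conv(I)$.

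Next I would produce the conical charts. Fix $\alpha$ with $E = E_\alpha$, and let $S = \{k : t_k(\alpha) = \infty\}$, which is nonempty. The finite coordinates of $\alpha$ (those $t_k$ with $t_k(\alpha) < \infty$) are unconstrained, so a neighborhood of $\alpha$ splits as a product of a Euclidean factor $Z = (-\infty,\infty)^{\{k : t_k(\alpha)<\infty\}}$ (an open neighborhood of the finite coordinates) with a neighborhood of $(\infty,\dots,\infty)$ inside $(-\infty,\infty]^{S}$ intersected with the closed subvariety imposed by which of the coordinates indexed by $S$ are allowed to drop below $\infty$. That second factor is an open neighborhood of the cone point in $C(Y)$ where $Y$ is the appropriate ``link''; concretely, for each $k \in S$ introduce a coordinate $u_k = 1/(1+ t_k) \in [0,1)$ (with $u_k = 0 \leftrightarrow t_k = \infty$), so the neighborhood becomes $Z \times \{(u_k)_{k\in S} : u_k \in [0,1), \text{ not all } u_k = 0\} \cup \{0\}$, i.e. $Z \times C(Y)$ with $Y$ the boundary simplex $\{u \in \partial \Delta^{|S|-1}\}$ stratified by its own faces. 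One checks the stratification of this chart by $\cP(S)_{>\emptyset}$-data matches the restriction of the map $F^{(I)} \to \Conv(I)$, using part (iii) of Proposition~\ref{prop. convex relations} to translate ``which coordinates are $\infty$'' into ``which essentially surjective quotient of $I$.'' This is essentially the computation of Example~\ref{example:octant-and-simplex-stratification}, adapted to the corner rather than the full octant; I would cite that example as the model and check the one new point, namely that at $\alpha$ the cone link is a simplex boundary (a sphere's worth of strata) rather than a full simplex.

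The main obstacle, and the step I would spend the most care on, is matching the combinatorics: verifying that the ``which coordinates equal $\infty$'' poset genuinely coincides with $\Conv(I)$ (including the $\ZZ$-equivariance and convexity conditions of Proposition~\ref{prop. convex relations}) and that the cone structure at $\alpha$ sees exactly the strata $E' \geq E_\alpha$ with the correct incidences. Equivariance forces the relevant subsets of $\{0,\dots,n\}$ to be ``$\ZZ$-periodic'' in the universal-cover picture, which is exactly why one works with the cyclic corner rather than a naive cube; convexity is what guarantees the link is a boundary-of-simplex (a genuine sphere of strata) so that the neighborhood is an honest open cone. Once the dictionary of Proposition~\ref{prop. convex relations} is in hand, the topological content is a routine unwinding of the $u_k = 1/(1+t_k)$ change of coordinates, and I would present it as such, leaving the inductive verification at non-origin points of the chart to the reader as in Example~\ref{example:octant-and-simplex-stratification}.
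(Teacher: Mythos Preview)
Your overall strategy matches the paper's proof: reduce to the parasimplex case, then use the corner-of-cube description of $F^{(I)}$ from Remark~\ref{remark:F^{(I)} are corners} to produce conical charts. Your local-chart construction (splitting off the finite coordinates as a Euclidean factor $Z$ and identifying the link at the corner as $\partial \Delta^{|S|-1}$) is correct and is exactly the content the paper defers when it says ``it is straightforward---say, by induction---to check that the boundary of simplices are conically stratified.'' In that sense your write-up is more explicit than the paper's.

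There are two issues. First, your reduction to the parasimplex case has the direction reversed. You write $\bar I = I/E$ for a maximal $E \in \Conv(I)$ and claim $F^{(I)}$ is open in $F^{(\bar I)}$. But the map $I \to I/E$ is a surjection in $\Preordpara$, and by Remark~\ref{remark:FI is a functor} surjections induce \emph{closed} embeddings $F^{(I/E)} \hookrightarrow F^{(I)}$, not the other way around. The correct parasimplex is the one on the \emph{same underlying set} as $I$, obtained by refining the preorder to a genuine linear order; call it $J$. The identity-on-sets map $J \to I$ is then a morphism in $\Preordpara$, and the induced $F^{(I)} \hookrightarrow F^{(J)}$ is the open inclusion you want (this is precisely what Remark~\ref{remark:F^{(I)} are corners} asserts and what the paper's proof invokes). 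Once you make this swap your reduction goes through.

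Second, you do not separately verify that the stratifying map $F^{(I)} \to \Conv(I)$ is continuous. The paper treats this as its own step: the preimage of the upward-closed set $\{E' : E \subset E'\}$ is the finite intersection of open conditions $\alpha(i,j) \neq \infty$ over $(i,j)$ with $iEj$ (finite by $\ZZ$-equivariance). Your ``which coordinates equal $\infty$'' description makes this more or less evident, but since conical stratification presupposes a continuous map to the poset, it is worth stating.
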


\begin{proof}
We first prove the map is continuous. Since $\Conv(I)$ is finite, one need only verify that for any $E \in \Conv(I)$, the preimage of $\{E' \text{ such that $E \subset E'$} \}$ is an open subset of $F^{(I)}$. The pre-image is identified with the set of all $\alpha$ such that $(i \leq j) \& (iEj) \implies \alpha(i,j) \neq \infty$. This exhibits the pre-image as an intersection of open subsets of $F^{(I)}$, and this intersection is finite by $\ZZ$-equivariance of $E$ (Proposition~\ref{prop. convex relations}). This proves the map is continuous.

To see that the stratification is conical, it suffices to consider the case when $I$ is a parasimplex. (Any paracyclic preorder $I'$ is obtained by expanding the order on some parasimplex $I$ to a preorder; since $F^{(I')}$ embeds openly into $F^{(I)}$, $F^{(I')}$ is conically stratified.) When $I$ is a parasimplex, one may identify $F^{(I)}$ as the open cone on the boundary of an $(n_I-1)$-simplex, where the boundary simplex is given the stratification induced by the stratification from Example~\ref{example:octant-and-simplex-stratification}. It is straightforward---say, by induction---to check that the boundary of simplices are conically stratified.
\end{proof}

\begin{notation}\label{notation:F^{(I)}_E-stratum}
Fix a paracyclic preorder $I$ and an equivalence relation $E \in \Conv(I)$.  (See Notation~\ref{notation. Conv(I)}.) We let
	\eqnn
	(F^{(I)})_E \subset F^{(I)}
	\eqnd
denote the collection of all $\alpha$ for which $\alpha(i,j)< \infty \iff iEj$. (See Notation~\ref{notation:fiber-of-stratification} and Construction~\ref{construction:stratification of F^{(I)}}.)
\end{notation}

\begin{remark}
Fix a paracyclic preorder $I \in \Preordpara$. We know from Remark~\ref{remark:F^{(I)} are corners} that the space $F^{(I)}$ may be identified with (an open subset of) a the faces of a Euclidean octant. 
\end{remark}

\begin{remark}
Fix a paracyclic preorder $I$, and let $\sim_I$ denote the equivalence relation induced by the preorder relation (Notation~\ref{notation:I-relation}). Then $I/\sim_I$ is isomorphic to a parasimplex; let $n_I$ be the non-negative integer associated to this parasimplex as in Notation~\ref{notation:n_I}, so that $(I/\sim_I)/\ZZ$ has $n_I+1$ elements in it.

Then $\Conv(I)$ is abstractly isomorphic, as a poset, to a cube poset with a terminal corner removed. More specifically, one has an isomorphism
	\eqnn
	\Conv(I) \cong (\cP(n_I)^{\op} \setminus \emptyset).
	\eqnd
Here, $\cP(n_I)$ is the power set of the set $\{0,\ldots,n_I\}$ (Example~\ref{example:octant-and-simplex-stratification}). 

Identifying (non-canonically) $F^{(I)}$ with the faces of the Euclidean octant $(-\infty,\infty]^{n_I+1}$, one can further identify $F^{(I)}$ (as a stratified space) as a closed subspace of the Euclidean octant $(-\infty,\infty]^{n_I+1}$---namely, the closed subspace consisting of those points $x$ where at least one coordinate is equal to $\infty$. Here we give the octant a stratification similar to the one we gave in Example~\ref{example:octant-and-simplex-stratification}---we send a point $x$ to the collection of $i$ for which $i = \infty$. 

Likewise, for any paracyclic preorder $I$, we may (non-canonically) identify $F^{(I)}$ with an open subset of some $F^{(J)}$ where $F^{(J)}$ is a parasimplex. Specifically, there is a unique parasimplex $J$ admitting an injective, $\ZZ$-equivariant map of preorders $r: J \to I$ any choice of such a map induces an open embedding of $r^*: F^{(I)} \to F^{(J)}$. The complement of this open embedding is easy to describe: The complement consists of those strata $(F^{(J)})_E$ for which the relation $\sim_I$ is not contained in $E$. 
\end{remark}

\subsection{Sheaves from $\brokenpara$ are constructible on  $F^{(I)}$}

\begin{lemma}\label{lemma:pullback-to-F^{(I)}-is-constructible}
Let $F^{(I)} \cong \broken^I \to \broken$ be the composite of the forgetful functor preceded by the representing equivalence $F^{(I)} \cong \broken^I$ from Theorem~\ref{theorem. I-sections representable}. Then any sheaf on $\broken$ pulls back to a sheaf on $F^{(I)}$ which is constructible with respect to the stratification $F^{(I)} \to \Conv(I)$ (Proposition~\ref{prop.F^{(I)}-is-stratified}). 
\end{lemma}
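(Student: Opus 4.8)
The plan is to reduce the statement to a purely local claim about constant sheaves on the strata $(F^{(I)})_E$ of Notation~\ref{notation:F^{(I)}_E-stratum}. Fix a sheaf $\cF$ on $\brokenpara$ and let $\cG$ be its pullback to $F^{(I)}$ along $F^{(I)} \cong \brokenpara^I \to \brokenpara$. By definition of constructibility we must show that $\cG|_{(F^{(I)})_E}$ is locally constant for each $E \in \Conv(I)$. I would first observe that each stratum $(F^{(I)})_E$ is connected---in fact, using Remark~\ref{remark:F^{(I)} are corners} together with Proposition~\ref{prop. convex relations}, the stratum $(F^{(I)})_E$ is (an open subset of) the relative interior of a face of a Euclidean octant, hence contractible, and the whole stratum carries a constant family of broken paracycles up to isomorphism: every fiber of $\tilde F^{(I)}$ over $(F^{(I)})_E$ is the broken paracycle with the isomorphism type determined by $I/E$ (this is precisely the content of Construction~\ref{construction:stratification of F^{(I)}}, where $iE_\alpha j \iff \alpha(i,j) < \infty$). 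So it suffices to see that $\cG$ restricted to such a stratum is constant, and since the stratum is contractible it will suffice to see it is locally constant.

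The key point is that over the stratum $(F^{(I)})_E$ the classifying map $F^{(I)} \to \brokenpara$, when restricted, factors through a map that is ``locally constant up to isomorphism of families.'' Concretely, let $J := I/E$, a parasimplex by Proposition~\ref{prop. convex relations}. Then by Remark~\ref{remark:FI is a functor} the quotient map $p : I \to J$ induces a closed embedding $F^{(J)} \hookrightarrow F^{(I)}$, and I would identify $(F^{(I)})_E$ with the open top stratum of $F^{(J)}$---the locus where all coordinates $\alpha(j,j^{++})$ equal $\infty$ and no other coordinate does. Over this open stratum, the family $\tilde F^{(J)} \to F^{(J)}$ restricts to a family whose fibers are all the {\em same} broken paracycle (the one with $n_J+1$ fixed points in the quotient circle), and moreover---shrinking to a small contractible neighborhood inside the stratum---the family is literally trivial by Theorem~\ref{theorem:broken-definitions-equivalent} (local triviality), even as a family of broken paracycles with $I$-section, because the combinatorial type of the $I$-section cannot jump within a single stratum. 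Thus on such a neighborhood $V \subset (F^{(I)})_E$, the composite $V \to F^{(I)} \to \brokenpara$ factors (up to the natural isomorphism witnessing the family as a pullback) through the point $\ast \to \brokenpara$ classifying that single broken paracycle. Pulling back $\cF$ along $V \to \ast \to \brokenpara$ manifestly yields a constant sheaf on $V$ (the pullback of $\cF(\ast\text{-fiber})$). Hence $\cG|_V$ is constant, so $\cG|_{(F^{(I)})_E}$ is locally constant, which is what we needed.

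To organize this cleanly I would proceed in the following order: (1) recall the stratification $F^{(I)} \to \Conv(I)$ and fix $E$; (2) using Proposition~\ref{prop. convex relations} produce the parasimplex $J = I/E$ and the map $p: I \to J$, and use Remark~\ref{remark:FI is a functor} to identify $(F^{(I)})_E$ with the open top stratum of $F^{(J)}$; (3) show the restricted family $\tilde F^{(J)}|_{(F^{(I)})_E} \to (F^{(I)})_E$, with its $I$-section (pulled back along $p$), is locally isomorphic to a constant family with constant $I$-section, invoking Theorem~\ref{theorem:broken-definitions-equivalent} for local triviality and the fact that the $\RR$-fixed-point pattern is locally constant on a stratum by Property~\ref{property:unramified}; (4) conclude that on a small contractible $V$ the classifying map $V \to \brokenpara^I \simeq F^{(I)} \to \brokenpara$ factors through a point, so $\cG|_V$ is constant; (5) since $V$ ranges over an open cover of the stratum, $\cG|_{(F^{(I)})_E}$ is locally constant, completing the proof.

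The main obstacle I anticipate is step (3): carefully justifying that the $I$-section, and not merely the underlying family of broken paracycles, is locally constant on a stratum---i.e., that passing to the local trivialization of Theorem~\ref{theorem:broken-definitions-equivalent} can be done compatibly with the $I$-section so that the classifying map to $\brokenpara$ (as opposed to to $\brokenpara^I$) genuinely factors through a point. This amounts to checking that the ``combinatorial position'' of each section point $\sigma(-,i)$ relative to the $\RR$-fixed points is constant on a stratum, which should follow from continuity of the distance function (Lemma~\ref{lemma. distance continuous}) together with the definition of the stratification via $\alpha(i,j) < \infty$, but it requires care. Everything else is routine given the earlier results, especially Theorem~\ref{theorem. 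I-sections representable} and Proposition~\ref{prop.F^{(I)}-is-stratified}.
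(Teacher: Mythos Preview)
Your overall strategy---show that the restriction of the classifying map $(F^{(I)})_E \to \brokenpara$ factors through a point---is exactly right, and is what the paper does. But your step~(2) is confused, and this confusion propagates into the unnecessary complications of steps~(3)--(4).

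The map $p^*: F^{(J)} \to F^{(I)}$ induced by the projection $p: I \to J = I/E$ goes the wrong way for your purposes. Its image is the locus $\{\alpha : p(i)=p(i') \implies \alpha(i,i')=0\}$, which is \emph{not} the stratum $(F^{(I)})_E = \{\alpha : \alpha(i,i')<\infty \iff iEi'\}$; indeed $p^*$ sends only the single corner point $(F^{(J)})_{\Delta_J}$ into $(F^{(I)})_E$, so there is no identification of $(F^{(I)})_E$ with a stratum of $F^{(J)}$ this way. The paper instead chooses a \emph{section} $r: J \to I$ of $p$ (a $\ZZ$-equivariant, order-preserving map, though not a morphism of $\Preordpara$ since it need not be essentially surjective; Remark~\ref{remark:FI is a functor} still applies). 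This induces $r': F^{(I)} \to F^{(J)}$, and on $(F^{(I)})_E$ one checks $r'(\alpha)(j,j') = \alpha(r(j),r(j')) = \infty$ iff $j\neq j'$, so $r'$ collapses the entire stratum $(F^{(I)})_E$ to the single point $(F^{(J)})_{\Delta_J}$. Moreover $r'$ is compatible with the families $\tilde F^{(\bullet)}$, so the composite $(F^{(I)})_E \to \brokenpara$ factors \emph{globally} through a point. No local triviality argument is needed, and your worry about the $I$-section is moot: the map to $\brokenpara$ forgets the $I$-section entirely, so only the underlying family need be (globally) trivial, which $r'$ witnesses directly.
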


\begin{proof}
Fix $E \in \Conv(I)$ and let $(F^{(I)})_E$ be the corresponding stratum (Notation~\ref{notation:F^{(I)}_E-stratum}). Also let $J= I/E$ be the quotient parasimplex, and let $\Delta_J \in \Conv(J)$ be the diagonal (trivial) equivalence relation. Then any section $r: J \to I$ of the projection $I \to J$ induces a map 
	\eqnn
	r': (F^{(I)})_E \to (F^{(J)})_{\Delta_J} \cong \ast
	\eqnd
which also identifies the families of broken paracycles
	\eqnn
	\tilde F^{(I)} |_{(F^{(I)})_E}
	\cong
	(r')^* 
	\left(
		\tilde F^{(J)} |_{(F^{(J)})_{\Delta_J}}
	\right).
	\eqnd
This proves that the map $(F^{(I)})_E \to F^{(I)} \to \brokenpara$ factors through a point (namely, the corner stratum $(F^{(J)})_{\Delta_J}$) hence any sheaf pulled back from $\brokenpara$ is (locally) constant along each stratum of $F^{(I)}$. This completes the proof.
\end{proof}

\begin{corollary}\label{cor.stalks-of-F^{(I)}}
Fix a sheaf $\cF$ on $\brokenpara$. Fix an integer $n \geq 0$ and let $p_n: \ast \to \brokenpara$ be the map classifying a broken paracycle $L$ with $n_L = n$ (Notation~\ref{notation:n_L for broken line L}). 

Then for any paracyclic preorder $I$ and any $E \in \Conv(I)$ such that $n_{I/E} = n$, the stalk of $\cF$ along $(F^{(I)})_E$ is given by the object $p_n^* \cF$.
\end{corollary}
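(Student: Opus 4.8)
The plan is to prove Corollary~\ref{cor.stalks-of-F^{(I)}} by reducing the stalk along a positive-codimension stratum of $F^{(I)}$ to the value of $\cF$ at a point-indexed stratum, using the factorization established in Lemma~\ref{lemma:pullback-to-F^{(I)}-is-constructible}. First I would recall that the stalk of a constructible sheaf along a stratum $(F^{(I)})_E$ is computed, via the exit-path equivalence (Theorem~\ref{theorem.exit-path-theorem} and Remark~\ref{remark:exit-paths-natural-in-X-P}), by evaluating the corresponding functor on $\Exit(F^{(I)})$ at any object lying in that stratum; equivalently, it is the value $(\iota_E^* \cF)_x$ of the pullback along the inclusion $\iota_E: (F^{(I)})_E \hookrightarrow F^{(I)}$ at any point $x \in (F^{(I)})_E$, and by Lemma~\ref{lemma:pullback-to-F^{(I)}-is-constructible} this pullback is locally constant on $(F^{(I)})_E$, so the stalk is independent of the chosen point $x$.

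Next I would invoke the factorization produced inside the proof of Lemma~\ref{lemma:pullback-to-F^{(I)}-is-constructible}: choosing a section $r: J \to I$ of the quotient $I \to J = I/E$, one obtains a map $r': (F^{(I)})_E \to (F^{(J)})_{\Delta_J}$ under which the universal family restricts compatibly, i.e.\ the composite $(F^{(I)})_E \hookrightarrow F^{(I)} \to \brokenpara$ is homotopic to $(F^{(I)})_E \xrightarrow{r'} (F^{(J)})_{\Delta_J} \hookrightarrow F^{(J)} \to \brokenpara$. Since $J = I/E$ is a parasimplex and $\Delta_J$ is the trivial relation, the stratum $(F^{(J)})_{\Delta_J}$ is a single point (this is exactly where all coordinates $\alpha(i,j)$ with $i \not\sim j$ equal $\infty$ — a corner of the octant in the picture of Remark~\ref{remark:F^{(I)} are corners}), and the family it classifies is a single broken paracycle $L$ with $n_L = n_J = n_{I/E}$. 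By hypothesis $n_{I/E} = n$, so that point is precisely $p_n: \ast \to \brokenpara$ up to the automorphism group of $L$, which does not affect the underlying map of stacks.

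It then follows that the restriction $\iota_E^* \cF$ is, up to the constant-sheaf functor, the pullback of $\cF$ along the constant map $(F^{(I)})_E \to \ast \xrightarrow{p_n} \brokenpara$; hence its stalk at any point is $p_n^* \cF$. Combining this with the first paragraph — that the stalk of $\cF$ along $(F^{(I)})_E$ equals the stalk of the locally constant sheaf $\iota_E^* \cF$ — gives the claimed identification. I would close by noting that the passage ``stalk along a stratum equals value on a point of that stratum'' uses only that $\cF$ pulled back to $F^{(I)}$ is constructible (Lemma~\ref{lemma:pullback-to-F^{(I)}-is-constructible}) together with the description of $F^{(I)}$ as a reasonable conically stratified space (Proposition~\ref{prop.F^{(I)}-is-stratified}), so Theorem~\ref{theorem.exit-path-theorem} and its naturality in Remark~\ref{remark:exit-paths-natural-in-X-P} apply.

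The main obstacle is bookkeeping rather than conceptual: one must make sure that ``the stalk along a stratum'' is the right notion (i.e.\ that the locally constant sheaf $\iota_E^*\cF$ has well-defined, point-independent stalk on the connected stratum $(F^{(I)})_E$ — which may require checking $(F^{(I)})_E$ is connected, or else arguing the pullback is genuinely constant and not merely locally constant, which follows from it being pulled back from the point $(F^{(J)})_{\Delta_J}$), and that the identification of $(F^{(J)})_{\Delta_J}$ with the image of $p_n$ is compatible with all the naturality used. None of this is deep given the machinery already in place; the factorization in Lemma~\ref{lemma:pullback-to-F^{(I)}-is-constructible} is doing essentially all the work, and Corollary~\ref{cor.stalks-of-F^{(I)}} is a repackaging of it together with the exit-path dictionary.
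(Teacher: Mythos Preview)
Your proposal is correct and takes essentially the same approach as the paper: the paper states the corollary without proof, intending it to follow directly from the factorization established in the proof of Lemma~\ref{lemma:pullback-to-F^{(I)}-is-constructible}, namely that $(F^{(I)})_E \to \brokenpara$ factors through the single point $(F^{(J)})_{\Delta_J}$ with $J = I/E$. Your write-up simply makes this explicit, and your closing bookkeeping remarks (about constancy rather than mere local constancy, and identifying $(F^{(J)})_{\Delta_J}$ with $p_n$) are already handled by that factorization through a point.
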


\subsection{Constructible sheaves on $F^{(I)}$}
Let $F^{(I)} \to \Conv(I)$ be the stratified space from Proposition~\ref{prop.F^{(I)}-is-stratified}. By definition of exit path category (Definition~\ref{defn:exit-path-category}), there is a functor of $\infty$-categories
	\eqnn
	\Exit(F^{(I)}) \to N(\Conv(I)).
	\eqnd
(Here, we have considered $\Conv(I)$ as a category and taken its nerve to render it an $\infty$-category.)

\begin{lemma}\label{lemma:exit-F^{(I)}-is-Conv(I)}
The map $\Exit(F^{(I)}) \to N(\Conv(I))$ is an equivalence of $\infty$-categories. Moreover, this equivalence is natural in the $I$ variable. 
\end{lemma}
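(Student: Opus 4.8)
The plan is to reduce to the case where $I$ is a parasimplex, then identify $\Exit(F^{(I)})$ with $N(\Conv(I))$ by exhibiting $F^{(I)}$ as (an open subset of) a stratified space whose exit path category is already understood. First I would observe that, by Remark~\ref{remark:F^{(I)} are corners} and the remarks following Notation~\ref{notation:F^{(I)}_E-stratum}, for a parasimplex $I$ with $n = n_I$ the space $F^{(I)}$ is the closed subspace of the Euclidean octant $(-\infty,\infty]^{n+1}$ consisting of points with at least one coordinate equal to $\infty$, stratified by sending a point to the set $\{\,i : x_i = \infty\,\}$; this is (homeomorphic to) the stratified boundary of the standard $n$-simplex from Example~\ref{example:octant-and-simplex-stratification}, and under this identification $\Conv(I) \cong (\cP(n)^{\op}\setminus\emptyset)$. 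For a general paracyclic preorder $I'$, choosing an injective $\ZZ$-equivariant $r: J \to I'$ from a parasimplex $J$ gives an open embedding $F^{(I')} \hookrightarrow F^{(J)}$ (as stratified spaces, per the remark after Notation~\ref{notation:F^{(I)}_E-stratum}), and since open subsets of conically stratified spaces are conically stratified and since $\Exit$ and $\Conv$ both pass to the open substratum correspondingly, it suffices to treat parasimplices.

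Next I would prove the equivalence $\Exit(F^{(I)}) \to N(\Conv(I))$ for a parasimplex $I$. The general principle, which is the substantive input, is the following: whenever $X \to P$ is a conically stratified space all of whose strata $X_p$ are \emph{contractible} (indeed weakly contractible suffices) and all of whose links are suitably well-behaved, the canonical functor $\Exit(X) \to N(P)$ is an equivalence of $\infty$-categories. For $F^{(I)}$ the strata $(F^{(I)})_E$ are, by the local model in Remark~\ref{remark:FI-fiber-surjection} and the octant picture, products of copies of $(-\infty,\infty)$ and single points, hence contractible; so the functor is essentially surjective and bijective on $\pi_0$ of objects. For the higher cells one argues by induction on $n = n_I$ using the conical structure established in Proposition~\ref{prop.F^{(I)}-is-stratified}: near the cone point the exit path category of a cone $C(Y)$ is the cone (in $\inftycat$) on $\Exit(Y)$ — this is exactly the recollement identification recalled in Remark~\ref{remark:exit-paths-natural-in-X-P} — and this matches the fact that $\Conv(I)$, being a cube poset with top corner removed, is the poset-cone on the corresponding link poset. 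One then checks the base case $n=0$ (where $F^{(I)}$ is a point and $\Conv(I)$ is terminal) directly, and the inductive step glues the contractible-strata statement for the link with the cone identification. Alternatively, one can cite the comparison of $\Exit$ with the nerve for the standard stratification of simplices (essentially Proposition~\ref{prop:nerve-of-P-is-inside-sing(P)} applied to the boundary of a simplex, together with codescent for $\sing$) to conclude directly.

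Finally, for naturality in $I$: given a morphism $r: I' \to I$ in $\Preordpara$, Remark~\ref{remark:FI is a functor} gives a continuous map $r^*: F^{(I)} \to F^{(I')}$, and one checks it is a map of stratified spaces over the induced poset map $\Conv(I) \to \Conv(I')$ (a relation $E$ on $I$ pulls back to $(r\times r)^{-1}(E)$ on $I'$; that this lies in $\Conv(I')$ follows from Proposition~\ref{prop. convex relations}, using essential surjectivity of $r$). By the functoriality of $\Exit$ on stratified maps (Remark~\ref{remark:Exit-is-natural}) and of $N(-)$ on poset maps, both $\Exit(F^{(-)})$ and $N(\Conv(-))$ become functors $\Preordpara^{\op} \to \inftycat$, and the comparison maps $\Exit(F^{(I)}) \to N(\Conv(I))$ assemble into a natural transformation — indeed, at the simplicial-set level these are honest maps of simplicial sets defined by a pullback that is manifestly natural, so the transformation is strict. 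Having shown each component is an equivalence, this is a natural equivalence, which is the assertion.

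I expect the main obstacle to be the inductive identification of $\Exit(F^{(I)})$ with $N(\Conv(I))$ at the level of higher simplices — i.e., genuinely verifying that the conical structure of $F^{(I)}$ matches the cone structure of the poset $\Conv(I)$ and invoking the recollement/cone-category statement correctly — rather than the (routine) reduction to parasimplices or the (formal) naturality. In practice this is cleanest if one first proves the general lemma ``conically stratified with contractible strata $\Rightarrow$ $\Exit(X)\simeq N(P)$'' and then just checks the hypotheses for $F^{(I)}$ using the octant/simplex-boundary model already in hand.
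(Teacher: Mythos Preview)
Your inductive approach via the identification $\Exit(C(Y)) \simeq \Exit(Y)^{\triangleleft}$ is valid and would ultimately work, but it is more elaborate than necessary. The paper's argument is a one-step direct computation: since $N(\Conv(I))$ is the nerve of a poset, it suffices to show every $\hom_{\Exit(F^{(I)})}(x_E, x_{E'})$ with $E \subset E'$ is contractible. For $E = E'$ this follows because each stratum $(F^{(I)})_E$ is a convex subset of some Euclidean space; for $E \subsetneq E'$ one checks that any map $\cone(\partial \Delta^k) \to \{x_E\} \cup (F^{(I)})_{E'}$ extends over $\cone(\Delta^k)$, again using convexity. No reduction to parasimplices and no induction are needed. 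Naturality is handled exactly as you say.

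One warning: the ``general lemma'' you propose at the end---that \emph{conically stratified with contractible strata implies $\Exit(X) \simeq N(P)$}---is false without further hypotheses. Take the theta graph $X$ (three arcs joining $p$ and $q$) stratified over $\{0 < 1\}$ with $X_0 = \{p\}$: both strata are contractible and $X$ is conically stratified, yet $\hom_{\Exit(X)}(p, q)$ has three components. Your earlier hedge ``links suitably well-behaved'' is doing real work and cannot be dropped; what is actually required is contractibility of the mapping spaces between strata, which is precisely what the paper verifies by hand. So if you want a clean general statement, it has to be formulated at the level of hom-spaces, not just strata---and at that point you have essentially reproduced the paper's direct argument.
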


\begin{proof}
The above map is natural by definition of exit path categories as a pullback (Definition~\ref{defn:exit-path-category}), and by the fact that the assignment $I \mapsto F^{(I)}$ is a functor to the category of conically stratified spaces. 

To show that the functor is an equivalence, it suffices to show that for all $x_E \in (F^{(I)})_E$ with $E \subset E'$, we have that
	\eqnn
	\hom_{\Exit(F^{(I)})}(x_E,x_{E'})
	\eqnd
is contractible. If $E = E'$, this follows from the observation that $(F^{(I)})_E$ may be identified with a convex (hence contractible) subspace of $\RR^n$. If $E \neq E'$, one observes that any map $\cone(\del \Delta^k) \to x_E \cup (F^{(I)})_{E'}$ may be filled to a map $\cone(\Delta^k) \to x_E \cup (F^{(I)})_{E'})$.
\end{proof}

Finally, let us introduce a category to organize the various $\Conv(I)$ for varying $I$.

\begin{notation}[$\widetilde{\Conv}$]\label{notation:Conv-tilde}
The assignment $I \mapsto \Conv(I)$ defines a functor from $\Preordpara^{\op}$ to the category of posets; given any morphism $r: I \to J$, any relation $E \in \Conv(J)$ pulls back to a relation in $\Conv(I)$. Considering any poset as a category, we can apply the Grothendieck construction to obtain a Cartesian fibration
	\eqnn
	\widetilde{\Conv} \to \Preordpara.
	\eqnd
Concretely, an object of $\widetilde{\Conv}$ is a pair  $(I,E)$  where $I$ is a paracyclic preorder and $E$ is a convex equivalence relation satisfying any of the equivalent properties in Proposition~\ref{prop. convex relations}.  A morphism from $(I,E_I)$ to $(J,E_J)$ is the data of a map $r: I \to J$ in $\Preordpara$---that is, a $\ZZ$-equivariant and essentially surjective map---such that $r$ induces a factorization $I/E_I \to J/E_J$.
\end{notation}

\begin{remark}\label{remark:Conv-Deltasurj-adjunction}
Given any object $(I,E)$ of $\widetilde{\Conv}$, we have that $I/E$ is a parasimplex (Proposition~\ref{prop. convex relations}). The assignment $(I,E) \mapsto I/E$ is a functor to the category $\Deltaparasurj$ of parasimplices with surjective, $\ZZ$-equivariant maps (Notation~\ref{notation:Deltasurj}). 

Moreover, this assignment admits a right adjoint, given by sending a parasimplex $J$ to the pair $(J,\Delta_J)$ where $\Delta_J$ is the diagonal equivalence relation (otherwise known as ``equality''). This right adjoint is fully faithful, and it follows that the functor
	\eqnn
	\widetilde{\Conv} \to \Deltaparasurj,
	\qquad
	(I,E) \mapsto I/E
	\eqnd  
is a localization. The edges $W \subset \widetilde{\Conv}$ sent to equivalences by this localization are those that induce isomorphisms $I/E_I \to J/E_J$; these are in turn precisely the Cartesian edges of the Cartesian fibration $\widetilde{\Conv} \to \Preordpara$.
\end{remark}

\subsection{Proof of Theorem~\ref{theorem. sheaves are Deltasurj}}\label{section:proof of sheaves}
\begin{proof}
We have the following arrows:
\begin{align}
\shv(\brokenpara, \cC)
	&\xrightarrow{ \text{Corollary~\ref{corollary:sheaves-on-broken-limit}}} \limarrow_{\Preordpara} \shv(F^{(I)},\cC) \nonumber \\
	&\xrightarrow{ \text{ Lemma~\ref{lemma:pullback-to-F^{(I)}-is-constructible} }} \limarrow_{\Preordpara} \shv^{\cbl}(F^{(I)},\cC) \nonumber\\
	&\xrightarrow{ \text{ Theorem~\ref{theorem.exit-path-theorem} }} \limarrow_{\Preordpara} \Ffun(\Exit(F^{(I)}),\cC) \nonumber\\
	&\xrightarrow{ \text{ Lemma~\ref{lemma:exit-F^{(I)}-is-Conv(I)} } } \limarrow_{\Preordpara} \Ffun(\Conv(I),\cC) \nonumber\\
	&\xrightarrow{ \text{Defn of colim} } \Ffun(\colimarrow_{\Preordpara^{\op}} \Conv(I),\cC) \nonumber\\
	&\xrightarrow{ \text{ Theorem~\ref{theorem. colimit of categories as localization} } }  \Ffun( \widetilde{\Conv}[W^{-1}],\cC) \nonumber\\
	&\xrightarrow{ \text{Remark~\ref{remark:Conv-Deltasurj-adjunction} } } \Ffun( \Deltaparasurj,\cC)	\nonumber
\end{align}

The first arrow is an equivalence by Corollary~\ref{corollary:sheaves-on-broken-limit}. To be precise: we have shown that $\brokenpara$ is a colimit of the $F^{(I)}$ in Corollary~\ref{corollary:brokenpara-as-colim-of-spaces}, hence its $\infty$-category of sheaves is the limit of the induced diagram of the $\infty$-categories of sheaves. 

The second arrow is an equivalence by Lemma~\ref{lemma:pullback-to-F^{(I)}-is-constructible}---because the sheaves pulled back from $\brokenpara$ are all constructible on $F^{(I)}$, the limit may be computed by a diagram of the full subcategories $\shv^{\cbl} \subset \shv$ consisting of constructible sheaves. 

The next arrow is an equivalence by the exit path theorem (Theorem~\ref{theorem.exit-path-theorem}) along with its naturality in the $F^{(I)}$ variable (Remark~\ref{remark:exit-paths-natural-in-X-P}). 

The fourth arrow is an equivalence by Lemma~\ref{lemma:exit-F^{(I)}-is-Conv(I)}, which shows that the exit path categories of $F^{(I)}$ are equivalent as $\infty$-categories to the very posets stratifying the $F^{(I)}$.

The next arrow is by the definition of colimit. 

The penultimate arrow is due to the general fact that colimits of $\infty$-categories may be computed by localizing along the collection $W$ of the (co)Cartesian edges of a representing (co)Cartesian fibration (Theorem~\ref{theorem. colimit of categories as localization}). 

Finally, the last arrow is an equivalence because $\widetilde{\Conv}$ is a Cartesian fibration over $\Preordpara$ modeling the diagram whose colimit we want to take, and moreover, the adjunction exhibited in Remark~\ref{remark:Conv-Deltasurj-adjunction} shows that the localization of $\widetilde{\Conv}$ along its Cartesian edges is precisely $\Deltaparasurj$. 
\end{proof}

\subsection{Proof of Theorem~\ref{theorem. main}}

\begin{proof}
Combine Theorem~\ref{theorem. sheaves are Deltasurj} and Corollary~\ref{cor. Deltainj and Deltasurj}. 
\end{proof}

\clearpage

\section{The cyclic case}
In this section we give a proof of Theorem~\ref{theorem. cyclic main}.
We will be brief, as the techniques are almost identical to the paracyclic case.

\subsection{The cyclic category}

Note that the paracyclic category $\Deltapara$ admits a $\ZZ$-action on all morphism sets---a morphism $r: I \to J$ which is $\ZZ$-equivariant gives rise to another map $r+1$, where
	\eqnn
	(r+1)(i) = r(i)+1
	\eqnd
and composition is compatible with the $\ZZ$-action.

\begin{defn}\label{defn:cyclic-category}
The {\em cyclic category} $\Deltacyc$ is the category with the same objects as $\Deltapara$, and where we set
	\eqnn
	\hom_{\Deltacyc}(I,J)
	=
	\hom_{\Deltapara}(I,J)/\ZZ
	\eqnd
with the inherited composition law.
\end{defn}

\begin{remark}
The above definition of the cyclic category is a variation we learned from~\cite{lurie-waldhausen}. It is equivalent to Connes's original definition~\cite{connes-cyclique}, where he denotes $\Deltacyc$ by $\Lambda$. 
\end{remark}

\begin{defn}
Likewise, the category of cyclic preorders 
	\eqnn
	\Preordcyc
	\eqnd
is the category obtained by quotienting the hom-sets of $\Preordpara$ by the obvious $\ZZ$-action.
\end{defn}

\begin{defn}
Fix an $\infty$-category $\cC$. A {\em cyclic object} in $\cC$ is a functor
	\eqnn
	\Deltacyc^{\op} \to \cC.
	\eqnd
A {\em semicyclic object} is the data of a functor
	\eqnn
	(\Deltacycinj)^{\op} \to \cC
	\eqnd
where $\Deltacycinj \subset \Deltacyc$ consists of those morphisms arising from injective morphisms in $\Deltapara$.
\end{defn}

\subsection{Broken cycles and their families}

\newenvironment{brokencycleproperties}{%
	  \renewcommand*{\theenumi}{(C\arabic{enumi})}%
	  \renewcommand*{\labelenumi}{(C\arabic{enumi})}%
	  \enumerate
	}{%
	  \endenumerate
}

\begin{defn}\label{defn:broken cycle}
A {\em broken cycle} is the data of a topological space $C$ equipped with a continuous $\RR$-action such that
\begin{brokencycleproperties}
\item $C$ is abstractly homeomorphic to the circle $S^1$.
\item\label{property:orientation} The $\RR$-action is directed, in that it induces an orientation on $S^1$, and
\item\label{property:fixed-point-set-discrete-in-circle} The fixed point set $C^\RR$ is discrete and non-empty.
\end{brokencycleproperties}
\end{defn}

\begin{remark}
A circle with $\RR$-action is a broken cycle if and only if its lift to the universal cover is a broken paracycle (Definition~\ref{defn:broken-paracycle}).
\end{remark}

We now give a definition of family of broken cycles in line with the criteria set forth in Theorem~\ref{theorem:broken-definitions-equivalent} for broken paracycles.

\begin{defn}\label{defn:family of broken cycles}
Fix a topological space $S$ and a pair
	\eqnn
	(\pi: C_S \to S, \mu)
	\eqnd
where $\mu$ is a fiber-wise $\RR$-action on $C_S$. We say $(\pi:C_S \to S, \mu)$ is a {\em family of broken cycles} if and only if it satisfies the following:
\enum
	\item  (Fibers are broken cycles.) For every $s \in S$, the fiber $C_s = \pi^{-1}(s)$ is a broken cycle in the sense of Definition~\ref{defn:broken cycle}.	
	\item  (Unramified.) Let $C_S^{\RR}$ denote the fixed point set of the $\RR$-action. Then the map
		\eqnn
		C_S^\RR \to S
		\eqnd
	induced by $\pi$ is unramified. 
	\item  (Local triviality.) For every $s \in S$, there is an open subset $U \subset S$ containing $s$ such that there exists an orientation-preserving homeomorphism 
		\eqnn
		U \times S^1 \cong \pi^{-1}(U)
		\eqnd 
		which respects the projection to $U$.
\enumd
\end{defn}

\begin{example}
Let $S = S^1$; then there is a unique family of broken cycles whose fibers have exactly one break: the trivial family. However, there are many non-isomorphic families of broken paracycles over $S$ whose $\ZZ$-quotients are all trivial families of broken cycles.
\end{example} 

\begin{defn}
A map of families of broken cycles is a commutative diagram
	\eqnn
	\xymatrix{
	C_S \ar[r]^{\tilde f} \ar[d] & C_T \ar[d] \\
	S \ar[r]^{f} & T
	}
	\eqnd
where $\tilde f$ is $\RR$-equivariant, and exhibits $C_S$ as homeomorphic to the pullback $C_T \times_T S$.
\end{defn}

\begin{defn}
$\brokencyc$ is the stack classifying families of broken cycles.
\end{defn}

\subsection{The stack of broken cycles as a colimit}

\begin{remark}\label{remark:broken cycles lift to broken paracycles locally}
By local triviality, any family of broken cycles locally admits a lift to a family of broken paracycles. Thus the natural map $\brokenpara \to \brokencyc$ given by passing a family $L_S \to S$ to the quotient $L_S/\ZZ \to S$ is a cover.
\end{remark}

By Remark~\ref{remark:broken cycles lift to broken paracycles locally} and because families of broken paracycles admit local $I$-sections (Lemma~\ref{lemma:local-I-sections-exist}),  any family of broken cycles locally admits a map
	\eqnn
	\sigma: U \times I \to C_U
	\eqnd
where $I$ is some paracyclic preorder. This shows that the topological spaces $F^{(I)}$ cover the stack $\brokencyc$. Moreover, $\sigma$ factors through the quotient $U \times I/\ZZ$ by $\ZZ$-equivariance; this means that the functor
	\eqnn
	\Preordpara^{\op} \to \Stacks_{/\brokenpara}
	\qquad
	I \mapsto F^{(I)}
	\eqnd
(from Theorem~\ref{theorem. I-sections representable})
descends to a functor
	\eqnn
	\Preordcyc^{\op} \to \Stacks_{/\brokencyc}
	\qquad
	I \mapsto F^{(I)}.
	\eqnd

\begin{remark}
Note that $F^{(I)}$ also classifies the data of a family of broken cycles, equipped with a lift to a family of broken paracycles, which in turn is equipped with an $I$-section. This is succinctly captured in the composition
	\eqnn
	F^{(I)} \simeq \brokenpara^I \to \brokenpara \to \brokencyc.
	\eqnd
\end{remark}

\begin{theorem}
The induced map
	\eqnn
	\colimarrow_{I \in \Preordcyc^{\op}} F^{(I)} \to \brokencyc
	\eqnd
is an equivalence of stacks.
\end{theorem}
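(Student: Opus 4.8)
The plan is to replay the proof of Theorem~\ref{theorem. brokenpara as a colimit} almost verbatim, with $\brokencyc$ in place of $\brokenpara$, the category $\Preordcyc$ in place of $\Preordpara$, and the stacks $F^{(I)}$ — now viewed over $\brokencyc$ via the composite $F^{(I)} \simeq \brokenpara^I \to \brokenpara \to \brokencyc$ — in place of the $\brokenpara^I$. As always there are two things to check: that the map is a local surjection, and that it is a monomorphism. Local surjectivity is already in hand: by local triviality of families of broken cycles (Definition~\ref{defn:family of broken cycles}) such a family locally lifts to a family of broken paracycles (Remark~\ref{remark:broken cycles lift to broken paracycles locally}), and that lift locally admits an $I$-section by Lemma~\ref{lemma:local-I-sections-exist}; hence any $S \to \brokencyc$ locally factors through some $F^{(I)}$.

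For the monomorphism one must show the relative diagonal $\colimarrow_{\Preordcyc^{\op}} F^{(I)} \to \left(\colimarrow_{\Preordcyc^{\op}} F^{(I)}\right) \times_{\brokencyc} \left(\colimarrow_{\Preordcyc^{\op}} F^{(I)}\right)$ is an equivalence, and for this the only genuinely new ingredient is a cyclic amalgam lemma: writing $\Amalg_\cyc(I,J)$ for the poset of $\ZZ$-equivariant preorder structures on the set $I \coprod J$ for which both inclusions are essentially surjective, taken with morphisms modulo the $\ZZ$-action, the natural map $\colimarrow_{K \in \Amalg_\cyc(I,J)} F^{(K)} \to F^{(I)} \times_{\brokencyc} F^{(J)}$ should be an equivalence of stacks. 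To prove it, note that a point of $F^{(I)} \times_{\brokencyc} F^{(J)}$ over $S$ consists of a family of broken cycles $C_S$, two families of broken paracycles $L_S^{(0)}, L_S^{(1)}$ over $S$ carrying an $I$-section and a $J$-section respectively, and an isomorphism $L_S^{(0)}/\ZZ \cong C_S \cong L_S^{(1)}/\ZZ$. Each $L_S^{(i)}$ is fiberwise the universal cover of $C_S$, so by Proposition~\ref{prop:properly-discontinuous} the isomorphism of $\ZZ$-quotients lifts, after passing to an open cover, to an isomorphism $\tilde\phi\colon L_S^{(0)} \cong L_S^{(1)}$, and any two lifts differ by a deck transformation, i.e. by the $\ZZ$-action. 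Transporting the $J$-section along $\tilde\phi$ exhibits the data, up to precisely this $\ZZ$-ambiguity, as a single family of broken paracycles equipped with an $I$-section and a $J$-section; the induced-preorder construction from the proof of Lemma~\ref{lemma.Amalg-colimit} then yields the amalgam $K$, and because we quotiented by the $\ZZ$-ambiguity at the outset, the resulting object is a well-defined cyclic preorder and the construction is functorial for morphisms in $\Preordcyc$. That this produces an equivalence of stacks follows as in Lemma~\ref{lemma.Amalg-colimit} via Proposition~\ref{prop.open-covers-of-stacks}. With this lemma available, the chain of equivalences of Section~\ref{section.proof-of-colimit-theorem} — local Cartesian closedness of the $\infty$-topos of stacks, the cyclic amalgam lemma, the finality statement of Remark~\ref{remark:IJ slice left final}, left Kan extension along the forgetful functor $\widetilde\Amalg_\cyc^{\op} \to \Preordcyc^{\op} \times \Preordcyc^{\op}$, and the diagonal adjunction of Remark~\ref{remark: diagonal amalgam adjoint} — applies unchanged and identifies the relative diagonal with the identity of $\colimarrow_{\Preordcyc^{\op}} F^{(I)}$. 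One finally restricts, as in Remark~\ref{remark:PreordIleftKanextension}, to the subcategory of cyclic preorders whose quotient-by-$\ZZ$ is a bona fide finite cyclic set, the degenerate preorders contributing the empty stack.

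The step I expect to be the main obstacle is the $\ZZ$-bookkeeping just indicated. Over $\brokenpara$ the two sections in a fiber product live on one and the same family of broken paracycles; over $\brokencyc$ they a priori live on two families related only by a $\ZZ$-torsor of isomorphisms, so the amalgam preorder is only defined up to the $\ZZ$-action. The content of the argument — and the thing that requires care — is that this is exactly the ambiguity already built into $\Preordcyc$, whose hom-sets are quotients by $\ZZ$; one must verify that the auxiliary category $\widetilde\Amalg_\cyc$, the slice-finality of Remark~\ref{remark:IJ slice left final}, and the diagonal adjunction of Remark~\ref{remark: diagonal amalgam adjoint} all survive this quotient, and that the relevant colimit and localization computations are insensitive to replacing $\ZZ$-torsors of data by their quotients. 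An alternative route, perhaps cleaner conceptually but requiring the same bookkeeping in a different guise, is to use the cover $\brokenpara \to \brokencyc$ directly: present $\brokencyc$ as the geometric realization of the \v{C}ech nerve $\brokenpara^{\times_{\brokencyc}(\bullet+1)}$ and compute each term from the already-established paracyclic presentation of Corollary~\ref{corollary:brokenpara-as-colim-of-spaces}.
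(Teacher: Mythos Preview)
Your proposal is correct and follows essentially the same route as the paper: introduce a cyclic analogue of $\PreordZ$ (the paper calls it $(\Preordcyc)_\ast$, obtained from $\PreordZ$ by modding out hom-sets by $\ZZ$), observe that the amalgam machinery of Remarks~\ref{remark: forgetful amalga}--\ref{remark: diagonal amalgam adjoint} descends to this quotient, rerun the chain of equivalences from Section~\ref{section.proof-of-colimit-theorem}, and finish with the left Kan extension along $\Preordcyc^{\op} \to (\Preordcyc)_\ast^{\op}$. Your treatment of the $\ZZ$-bookkeeping --- lifting the two paracycle families to a common one via the universal-cover argument, and absorbing the resulting $\ZZ$-torsor of identifications into the quotient defining $\Preordcyc$ --- is exactly the content the paper compresses into the single phrase ``by $\ZZ$-equivariance, the functors from Remark~\ref{remark: forgetful amalga} descend,'' so you have in fact spelled out more than the paper does.
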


\begin{proof}
This proceeds in parallel to the proof of Theorem~\ref{theorem. brokenpara as a colimit}. Consider the category $(\Preordcyc)_\ast$ obtained from $\PreordZ$ by modding out the hom-sets by the natural $\ZZ$-action. 

Given a pair of objects $I_0, I_1 \in \Preordcyc^{\op}$, one can again define a poset $\Amalg(I_0,I_1)$, and by $\ZZ$-equivariance, the functors from Remark~\ref{remark: forgetful amalga} descend to functors
	\eqnn
	(\Preordcyc)_\ast \leftarrow \widetilde{\Amalg} \to 
	(\Preordcyc)_\ast \times 
	(\Preordcyc)_\ast.
	\eqnd
Then the analogous string of equivalences to the ones in~\eqref{eqn:bigComposition} hold:
	\begin{align}
   &  \left( \colimarrow_{I_0 \in (\Preordcyc)_\ast^{\op}} F^{(I_0)} \right) \times_{\brokencyc}
    	\left( \colimarrow_{I_1 \in (\Preordcyc)_\ast^{\op}} F^{(I_1)} \right) \nonumber \\
	& \to   \colimarrow_{I_0, I_1 \in (\Preordcyc)_\ast^{\op} \times  (\Preordcyc)_\ast^{\op}} F^{(I_0)}  \times_{\brokencyc} F^{(I_1)}  \nonumber \\
	&  \to   \colimarrow_{I_0, I_1} 		\colimarrow_{K \in \Amalg(I_0,I_1)^{\op}} F^{(K)} \nonumber\\
	&  \to    \colimarrow_{I_0, I_1} 		\colimarrow_{(J_0,J_1,K) \in \widetilde \Amalg^{\op}_{/(I_0,I_1)}} F^{(K)} \nonumber\\
	&  \to    \colimarrow_{(I_0,I_1,K) \in \widetilde \Amalg^{\op}} F^{(K)} \nonumber\\
	&  \to   \colimarrow_{I \in (\Preordcyc)_\ast^{\op}} F^{(I)}. \nonumber 
	\end{align}
We conclude by noting that the left Kan extension along $\Preordcyc^{\op} \to (\Preordcyc)_\ast^{\op}$ computes the desired colimit. We refer to Section~\ref{section.proof-of-colimit-theorem} for more details.
\end{proof}

\subsection{Proof of Theorem~\ref{theorem. cyclic main}}

\begin{proof}
This proceeds in parallel to the proof of Theorem~\ref{theorem. sheaves are Deltasurj}. (See Section~\ref{section:proof of sheaves}.) The only difference is that we must now consider the composite fibration 
	\eqnn
	q_{\cyc}: \widetilde{\Conv} \xrightarrow{q_{\para}}  \Preordpara
	\xrightarrow{/\ZZ} \Preordcyc
	\eqnd
and localize the $q_{\cyc}$-Cartesian edges, rather than the $q_{\para}$-Cartesian edges. We observe that $\widetilde{\Conv}$ admits a localization functor to $\Deltacycsurj$ (which now plays the role of $\Deltaparasurj$ in Remark~\ref{remark:Conv-Deltasurj-adjunction}). Then the string of equivalences in Section~\ref{section:proof of sheaves} have an analogous string of equivalences as follows:

\begin{align}
\shv(\brokencyc, \cC)
	&\to \limarrow_{\Preordcyc} \shv(F^{(I)},\cC) \nonumber \\
	&\to \limarrow_{\Preordcyc} \shv^{\cbl}(F^{(I)},\cC) \nonumber\\
	&\to \limarrow_{\Preordcyc} \Ffun(\Exit(F^{(I)}),\cC) \nonumber\\
	&\to \limarrow_{\Preordcyc} \Ffun(\Conv(I),\cC) \nonumber\\
	&\to \Ffun(\colimarrow_{\Preordcyc^{\op}} \Conv(I),\cC) \nonumber\\
	&\to  \Ffun( \widetilde{\Conv}[W_{\cyc}^{-1}],\cC) \nonumber\\
	&\to \Ffun( \Deltacycsurj,\cC)	\nonumber
\end{align}
where we note that $W_{\cyc}$ refers to the Cartesian edges with respect to $q_{\cyc}$, not $q_{\para}$. 
This completes the proof.
\end{proof}

\clearpage

\bibliographystyle{amsalpha}
\bibliography{biblio}


%
%


\end{document}